\theoremstyle{plain} 
    \newtheorem{theorem}{Theorem}
    \newtheorem{lemma}[theorem]{Lemma}
    \newtheorem{proposition}[theorem]{Proposition}
    \newtheorem{corollary}[theorem]{Corollary}
    \newtheorem{conjecture}[theorem]{Conjecture}
\theoremstyle{definition} 
    \newtheorem{fact}[theorem]{Fact}
    \newtheorem{remark}[theorem]{Remark}
\newcommand{\scond}{{(q)}}
\def \footnotemark{}
\def\bet{\beta}
\def\gam{\gamma}
\def\eps{\varepsilon}
\def\lam{\lambda}
\def\Hb{{\bf{H}}}
\def\S{\mathbb{S}}
\def\e{{\bf e}}
\def\summ{\sum\limits}
\def\prodd{\prod\limits}
\def\tends{\rightarrow}
\def\one{{\bf 1}}
\def\l{\left}
\def\r{\right}
\def\<{\langle}
\def\>{\rangle}
\newcommand{\E}{\mbox{\bf E}}
\newcommand{\ev}{\mbox{\bf E}}
\def\bar{\overline}
\def\P{{\bf P}}
\def\essup{\mbox{essup}}
\newcommand{\bareta}{\bar \eta}
\newcommand{\EE}[1]{\E\l[#1\r]}
\newcommand\Tr{{\mbox{Tr}}}
\newcommand\mnote[1]{} 
\newcommand\be{\begin{equation*}}
\newcommand\ee{\end{equation*}}
\newcommand\ben{\begin{equation}}
\newcommand\een{\end{equation}}
\newcommand\bes{\begin{eqnarray*}}
\newcommand\ees{\end{eqnarray*}}
\newcommand{\dist}{\mbox{\rm dist}}
\newcommand{\supp}{\mbox{\rm supp}}
\newcommand{\Var}{\operatorname{Var}}
\newcommand{\tr}{\operatorname{tr}}
\newcommand{\sm}{{\raise0.3ex\hbox{$\scriptstyle \setminus$}}}
\newcommand{\Cb}{\mathbf C}
\def\l{\left}
\def\r{\right}
\def\lam{\lambda}
\def\eps{\epsilon}
\def\tends{\rightarrow}
\renewcommand{\phi}{\varphi}
\newcommand{\HH}{{\mathcal H}}
\newcommand{\Hess}{\operatorname{Hess}}
\newcommand{\lambdamax}{\lambda_{max}}
\newcommand{\SAO}{\text{\rm SAO}_\beta}
\author{Manjunath Krishnapur, Brian Rider \\ and B\'alint Vir\'{a}g}
\title{Universality of the Stochastic Airy Operator}
\begin{document}

\bibliographystyle{abbrv}
\maketitle

\begin{abstract}
We introduce a new method for studying universality of random matrices. Let $T_n$ be the
Jacobi matrix associated to the Dyson beta ensemble with uniformly convex polynomial potential. We show
that after scaling, $T_n$ converges to the Stochastic Airy operator. In particular, the top edge of the
Dyson beta ensemble and the corresponding eigenvectors are universal. As a byproduct, our work leads to conjectured operator limits for the entire family of
soft edge distributions.
\end{abstract}

\pagebreak

\tableofcontents

\section{Introduction}

The goal of this paper is to introduce a novel approach to universality of random matrices. We consider Dyson's beta ensembles: these are $n$ random points on the real line with probability density
\begin{equation}\label{e:Dysonbeta}
\frac{1}{Z_{n,\beta}}  e^{ - \beta n \sum_{k=1}^n V(\lambda_k)} \, \prod_{j < k} |
\lambda_j - \lambda_k |^{\beta},
\end{equation}
where $V$ is polynomial, and we assume  $V''\ge c_u>0$.  We show that the distribution of the top points converges to a universal limit that does not depend on $V$. There have been two approaches to universality for such ensembles.  The first, classical method (see the book of Deift \cite{Deiftbook}) is based on asymptotics of the orthogonal polynomials, and is tied to the special values $\beta=1,2,4$. The more recent method, carried out in the bulk by Bourgade, Erd\H os,  and Yau \cite{BourgErdYau1, BourgErdYau2}, is based on the study the dynamics given by versions of Dyson's Brownian motion.

Just as in the classical method, our starting point is the theory of orthogonal polynomials. Recall that for a probability measure $\pi$ supported on exactly $n$ points there exists a unique $n\times n$ Jacobi matrix  $T$ (i.e., tridiagonal symmetric matrix with positive off-diagonals), so that the spectral measure of $T$ at the first coordinate vector $e_1$ is $\pi$. Consider the Jacobi matrix $T_n=T_n(V,\beta)$ associated to the random measure with support points picked form \eqref{e:Dysonbeta} and with independent Dirichlet$(\beta/2,..,\beta/2)$ distributed weights \eqref{e:Dirichlet}.

We study the structure of the matrix $T_n$ and show that after scaling, it converges as an operator to a unique random limit depending on $\beta$ only. This, in particular, implies the universality of the joint distribution of top eigenvalues: its limit does not depend on $V$.
\begin{theorem}\label{t:main}
There exists a coupling of the random matrices $T_n$ on the same probability space and constants $\gamma, \vartheta, \mathcal E$ depending on $V$ only (and specified in Remark \ref{r:constants}) so that
a.s. we have
$$\gamma n^{2/3}(\mathcal E-T_n)\to \SAO$$
in the norm-resolvent sense: for every $k$ the bottom $k$th eigenvalue converges the and corresponding eigenvector converges in norm.
Here $\mathcal E-T_n$ acts on $\mathbb R^n\subset L^2(\mathbb R_+)$ with coordinate vectors $e_j=(\vartheta n)^{1/6}\one_{[j-1,j](\vartheta n)^{-1/3}}$. \end{theorem}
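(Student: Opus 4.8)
The plan is to transfer the problem from the point process \eqref{e:Dysonbeta} to the random recurrence coefficients of $T_n$ and then analyze those directly, exploiting uniform convexity of $V$ at every turn. Write $a_1,\dots,a_{n-1}>0$ and $b_1,\dots,b_n\in\R$ for the off-diagonal and diagonal entries of $T_n$. The classical bijection between probability measures on $n$ points and $n\times n$ Jacobi matrices, fed with the Dirichlet$(\beta/2,\dots,\beta/2)$ weights and the identity $\sum_kV(\lambda_k)=\tr V(T_n)$, yields --- after the standard Jacobian computation --- that $(a,b)$ has density proportional to $\bigl(\prod_{k=1}^{n-1}a_k^{\beta(n-k)-1}\bigr)\exp\!\bigl(-n\beta\,\tr V(T_n)\bigr)$. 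The structural fact that drives everything: since $S\mapsto\tr V(S)$ is convex on symmetric matrices when $V$ is convex and $(a,b)\mapsto T_n$ is affine, while each $\log a_k$ is concave, this density is log-concave on the block of coordinates $k=o(n)$ relevant to the edge (there $\beta(n-k)-1>0$); in fact the Hessian of $-\log(\mathrm{density})$ is bounded below by a multiple of $n$, since $V''\ge c_u$ gives $\nabla^2\tr V(T_n)[H,H]=\sum_{\mu,\nu}\tfrac{V'(\lambda_\mu)-V'(\lambda_\nu)}{\lambda_\mu-\lambda_\nu}\,|H_{\mu\nu}|^2\ge c_u\|H\|_F^2$.

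From this I would first harvest the soft consequences. Brascamp--Lieb and transportation inequalities for uniformly log-concave laws give concentration of $a_k,b_k$ about deterministic values at scale $n^{-1/2}\,\mathrm{polylog}\,n$, uniformly over the edge window, together with a priori large-deviation control of the top eigenvalue of $T_n$ and $\ell^2$-localization --- on scale $n^{1/3}$ --- of its eigenvector near $e_1$. Next I would pin down the deterministic skeleton: combining the critical-point (``string'') relations $\tfrac{\beta(n-k)-1}{a_k}=2n\beta\,[V'(T_n)]_{k,k+1}$ and $[V'(T_n)]_{k,k}=0$ with the equilibrium measure $\mu_V$ of the field $V$, supported on a single interval $[\mathcal E_-,\mathcal E]$ by uniform convexity, one obtains $\E a_k\to\alpha(k/n)$ and $\E b_k\to\beta_*(k/n)$ with $2\alpha(0)+\beta_*(0)=\mathcal E$ and the corner expansions $\alpha(s)=\alpha(0)-\kappa_1 s+o(s)$, $\beta_*(s)=\beta_*(0)+\kappa_2 s+o(s)$, the soft-edge slope $\kappa:=2\kappa_1-\kappa_2$ being positive. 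One then reads off $\mathcal E=2\alpha(0)+\beta_*(0)$, and obtains $\gamma,\vartheta$ from the two requirements that the discrete Laplacian rescale to $-\partial_x^2$ and the drift $\kappa\,(k/n)$ rescale to $+x$.

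The heart of the argument is the fluctuation analysis at the corner. Expanding $(T_nf)_k=a_{k-1}f_{k-1}+b_kf_k+a_kf_{k+1}$ about $a_k\approx\alpha(0)$ rewrites $\gamma n^{2/3}(\mathcal E-T_n)$, in the coordinates $e_j$, as a discrete Schr\"odinger operator $-\partial_x^2+x-(\text{centered potential})$ on the mesh $(\vartheta n)^{-1/3}\N$, whose centered potential at site $k$ is, to leading order, $\tilde a_{k-1}+\tilde a_k+\tilde b_k$ with $\tilde a_k=a_k-\E a_k$, $\tilde b_k=b_k-\E b_k$. Since the Hessian of $-\log(\mathrm{density})$ is the sum of the large diagonal $\mathrm{diag}\!\bigl((\beta(n-k)-1)/a_k^2\bigr)$ and the positive-definite $n\beta\,\nabla^2\tr V(T_n)$, which is banded because $V$ is a polynomial, the covariance of $(\tilde a,\tilde b)$ is $n^{-1}$ times a banded, exponentially decaying kernel; hence the centered potential has $O(1)$ correlation length on the index scale, i.e.\ is asymptotically white on the mesh $(\vartheta n)^{-1/3}\N$. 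A local central limit theorem / invariance principle --- uniform over the edge window, with the non-Gaussianity and short-range dependence controlled by the log-concave structure --- then shows that the rescaled partial sums of the centered potential converge to a multiple of a Brownian motion, and the universality content is that this multiple equals $2/\sqrt\beta$ regardless of $V$: a ``fluctuation--dissipation'' identity forced by the string relations, of the form $n\,\Var(a_{k-1}+a_k+b_k)\to\tfrac{4}{\beta}\,\alpha(0)\,\kappa$. Finally, upgrading the invariance principle to a strong-approximation coupling realizes all the $T_n$ on one probability space with these partial sums a.s.\ uniformly close to a fixed Brownian path, which supplies the coupling and the a.s.\ convergence the theorem asks for.

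It remains to assemble operator convergence. The a priori bounds give, uniformly in $n$, a lower bound for $\gamma n^{2/3}(\mathcal E-T_n)$ and the fact that its low-lying eigenfunctions do not leak past $k\sim n^{1/3}\log n$; this yields tightness of the bottom eigenvalues and precompactness of the corresponding eigenfunctions in $L^2(\R_+)$. The invariance principle yields convergence of the associated Dirichlet forms, on a suitable core, to the form of $\SAO$ (in the Mosco sense), exactly as in the stochastic Airy treatment of the Gaussian case. Tightness together with form convergence then upgrade to convergence of each bottom eigenvalue and of its eigenvector in $L^2$, which is precisely the asserted norm-resolvent convergence. I expect the main obstacle to lie at the interface of the last two steps: running the invariance principle for the corner fluctuations \emph{with enough uniformity and with a quantitative coupling} to actually control the random operator --- in particular, ruling out that the rare events on which log-concave concentration or the noise bounds fail can displace the bottom eigenvalues, and establishing the universal value $2/\sqrt\beta$, which, unlike in the Gaussian case, must be derived from the structure of the ensemble rather than read off.
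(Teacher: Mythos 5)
Your outline follows, in essence, the same route as the paper: pass to the tridiagonal density, exploit uniform log-concavity (the paper's Gaussian domination lemma is exactly the Brascamp--Lieb step you invoke), identify a deterministic profile through variational relations tied to the equilibrium measure, prove a functional CLT for the centered potential with variance read off the inverse Hessian, and then feed Assumptions~1 and~2 into the convergence machinery of \cite{RRV}/\cite{BV1} (your ``Mosco/Dirichlet-form'' step is that machinery). Two of your asserted ingredients are real work rather than formalities --- the paper spends Sections \ref{sec:minimizerbounds}--\ref{s:conditionallocal} showing that conditional minimizers are exponentially insensitive to boundary data and close to the local minimizers, and the ``fluctuation--dissipation'' constant $\sigma^2=\tfrac4\beta b^\dagger(0)\tau$ is not a single-site variance identity but comes from the inverse-Hessian computation of Proposition \ref{p:localcovariance} (via Lemma \ref{lem:wlemma}, obtained by differentiating the critical-point equations), matched to partial sums in Lemma \ref{l:covariance} --- but these are gaps of execution within the same strategy.

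The genuine missing idea is the first $O(\log n)$ stretch of entries, and your proposal is actually false as stated there. You assume concentration of $(A_k,B_k)$ about the profile $\bigl(a^\dagger(k/n),b^\dagger(k/n)\bigr)$ ``uniformly over the edge window'' down to $k=1$; in fact, near the corner the entries concentrate about the entries of the non-universal Jacobi operator $J$ of the equilibrium measure (Proposition \ref{p:lln}), which differ from $a^\dagger(0),b^\dagger(0)$ by order-one amounts for the first $O(1)$ indices and only approach them exponentially fast, see \eqref{e:expclose}. Consequently the summed potential started at $k=1$ picks up a term of size $m_n\sum_k\bigl[(a^\dagger(0)-J_{kk})+2(b^\dagger(0)-J_{k,k+1})\bigr]\sim C_V\,n^{1/3}$, which diverges for generic $V$ (it vanishes only in the Hermite-like case), so your invariance principle cannot be run from the corner and your ``no leakage past $k\sim n^{1/3}\log n$'' bound does not address it --- the problem sits at $k\lesssim\log n$, and it is deterministic, not a rare event. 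The paper handles this by truncating at $c\log n$, showing via interlacing and Theorem \ref{t:beginning} that the discarded block is spectrally dominated, and absorbing its effect into a rank-one perturbation $z_ne_{11}$ that is subcritical in the sense of \eqref{e:perturbation}, so that part (ii) of Theorem \ref{weak} applies. The proof of Theorem \ref{t:beginning} in turn needs a quantitative spectral gap for the corner block, $\lambdamax(J[1,m])\le a^\dagger(0)+b^\dagger(0)\bigl(2-(j_{0,0}/m)^2\bigr)+o(m^{-2})$, whose usefulness hinges on the strict inequality $j_{0,0}>\pi/2$; nothing in your outline produces this boundary control, and without it the a.s.\ norm-resolvent convergence of the full (untruncated) $\gamma n^{2/3}(\mathcal E-T_n)$, including the Dirichlet condition at the origin, does not follow.
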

For the special values of $\beta=1,2,4$, Theorem 1 strengthens (to operator convergence) some previously known results \cite{DG}. For those cases, \eqref{e:Dysonbeta} describes the eigenvalue distribution of a random matrix $\Upsilon$ with real, complex or quaternion entries, respectively. Then, $T_n$ is simply $\Upsilon$ written in the basis given by the Gram-Schmidt procedure with input $e_1, \Upsilon e_1, \ldots, \Upsilon^{n-1} e_1$.

The limiting object $\SAO$ of Theorem \ref{t:main} is the Stochastic Airy Operator, a second order differential operator with random potential defined by
\begin{equation}
\label{eq:SAO}
   \SAO = - \frac{d^2}{dx^2} + x + \frac{2}{\sqrt{\beta}} W'(x).
\end{equation}
Here  $x \mapsto W(x)$ is a standard Brownian motion, and $\SAO$ acts on a dense subset of $L^2(\mathbb R^+)$ with Dirichlet boundary conditions.
$\SAO$ was introduced in Rider, Ramirez and Vir\'ag \cite{RRV}, where Theorem 1 was proved for $V=x^2/4$, when $T_n$ has a particularly simple form found by Dumitriu and Edelman \cite{DE}. The GOE and GUE are special cases. The paper \cite{RRV} also establishes some basic criteria for the convergence of random tridiagonal operators. The proof of Theorem \ref{t:main} relies on \cite{RRV}, but apart from that and a few classical facts about orthogonal polynomials it is self-contained.

The eigenvalue/eigenvector pairs $(\Lambda_k, f_k)$ of $\SAO$ can also be defined via the variational formalism. We iteratively define
\begin{equation}
\label{variational}
  \Lambda_k = \inf_{f \perp f_0, \dots f_{k-1}  \atop f \in L } \int_0^{\infty} [ (f')^2(x) + x f^2(x) ] \, dx +  {\frac{2}{\sqrt{\beta}}} \int_0^{\infty} f^2(x) dW_x,
\end{equation}
where $L$ is the space of functions satisfying $f(0)=0$, $\int_0^{\infty} f^2 =1$, along with $    \int_0^{\infty} [ (f')^2 + x f^2] < \infty$.
That working on the space $L$ makes the stochastic integral in \eqref{variational} sensible as well as the form bounded below (almost surely) is one part of what is proved in \cite{RRV}.

The top eigenvalue of -$\SAO$ has the so-called Tracy-Widom-$\beta$ distribution. The representation \eqref{eq:SAO} has been been used to study rank-one deformations, to give a quick derivation of the Painlev\'e formulas for the TW$_\beta$ distribution for $\beta=2,4$, \cite{BV1} and for precise tail bounds \cite{DV}.

Most of this paper consists of the proof of Theorem 1, and we will conclude the introduction with an outline and motivation. But first, a conjecture, which is supported by further evidence in Section \ref{s:nonregular}.

The empirical distribution of eigenvalues of $T_n$, without scaling, converge to the classical equilibrium measure form potential theory corresponding to $V$ (see Remark \ref{r:momentcond}
below). The convexity and analyticity of $V$ forces this measure to have a density which is decays like $x^{1/2}$ at the edges. As one might guess, this $x^{1/2}$ is crucial for the $\SAO$ limit. When $V$ is analytic, the possible decay rates are $x^{2k+1/2}$ for some integer $k$. We conjecture (see Conjecture \ref{c:sk}) that after scaling, $T_n$ in this case converges to the random operator
$$
   \mathcal{S}_{\beta, k} =  - \frac{d^2}{dx^2} + x^{\frac{1}{2k+1}} +  \frac{2}{\sqrt{\beta}} x^{- \frac{k}{2k+1}} W'(x).
$$
For $\beta=2$ the eigenvalue limits have been studied in \cite{Claeys} via the Riemann-Hilbert approach.

\subsection{Methods of the proof}

To explain, begin with the $\beta$-Hermite ensembles, i.e. the case $V=x^2/4$.
For this case, Theorem 1 had been conjectured to hold by Edelman and Sutton \cite{ES, Sutton}.  Their reasoning, as well as  the rigorous proof in \cite{RRV}, makes essential use of
the discovery by Dumitriu and Edelman of a simple tridiagonal matrix model for the $\beta$-Hermite ensembles \cite{DE}  (see also the earlier work of Trotter \cite{Trotter} for the classical $\beta$ case).
Let
\begin{equation}
\label{thematrix}
H_n  =  \frac{1}{\sqrt{n \beta}} \left[ \begin{array}{ccccc}  g_1 & \chi_{(n-1)\beta} &&& \\
\chi_{(n-1)\beta} &  g_2 & \chi_{(n-2)\beta} & &\\
&\ddots & \ddots & \ddots & \\
& & \chi_{2\beta } &  g_{n-1} & \chi_{\beta} \\
& & & \chi_{\beta} &  g_{n}  \\
 \end{array}\right],
\end{equation}
in which the $g_k$ are Gaussian random variable of
mean $0$ and variance 2, the $\chi_k$ are $\chi$ random variables indexed by the shape parameter, and all variable are independent save for the condition that $H_n$ is symmetric.  The fact is that the eigenvalues of $H_n$ realize the law \eqref{e:Dysonbeta} for $V=x^2/4$ and all $\beta >0$.

The heuristic behind the Edelman-Sutton conjecture can then be gleaned from the asymptotic assessment: in distribution, $\chi_{n-k} \sim \sqrt{n} + \frac{k}{n} + g$ for a standard Gaussian random variable $g$.  One thus sees that, to leading order, the top corner of $H_n - 2I_n $ resembles the discrete second derivative operator. The corrections can be viewed as an additive potential which is of type linear  plus Gaussian noise.  This heuristic guides the proof of \cite{RRV} in which the Stochastic Airy Operator is identified by showing that, after ``centering" $H_n$ by the appropriate second derivative operator, the running sum of the process of entries (the integrated potential) converges to
$ \frac{1}{2}x^2 + \frac{2}{\sqrt{\beta}} W(x)$. In this way  Tracy-Widom limits are obtained as a consequence of a simple functional central limit theorem.

Here we continue this approach. First, we establish a tridiagonal representation for general $V$. In Proposition \ref{p:matrix model} we show that the diagonal $(A_1,\ldots, A_n)$ and off-diagonal $(B_1,\ldots, B_{n-1})$ entries of $T_n$ have joint density
\begin{equation}\label{density}
ce^{-n\beta H)}, \qquad \mbox{where }H=H(a,b)=\tr(V(T))-\summ_{k=1}^{n-1}(1-k/n-1/(n\bet))\log(b_k) .
\end{equation}
It follows from the path expansion of $\tr(T)$ that while the entries are not independent any more, they have a certain Markov field property. Indeed, variables with indices that are more that $\deg V/2$ apart are conditionally independent given the variables in between. Our goal is to prove the required central limit theorem and tightness conditions for these variables.

The Markov field property suggests that one could understand the distribution of the $(A,B)$ through studying some equilibrium measure of a Markov chain. One
issue is that the distribution is not homogeneous in $k$. However, one expects that mixing happens reasonably fast (in time $\log n$). In particular, some
local metastable equilibria will develop, and that $(A,B)$ will be close to these local equilibria. In Section \ref{s:local minimizers} we will study the location of these equilibria, and derive some properties of it.

Section \ref{s:outline} contains a far more extensive outline of the proof: we recall the criteria established in \cite{RRV} and outline how they will be applied. Essentially, we have to establish tightness and a functional CLT for the variables $A,B$.

The first step is to give rough bounds on the minimizers of $H$. This is achieved in Section \ref{sec:minimizerbounds}. Then, in Section \ref{s:minimizerboundary} we study the minimizers of versions of the Hamiltonian $H$ \ref{density}, and their dependence on boundary conditions. In the next section we show that they are close to the local minimizers studied in Section \ref{s:local minimizers}. In Section \ref{s:boundingthefield} we bound the random variables $(A,B)$, and in Section \ref{s:Gaussian} we establish a Gaussian approximation.

One wrinkle is that first $O(\log n)$ stretch of $(A, B)$ variables do not have a universal behavior. As a $(\deg V/2-1)$-Markov process,
$k \mapsto (A_k, B_k)$ can be expected to require $O(\log n)$ steps to achieve local equilibrium.
Thus, we first study the submatrices  $T_{[ c \log n, n]}$.  The effect of the truncation can be controlled by a rank-one perturbation. This is studied in Section \ref{s:firststretch}. In Section \ref{s:meanandvariance} we compute the parameters of the CLT, and in Section \ref{s:mainresult} we complete the proof. Conjectures about the general (nonregular) edge case are discussed in Section \ref{s:nonregular}.

\section{Tridiagonal models}\label{s:tridiagonal}

Let $T=T(a,b)$ denote the symmetric tridiagonal matrix with $T_{i,i}=a_i$ for $i\le n$ and $T_{i,i+1}=T_{i+1,i}=b_i$ for $i\le n-1$.

Recall that for a symmetric matrix $M$ the  spectral measure of $M$  at a unit vector $v$ is the probability measure whose $k$th moment is $\langle v,M^kv \rangle$.
In the below we take $v$ and to be the first coordinate vector $\mathbf e_1$.

\begin{proposition}[Matrix model] \label{p:matrix model}
Let $(A,B)$ be sampled from the density
\ben \label{eq:matrixpdf}
 \exp \l\{-n\beta \l[\tr(V(T))-\summ_{k=1}^{n-1}(1-k/n-1/(n\bet))\log(b_k) \r]\r\}.
\een
Then the eigenvalues of $T_n=T(A,B)$ have joint density proportional to
\be
 \exp\{-n\bet \summ_{j=1}^n V(\lam_j) \}\prodd_{j<k}|\lam_j-\lam_k|^{\bet}.
\ee
Moreover, the weights $q^2_i$ of the spectral measure $\mu:=\sum_{j=1}^n q_j^2\delta_{\lambda_j}$ of  $T$ are independent with Dirichlet$(\frac{\beta}{2},\ldots ,\frac{\beta}{2})$ distribution.
\end{proposition}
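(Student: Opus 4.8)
\noindent\emph{Proof plan.}
I would prove the Proposition by the change of variables from the $2n-1$ entries $(a,b)=(a_1,\dots,a_n,b_1,\dots,b_{n-1})$ of $T=T(a,b)$ to its spectral data: the ordered eigenvalues $\lambda_1<\dots<\lambda_n$ together with the weights $p=(p_1,\dots,p_n)$, $p_j:=q_j^2$, of the spectral measure $\mu=\sum_{j=1}^n p_j\delta_{\lambda_j}$ at $\mathbf e_1$. By the classical inverse spectral theory of Jacobi matrices --- equivalently, orthogonal polynomials for a measure supported on $n$ points (see \cite{Deiftbook}) --- the map $(a,b)\mapsto(\lambda,p)$ is a real-analytic diffeomorphism from $\R^n\times(0,\infty)^{n-1}$ onto $\{\lambda_1<\dots<\lambda_n\}\times\{p_j>0,\ \sum_j p_j=1\}$. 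Uniform convexity of $V$ makes $\tr V(T)$ grow at least quadratically in $\|(a,b)\|$ (via $\sum_i a_i^2+2\sum_k b_k^2=\tr T^2=\sum_j\lambda_j^2$ and $V''\ge c_u$), and each exponent $\beta(n-k)-1$ produced by the $\log b_k$ terms exceeds $-1$, so \eqref{eq:matrixpdf} is an honest probability density; it then remains to compute its pushforward under the diffeomorphism.

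Three facts enter. (i) \emph{Spectral mapping}: $T$ being symmetric, $V(T)$ has eigenvalues $V(\lambda_j)$, so $\tr V(T)=\sum_{j=1}^n V(\lambda_j)$, which handles the $V$-dependent factor at once. (ii) \emph{A product identity for the off-diagonals:}
\[
   \prod_{k=1}^{n-1} b_k^{\,2(n-k)}\;=\;\Big(\prod_{j<l}(\lambda_l-\lambda_j)\Big)^{\!2}\,\prod_{j=1}^n p_j .
\]
I would derive this from the resultant $R=\operatorname{Res}(p_T,p_T^{(1)})$, where $p_T^{(m)}$ denotes the characteristic polynomial of $T$ with its first $m$ rows and columns deleted. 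The tridiagonal cofactor recursion $p_T=(z-a_1)p_T^{(1)}-b_1^2p_T^{(2)}$ gives $p_T\equiv-b_1^2p_T^{(2)}\pmod{p_T^{(1)}}$, hence $R=(-b_1^2)^{n-1}\operatorname{Res}(p_T^{(1)},p_T^{(2)})$; since $p_T^{(1)},p_T^{(2)}$ are the first two characteristic polynomials of the $(n-1)\times(n-1)$ Jacobi block with off-diagonals $b_2,\dots,b_{n-1}$, induction on $n$ yields $R=\pm\prod_{k=1}^{n-1}b_k^{2(n-k)}$. On the other hand, Cramer's rule gives $\sum_j p_j(z-\lambda_j)^{-1}=\langle\mathbf e_1,(z-T)^{-1}\mathbf e_1\rangle=p_T^{(1)}(z)/p_T(z)$, whence $p_j=p_T^{(1)}(\lambda_j)/p_T'(\lambda_j)$ and $R=\prod_j p_T^{(1)}(\lambda_j)=\big(\prod_j p_j\big)\prod_j p_T'(\lambda_j)=\pm\big(\prod_j p_j\big)\prod_{j<l}(\lambda_l-\lambda_j)^2$; both expressions for $R$ are manifestly positive, so the signs agree and the identity follows. (iii) \emph{The Jacobian}: this is the classical (and $V$-independent) tridiagonal change-of-variables computation of Dumitriu and Edelman \cite{DE}, cleanest once weighted by $\prod_k b_k$; in the present coordinates it takes the form
\[
   \prod_{i=1}^n da_i\,\prod_{k=1}^{n-1}db_k\;=\;c_n\,\frac{\prod_{k=1}^{n-1}b_k}{\prod_{j=1}^n p_j}\;\prod_{j=1}^n d\lambda_j\,\prod_{j=1}^{n-1}dp_j ,
\]
with $c_n$ an explicit constant ($c_2=\tfrac12$, verifiable by hand).

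Assembling: rewriting \eqref{eq:matrixpdf} in the coordinates $(\lambda,p)$ and using (i) and (iii), the factor $\prod_k b_k^{\,\beta(n-k)-1}$ times the $\prod_k b_k$ from the Jacobian equals $\big(\prod_k b_k^{\,2(n-k)}\big)^{\beta/2}$, which by (ii) is $\big(\prod_{j<l}(\lambda_l-\lambda_j)\big)^{\beta}\prod_j p_j^{\beta/2}$; cancelling $\prod_j p_j$ against the Jacobian's denominator, the law of $(\lambda,p)$ has density
\[
   \mathrm{const}\cdot e^{-n\beta\sum_{j=1}^n V(\lambda_j)}\prod_{j<l}|\lambda_l-\lambda_j|^{\beta}\;\times\;\prod_{j=1}^n p_j^{\beta/2-1}
\]
on $\{\lambda_1<\dots<\lambda_n\}\times\{p_j>0,\ \sum_j p_j=1\}$. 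This is a product measure: the first factor, symmetrized over orderings of the $\lambda_j$, is the eigenvalue density \eqref{e:Dysonbeta}, and the second is the density of the $\mathrm{Dirichlet}(\tfrac\beta2,\dots,\tfrac\beta2)$ distribution. Hence the eigenvalues of $T(A,B)$ have the asserted law, the weights $q_j^2$ are $\mathrm{Dirichlet}(\tfrac\beta2,\dots,\tfrac\beta2)$, and the two are independent.

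The only truly computational step is (iii) --- writing down the tridiagonal Jacobian with the correct constant; quoting it from \cite{DE} makes this immediate, while a from-scratch route is to differentiate the polynomial moment map $(a,b)\mapsto(\langle\mathbf e_1,T^j\mathbf e_1\rangle)_{j=1}^{2n-1}$ and compare it with the generalized-Vandermonde map out of $(\lambda,p)$. One also needs the (standard) care that the map of the first paragraph is a genuine diffeomorphism onto the stated domain, in particular that every admissible $(\lambda,p)$ is realized. A shortcut worth recording: the $V$-dependence of \eqref{eq:matrixpdf} enters only through $\tr V(T)=\sum_j V(\lambda_j)$, so in the $(a,b)$-chart its density for a general $V$ differs from the one for $V_0(x)=x^2/4$ by the factor $e^{-n\beta\sum_j(V-V_0)(\lambda_j)}$, a function of the eigenvalues alone; pushing forward by the same diffeomorphism, the spectral-data laws differ by exactly this factor, so the Proposition for general $V$ reduces to the Dumitriu--Edelman tridiagonal model \cite{DE} (the case $V=V_0$), leaving the Dirichlet factor untouched.
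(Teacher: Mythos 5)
Your argument is correct, and its backbone is the same spectral change of variables $(a,b)\mapsto(\lambda,q^2)$ that the paper uses, resting on the same two ingredients: $\tr V(T)=\sum_j V(\lambda_j)$ and the product identity \eqref{spectralmapid} (your item (ii), which you rederive via resultants rather than citing Section 3.1 of \cite{Deiftbook} --- a sound alternative). Where you differ is in the Jacobian. The paper computes it from scratch: it differentiates the moment map $(a,b)\mapsto(\langle \mathbf e_1,T^k\mathbf e_1\rangle)_{k\le 2n-1}$, whose triangular structure gives $2^{n-1}\prod_j b_j^{4(n-j)-1}$ on the $(a,b)$ side, and evaluates a confluent Vandermonde determinant on the $(\lambda,q^2)$ side; combined with \eqref{spectralmapid} this is exactly your formula $da\,db=c_n\bigl(\prod_k b_k/\prod_j q_j^2\bigr)\,d\lambda\,dq^2$ with $c_n=2^{-(n-1)}$, so your constant check $c_2=\tfrac12$ is consistent. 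You instead quote this Jacobian from \cite{DE} (your suggested from-scratch route is precisely the paper's computation), which is legitimate but less self-contained. Your closing shortcut is the one genuinely different argument: since the $V$-dependence of \eqref{eq:matrixpdf} enters only through $e^{-n\beta\sum_j(V-V_0)(\lambda_j)}$, a function of the spectral data alone, the general-$V$ claim follows from the Dumitriu--Edelman case $V_0=x^2/4$ (the model \eqref{thematrix}, whose entrywise density is exactly \eqref{eq:matrixpdf} with $V=V_0$) by a Radon--Nikodym tilt that leaves the Dirichlet factor and the independence untouched. That reduction avoids any Jacobian computation and is arguably the quickest complete proof, at the price of resting entirely on \cite{DE}; the paper's explicit computation buys a self-contained derivation and the identity \eqref{spectralmapid} in a form it reuses later (e.g.\ in Corollary \ref{c:lln}).
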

Henceforth $T$ will mean this random tridiagonal matrix and we shall assume that $V$ is a polynomial with even degree and positive leading coefficient.
It is not hard to show that in the classical $\beta=1,2,4$  cases $T$ has the distribution of the random matrix $M$ chosen from the probability distribution
$$
\mathcal Z^{-1}\exp(- n \beta \Tr V(M))\,dM,
$$
written in the orthonormal basis obtained from $\mathbf e_1, M \mathbf e_1, M^2\mathbf e_1,\ldots $ via the Gram-Schmidt procedure.

\begin{proof} By definition, $\<T^k\e_1,\e_1\>=\int x^k d\mu(x)$ for any $k\ge 0$. Consider this equation for each $0\le k\le 2n-1$ and write them as
\bes
 \prod_{j=1}^\ell b_j^2 + f_{\ell}(a_j,b_j;j<\ell) &=& \summ_{j=1}^n q_j^2 \lambda_j^{2\ell} \ \ \mbox{ if } k=2\ell  \\
 a_{\ell+1}\prod_{j=1}^\ell b_j^2 + g_{\ell}(a_j,b_{j'};j<\ell, j'\le \ell) &=& \summ_{j=1}^n q_j^2 \lambda_j^{2\ell+1} \ \ \mbox{ if } k=2\ell+1
 \ees
 Equate the Jacobian determinant with respect to $(a,b)$ of the left side with the Jacobian determinant of the right side with respect to $(\lam,q^2)$ to find that,
 $$
 2^{n-1}\prodd_{j=1}^{n-1}b_j^{4(n-j)-1}\, da_1\cdots  da_n \,db_1\cdots db_{n-1} = \det(M)\, dq_1^2\cdots dq_{n-1}^2\, d\lambda_1\ldots d\lambda_n
 $$
 in which
 $$
  M=\l[\begin{array}{cccccc}
    \lambda_1-\lambda_n & \ldots &\lambda_{n-1}-\lambda_n & q_1^2 &\ldots & q_n^2 \\
    \lambda_1^{2}-\lambda_n^{2} & \ldots &\lambda_{n-1}^{2}-\lambda_n^{2} & 2q_1^2\lambda_1 &\ldots & 2q_n^2\lambda_n \\
    \vdots & \vdots & \vdots & \vdots & \vdots & \vdots  \\
    \lambda_1^{2n-1}-\lambda_n^{2n-1} & \ldots &\lambda_{n-1}^{2n-1}-\lambda_n^{2n-1} & q_1^2(2n-1)\lambda_1^{2n-2} &\ldots & q_n^2(2n-1)\lambda_n^{2n-2}
   \end{array}\r].
 $$
 Note here that $\sum_{k=1}^n q_k^2 =1$.
 A special case of the confluent Vandermonde determinant identity \cite{Meray} yields
 $$
 \det(M)=\prod_{i=1}^n q_i^2 \prod_{i<j}(\lam_i-\lam_j)^4.
 $$
 Further, there is the identity
 \begin{equation}
 \label{spectralmapid}
  \prod_{k=1}^{n-1}b_k^{2(n-k)} = \prod_{i=1}^n q_i^2 \ \prod_{i<j}(\lambda_i-\lambda_j)^2,
 \end{equation}
 see for example Section 3.1 of \cite{Deiftbook}, expressing the
 $b_k$ in terms of the spectral measure of $T$.

 It follows that, the measure with density $\exp \l\{-n\beta \tr(V(T))\r\}\prod_{k=1}^{n-1}b_k^{\beta(n-k)-1}$ on $(a,b)$ transforms to the measure
 \begin{equation}\label{e:Dirichlet}
 \l( e^{-n\beta\sum_{k=1}^n V(\lambda_k)} \prod_{i<j}|\lambda_i-\lambda_j|^{\bet}\,d\lambda_1 \cdots d\lambda_k \r)\l(\prod_{k=1}^nq_k^{\beta-2} dq^2_1 \cdots dq_{n-1}^2 \r)\end{equation}
 for $(\lambda,q^2)$. In particular,  $\lambda$ and $q$ are independent, $q^2$ has Dirichlet distribution with parameters $(\frac{\beta}{2},\ldots ,\frac{\beta}{2})$, and $\lambda$ has the desired Coulomb gas distribution.
\end{proof}

\begin{lemma}[Uniform Convexity]
\label{l:UniformConvexity} The function $(a,b) \mapsto
\tr V(a,b)$ is convex or uniformly convex along with
$V$. That is, its Hessian is bounded below by $c_u I$ for $c_u =\min_x V''(x)$.
\end{lemma}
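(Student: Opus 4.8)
The plan is to use that the map $(a,b)\mapsto T(a,b)$ is \emph{linear}, so that $(a,b)\mapsto\tr V(T(a,b))$ is the composition of this linear map with the spectral trace functional $M\mapsto\tr V(M)$ on symmetric matrices; the statement then follows from the (classical) convexity of that functional together with some bookkeeping to extract the constant $c_u$. I would begin by recording the second-differential formula: if $M$ is symmetric with eigenvalues $\lambda_1,\dots,\lambda_n$ and orthonormal eigenbasis $U$, and $\widetilde H=U^{*}HU$, then
\[
  \frac{d^{2}}{dt^{2}}\Big|_{t=0}\tr V(M+tH)=\sum_{i,j} V^{[1]}(\lambda_i,\lambda_j)\,|\widetilde H_{ij}|^{2},
\]
where $V^{[1]}(x,y)=\dfrac{V'(x)-V'(y)}{x-y}$ for $x\ne y$ and $V^{[1]}(x,x)=V''(x)$. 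For a polynomial $V$ this is elementary: differentiate $t\mapsto\tr(V'(M+tH)H)$ once more and insert the Fréchet derivative $DV'(M)[H]=U\big(V^{[1]}(\lambda_i,\lambda_j)\circ\widetilde H\big)U^{*}$ (Hadamard product in the eigenbasis). Since $V^{[1]}(x,y)=\tfrac1{x-y}\int_y^x V''(s)\,ds$, uniform convexity $V''\ge c_u$ forces $V^{[1]}(x,y)\ge c_u$ for all $x,y$, and hence the quadratic form above is $\ge c_u\sum_{i,j}|\widetilde H_{ij}|^{2}=c_u\|H\|_{\mathrm{HS}}^{2}$ (ordinary convexity $V''\ge 0$ gives the same with $c_u=0$).

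To transfer this to the $(a,b)$ coordinates, note that a tangent direction $(\dot a,\dot b)$ corresponds to the symmetric tridiagonal matrix $\dot T$ with diagonal $\dot a$ and off-diagonals $\dot b$, for which $\|\dot T\|_{\mathrm{HS}}^{2}=\sum_i\dot a_i^{2}+2\sum_i\dot b_i^{2}\ge\|(\dot a,\dot b)\|^{2}$. Applying the previous paragraph with $M=T(a,b)$ and $H=\dot T$ gives
\[
  \frac{d^{2}}{dt^{2}}\Big|_{t=0}\tr V\big(T(a+t\dot a,b+t\dot b)\big)\ \ge\ c_u\|\dot T\|_{\mathrm{HS}}^{2}\ \ge\ c_u\|(\dot a,\dot b)\|^{2},
\]
which is exactly the statement that $\Hess\big[(a,b)\mapsto\tr V(T(a,b))\big]\succeq c_u I$. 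For $V$ merely convex this reads $\Hess\succeq 0$, i.e. convexity.

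There is an even shorter route that avoids the divided-difference formula entirely: split $V(x)=\tfrac{c_u}{2}x^{2}+W(x)$, so that $W''=V''-c_u\ge 0$ and $W$ is convex. Then $\tr V(T(a,b))=\tfrac{c_u}{2}\big(\sum_i a_i^{2}+2\sum_i b_i^{2}\big)+\tr W(T(a,b))$. The first summand is a diagonal quadratic form with Hessian $\operatorname{diag}(c_u,\dots,c_u,2c_u,\dots,2c_u)\succeq c_u I$; the second is convex in $(a,b)$, being the composition of the well-known convex spectral functional $M\mapsto\tr W(M)$ with a linear map, so its Hessian is $\succeq 0$. Summing the two Hessians yields the claim, and for $V$ convex one applies the same decomposition with $c_u=0$ (or simply the convexity of $M\mapsto\tr V(M)$ directly).

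I do not anticipate a real obstacle here: the lemma is essentially a repackaging of the convexity of $M\mapsto\tr V(M)$. The only point worth flagging is that the embedding $(a,b)\mapsto T(a,b)$ is \emph{not} an isometry for the Hilbert--Schmidt norm, since off-diagonal entries are counted twice, so one cannot quite say ``convex functional $\circ$ isometry''; but this discrepancy only enlarges the quadratic lower bound, and in fact it shows that the Hessian is $\succeq 2c_u$ along the off-diagonal directions.
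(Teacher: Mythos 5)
Your proposal is correct, and your second (``shorter'') route is essentially the paper's own proof: the paper writes $V=\tilde V+c_u x^2/2$ and quotes Davis's theorem that a convex function of the eigenvalues is a convex function of the matrix entries, which applied to $\tr \tilde V(T(a,b))$, together with the explicit quadratic $\tfrac{c_u}{2}\bigl(\sum_i a_i^2+2\sum_i b_i^2\bigr)$, gives $\Hess\succeq c_u I$ --- exactly your decomposition with $W=\tilde V$. Your first route, via the divided-difference (Daleckii--Krein) formula for $\tfrac{d^2}{dt^2}\tr V(M+tH)$, is genuinely different: it bypasses the citation of Davis's theorem, is self-contained for polynomial $V$ (the formula can be verified monomial by monomial, and the diagonal convention $V''(\lambda_i)$ handles coinciding eigenvalues since everything is polynomial in $t$), and it is more quantitative, yielding the lower bound $c_u\|\dot T\|_{\mathrm{HS}}^2$ and hence the sharper constant $2c_u$ along off-diagonal directions, which the paper neither states nor needs. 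The price is the perturbation-formula bookkeeping. In both routes the transfer to the $(a,b)$-Hessian rests on the same elementary observation, that $(a,b)\mapsto T(a,b)$ is linear with $\|\dot T\|_{\mathrm{HS}}^2=\sum_i\dot a_i^2+2\sum_i\dot b_i^2\ge\|(\dot a,\dot b)\|^2$, and your closing remark that the non-isometry only helps is accurate.
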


\begin{proof}
That $\tr(V(a,b))$ is convex follows from Chandler
Davis's theorem \cite{CD}: any convex function of the eigenvalues is
also a convex function of the entries. For uniform convexity,
write $V=\tilde V(x)+c_u x^2/2$, where $c_u=\min_x V''(x)$.
\end{proof}

With now $V(a,b) := \tr V(a,b)$ it follows that
\begin{equation}
\label{Hdef}
H=H(a,b)=\tr(V(T))-\summ_{k=1}^{n-1}(1-k/n-1/(n\bet))\log(b_k),
\end{equation}
shares its convexity properties with $V$  as long as $\beta \ge 1$.  For $\beta < 1$
the same is true for $H$ restricted to the coordinates with indices $k \le n - 1/\beta$ with
the remaining $(a,b)$-values viewed as fixed.

\section{Local minimizers}
\label{s:local minimizers}

Intuitively, to first order the variables $(A,B)$
should be close to the global minimizers $(a^\circ,b^\circ)$ of the Hamiltonian $H$,  recall \eqref{Hdef}. Recall also that for
 $\beta < 1$, this minimizer is understood to be subject to the ``boundary" condition $b_k = 0$ for $k > n - \frac{1}{\beta}$.
 The actual minimizers are difficult to characterize, but, as will show, for indices away from the boundaries they are
 locally close to constant functions. The goal of this section is to describe these constants.

Introduce the local Hamiltonian: for fixed $x$,
\begin{equation}
\label{localH}
H^{(x)}=H^{(x)}(a,b)=\tr(V(C))-\sum_{k=1}^{n}(1-x)\log(b_k)
\end{equation}
where $C$ is the symmetric circulant matrix with main diagonal given by the $a$'s, first off-diagonal given by the $b$'s and zeros elsewhere.
This definition aims to mimic the local behavior of $H$ around the index about $k=xn$.

Since $H^{(x)}$ is convex, it has a unique minimizer; since a rotation of the indices does not change $H^{(x)}$, it follows that for the minimizer, all of the $a_i$'s have to be equal, and also all of the $b_i$'s have to be equal.  Assuming this, note that as long as $n>\deg(V)$, the expression $H^{(x)}(a,b)/n$ does not depend on $n$. Thus the location of the minimum
 \begin{equation}\label{bdagger1}
 (a^\dagger=a^\dagger(x),b^\dagger=b^\dagger(x))\in \mathbb R^2
 \end{equation}
 does not depend on $n$. As functions of $x \in [0,1]$ these define the local minimizers.
We will also have reason to consider:
\begin{equation}\label{bdagger}
  a^\dagger_k=a^\dagger(k/n+1/(n\beta)), \qquad b^\dagger_k=b^\dagger(k/n+1/(n\beta)),
\end{equation}
 the local minimizers corresponding to index $k$.

More concretely, introduce the function
$$
W(a,b)=\frac{1}{\dim C}\tr V(C),
$$
assuming  $\dim C>\deg V$.  Then $a^{\dagger}(x), b^{\dagger}(x) $, minimize the
expression
\begin{equation}
\label{eq:localfunction}
   W(a,b)  - (1-x) \log b.
\end{equation}
The function $W$ may also be written as in
\begin{equation}
\label{eq:Laurent}
W(a,b)=[1]V(a+b(z+1/z))
\end{equation}
where $[1]$ denotes the coefficient of the constant term in the
Laurent series in $z$. One way to understand \eqref{eq:Laurent} is by counting random walk
paths. Another is to note that $C_{0,1}$ corresponds to the
sum of the left and right shift operators on the discrete circle.
In the Fourier basis it corresponds to by the multiplication
operator of $z+1/z$, and traces of multiplication operators are
given by the constant term.

\begin{remark}[Constants in the main theorem]\label{r:constants}  We can now specify the constants $\gamma, \vartheta, \mathcal E$ in 
Theorem \ref{t:main} for easy reference. With $a^\dagger(x), b^\dagger(x)$ defined quickly by (\ref{eq:localfunction}, \ref{eq:Laurent}), we have $\tau =  - (a^{\dagger})'(0)  - 2 (b^{\dagger})'(0) $,
$ \gamma = (b^{\dagger}(0))^{-1/3} \tau^{-2/3}$ 
$\vartheta= b^\dagger(0)/\tau$, and 
$\mathcal E= a^{\dagger}(0)  + 2 b^{\dagger}(0) $.
\end{remark}

For the rest of this section, we will drop the $\dagger$ from the local minimizers.
The most basic properties of the local minimizers as function of $x$ are captured in the following.

\begin{proposition}
\label{p:analytic}
The minimizers $x \mapsto a(x), b(x)$ of $W(a,b) - (1-x) \log b$ are unique and analytic as
functions of $x \in (-\infty, 1)$. Furthermore, they are continuous from the left at $x=1$ with $b(1) =0$.
\end{proposition}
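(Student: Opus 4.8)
The plan is to build everything on the uniform convexity of $W$. First I would record that $W$ is uniformly convex on $\mathbb R^2$: writing $W(a,b)=\frac1N\tr V(C(a,b))$ with $C(a,b)=aI_N+bS$, where $S$ is the sum of the forward and backward cyclic shifts on $\mathbb Z/N\mathbb Z$ and $N>\deg V$, the map $(a,b)\mapsto C(a,b)$ is affine into the symmetric matrices with $\langle I_N,S\rangle_{\mathrm F}=0$, $\|I_N\|_{\mathrm F}^2=N$, $\|S\|_{\mathrm F}^2=2N$; as in the proof of Lemma \ref{l:UniformConvexity} the map $M\mapsto\tr V(M)$ has Hessian $\succeq c_u\,\mathrm{Id}$ in the matrix entries, so pulling back along this affine map gives $\Hess W\succeq c_u\,\mathrm{diag}(1,2)\succeq c_u\,\mathrm{Id}$ on $\mathbb R^2$. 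In particular $W$ is strictly convex, and Taylor expansion at the origin yields a quadratic lower bound $W(a,b)\ge \tfrac{c_u}{4}(a^2+b^2)-C_0$.

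Then, for fixed $x<1$, I would set $F(a,b;x)=W(a,b)-(1-x)\log b$ on $\{b>0\}$. Since $1-x>0$ this is uniformly convex there and tends to $+\infty$ both as $b\downarrow 0$ and as $a^2+b^2\to\infty$ (the quadratic growth of $W$ dominating $(1-x)|\log b|$), so it has compact sublevel sets and hence a unique minimizer, which is an interior critical point and therefore solves $\partial_aW=0$, $\partial_bW=(1-x)/b$. For analyticity I would apply the analytic implicit function theorem to $G(a,b,x)=\bigl(\partial_aW(a,b),\;b\,\partial_bW(a,b)-(1-x)\bigr)$, which is polynomial in $(a,b,x)$; at any zero of $G$ with $b>0$ one computes $D_{(a,b)}G=\mathrm{diag}(1,b)\,\Hess_{(a,b)}F$, invertible because $\Hess_{(a,b)}F\succ0$ and $b>0$. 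This produces, near each $x_0\in(-\infty,1)$, an analytic branch of zeros of $G$ through $(a(x_0),b(x_0))$; since the minimizer is the unique zero of $G$ in $\{b>0\}$, that branch coincides with $x\mapsto(a(x),b(x))$, and patching the branches gives analyticity on all of $(-\infty,1)$.

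For the behavior as $x\uparrow1$, testing $F(\cdot,\cdot;x)$ against the fixed point $(1,1)$ gives $F(a(x),b(x);x)\le F(1,1;x)=W(1,1)$ for all $x<1$; combined with $W(a,b)\ge \tfrac{c_u}{4}(a^2+b^2)-C_0$ and $-(1-x)\log b\ge -(1-x)\sqrt{a^2+b^2}$, this traps $(a(x),b(x))$ in a fixed compact set for $x$ near $1$. If $b(x)$ did not tend to $0$, then along a subsequence $(a(x),b(x))\to(a_*,b_*)$ with $b_*>0$, and passing to the limit in $\partial_bW(a(x),b(x))=(1-x)/b(x)$ and $\partial_aW(a(x),b(x))=0$ would give $\nabla W(a_*,b_*)=0$. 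But $W$ is even in $b$, so $\partial_bW(\cdot,0)\equiv0$, while $\partial_aW(a,0)=V'(a)$; hence the unique critical point of the strictly convex $W$ is $(a_0,0)$ with $a_0$ the unique root of $V'$, contradicting $b_*>0$. So $b(x)\to0$, and then every subsequential limit of the bounded family $a(x)$ is a root of $V'$, i.e.\ equals $a_0$; thus $a(x)\to a_0$, and setting $a(1)=a_0$, $b(1)=0$ gives the asserted left-continuity at $x=1$.

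I expect the last paragraph to be the main obstacle: establishing the a priori compactness of $\{(a(x),b(x)):x\text{ near }1\}$ and excluding the possibility $\liminf_{x\uparrow1}b(x)>0$. Everything before it is a routine application of uniform convexity and the analytic implicit function theorem.
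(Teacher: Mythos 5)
Your proposal is correct. The uniqueness and analyticity parts follow essentially the paper's own route: first-order conditions plus the analytic implicit function theorem, with invertibility of the Jacobian coming from uniform convexity (your formulation with the polynomial map $G(a,b,x)=(W_1,\,bW_2-(1-x))$ and the identity $D_{(a,b)}G=\mathrm{diag}(1,b)\Hess$ at zeros of $G$ is just a clean variant of the same computation, and your explicit verification of existence via coercivity/compact sublevel sets fills in a step the paper leaves implicit). Where you genuinely diverge is the endpoint $x\to 1$. The paper argues quantitatively: it tests the minimizer against the competitor $(a(1),1-x)$ and uses uniform convexity of $W$ at its minimizer to get $c(a(1)-a(x))^2+c\,b(x)^2\le W(a(1),1-x)-W(a(1),0)+(1-x)\log b(x)-(1-x)\log(1-x)$, whose right-hand side it drives to zero using the monotonicity $b'(x)<0$ established earlier in the same proof; this yields left-continuity together with an explicit modulus. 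You instead give a soft argument: a priori boundedness of $(a(x),b(x))$ near $x=1$ by testing against $(1,1)$, then a subsequential-limit argument in the first-order conditions, using evenness of $W$ in $b$ and uniform convexity of $V$ to identify the unique critical point $(a_0,0)$ of $W$ and rule out $b_*>0$. Both are valid; your route avoids any appeal to monotonicity of $b(x)$ and is arguably more elementary, while the paper's comparison argument buys a quantitative rate of convergence (and the monotonicity facts it establishes en route are reused later, e.g.\ in Proposition \ref{p:localcovariance}), so nothing in your version is a gap for the statement as written.
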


\begin{proof}
Again, uniform convexity implies that $(a(x),b(x))$ are well-defined and unique.  This is true even
as local extrema; there cannot be local maxima and inflection points. Differentiating \eqref{eq:localfunction}
we get that,
at $(a(x),b(x))$:
\begin{equation}\label{eq:optimizationequation}
W_1(a(x),b(x))=0, \qquad W_2(a(x),b(x))-(1-x)/b(x)=0.
\end{equation}
Here the indices denote partial derivatives in the $a$ and $b$ variables.

Thinking of these as a pair of functions $f$ of $a,b,x$, the $2\times 2$
Jacobian matrix in $a,b$ is exactly the Hessian of $W- (1-x)\log b$. By uniform convexity (Lemma \ref{l:UniformConvexity})
this is bounded away from zero for $a\in \mathbb R, 1-x,b\ge \eps$ and
hence also for some complex neighborhood of these sets. The analytic implicit function theorem
now implies that $(a(x),b(x))$ is an analytic function of $x\in (-\infty, 1)$. Note that when $V(x)=x^2$,
we have $b(x)= \frac{1}{2} \sqrt{1-x}$, and analyticity breaks down at $x=1$.

Next,  differentiating the version
\begin{equation}\notag
W_1(a(x),b(x))=0, \qquad b(x)W_2(a(x),b(x))=1-x
\end{equation}
of \eqref{eq:optimizationequation} we get
\begin{equation}\label{eq:secondderivativesofW}
a'W_{11}+b'W_{12}=0, \qquad (a'W_{21}+b'W_{22})b+b'W_2 =-1.
\end{equation}
As $W_{11}>0$ by uniform convexity, it follows from the first equation of \eqref{eq:secondderivativesofW}
that $b'(x)=0$ implies $a'(x)=0$, but this would contradict
the second equation of \eqref{eq:secondderivativesofW}. From the optimization
problem \eqref{eq:localfunction} it is also clear that as $x\to-\infty$, we get $b(x)\to \infty$, so $b'(x)< 0$ for all $x<1$.

Last, since $W$ is an even function of $b$, we see that $b(1)=0$. Testing the minimizer against $a=a(1), b=1-x$, we get
$$
W(a(1),1-x)-(1-x)\log (1-x) \ge W(a(x),b(x))-(1-x)\log b(x).
$$
Uniform convexity of $W$  at its minimizer gives the lower bound
$$
 W(a(1),0)+c(a(1)-a(x))^2+cb(x)^2 -(1-x) \log b(x).
$$
Comparing the upper and lower bounds then shows that
$$
c(a(1)-a(x))^2+cb(x)^2 \le W(a(1),1-x)-W(a(1),0)+(1-x)\log b(x) -(1-x)\log (1-x).
$$
Since $b(x)$ is decreasing and $W$ is continuous, the right hand side tends to $0$ as $x\downarrow 1$.
\end{proof}

Continuing we note that  $W$ is not an arbitrary two-variable polynomial. For
example, it satisfies
\begin{lemma}\label{lem:wlemma}
$$
4b W_{11}=b W_{22}  +  W_2
$$
where the indices refer to partial derivatives in the $a$ or $b$ variables.
\end{lemma}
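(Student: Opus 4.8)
The plan is to use the angular (Fourier) representation of $W$ rather than the bare polynomial formulas. As observed just after \eqref{eq:Laurent}, $C_{0,1}$ acts on the discrete circle as the sum of the two shifts, so the eigenvalues of the $m\times m$ symmetric circulant $C$ with diagonal $a$ and off-diagonals $b$ are $a+2b\cos(2\pi j/m)$, $j=0,\dots,m-1$. Hence, once $m=\dim C>\deg V$, the quantity $\frac1m\tr V(C)$ stabilizes in $m$ and equals its $m\to\infty$ limit, so
$$
W(a,b)=\frac{1}{\dim C}\tr V(C)=\frac{1}{2\pi}\intc V\bigl(a+2b\cos\theta\bigr)\,d\theta ,
$$
which is \eqref{eq:Laurent} with $z=e^{i\theta}$. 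Since for each fixed $\theta$ the integrand is a polynomial in $(a,b)$, one may differentiate under the integral without scruple. Writing $\phi=\phi(\theta)=a+2b\cos\theta$ and recording $\partial_a\phi=1$, $\partial_b\phi=2\cos\theta$, this gives
$$
W_{11}=\frac{1}{2\pi}\intc V''(\phi)\,d\theta,\qquad W_{2}=\frac{1}{2\pi}\intc 2\cos\theta\,V'(\phi)\,d\theta,\qquad W_{22}=\frac{1}{2\pi}\intc 4\cos^{2}\theta\,V''(\phi)\,d\theta .
$$

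Next I would substitute these into the claimed identity and simplify. Using $\cos^{2}\theta=1-\sin^{2}\theta$, the difference $4bW_{11}-bW_{22}-W_{2}$ equals
$$
\frac{1}{2\pi}\intc\Bigl(4b\sin^{2}\theta\,V''(\phi)-2\cos\theta\,V'(\phi)\Bigr)\,d\theta ,
$$
so it suffices to show this integral vanishes. The key step is the chain rule $\frac{d}{d\theta}V'(\phi(\theta))=V''(\phi)\cdot(-2b\sin\theta)$, which lets us rewrite $4b\sin^{2}\theta\,V''(\phi)=-2\sin\theta\,\frac{d}{d\theta}V'(\phi)$. Integrating by parts over the circle produces no boundary term, and
$$
\intc-2\sin\theta\,\frac{d}{d\theta}V'(\phi)\,d\theta=\intc 2\cos\theta\,V'(\phi)\,d\theta ,
$$
which cancels the remaining term exactly. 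That finishes the proof.

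There is no genuine obstacle here; the only two points deserving a sentence of justification are the identification of $W$ with the angular integral (immediate from the circulant spectrum together with the stabilization of $\tr V(C)/\dim C$ once $\dim C>\deg V$) and differentiation under the integral sign (immediate, as the integrand is polynomial in $a,b$). If one prefers to argue entirely within the Laurent-series formalism of \eqref{eq:Laurent}, the same computation goes through verbatim with $[1](\cdot)$ in place of $\frac{1}{2\pi}\intc(\cdot)\,d\theta$ and with $z\frac{d}{dz}$ in place of $\frac{d}{d\theta}$, using that the constant Laurent coefficient of $z\frac{d}{dz}$ of anything is zero — the exact analogue of the vanishing boundary term above.
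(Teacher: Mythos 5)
Your proof is correct, but it takes a genuinely different route from the paper. The paper works directly with the Laurent formulation \eqref{eq:Laurent}: it writes the claim as $[1]\,4b\,V''(a+by)=[1]\bigl(b y^2 V''(a+by)+yV'(a+by)\bigr)$ with $y=z+1/z$, normalizes $a=0,b=1$ by shifting and scaling, reduces by linearity to monomials $V'(x)=x^k$, and then checks the resulting binomial identity $4k\binom{k-1}{(k-1)/2}=(k+1)\binom{k+1}{(k+1)/2}$ using $[1]y^k=\binom{k}{k/2}$. You instead pass to the angular representation $W(a,b)=\frac{1}{2\pi}\int_0^{2\pi}V(a+2b\cos\theta)\,d\theta$ (your justification via the circulant spectrum and exactness of the root-of-unity average for trigonometric polynomials of degree less than $\dim C$ is sound, and is anyway equivalent to \eqref{eq:Laurent} with $z=e^{i\theta}$), differentiate under the integral, and observe that $4bW_{11}-bW_{22}-W_2$ is the integral of $-2\sin\theta\,\frac{d}{d\theta}V'(\phi)-2\cos\theta\,V'(\phi)$, which vanishes by a single integration by parts over the circle; your closing remark that this is the same as "$[1]$ of $z\frac{d}{dz}(\cdot)$ is zero" is the exact algebraic counterpart. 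The trade-off: the paper's argument is purely combinatorial and needs only coefficient extraction plus one binomial identity, while yours handles general $V$ in one stroke, avoids the reduction to monomials and the normalization step, and explains the identity conceptually as an integration-by-parts (total-derivative) relation rather than a numerical coincidence. Both are complete and of comparable length.
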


\begin{proof}
Using the formulation \eqref{eq:Laurent} this reduces to
$$
[1]4b V''(a+by) =[1] \left(by^2 V''(a+by) + yV'(a+by)
\right)
$$
with $y=z+1/z$. In order to show this, by shifting and scaling
$V$, we may assume that $b=1$, $a=0$. Then by linearity, it enough
to consider $V'(x)=x^k$. Then we get
$$
[1]4ky^{k-1} = [1]\left(y^2ky^{k-1}+yy^k\right).
$$
which since $[1]y^k= \binom{k}{k/2}$ reduces to the combinatorial
identity
\[
4k
\binom{k-1}{\frac{k-1}{2}}=(k+1)\binom{k+1}{\frac{k+1}{2}}.
\qedhere\]
\end{proof}

 We close this section with a formula for the inverse Hessian of
  $W(a,b)-(1-x)\log(b)$ at  $a(x),b(x)$. Note that by the definition of $W$, this
 the inverse of the Hessian the local Hamiltionian \eqref{localH}
    evaluated at its minimizer, and restricted to the invariant
    subspace with basis $\xi_a,\xi_b$. Here the $\xi$ are the vectors that
    are $1$ at all $a$ and $b$ variables, respectively, and zero otherwise.

\begin{proposition}
\label{p:localcovariance}
Denote by $\Sigma(x)$ the inverse of the Hessian of
    $W(a,b)-(1-x)\log(b)$ at its minimizer $a(x),b(x)$. It holds that
\begin{equation}
\label{e:sigma}
\Sigma(x)= -b(x) \l[\begin{array}{cc} 4b'(x) & a'(x) \\ a'(x) & b'(x)  \end{array} \r].
\end{equation}
Further, $\Sigma$ is strictly positive and bounded for $x \in [0, 1)$, and it is recorded for later use that
\begin{equation}
 \label{eq:edgemonotone}
   b'(x) < 0, \quad  a'(x) + 2 b'(x) < 0
\end{equation}
for $x \in [0,1)$.
\end{proposition}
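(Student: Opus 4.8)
The plan is to combine the implicit-differentiation identities already obtained in the proof of Proposition~\ref{p:analytic} with the structural identity of Lemma~\ref{lem:wlemma}, and then read off the monotonicity statements from the integral representation of $W$. First I would make the Hessian explicit: at the minimizer $(a(x),b(x))$ one has $W_1=0$ and $W_2=(1-x)/b$, so $(1-x)/b^2=W_2/b$ and the Hessian of $W(a,b)-(1-x)\log b$ is
\[
\Hess \;=\; \begin{pmatrix} W_{11} & W_{12} \\ W_{12} & W_{22}+W_2/b \end{pmatrix}.
\]
Writing the first-order conditions as $F(a,b,x)=\bigl(W_1(a,b),\,W_2(a,b)-(1-x)/b\bigr)=0$ and differentiating in $x$ — the same chain-rule step that produced \eqref{eq:secondderivativesofW}, in which only the explicit $-(1-x)/b$ term carries a partial in $x$ — gives $\Hess\,(a',b')^{\mathsf T}=-(0,\,1/b)^{\mathsf T}$, that is $\Sigma\,(0,1)^{\mathsf T}=-b\,(a',b')^{\mathsf T}$. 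Since $\Sigma=\Hess^{-1}$ is symmetric, this already identifies three entries: $\Sigma_{12}=\Sigma_{21}=-b\,a'$ and $\Sigma_{22}=-b\,b'$.

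For the remaining $(1,1)$ entry I would use the $2\times2$ cofactor formula, which for a symmetric invertible matrix gives $\Hess_{11}\Sigma_{11}=\Hess_{22}\Sigma_{22}$, hence $\Sigma_{11}=-b'\,(bW_{22}+W_2)/W_{11}$; Lemma~\ref{lem:wlemma} ($bW_{22}+W_2=4bW_{11}$) then collapses this to $\Sigma_{11}=-4b\,b'$, which completes \eqref{e:sigma}. This is the one place where the special form of $W$ matters — the constant $4$ is exactly what Lemma~\ref{lem:wlemma} provides.

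Positivity and boundedness I would deduce from uniform convexity. Since the constant Laurent coefficient in \eqref{eq:Laurent} is the average over $|z|=1$, one has $W(a,b)=\tfrac1\pi\int_{-2}^{2}V(a+by)\,(4-y^2)^{-1/2}\,dy$; differentiating twice and using $\tfrac1\pi\int_{-2}^{2}(s+ty)^2(4-y^2)^{-1/2}\,dy=s^2+2t^2$ together with $V''\ge c_u$ yields $\Hess W\ge c_u\,I$ (indeed $\ge c_u\operatorname{diag}(1,2)$), and adding the nonnegative term $(1-x)/b^2$ in the $(2,2)$ slot keeps $\Hess\ge c_uI$ for $x<1$; hence $\Sigma=\Hess^{-1}$ exists, is positive definite, and obeys $\Sigma\le c_u^{-1}I$ uniformly on $(-\infty,1)$, in particular on $[0,1)$. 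As for \eqref{eq:edgemonotone}, the inequality $b'<0$ on $(-\infty,1)$ was already established in Proposition~\ref{p:analytic} (it also follows from $\Sigma_{22}=-b\,b'>0$ and $b>0$); for the second inequality I would solve the first relation of \eqref{eq:secondderivativesofW} for $a'=-b'W_{12}/W_{11}$, so that $a'+2b'=b'\,(2W_{11}-W_{12})/W_{11}$, and observe $2W_{11}-W_{12}=\tfrac1\pi\int_{-2}^{2}(2-y)\,V''(a+by)\,(4-y^2)^{-1/2}\,dy>0$ since $2-y>0$ and $V''\ge c_u>0$ on $(-2,2)$; with $b'<0$ and $W_{11}>0$ this gives $a'+2b'<0$ on $[0,1)$.

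I do not expect a serious obstacle: the computation is essentially bookkeeping once Lemma~\ref{lem:wlemma} is in hand. The one point to be careful about is that \eqref{eq:edgemonotone} and the bound on $\Sigma$ are asserted uniformly up to $x\uparrow1$, where $b(x)\to0$, so one might worry about degeneration at the edge; but $\Hess\ge c_uI$ and the integral formula for $2W_{11}-W_{12}$ are manifestly uniform in $x$, so nothing breaks down there.
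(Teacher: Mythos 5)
Your proof is correct, and while it rests on the same two ingredients as the paper (the $x$-differentiated optimality equations and Lemma~\ref{lem:wlemma}), the bookkeeping and the treatment of the inequalities differ in ways worth noting. For \eqref{e:sigma}, the paper solves the system \eqref{eq:secondderivativesofW} for $W_{11}=-1/\bigl(bb'(4-a'^2/b'^2)\bigr)$, writes the whole Hessian in terms of $W_{11}$ and $a'/b'$, and inverts the $2\times 2$ matrix explicitly; your observation that differentiating the first-order conditions gives $\Hess\,(a',b')^{\mathsf T}=-(0,1/b)^{\mathsf T}$, i.e.\ $\Sigma(0,1)^{\mathsf T}=-b\,(a',b')^{\mathsf T}$, delivers the second column of $\Sigma$ in one stroke, and the cofactor relation $\Hess_{11}\Sigma_{11}=\Hess_{22}\Sigma_{22}$ combined with $bW_{22}+W_2=4bW_{11}$ then yields $\Sigma_{11}=-4bb'$ without ever solving for $W_{11}$ — a slightly slicker route to the same formula. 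For the second half, the paper is terse: positivity is attributed to uniform convexity and continuity, and \eqref{eq:edgemonotone} to ``testing the quadratic form $\Sigma$ against simple vectors'' (e.g.\ $(1,2)\Sigma(1,2)^{\mathsf T}=-4b\,(a'+2b')>0$ gives the second inequality, since $\Sigma_{22}=-bb'>0$ gives the first). You instead make everything explicit through the arcsine representation $W(a,b)=\tfrac1\pi\int_{-2}^{2}V(a+by)(4-y^2)^{-1/2}\,dy$, which gives $\Hess W\ge c_u\operatorname{diag}(1,2)$, hence the uniform bound $\Sigma\le c_u^{-1}I$ on all of $(-\infty,1)$ (settling boundedness up to $x\uparrow 1$ more transparently than an appeal to continuity of the entries), and the positivity $2W_{11}-W_{12}=\tfrac1\pi\int_{-2}^{2}(2-y)V''(a+by)(4-y^2)^{-1/2}\,dy>0$, which together with $a'=-b'W_{12}/W_{11}$ and $b'<0$ gives $a'+2b'<0$ directly from the optimality equations rather than from $\Sigma$. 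Both arguments are sound; yours is more self-contained on the inequalities, the paper's quadratic-form test is shorter once \eqref{e:sigma} is in hand.
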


\begin{proof}
Start with the equations
\begin{equation*}
a'W_{11}+b'W_{12}=0, \qquad (a'W_{21}+b'W_{22})b+b'W_2 =-1
\end{equation*}
derived in the course of proving Proposition \ref{p:analytic}.
Now use Lemma \ref{lem:wlemma} to cancel  $W_2$ and $W_{22}$ from
the second equation, and
$$W_{12}=-\frac{a'}{b'}W_{11}$$ to cancel $W_{12}$. This
gives
\[ W_{11} = \frac{-1}{bb'(4-\frac{a'^2}{b'^2})}\]
and by Lemma \ref{lem:wlemma} and the second equation of
\eqref{eq:optimizationequation} we have
$$W_{22}=4W_{11}-\frac{1-x}{b^2}.$$
Finally  we can compute
\begin{eqnarray*}
\Sigma =\l[\begin{array}{cc} W_{11} & W_{12} \\W_{21} & W_{22}+\frac{1-x}{b^2}  \end{array}\r]^{-1}&=&
 \frac{1}{W_{11}}\l[\begin{array}{cc} 1 & -\frac{a'}{b'}\notag \\ -\frac{a'}{b'} & 4  \end{array}\r]^{-1} \\&=& \frac{1}{W_{11}(4-\frac{a'^2}{b'^2})}\l[\begin{array}{cc} 4 & \frac{a'}{b'} \\ \frac{a'}{b'} & 1  \end{array}\r]=
-b\l[\begin{array}{cc} 4b' & a' \\ a' & b'  \end{array}\r],
\end{eqnarray*}
as claimed.

That $\Sigma$ is positive is another consequence of uniform convexity along with the continuity of $a(x), b(x)$.  That $b'(x) < 0$ was already noted in (the proof of) Proposition
\ref{p:analytic}, but both claims in \eqref{eq:edgemonotone} now follow from testing the quadratic form $\Sigma$ against simple vectors.
\end{proof}

\begin{remark}
\label{r:momentcond}
The equations for $(a(x), b(x))$ can be put in another form.   Using the integral formula for the Laurent coefficient, (\ref{eq:optimizationequation}) is equivalent to
\begin{equation}
\label{eq:momentconditions}
  \frac{i}{2\pi} \int_{L_x}^{R_x} \frac{s V_x(s)  \, ds }{\sqrt{(s-L_x) (R_x-s)}} =1,  \  \  \
   \int_{L_x}^{R_x} \frac{V_x(s)  \, ds }{\sqrt{(s-L_x) (R_x-s)}} =  0,
\end{equation}
where
$$
  V_x(s) = \frac{1}{1-x} V(s),  \   \   L_x =  a(x) - 2 b(x),  \   \  R_x = a(x) + 2 b(x).
$$
This identifies $(L_x, R_x)$ as the left and right endpoints of support for the equilibrium measure $\mu_V$
associated with the family of potentials  $V_x, -\infty < x < 1$.  That is, with
$$
  \mu_V = \mbox{argmin} \int_{-\infty}^{\infty} V_x(s) \mu(ds) + \int_{-\infty}^{\infty} \int_{-\infty}^{\infty} \log \frac{1}{|s-t|}  \mu(ds) \mu(dt),
$$
and the (realized) infimum taken over all probability measures $\mu$, it is the case that $\supp \, \mu_V = [L_x, R_x]$.  So, with obvious notation,
\begin{equation}
\label{eq:edgevalue}
  \mathcal{E}(x)  = R_x =  a(x) +2 b(x).
\end{equation}
 From this point of
view \eqref{eq:momentconditions} form the so-called moment conditions used in the determination of $\mu_V$, see for example \cite{KuilMcL}.
The results there provide another proof that  $a(x), b(x)$ are real analytic with ${b}'(x) < 0$, ${L}'_x > 0$, ${R}'_x = \mathcal{E}_x'  < 0$.
\end{remark}

\section{Outline of the main argument}
\label{s:outline}

The starting point of our argument is the paper \cite{RRV},  which provides a set of conditions for
a sequence of tridiagonal random matrices to converge (in the norm-resolvent sense)
to their natural continuum limit.  We begin by repeating  the set-up from that paper, along with a
needed extension from \cite{BV1}.

\subsection{Limits of random tridiagonal operators}

Start with a sequence of discrete-time
$\mathbb R^2$-valued random sequences
$(y_{n,1,k},y_{n,2,k})$ for $1\le k \le n$ with the
convention that $y_{n, i, 0}=0$. Let $m_n=o(n)$ be
a scaling parameter.
 For each
$n$, build the $n\times n$ symmetric tridiagonal matrix $H_n$ with
$$
(2   m_n^2+m_n(y_{n,1,k}-y_{n,1,k-1}),k\ge 1)
$$
 on the diagonal and
$$
(-  m_n^2+ m_n(y_{n,2,k}-y_{n,2,k-1}), k \ge 1)
$$
below and
above the diagonal.

Defining  $y_{n,i}(x)=y_{n,i,\lfloor x m_n\rfloor
}1_{xm_n\in[0,n]}$ and $\triangle_n$ the discrete Laplacian on the scale $m_n$ (so $m_n$ is one over the discretization length), $H_n$ should be viewed as
$-  \triangle_n$  plus integrated potential
$y_{n,1}(x)+ 2y_{n,2}(x)$.  What is desired is that $H_n \rightarrow H= - \frac{d^2}{dx^2} + y'(x)$ (in a sense to be made precise)
if it holds that $ y_{n,1}(x)+ 2y_{n,2}(x)$ converges to a process $y(x)$ (in a sense to be made precise).

Consider the following:

\bigskip

\noindent {\em Assumption 1 (Tightness/Convergence) } There
exists a continuous process $x \mapsto y(x)$ with $y(0)=0$ such that
\begin{eqnarray}\nonumber
\big(y_{n,i}(x);\;x\ge 0\big) &&i=1,2\quad \mbox { are tight in law, }\\
\big(y_{n,1}(x)+ 2  y_{n,2}(x);\; x\ge 0\big) &\Rightarrow
&\big(y(x);\, x\ge 0\big) \quad \mbox{ in
law,}\label{condition1}
\end{eqnarray}
with respect to the Skorokhod topology of paths, see
\cite{EK} for definitions.

\bigskip

\noindent {\em Assumption 2 (Growth/Oscillation bound) }
There is a decomposition
 \begin{equation}\label{e.assumptiongrowth}
y_{n,i,k} =m_n^{-1}\sum_{\ell=1}^{k} \eta_{n,i,\ell} \, +
\, w_{n,i,k},
 \end{equation} along with
 deterministic,
unbounded nondecreasing continuous functions
$\bareta(x)>0,\zeta(x) \ge 1$, and random constants
$\kappa_n(\omega)\ge 1$ defined on the same probability
space which satisfy the following. The $\kappa_n$ are tight
in distribution, and, almost surely,
\begin{eqnarray}
 \bareta(x)/\kappa_n\;-\kappa_n\le\; \eta_{n,1}(x)  + \eta_{n,2}(x) &\le& \;\kappa_n(1+\bareta(x)),
\label{bounds1}
 \\
\label{bounds2}  0 \le  \eta_{n,2}(x)&\le & m_n^2  \\
 |w_{n,1}(\xi)-w_{n,1}(x)|^2  +  |w_{n,2}(\xi)-w_{n,2}(x)|^2   & \le& \kappa_n(1+\bareta(x)/\zeta(x)).
\label{bounds3}
\end{eqnarray}
for all $n$ and $x,\xi\in[0,n/m_n]$ with $|x-\xi|\le 1$.

\bigskip

The growth/oscillation bounds in particular imply that the target limit operator $H$ is almost surely densely defined on $L^2$ (of the positive half-line). In particular, it is made sensible through its quadratic form $\langle f,g \rangle_H = \int f' g' + \int f g y' $ after a suitable integration by parts in the second term.
 Eigenvalues and eigenvectors of $H$, which has discrete spectrum with probability one,  are also defined through the quadratic form: $(\Lambda, f)$ is an eigenvalue/eigenvector pair
if $\langle f,\phi \rangle_H = \Lambda \int f \phi$  for all $\phi \in C_0^{\infty}$. As for convergence of $H_n$ to $H$, the needed result from \cite{RRV}, as extended in \cite{BV1} reads:

\begin{theorem}

(i) [Theorem 5.1 of \cite{RRV}] \label{weak}
Given Assumption 1 and 2 above let $(\lambda_{n, k}, v_{n,k})$, $k=1,2,\dots$, denote the ordered
eigenvalues/eigenvectors of the matrices $H_n$. Similarly let $(\Lambda_k, f_k)$  denote
the ordered eigenvalues/eigenvectors of the operator $H$, taken with Dirichlet boundary conditions at the origin.
Assume that $H_n$ acts on  $\mathbb R^n$ as a subspace of $L^2(\mathbb R)$ with coordinate vectors $e_j=\sqrt{m_n}\,\one_{[j-1,j]m_n^{-1}}$.
Then $H_n, H$  can be coupled on a probability space so that a.s. we have $\lambda_{n,k} \to \Lambda_k$ and
$v_{n,k} \to_{L^2} f_k$.

(ii) [Theorem 2.10 of \cite{BV1}] The result is unchanged for certain perturbations of $H_n$ at its first entry.  In particular
let $e_{11}$ be the matrix with $11$-entry equal to one and otherwise zero and consider the family of matrices $H_n +z_n e_{11}$
with $H_n$ as above and
\begin{equation}
\label{e:perturbation}
 \frac{z_n +m_n^2}{m_n} \rightarrow \infty
\end{equation}
in probability.  Then the eigenvalues/eigenvectors of $H_n$ still converge to those of $H$ (in the manner described) again with Dirichlet conditions at the origin.
\end{theorem}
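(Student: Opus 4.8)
The statement is quoted from \cite{RRV} and \cite{BV1}, so I only sketch the argument. The plan is to prove part (i) by a $\Gamma$-convergence (equivalently, Mosco convergence) of the quadratic forms of the embedded operators $H_n$ together with a compactness estimate uniform in $n$, and then to deduce part (ii) by comparing the perturbed operator with operators to which part (i) already applies. For part (i), realize each $H_n$ as a self-adjoint operator on $L^2(\mathbb R_+)$ through the orthonormal family $e_j=\sqrt{m_n}\,\one_{[j-1,j]m_n^{-1}}$, and let $\mathcal E_n(f)=\langle f,H_nf\rangle$ be its quadratic form with form domain $\mathrm{span}(e_1,\dots,e_n)$. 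For $f=\sum_k f_ke_k$, collecting terms gives $\mathcal E_n(f)=m_n^2\sum_{k\ge0}(f_{k+1}-f_k)^2+m_n\sum_k(y_{n,1,k}-y_{n,1,k-1})f_k^2+2m_n\sum_k(y_{n,2,k}-y_{n,2,k-1})f_kf_{k+1}$ with the convention $f_0=f_{n+1}=0$; here the term $m_n^2f_1^2$ coming from $(f_1-f_0)^2$ is exactly the Dirichlet condition at the origin. Summation by parts rewrites the potential part, modulo negligible boundary terms, as an integral of $y_{n,1}(x)+2y_{n,2}(x)$ against the increments of a discretization of $f^2$, plus error terms pairing $y_{n,1}$ and $y_{n,2}$ individually with squared discrete gradients of $f$. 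Using \eqref{bounds1}, \eqref{bounds2}, \eqref{bounds3} one establishes the discrete counterparts of the properties that make $\SAO$ bounded below with compact resolvent: a lower bound $\mathcal E_n(f)\ge c\int_0^\infty\bar\eta(x)\tilde f_n(x)^2\,dx-C\kappa_n\|f\|^2$, where $\tilde f_n$ is the piecewise-constant interpolant of $f$, together with, on each sublevel set of $\mathcal E_n$, a uniform modulus of continuity for the interpolants on bounded intervals.

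On a sublevel set $\{\mathcal E_n(f)\le E,\ \|f\|=1\}$ the modulus-of-continuity estimate is a discrete Rellich lemma, while the $\bar\eta$-term, since $\bar\eta\uparrow\infty$, confines the $L^2$-mass to a bounded interval uniformly in $n$; hence these sublevel sets are precompact in $L^2(\mathbb R_+)$, uniformly in $n$. For the $\liminf$ inequality, given $f_n\to f$ in $L^2$ with $\sup_n\mathcal E_n(f_n)<\infty$, one extracts a weakly $H^1$-convergent subsequence and passes to the limit in the potential part using the Skorokhod convergence $y_{n,1}+2y_{n,2}\Rightarrow y$ of \eqref{condition1} for the main term and the separate tightness of $y_{n,1},y_{n,2}$ to kill the gradient-paired errors, obtaining $\int(f')^2+\int y'f^2$ after the integration by parts indicated in the text. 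For the $\limsup$ inequality one samples a test function $\phi\in C_c^\infty$ with $\phi(0)=0$ and checks the same limit for $\mathcal E_n(\phi_n)$. Coercivity, uniform compactness, and $\Gamma$-convergence of the forms together yield norm-resolvent convergence of $H_n$ to $H$ in the embedded picture, i.e. $\lambda_{n,k}\to\Lambda_k$ and $v_{n,k}\to f_k$ in $L^2$ with Dirichlet boundary conditions; the coupling is obtained by realizing the convergence of Assumption 1 almost surely via Skorokhod's representation theorem, a subsequence argument reducing the $\kappa_n$-dependent bounds to a deterministic bound.

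For part (ii), write $\tilde H_n=H_n+z_ne_{11}$ and let $H_n^{(1)}$ be the lower-right $(n-1)\times(n-1)$ submatrix of $H_n$, embedded on $L^2(\mathbb R_+)$ after a one-site shift; both $H_n$ and $H_n^{(1)}$ fall under part (i) — a one-site shift changes only $m_n^{-1}$-scale data — so their eigenvalues both converge to $\Lambda_1,\Lambda_2,\dots$. The quadratic form of $\tilde H_n$ restricted to $\{v:v_1=0\}$ coincides with that of $H_n^{(1)}$ (the $z_n$-term and the $b_1$-coupling both vanish there), so Cauchy interlacing gives $\lambda_{n,k}(\tilde H_n)\le\lambda_{n,k}(H_n^{(1)})$. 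If $z_n\ge0$ then $\tilde H_n\ge H_n$ supplies the matching lower bound $\lambda_{n,k}(\tilde H_n)\ge\lambda_{n,k}(H_n)$, and the two sides squeeze. If $z_n<0$, decompose $\langle v,\tilde H_nv\rangle=(m_n^2+z_n)v_1^2+\langle v,H_n^Nv\rangle$, where $H_n^N$ is the Neumann-type form obtained by dropping the Dirichlet bond (equivalently, lowering the first diagonal entry of $H_n$ by $m_n^2$); $H_n^N$ is bounded below uniformly by the same coercivity estimate, and hypothesis \eqref{e:perturbation}, which says precisely $(m_n^2+z_n)/m_n\to\infty$, then shows both that $\tilde H_n$ is bounded below uniformly and that any state of $\mathcal E_n$-energy of order one satisfies $v_1^2\le C/(m_n^2+z_n)$, whence $\tilde f_n(0)^2=m_nv_1^2\le Cm_n/(m_n^2+z_n)\to0$. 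Combined with the modulus-of-continuity estimate this forces the limiting eigenfunctions to satisfy $f(0)=0$, i.e. the Dirichlet condition; an additional argument, projecting the low-lying eigenspace onto $\{v_1=0\}$ and using that this projection is nearly isometric there, pins the limiting eigenvalues to $\Lambda_k$. Convergence of eigenvectors then follows from the uniform compactness estimate as in part (i).

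The main obstacle is this last step in the negative-$z_n$ regime: one must rule out a spurious low-lying eigenstate supported at the single modified site — the discrete analogue of a switch from Dirichlet to Neumann boundary data — and the naive bounds on the cross-terms produced by projecting onto $\{v_1=0\}$ involve $m_n^2$-sized matrix entries and are not small, so they must be controlled by Cauchy--Schwarz against the (bounded) kinetic energy, using the full strength of \eqref{e:perturbation} that the margin $m_n^2+z_n$ beats $m_n$. A secondary technical point, in part (i), is passing to the limit in the potential term of the $\liminf$ inequality with only Skorokhod convergence of $y_{n,1}+2y_{n,2}$ at hand together with the separate tightness of $y_{n,1}$ and $y_{n,2}$, since the increments of $y_{n,1}$ and $y_{n,2}$ need not converge individually.
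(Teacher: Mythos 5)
This statement is not proved in the paper: it is imported verbatim from the literature (part (i) is Theorem 5.1 of \cite{RRV}, part (ii) is Theorem 2.10 of \cite{BV1}), and the only added content is the remark that the distributional statement upgrades to an a.s.\ coupling via Skorokhod representation --- a point your sketch also makes. Measured against the cited proofs rather than a proof in this paper, your outline follows essentially the same route: \cite{RRV} works with the embedded quadratic forms, proves a uniform lower bound and compactness of sublevel sets from the growth/oscillation bounds \eqref{bounds1}--\eqref{bounds3}, and deduces convergence of each eigenvalue/eigenvector by variational upper and lower bounds; your $\Gamma$-convergence (Mosco) phrasing is a repackaging of that argument, and your treatment of the cross term $2f_kf_{k+1}$ (splitting off the gradient-paired piece controlled by $0\le\eta_{n,2}\le m_n^2$ and the oscillation bound, so that only the combination $y_{n,1}+2y_{n,2}$ needs to converge) is exactly the mechanism used there. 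For part (ii), your interlacing-plus-boundary-term decomposition $(m_n^2+z_n)v_1^2$ plus a Neumann-type form, with \eqref{e:perturbation} forcing $\tilde f_n(0)^2\approx m_nv_1^2\to 0$, is the same subcritical analysis as in \cite{BV1}. The one caveat: as a standalone proof your text is incomplete, and you correctly identify where --- the step that pins the limiting eigenvalues of $H_n+z_ne_{11}$ to $\Lambda_k$ in the negative-$z_n$ regime (ruling out a spurious state at the modified site) is precisely the substance of Theorem 2.10 of \cite{BV1} and is only flagged, not carried out; similarly the liminf inequality with only joint Skorokhod convergence of $y_{n,1}+2y_{n,2}$ is asserted rather than executed. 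Since the paper itself treats these as cited results, that level of detail is acceptable here, but be aware your proposal is a reconstruction of the external proofs, not an independent argument.
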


In \cite{RRV} and \cite{BV1} this was was given as a distributional convergence statement, but by the standard Skorokhod embedding theorem it can be phrased this way (and the actual proof goes through Skorokhod embedding, giving the claimed result).
%
%

\begin{remark}Let us clarify what is meant here by norm-resolvent convergence. Let some operators $H_n$ be defined on a domain $A_n$ of $L^2$, and let $H$ be defined on a subspace $A$. If $H_n$ and
$H$ are closed and their spectrum is real, then for any non-real complex $z$ the resolvents $(z-H_n)^{-1}$ and $(z-H)^{-1}$ can be defined on {\it all of }$L^2$ and are bounded there by $1/\Im z$. This is very useful since the original operators may have disjoint domains and would be hard to compare. 

Norm-resolvent convergence means that these bounded operators converge in norm for some (equivalently, all) such $z$.

It is easy to check that if $H_n,H$ are closed and $H$ has discrete spectrum bounded below with no multiple eigenvalues, then norm-resolvent convergence is equivalent to the following: for every $k$ the $k$th lowest eigenvalue of $H_n$ converges to that of $H$ and the corresponding eigenvector of $H_n$ converges in norm to that of $H$. See \cite{Weidman} for more on convergence of unbounded operators.
\end{remark}

The extension in \cite{BV1} goes beyond {\em subcritical} perturbations of the form \eqref{e:perturbation}. In particular, there it is proved that for critical perturbations
reading
$z_n/m_n + m_n \rightarrow \omega \in (-\infty, \infty)$ changes the boundary condition for $H$: from Dirichlet, $f(0)= 0$, to Robin $f'(0) = \omega f(0)$.

 We should also note that have slightly changed the condition \eqref{bounds2} in Assumption 2 from how things were stated in \cite{RRV}. There it was assumed $\eta_{n,i} \ge 0$ while the proof only actually requires the non-negativity of $\eta_{n,2}$. Also, since it is convenient to define $y_{n,2}$ here to be twice that from \cite{RRV} we have required $\eta_{n, 2} \le m_n^2$ on the right hand side of \eqref{bounds2} (rather than the requirement $\eta_{n,2} \le 2 m_n^2$ of (5.5) of \cite{RRV}).
   We will have more to say about  this condition at the end of this section.

\subsection{Application to the Dyson $\beta$ ensembles}

 To apply Theorem \ref{weak} one anticipates that $\mathcal{E} I - T_n$ has
 has a Laplacian term after scaling.  That is, the main
diagonal should be $-2$ times the off-diagonal, to leading order. Recalling the definition of the local minimizers,
one would then expect $-2b^{\dagger}(0)=-\mathcal E+a^{\dagger}(0)$ $-$ the top of $T$ presumably satisfying $
\sim \mbox{trigdiag}( b^{\dagger}(0) , a^{\dagger}(0), b^{\dagger}(0))$
to first order.
This provides an intuitive understanding of the formula
for the edge $\mathcal E = a^{\dagger}(0) + 2 b^{\dagger}(0)$ derived in Section  \ref{s:local minimizers}, see \eqref{eq:edgevalue}.

Based on the $\beta$-Hermite case ($V(x) = x^2/4$), we should rescale as in $ \gamma n^{2/3}  ( \mathcal{E} I - T)$ for some $\gamma = \gamma(V, \beta)$. This sets
 $m_n = \sqrt{ \gamma b^{\dagger}(0)} \, n^{1/3}$, and the appropriate choice is to let
$$
  \gamma  = \frac{1}{ \tau^{2/3}  (b^{\dagger}(0))^{1/3} }, \mbox{ with } \tau = - (  (a^{\dagger})'(0)  + 2 (b^{\dagger})'(0)  ),
$$
 so that
 \begin{equation}
 \label{eq:mn}
    m_n = ( b^{\dagger}(0)  n/\tau)^{1/3}.
 \end{equation}
From \eqref{eq:edgemonotone} and \eqref{eq:edgevalue} we have that  $\tau$ is positive and equals the derivative of the edge at zero, $\tau = \mathcal{E}'(0)$.  For the $\beta$-Hermite case $\gamma=\tau = b^{\dagger}(0) = 1$.

 One wrinkle is that, as alluded to the introduction, the first $O(\log n)$ stretch of $(A, B)$ variables do not have a universal behavior. As a $(\deg V/2-1)$-Markov process,
 $k \mapsto (A_k, B_k)$ can be expected to require $O(\log n)$ steps to achieve local equilibrium.
 Thus, one can only hope to apply
 the Theorem \ref{weak} as such directly to the submatrices  $T_{[ c \log n, n]}$ for some  $c$ = $c(V, \beta)$.  Our strategy will be to show that these truncated matrices satisfy part (i)
 of the theorem, while the effect of the truncation can be controlled by a rank-one perturbation, to which part (ii) of the theorem applies.

We therefore set for a suitably large constant $c$ and $m_n$ as defined in \eqref{eq:mn}:
\begin{equation}
\label{summedpotentials}
y_{n,1}(x) = m_n \sum_{k=  \lfloor c \log n \rfloor}^{ \lfloor x m_n \rfloor } (a^{\dagger}(0) - A_k )/ b^{\dagger}(0),
\quad
y_{n,2}(x) =   m_n \sum_{k=  \lfloor c \log n \rfloor }^{\lfloor x m_n \rfloor} (b^{\dagger}(0) - B_k)/ b^{\dagger}(0) .
\end{equation}
This choice naturally prompts the further definitions:
\begin{equation}
\label{noise}
w_{n,1}(x)=   m_n  \sum_{k=  \lfloor c \log n \rfloor}^{ \lfloor x m_n \rfloor } (a^{\dagger}_k - A_k ) / b^{\dagger}(0),
\quad
w_{n,2}(x)=    m_n \sum_{k= \lfloor c \log n \rfloor }^{ \lfloor x m_n  \rfloor } ( b^{\dagger}_k  -B_k)/  b^{\dagger}(0),
\end{equation}
and
\begin{equation}
\label{driftderivatives}
\eta_{n,1}(x)=     m_n^{2}  ( a^{\dagger}(0) - a^{\dagger}_{\lfloor x m_n \rfloor} )/  b^{\dagger}(0), \quad
\eta_{n,2}(x)=   m_n^{2}   ( b^{\dagger}(0) - b^{\dagger}_{\lfloor x m_n \rfloor} )/ b^{\dagger}(0).
\end{equation}
In this way we recognize $w_{n,i}$'s  as noise/oscillation terms and the $\eta_{n,i}$'s as (derivatives of) the drifts.

It is important to note that the tightness/convergence conditions on $y_{n,i}$ from Assumption 1 only require looking at the variable $x$ on bounded sets, that is,
$x= O(1)$.  On the other hand, the conditions on the noise and drift components set out in Assumption 2 require $x$'s that track the entire index set of the matrix,
or up to $x$ of order  $n^{2/3}$.  Since our control of the variables at the bottom of the matrix is not so sharp, it is more convenient put everything in the growth terms.  More
succinctly, what we actually do is to let:  for an $\epsilon  = \epsilon(V, \beta) > 0$  chosen below,
\begin{align}
\label{e:endcondadjust}
   &  \mbox{Retain } \eqref{noise}, \eqref{driftderivatives} \mbox{ for } x \le n^{2/3} (1 -\epsilon),
   \mbox{ otherwise set: } \\
   & w_{n,i} = 0, \,  \eta_{n,1}(x) =  m_n^2 (a^{\dagger}(0) - A_{\lfloor x m_n \rfloor} ) / b^{\dagger}(0),  \
     \eta_{n,2}(x) =   m_n^2 (b^{\dagger}(0) - B_{\lfloor x m_n \rfloor} ) / b^{\dagger}(0). \nonumber
\end{align}
In terms of matrix indices this cutoff occurs at $k = n(1-\epsilon)$.

After several sections of preliminary estimates (to show that the minimizers of $H$ are indeed well-approximated by the local minimizers, around which the field concentrates
well), Section \ref{s:Gaussian} contains in Proposition \ref{p:variation} the basic fluctuation result that allows Assumptions 1 and 2 above to be verified for the truncated matrices
$H_n = \gamma n^{2/3} (\mathcal{E} I - T_{[c \log n, n]})$.  Section \ref{s:firststretch} establishes that the effect of this truncation on the spectrum can be bounded by a suitable perturbation: that the spectrum of $\gamma n^{2/3} (\mathcal{E} I - T_{n})$ is bounded in terms of that for an ensemble of the form $H_n + z_n e_{11}$ with $z_n$ satisfying
requirement \eqref{e:perturbation} of part (ii) of Theorem \ref{weak}.  Our main result is then proved in Section \ref{s:mainresult}

\begin{remark}  The rather funny condition \eqref{bounds2}  is automatically satisfied by the $\eta_{n,2}$
defined in \eqref{driftderivatives}: the results of Section \ref{s:local minimizers} show that $x \mapsto b^{\dagger}(x) $ is nonnegative and decreasing.
\end{remark}

\section{Bounding minimizers}
\label{sec:minimizerbounds}

A first step toward showing that the global minimizers $(a^\circ, b^\circ)$ are well approximated by the local minimizers
$(a^{\dagger}, b^{\dagger})$ defined in Section \ref{s:local minimizers} is to develop some preliminary upper and lower bounds
for $(a^{\circ}, b^{\circ})$. Actually, in the course of the proof it will be natural (and necessary) to consider minimizers of $H$ restricted to
some subsets  $J_a$, $J_b$ of coordinates, the values of $a,b$ on the complement of $J_a$ (respectively $J_b$)  being fixed as boundary conditions. A collection of
bounds are therefore established in the context of such ``conditional minimizers".

\subsection{Upper bounds}

We are interested in the minimizer of
Hamiltonians similar to $H$ over entries $a_j, j\in J_a$ and
$b_j, j\in J_b$. For example, for the overall minimizer we take
$J_a=\{1,\ldots, n\}$, $J_b=\{1,\ldots,n-1\}$. Now let $I$ be the
set of integers at most $\deg V/2$ away from $J_a\cup J_b$.

Let $T_I$ be the finite minor of the doubly infinite tridiagonal matrix $T(a,b)$ corresponding to the indices
$I$, and we set
$$
V_I(a,b):=\tr(V(T_I(a,b)).
$$
Fix $y_2,y_3\in(0,1]$, and define
\begin{equation}
\label{e:localH2}
\mathcal H(a,b)=V_I(a,b)-\summ_{j\in J_b }\alpha_j\log b_j, \qquad |\alpha_j| \le y2 \mbox{ for }  j\in J_b
\end{equation}
The variables $a_j,j\in I\setminus J_a$, $b_j, j\in I\setminus J_b$ are considered fixed, or boundary conditions, and we are interested in the minimizer of $\mathcal H$
over  $a_j\in \mathbb R$ for $j\in J_a$  and
\begin{equation}
  \label{e:bcond}
b_j\ge   \begin{cases}
y_3&\mbox{for } \alpha_j< 0 \\ 0 &\mbox{otherwise},
\end{cases}
\end{equation}
for $j\in J_b$.  We employ
the convention that $\log 0=-\infty$.

The goal of this section is to show Theorem \ref{thm:upper bound}, a bound on the minimizers. It will be used in two ways. First, more roughly, if the $\alpha_i$ and the boundary conditions are bounded by a constant, then so are the minimizers. Second, it will be used for indices close to $n$ in our original problem,  where the $|\alpha_i|$ are small. In this case,
if the boundary conditions are close to $a(1),\,b(1)=0$, then we show that so are the minimizers.

\begin{theorem}\label{thm:upper bound}
There exists a constant $c$ depending on the polynomial $V$ only so that if $$
|a_k-a^\dagger(1)|\le y_1 \mbox{ for all } k \in I\setminus  J_a, \qquad \mbox{ and } \qquad
b_k\le y_1 \mbox{ for all } k \in I\setminus  J_b, $$ then
for the minimizers $a, b$ of $\mathcal H$ satisfying \eqref{e:bcond} and all $k\in I$ we have
$$
|a_k-a^\dagger(1)|+b_k\le c\max(y_1,y_1^{(\deg V)^2/2}, y_2\log (e/y_3),(y_2\log (e/y_3))^{\deg V/2})^{1/2}.$$
\end{theorem}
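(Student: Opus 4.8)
The plan is to work with the first--order (KKT) conditions for the convex problem defining the minimiser $(a,b)$ of $\mathcal H$, to exploit that these are \emph{local} polynomial identities of degree $\le\deg V-1$, and then to propagate the smallness of the boundary data inward. First I would normalise. By Proposition~\ref{p:analytic} one has $b^\dagger(1)=0$ and $a^\dagger(1)=\operatorname{argmin}_a W(a,0)=\operatorname{argmin}V$, so, shifting the argument of $V$ and subtracting a constant --- which merely translates the $a$--variables and changes $\mathcal H$ by a constant --- I may assume $a^\dagger(1)=0$ and $V\ge V(0)=0$, hence $V(x)=\sum_{m\ge 2}c_mx^m$, $v:=V''(0)>0$, and $V(x)\ge\tfrac{c_u}{2}x^2$. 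Lemma~\ref{l:UniformConvexity}, applied to the minor $T_I$ in place of $T$, gives that $V_I=\tr V(T_I)$ has Hessian $\ge c_uI$ in the entries; combined with convexity of $-\alpha_j\log b_j$ for $\alpha_j\ge0$ and the fact that the barrier $b_j\ge y_3$ caps the concavity of the remaining terms, $\mathcal H$ is strictly convex on its convex feasible set, so the minimiser $(a^*,b^*)$ is unique and is pinned down by its KKT conditions. Using $\partial_{a_k}\tr V(T)=(V'(T))_{kk}$ and $\partial_{b_k}\tr V(T)=2(V'(T))_{k,k+1}$, these say $(V'(T_I))_{kk}=0$ for $k\in J_a$, and $2(V'(T_I))_{k,k+1}=\alpha_k/b_k$ for $k\in J_b$ with $b_k$ strictly interior, while $b_k$ equals its lower bound ($y_3$ if $\alpha_k<0$, else $0$) otherwise.

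Next I would read off the local structure. Since $V'(0)=0$ we have $V'(T_I)=v\,T_I+\sum_{\ell=2}^{\deg V-1}v'_\ell T_I^\ell$, and by the path expansion the entries $(T_I^\ell)_{kk}$ and $(T_I^\ell)_{k,k+1}$ for $\ell\le\deg V-1$ depend only on entries within distance $\deg V/2$ of $k$ (so there is no truncation effect), are polynomials in those entries of degree between $2$ and $\deg V-1$, and in the off--diagonal case carry $b_k$ as an explicit factor. Writing $\mu$ for the largest entry in the band around $k$, the KKT equations then take the form $v\,a^*_k=-\Phi_k$ with $|\Phi_k|\le C_V\max(\mu^2,\mu^{\deg V-1})$, and $2(v+\Psi_k)(b^*_k)^2=\alpha_k$ with $|\Psi_k|\le C_V\max(\mu,\mu^{\deg V-2})$ when $b^*_k$ is interior. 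In particular, once $\mu$ is below a threshold depending only on $V$ one gets the clean pointwise bounds $|a^*_k|\le C_V\mu^2$ and $b^*_k\le\sqrt{\alpha_k/v}\le\sqrt{y_2/v}$, whereas for $\alpha_k<0$ simply $b^*_k=y_3$.

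It remains to establish an a priori bound and bootstrap it to the stated estimate. I would first bound $M:=\max_{k\in I}(|a^*_k|+b^*_k)$ crudely by comparing $\mathcal H(a^*,b^*)$ with its value at the test configuration (free $a\equiv0$, free $b$ at the barrier value, boundary entries as given), using $\tr V(T_I)\ge\tfrac{c_u}{2}\tr(T_I^2)=\tfrac{c_u}{2}\sum_I(a_i^2+2b_i^2)$ on the left and the fact that the logarithmic terms are bounded below in a way governed by the barrier $y_3$ on the right. Turning this \emph{global} energy inequality into an $\ell^\infty$ bound uses that the Hessian of $\mathcal H$ is banded of width $\deg V$ and uniformly convex, so its inverse has geometrically decaying entries; this localises the estimate and removes the dependence on $|I|$. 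With $M$ below the threshold of the previous paragraph, I then peel off layers of width $\deg V/2$ starting from the boundary: the relation $|a^*_k|\le C_V\mu^2$ and the $b$--estimate are superlinearly contracting, so the bound on successive layers shrinks geometrically down to the ``floor'' $\sqrt{y_2\log(e/y_3)}\vee y_3$ produced by the logarithmic terms and the barrier, while the outermost layer remains controlled by $y_1$. Compounding the degree-$(\deg V-1)$ relations across the width-$(\deg V/2)$ interaction range is exactly what converts $y_1$ into the exponent $(\deg V)^2/2$ and $y_2\log(e/y_3)$ into the exponent $\deg V/2$ in the final maximum, yielding $|a_k-a^\dagger(1)|+b_k\le c\max(y_1,y_1^{(\deg V)^2/2},y_2\log(e/y_3),(y_2\log(e/y_3))^{\deg V/2})^{1/2}$.

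The step I expect to be the main obstacle is exactly this last one: passing from the global energy estimate to a pointwise bound uniform in $|I|$ (the localisation of the banded convex problem), and then, inside the inward bootstrap, bookkeeping the nonlinear stationarity relations carefully enough that the exponents come out precisely as stated --- all the while distinguishing the two genuinely different regimes for the off--diagonal variables (interior, giving $\sqrt{y_2}$--type control, versus pinned at the barrier $y_3$) and absorbing the mild loss of global convexity when some of the $\alpha_j$ are negative.
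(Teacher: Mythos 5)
There is a genuine gap, and it sits exactly where you flag your ``main obstacle'': the passage from the global energy inequality to a pointwise bound that is uniform in $|I|$. You propose to localise by arguing that the Hessian of $\mathcal H$ is banded and uniformly convex, hence has an inverse with geometrically decaying entries. But an exponential-decay bound for the inverse of a banded matrix needs an \emph{upper} bound on the matrix (equivalently, control of its condition number), and the second derivatives of $\tr V(T_I)$ are polynomials of degree $\deg V-2\ge 2$ in the entries $a_i,b_i$; bounding them requires precisely the a priori $\ell^\infty$ control of the minimiser that Theorem \ref{thm:upper bound} is supposed to deliver. (This is why, in the paper, the exponential-decay statements Propositions \ref{prop:close} and \ref{prop:minbound} carry the hypotheses $|a_k|,b_k\le b^*$, $b_k\ge b_*$, and are invoked only \emph{after} Theorem \ref{thm:upper bound} and Proposition \ref{prop:lowerbound}.) So your localisation step is circular, and without it the subsequent inward KKT bootstrap has nothing to start from: the stationarity relations $va^*_k=-\Phi_k$, $2(v+\Psi_k)(b^*_k)^2=\alpha_k$ couple index $k$ to entries on \emph{both} sides within distance $\deg V/2$, so they only contract once every nearby entry is already below a $V$-dependent threshold. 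They also genuinely fail in the regime $y_1\ge 1$ (the regime responsible for the exponent $y_1^{(\deg V)^2/2}$), where $\Phi_k,\Psi_k$ can dominate $v$ and the relations give no upper bound at all; and the exponent bookkeeping you describe (``compounding degree-$(\deg V-1)$ relations across width-$(\deg V/2)$'') is asserted rather than derived. A smaller but real error: $\mathcal H$ of \eqref{e:localH2} is \emph{not} convex in general, since for $\alpha_j<0$ the term $-\alpha_j\log b_j$ is concave with second derivative of size $|\alpha_j|/b_j^2$, which on the feasible set \eqref{e:bcond} is only bounded by $y_2/y_3^2$ and can swamp $c_u$ when $y_3$ is small; so uniqueness and ``pinned down by KKT'' cannot be used (first-order necessary conditions survive, global structure does not).

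For contrast, the paper's proof never needs convexity of $\mathcal H$, any Hessian control, or a smallness threshold. Its only inputs are the quadratic lower bound $V(x)\ge c_1x^2$ (Lemma \ref{l:underV}), a comparison of the minimiser's energy with an explicit test configuration (Corollary \ref{c:l2bound}), and then the key device you are missing: nested intervals $J^1\subset\cdots\subset J^{j^*}$ around the index of interest, with $s_j$ the $\ell^2$ mass on $J^j$. Corollary \ref{c:l2bound} applied to each $J^j$ turns the unknown interior boundary data into annulus terms bounded by $(s_{j+1}-s_j)^{\deg V/2}$, yielding a recursion to which the iteration Lemma \ref{l:iteration} applies: either $s_1$ is already small, or the $s_j$ must grow superlinearly in $j$, contradicting the linear-in-$j^*$ bound at the outermost scale. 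Running this in the two regimes (small versus large $\max(y_1,y_2\log(e/y_3))$, with $\gamma_4=y_1+y_2\log(e/y_3)$ or $\gamma_4=y_1^{\deg V}+y_2\log(e/y_3)$) is what produces the exponents $(\deg V)^2/2$ and $\deg V/2$ after the final square root. If you want to salvage your outline, you would have to replace the Hessian-decay localisation by an argument of this self-improving $\ell^2$ type; the KKT relations alone will not give it.
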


The first step in the strategy is to show that a bound on $\mathcal H(a,b)$ gives a bound on the
$\ell^2$-norm of $(a,b)$. We will then show that $\mathcal H(a,b)$ is small for the minimizers $(a,b)$ by comparing it to an explicit example.

\begin{lemma}
\label{l:underV}
Let $V$ be a polynomial satisfying $V(x)\ge c_1x^2$ for
$c_1>0$.  Then for $b_j$ satisfying \eqref{e:bcond}, the Hamiltonian \eqref{e:localH2} has the bound
$$
\mathcal H(a,b) \ge \frac{c_1}{2}\sum_{i\in I}(a_i^2+b_i^2) - |J_b|y_2((\log c_1)^- +\log(1/ y_3)).
$$
\end{lemma}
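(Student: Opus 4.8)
The plan is to lower-bound $\mathcal H(a,b)$ by splitting it into the potential term $V_I(a,b)$ and the logarithmic term. For the logarithmic term, the key observation is that only the indices $j\in J_b$ with $\alpha_j<0$ can contribute a negative amount to $-\sum_j\alpha_j\log b_j$; for those, \eqref{e:bcond} forces $b_j\ge y_3$, so $-\alpha_j\log b_j \ge \alpha_j\log(1/y_3) \ge -y_2\log(1/y_3)$ (using $|\alpha_j|\le y_2$ and $y_3\le 1$), while for $\alpha_j\ge 0$ the term is nonnegative when $b_j\ge 1$ and bounded below by $-\alpha_j\log(1/b_j)$ in general — but here one must be slightly more careful since $b_j$ could be tiny. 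The clean way is: $-\alpha_j\log b_j \ge -|\alpha_j|\,(\log b_j)^- \ge -y_2(\log b_j)^-$ always, and then absorb $(\log b_j)^-$ into the quadratic part using $c_1 b_j^2/2 + y_2(\log b_j)^- \ge -C(y_2)$ — but this loses the clean $\log(1/y_3)$ dependence. So instead I would keep the dichotomy: the total negative contribution from the log term is at most $\sum_{j\in J_b,\,\alpha_j<0} y_2\log(1/y_3) \le |J_b|y_2\log(1/y_3)$, using that $b_j\ge y_3$ exactly on those indices; the $\alpha_j\ge 0$ indices need $b_j$ possibly small, but there $-\alpha_j\log b_j$ has the wrong sign only if... no: $\alpha_j\ge 0$ and $b_j<1$ gives $-\alpha_j\log b_j>0$, so those terms are nonnegative. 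Hence $-\sum_{j\in J_b}\alpha_j\log b_j \ge -|J_b|y_2\log(1/y_3)$.

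For the potential term I would use $V(x)\ge c_1 x^2$ together with the fact that $\tr(V(T_I))\ge c_1\tr(T_I^2)$, since $x\mapsto c_1 x^2$ is applied to the (real) eigenvalues of the symmetric matrix $T_I$ and $V(x)-c_1x^2\ge 0$ pointwise means $V(T_I)-c_1 T_I^2\succeq 0$, hence has nonnegative trace. Then $\tr(T_I^2) = \sum_{i\in I} a_i^2 + 2\sum b_i^2$ where the sum over $b_i$ runs over the off-diagonal indices inside $I$; in particular $\tr(T_I^2)\ge \sum_{i\in I}(a_i^2 + b_i^2)$ provided we interpret the $b$-sum as over indices $i$ with both $i,i+1\in I$, which is what "$i\in I$" should mean here (the statement is slightly informal on this point). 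So $V_I(a,b)\ge c_1\sum_{i\in I}(a_i^2+b_i^2)$.

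Combining, $\mathcal H(a,b)\ge c_1\sum_{i\in I}(a_i^2+b_i^2) - |J_b|y_2\log(1/y_3)$, which is even a bit stronger than claimed (the $(\log c_1)^-$ and the factor $1/2$ give room to spare, presumably to absorb the boundary entries $a_i,b_i$ for $i\in I\setminus(J_a\cup J_b)$ that also appear on the left inside $\tr(T_I^2)$ — actually those make the bound only better, so the factor $1/2$ and the $(\log c_1)^-$ slack are harmless and I would simply note the stronger inequality implies the stated one). I do not anticipate a serious obstacle; the only delicate point is bookkeeping around which $b_j$ are constrained to be $\ge y_3$ versus merely $\ge 0$, and making sure the index set in the quadratic lower bound is handled consistently with the boundary entries — I would state the matrix inequality $V(T_I)\succeq c_1 T_I^2$ carefully and then just read off the diagonal of $T_I^2$.
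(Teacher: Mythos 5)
There is a genuine gap in your handling of the logarithmic term. You claim that for indices with $\alpha_j\ge 0$ the terms $-\alpha_j\log b_j$ are nonnegative, but you only checked the case $b_j<1$: the constraint \eqref{e:bcond} gives no \emph{upper} bound on $b_j$, and when $\alpha_j>0$ and $b_j>1$ the term $-\alpha_j\log b_j$ is negative and unbounded below as $b_j\to\infty$. Consequently your intermediate claim $-\sum_{j\in J_b}\alpha_j\log b_j\ge -|J_b|y_2\log(1/y_3)$ is false, and so is your ``slightly stronger'' final bound $\mathcal H\ge c_1\sum_{i\in I}(a_i^2+b_i^2)-|J_b|y_2\log(1/y_3)$: take $V(x)=c_1x^2$ with $c_1$ small, a single active index with $\alpha_j=y_2=1$, $y_3=1$, all entries zero except $b_j=3$; then $\mathcal H=2c_1b_j^2-\log b_j$ is strictly below $c_1b_j^2$ once $c_1b_j^2<\log b_j$. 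This shows the $(\log c_1)^-$ term in the statement is not ``slack to spare'' for boundary bookkeeping, as you suggest, but is forced by exactly this large-$b_j$ regime.

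The fix, which is the paper's argument, is to split the quadratic lower bound $V_I(a,b)\ge c_1\tr(T_I^2)$ into two halves: keep $\tfrac{c_1}{2}\tr(T_I^2)\ge \tfrac{c_1}{2}\sum_{i\in I}(a_i^2+b_i^2)$ for the final bound (this is where the factor $\tfrac12$ comes from), and use the other half, via $\tfrac{c_1}{2}\tr(T_I^2)\ge \sum_{j\in J_b}c_1b_j^2$, to dominate the dangerous log terms index by index. For $\alpha_j<0$ (hence $b_j\ge y_3$) and $\alpha_j=0$ one gets the lower bound $y_2\log y_3$ as you did; for $\alpha_j>0$ one minimizes $-\alpha_j\log b_j+c_1b_j^2$ over $b_j>0$, obtaining the value $\tfrac{\alpha_j}{2}\bigl(1-\log\alpha_j+\log(2c_1)\bigr)\ge -y_2(\log c_1)^-$, which is where the $(\log c_1)^-$ in the statement originates. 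Your first paragraph's discarded idea (``absorb $(\log b_j)^-$ into the quadratic part'') was aimed at the wrong regime: it is the positive part $\log b_j$ for $\alpha_j>0$, not $(\log b_j)^-$, that must be absorbed by $c_1b_j^2$. Your treatment of the polynomial term via $V(T_I)\succeq c_1T_I^2$ and the trace of $T_I^2$ is fine and matches the paper.
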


\begin{proof}
By the lower bound on $V$, the left hand side minus
$\frac{c_1}2\tr(T_I^2)$ is bounded below by
$$
\frac{c_1}{2}\tr(T_I^2)-\sum_{i\in
J_b} \alpha_i \log
b_i\ge \sum_{i\in J_b} (-\alpha_i \log b_i +c_1 b_i^2),
$$
where the terms can be minimized individually.
When $\alpha_i<0$,
and so $b_i\ge y_3$ a lower bound is $y_2\log y_3$, and the same lower bound holds
for $\alpha_i=0$.
When $\alpha_i>0$ the minimal
value is
\[
\frac{\alpha_i}2 (1- \log {\alpha_i}+\log (2c_1))\ge -y_2(\log c_1)^-. \qedhere
\]

\end{proof}

By comparing minimizers to some specific substitution, we get a bound that depends on the boundary conditions.

\begin{corollary}\label{c:l2bound} Assume that $V(a)$ is minimized at $a=0$ and $V(0)=0$. There exists a constant $c$ depending on $V$ and
so that
the conditional minimizer $(a,
b)$ of $\mathcal H$ satisfies
\begin{equation}\sum_{k\in I}(a_k^2+b_k^2)\le
c\gamma_4|I|+c(1+\gamma_4^{1-\deg V})\left(\sum_{k\in I\setminus J_a} a_k^{\deg V}+\sum_{k\in I\setminus J_b} b_k^{\deg V}\right)
\end{equation}
as long as \begin{equation}\label{e:g4y2y3}
\gamma_4\ge y_2\log(e/ y_3).
\end{equation}
\end{corollary}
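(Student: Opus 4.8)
The plan is to combine Lemma \ref{l:underV} with an explicit test-point comparison to control $\mathcal H$ at the minimizer. First, by Lemma \ref{l:underV} (applied with $c_1$ the leading-order quadratic lower bound for $V$, which is available since $V$ has even degree and positive leading coefficient; one uses $V(x)\ge c_1 x^2 - c_2$ and absorbs the constant into an $O(|I|)$ term), we have
\[
\mathcal H(a,b)\ \ge\ \frac{c_1}{2}\sum_{i\in I}(a_i^2+b_i^2)\ -\ c\,|J_b|\,y_2\log(e/y_3)\ \ge\ \frac{c_1}{2}\sum_{i\in I}(a_i^2+b_i^2)\ -\ c\gamma_4|I|,
\]
using \eqref{e:g4y2y3} and $|J_b|\le |I|$. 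So it suffices to produce an upper bound on $\mathcal H(a,b)$ at the minimizer of the same order as the right-hand side of the claimed inequality. The natural move is to bound $\mathcal H$ at the minimizer by its value at a convenient test point $(\tilde a,\tilde b)$: since the minimizer is the argmin, $\mathcal H(a,b)\le \mathcal H(\tilde a,\tilde b)$.

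For the test point I would take $\tilde a_j=0$ for $j\in J_a$ and $\tilde b_j = \gamma_4$ for $j\in J_b$ (this respects the constraint \eqref{e:bcond} since $\gamma_4\ge y_2\log(e/y_3)\ge y_3$ when the relevant quantities are small, or one adjusts the constant), keeping the boundary values $a_j,b_j$ for $j\in I\setminus(J_a\cup J_b)$ as given. Then $V_I(\tilde a,\tilde b)=\tr V(T_I(\tilde a,\tilde b))$ is a polynomial expression in the finitely many entries of $T_I$; by the path-counting interpretation of $\tr V(T_I)$, each entry of $T_I^m$ is a sum of at most $C$ products of $m\le \deg V$ entries, so $\tr V(T_I(\tilde a,\tilde b))$ is bounded by a constant times $|I|$ times a polynomial in $\gamma_4$ and in the boundary entries, of degree at most $\deg V$. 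The terms involving only the test values $0$ and $\gamma_4$ contribute $O((1+\gamma_4^{\deg V})|I|)$; the terms involving the fixed boundary entries $a_k$ ($k\in I\setminus J_a$) and $b_k$ ($k\in I\setminus J_b$) contribute, after Young's inequality to separate a boundary factor from the remaining factors (which are bounded by a constant times a power of $\gamma_4$), a term of the form $c(1+\gamma_4^{1-\deg V})\big(\sum_{k\in I\setminus J_a}a_k^{\deg V}+\sum_{k\in I\setminus J_b}b_k^{\deg V}\big)$ plus a further $O(\gamma_4|I|)$. Finally the logarithmic term $-\sum_{j\in J_b}\alpha_j\log\tilde b_j = -\sum_{j\in J_b}\alpha_j\log\gamma_4$ is bounded in absolute value by $|J_b|y_2|\log\gamma_4|\le c\gamma_4|I|$ (using $\gamma_4\ge y_2$, so $y_2|\log\gamma_4|$ is $O(\gamma_4)$ up to constants on the relevant range, or one simply notes $y_2\log(1/\gamma_4)\le y_2\log(1/y_3)\le\gamma_4$). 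Combining the lower bound on $\mathcal H(a,b)$ with this upper bound and rearranging gives the stated inequality.

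The main obstacle I anticipate is the bookkeeping in the Young's-inequality step: one must verify that every monomial in $\tr V(T_I(\tilde a, \tilde b))$ that involves a boundary entry can be split so that the boundary entry appears to exactly the power $\deg V$ while the compensating factor is a bounded power of $\gamma_4$, producing precisely the exponent $1-\deg V$ on $\gamma_4$ in the cross term — this is where the specific form of the claimed bound (rather than something weaker) comes from, and it relies on each such monomial having total degree at most $\deg V$ with at least one boundary factor. A secondary point to handle carefully is the interplay between the constraint \eqref{e:bcond} and the choice $\tilde b_j=\gamma_4$: one needs $\gamma_4$ to dominate $y_3$, which follows from \eqref{e:g4y2y3} together with $\log(e/y_3)\ge 1$, so $\gamma_4\ge y_2\ge\dots$; if $y_3$ is not automatically dominated one enlarges the test value to $\max(\gamma_4,y_3)$, which only changes constants.
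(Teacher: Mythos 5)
Your overall architecture is exactly the paper's: lower bound $\mathcal H$ at the minimizer via Lemma \ref{l:underV}, upper bound it by an explicit admissible test configuration, and use a Young-type splitting to push the boundary entries up to the power $d=\deg V$. The genuine gap is in your choice of test value $\tilde b_j=\gamma_4$, together with the claim that the resulting bulk and logarithmic contributions are $O(\gamma_4|I|)$. Nothing in the statement caps $\gamma_4$ at $1$ --- indeed the corollary is invoked in the second case of the proof of Theorem \ref{thm:upper bound} with $\gamma_4=y_1^{d}+y_2\log(e/y_3)>1$ --- and for $\gamma_4>1$ the monomials of degree $d$ built purely from test entries (e.g.\ the $b_j^{d}$ terms, which occur with positive coefficients in $\tr V(T_I)$) make $\mathcal H(\tilde a,\tilde b)$ itself of order $\gamma_4^{d}|I|$, which is not $\le c\gamma_4|I|$; your own bound $O((1+\gamma_4^{d})|I|)$ concedes this, so no bookkeeping in the Young step can rescue that regime. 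For small $\gamma_4$ the log terms also fail: you need $y_2\log(1/\gamma_4)\le c\gamma_4$, and your justification $y_2\log(1/\gamma_4)\le y_2\log(1/y_3)$ presumes $\gamma_4\ge y_3$, which does not follow from \eqref{e:g4y2y3}; moreover, taking $y_3=1$ and $\gamma_4=y_2=t\to 0$ (allowed by \eqref{e:g4y2y3}) gives $t\log(1/t)\not\le ct$ uniformly. Your fallback $\tilde b_j=\max(\gamma_4,y_3)$ to respect \eqref{e:bcond} has the same defect: when $y_3\gg\gamma_4$ the bulk terms become of order $y_3|I|$, so it is not ``only a change of constants''.

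The paper avoids all of this by decoupling the two roles you have merged: the test configuration uses the small value $b'_j=y_2\le 1$, so every pure test-entry monomial is at most $2y_2\le 2\gamma_4$ per index and the logarithmic terms are controlled by $|I|\,y_2\log(1/y_3)\le \gamma_4|I|$ (in the regime where $b'_j=y_2$ is admissible for \eqref{e:bcond}), while $\gamma_4$ enters only at the very end as the free parameter $\eta$ in the elementary inequality $x+x^{d}\le \eta+(1+\eta^{1-d})x^{d}$ applied to the boundary entries; setting $\eta=\gamma_4$ then produces exactly the two terms in the claimed bound, for $\gamma_4$ large or small. If you replace $\tilde b_j=\gamma_4$ by $y_2$ and keep the Young parameter free until the last line, your argument becomes the paper's. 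A secondary slip: lower-bounding $V(x)\ge c_1x^2-c_2$ and ``absorbing'' $c_2|I|$ is not permissible, since an additive $O(|I|)$ cannot be dominated by $c\gamma_4|I|$ when $\gamma_4$ is small; the hypothesis that $V$ is minimized at $0$ with $V(0)=0$ is there precisely so that uniform convexity gives $V(x)\ge c_u x^2/2$ with no additive constant, and you should use it as stated.
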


\begin{proof}
Let $q$ denote the total of the two sums above, and let $d=\deg V$. Consider the candidate vector $a',b'$ in which all $a'_i$ for  $i\in J_a$ are set
identically $0$ and all $b'_i$ for $i\in J_b$ terms are identically equal to $y_2$, and the rest are given by the boundary conditions.
For some $c$ depending on $V$ only, we have
$$V_I(a',b') \le c\sum_{i\in I} \left(|a'_i|+ {a'_i}^{d}+ b'_i+{b'_i}^{d}\right)
$$
since in the expansion, each $a'_i,b'_i$ is contained in a bounded number of monomials and the coefficients are all bounded. Now using the given substitution and the fact that $y_2\le 1$, we get the upper bound
$$V_I(a',b') \le 2cy_2|I| + c\sum_{i\in I\setminus J_a} \left(|a'_i|+ {a'_i}^{d}\right)+ c\sum_{i\in I\setminus J_b}\left( b'_i+{b'_i}^{d}\right).
$$
Note that for any $x,\eta>0$ we have $x+x^d\le \eta^{1-d} x^{d} + \eta$, whence for any $\eta>0$
$$V_I(a',b') \le c|I|(2y_2+\eta) + c (1+\eta^{1-d}) q.$$
This, together with the bound $|I|y_2\log(1/ y_3)$ on the logarithmic terms then yields
\begin{equation}\label{e:Hp}
\mathcal H (a',b') \le c|I|(y_2\log(e/y_3)+\eta) + c (1+\eta^{1-d}) q.
\end{equation}
We have assumed that $V$ is minimized at 0, and $V(0)=0$, so $V$ is bounded below by $c_u x^2/2$, where $c_u$ is the uniform convexity constant form Lemma \ref{l:UniformConvexity}.
By Lemma \ref{l:underV} applied to $T_I$ we have the lower bound for the minimizer $a,b$
\begin{eqnarray*}
\mathcal H(a, b)\ge \frac{c_1}{2}\sum_{k\in I}(a_k^2+b_k^2)-y_2((\log c_2)^- +\log(1/ y_3))|I|.
\end{eqnarray*}
Together with \eqref{e:Hp} and $\mathcal H (a',b')\ge \mathcal H (a,b)$ gives
$$
\sum_{k\in I}(a_k^2+b_k^2) \le c|I|(y_2\log(e/y_3)+\eta) + c (1+\eta^{1-d}) q.
$$
We set $\eta=y_4$ and the claim follows.
\end{proof}

For the proof of Theorem \ref{thm:upper bound} this bound will be iterated. The following lemma isolates what we need for the iteration.

\begin{lemma}\label{l:iteration}
Given $\gamma_1,\gamma_2>0$, $\alpha\ge 2$, and a nondecreasing positive sequence $s_n$, assume that for $n=1,\ldots, n^*-1$ we
have
\begin{equation}\label{e:iteration}
  s_n\le (s_{n+1}-s_n)^\alpha/\gamma_1 + \gamma_2n.
\end{equation}
For any positive $\gamma_3, x_0$ satisfying
$$\gamma_3\le (32\gamma_1)^{\frac{-1}{\alpha-1}}, \qquad x_0\ge \max( (\gamma_2/\gamma_3)^{1-1/\alpha}-1,0)$$
set
$
f(x)=\gamma_3(x+x_0)^\frac{\alpha}{\alpha-1}.
$

If $s_1\ge f(1)$ then $s_n\ge f(n)$ for all $n=1,\ldots, n^*$.
\end{lemma}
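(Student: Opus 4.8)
\emph{Proof plan.}
I would argue by induction on $n$ (equivalently, by passing to the smallest $m\le n^*$ with $s_m<f(m)$ and deriving a contradiction): the base case $n=1$ is the hypothesis $s_1\ge f(1)$, so it suffices to prove the step $s_n\ge f(n)\Rightarrow s_{n+1}\ge f(n+1)$ for $1\le n\le n^*-1$. First rearrange \eqref{e:iteration}: since $s_n$ is nondecreasing we have $s_{n+1}\ge s_n$, and \eqref{e:iteration} gives $(s_{n+1}-s_n)^{\alpha}\ge\gamma_1(s_n-\gamma_2 n)$, hence
\[
   s_{n+1}\ \ge\ s_n+\bigl(\gamma_1(s_n-\gamma_2 n)\bigr)^{1/\alpha}\qquad\text{whenever }s_n\ge\gamma_2 n.
\]
The right-hand side is nondecreasing in $s_n$ on $\{s_n\ge\gamma_2 n\}$, so once $s_n\ge f(n)$ and $f(n)\ge\gamma_2 n$ we get $s_{n+1}\ge f(n)+(\gamma_1(f(n)-\gamma_2 n))^{1/\alpha}$. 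Thus the whole statement reduces to the purely deterministic claim $(\star)$: for every $n\ge1$,
\[
   f(n)\ge\gamma_2 n\qquad\text{and}\qquad\bigl(f(n+1)-f(n)\bigr)^{\alpha}\le\gamma_1\bigl(f(n)-\gamma_2 n\bigr).
\]

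The exponent $p:=\alpha/(\alpha-1)$ in $f(x)=\gamma_3(x+x_0)^{p}$ is chosen to make $(\star)$ scale-invariant: because $\alpha(p-1)=p$, differentiating yields the identity $(f'(x))^{\alpha}=\gamma_3^{\alpha-1}p^{\alpha}f(x)$. I would bound the discrete difference by the derivative, $f(n+1)-f(n)\le f'(n+1)$, apply the identity at $x=n+1$, and then use $f(n+1)\le 2^{p}f(n)\le 4f(n)$ (valid since $(n+1+x_0)/(n+x_0)\le 2$ as $n+x_0\ge1$, and $1<p\le2$) together with $p^{\alpha}\le 4$ to conclude $\bigl(f(n+1)-f(n)\bigr)^{\alpha}\le 16\,\gamma_3^{\alpha-1}f(n)$. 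The hypothesis $\gamma_3\le(32\gamma_1)^{-1/(\alpha-1)}$ is then the calibration that absorbs the increment term into half of $\gamma_1 f(n)$, giving $\bigl(f(n+1)-f(n)\bigr)^{\alpha}\le\tfrac12\gamma_1 f(n)$. Separately, the hypothesis $x_0\ge(\gamma_2/\gamma_3)^{1-1/\alpha}-1$ is tailored so that the linear term stays dominated, $\gamma_2 n\le\tfrac12 f(n)$ for every $n\ge1$: at $n=1$ this reads $\gamma_3(1+x_0)^{p}\ge2\gamma_2$, i.e.\ $1+x_0\ge(2\gamma_2/\gamma_3)^{1-1/\alpha}$, and a short monotonicity check in $n$ propagates it. Combining the two, $\gamma_1(f(n)-\gamma_2 n)\ge\tfrac12\gamma_1 f(n)\ge\bigl(f(n+1)-f(n)\bigr)^{\alpha}$, which is the second half of $(\star)$; the first half is immediate from $\gamma_2 n\le\tfrac12 f(n)$. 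This closes the induction.

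The substantive step is $(\star)$ — controlling $\bigl(f(n+1)-f(n)\bigr)^{\alpha}$ by $\gamma_1(f(n)-\gamma_2 n)$ uniformly in $n$. The scaling identity $(f')^{\alpha}=\gamma_3^{\alpha-1}p^{\alpha}f$ forced by the choice $p=\alpha/(\alpha-1)$ is the one genuinely structural point: it reduces $(\star)$ to a comparison of constants, after which the passage from $f'$ to the discrete difference and the bookkeeping that turns the two explicit thresholds on $\gamma_3$ and $x_0$ into the two factor-$\tfrac12$ splittings is routine. I would also remark that the nondecreasing hypothesis on $s_n$ is used exactly once, namely to fix the sign when extracting the $\alpha$-th root of $(s_{n+1}-s_n)^{\alpha}$ above.
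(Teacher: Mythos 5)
Your proof is correct in structure and follows essentially the same route as the paper's: induction, the bound $f(n+1)-f(n)\le f'(n+1)$, the scaling identity $(f'(x))^{\alpha}=\gamma_3^{\alpha-1}\bigl(\tfrac{\alpha}{\alpha-1}\bigr)^{\alpha}f(x)$, and a two-way split of $\gamma_1\bigl(f(n)-\gamma_2 n\bigr)$ calibrated by the hypotheses on $\gamma_3$ and $x_0$. The one caveat --- that the stated hypothesis on $x_0$ literally gives only $\gamma_2\le f(1)$ rather than the $2\gamma_2\le f(1)$ your halving (and its propagation in $n$) needs --- is a bookkeeping leap the paper's own proof makes at exactly the same point (``which follows from our assumptions''), so your argument matches the paper's step for step.
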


\begin{proof}
We show this by induction on $n$. Note that $f(x)$ has
increasing derivative, so we have $f(x+1)-f(x) \le
f'(x+1)$. With $()_+$ denoting the positive part, we write
$$
s_{n+1}=  s_n+(s_{n+1}-s_n)\ge s_n +
(\gamma_1s_n-\gamma_1\gamma_2n)_+^{\frac1\alpha}\ge
f(n)+(\gamma_1f(n)-\gamma_1\gamma_2n)_+^{\frac1\alpha}
$$
using the bound on the derivative, we get the lower bound $
f(n+1)-f(n+1)'+(\gamma_1f(n)-\gamma_1\gamma_2 n)^{\frac1\alpha} $. So it
suffices to show that $ f(n+1)'\le
(\gamma_1f(n)-\gamma_1\gamma_2n)^{\frac1\alpha} $ This reduces to the
inequality
$$
\left(\frac{\alpha\gamma_3}{\alpha-1}\right)^\alpha
(n+x_0+1)^\frac{\alpha}{\alpha-1} \le
\gamma_1\gamma_3\,(n+x_0)^\frac{\alpha}{\alpha-1}-\gamma_1\gamma_2n.
$$
We first choose $\gamma_3$ so that the dominant terms satisfy
$$
\left(\frac{\alpha\gamma_3}{\alpha-1}\right)^\alpha
(n+x_0+1)^\frac{\alpha}{\alpha-1} \le
\frac{\gamma_1\gamma_3}{2}\,(n+x_0)^\frac{\alpha}{\alpha-1},
$$
which holds for all $n\ge 1, x_0\ge 0$ as long as $\gamma_3\le (32\gamma_1)^{\frac{-1}{\alpha-1}}$.
It suffices to check
$$
\gamma_1\gamma_2n\le \frac{\gamma_1\gamma_3}{2}
(n+x_0)^\frac{\alpha}{\alpha-1},
$$
which follows from our assumptions.
\end{proof}

\begin{proof}[Proof of Theorem \ref{thm:upper bound}]
By shifting the argument and the value of $V$ by a constant we may assume that its minimizer $a^\dagger(1)$ is zero and $V(0)=0$.
By deleting unnecessary indices, we may assume that $I$
is an interval. Let $J=\{\ell,\ldots, r\}$ be the smallest interval containing $J_a\cup J_b$, and
by symmetry we may assume without loss of generality that the index of interest, $k$, is closer to the left: $k-\ell \le r-k$.

Let $d=\deg V$, note that $d\ge 4$ and even. The iteration will use $j^*=\lfloor 2(r-k)/d\rfloor$ steps.
Define the nested intervals
$$J^j=\{k-j d /2,\ldots, k+jd/2\}\cap J ,\qquad j=1,\ldots, j^*-1$$ and let
$J^{j^*}=J$. Let
$s_j=\sum_{k\in J^j}a_k^2+b_k^2$.  We are interested in $s_1$ and
we will control the $s_j$ recursively from $j^*-1$ to $1$.

Let $J_a^j=J_a\cap J^j$, let $J_b^j=J_b\cap J^j$,
and let $I^j$ be the set of indices at most $d/ 2$ away from $J_a^j\cup J_b^j$.
Corollary \ref{c:l2bound} applied to $J_a^j$, $J_b^j$
and $I^j$ gives
\begin{equation}\sum_{k\in I^j}(a_k^2+b_k^2)\le
c\gamma_4|I^j|+c(1+\gamma_4^{1-d} )\left(\sum_{k\in I^j\setminus J_a^j} a_k^{d}+\sum_{k\in I^j\setminus J_b^j} b_k^{d}\right).
\end{equation}
The left hand side is bounded below by $s_{j}$. On the right hand side, we have boundary terms
$$
\sum_{k\in J^j\setminus J_a^j} a_k^{d}+\sum_{k\in J^j\setminus J_b^j} b_k^{d}\le cy_1^{d}.$$
For $j<j^*$ the rest of the summands have indices from $I^j\setminus J^j$, which is a subset
of $(J_{j+1}\setminus J_j)\cup \partial J$. These
can be bounded by $c\left((s_{j+1}-s_j)^{d/2}+y_1^{d}\right)$.
So for $j<j^*$, we have
\begin{equation}\label{e:sj}
s_j \le c\gamma_4j + c(1+\gamma_4^{1-d}) \left((s_{j+1}-s_j)^{d/2}+y_1^{d}\right).
\end{equation}
For $j=j^*$ only the first kind of boundary terms appear, so we have
\begin{equation}\label{e:sj*}
s_{j^*} \le c\gamma_4j^* + c(1+\gamma_4^{1-d}) y_1^{d}.
\end{equation}
We now proceed to analyze two cases. First assume $y_1\le 1$, $y_2\log(e/y3)\le 1$, and set $\gamma_4=y_1+y_2\log(e/y3)$. Then \eqref{e:sj}, \eqref{e:sj*} simplify to
$$
s_j \le c\gamma_4j + c\gamma_4^{1-d} (s_{j+1}-s_j)^{d/2}, \qquad s_{j^*} \le c \gamma_4j^*.
$$
We use Lemma \ref{l:iteration} with
$$
\gamma_2=c\gamma_4, \quad \gamma_3=2\gamma_2, \quad \gamma_1=c'\gamma_4^{d-1}, \quad x_0=0
$$
to get that either $s_1\le \gamma_3 = 2c\gamma_4$, or we have
\begin{equation}\label{e:sj1}\notag
s_{j^*} \ge \gamma_3(j^*)^{1+1/(d/2-1)}\ge c\gamma_4 (j^*)^{1+1/(d/2-1)}.
\end{equation}
Together with $s_{j^*}<c\gamma_4j^*$ the latter implies $j^*<c$ and so $s_1\le s_j \le c\gamma_4$. The claim follows.

Now assume $y_1>1$ or $y_2\log(e/y_3)>1$. Set $\gamma_4=y_1^{d}+y_2\log(e/y3)$. Then \eqref{e:sj}, \eqref{e:sj*} simplify to
$$
s_j \le c\gamma_4j + c(s_{j+1}-s_j)^{d/2}, \qquad s_{j^*} \le c \gamma_4j^*
$$
we use Lemma \ref{l:iteration} with
$$
\gamma_2=c\gamma_4, \quad \gamma_3=c', \quad \gamma_1=c'', \quad x_0=c'''\gamma_4^{1-2/d}
$$
to get that either $s_1\le c\gamma_4^2$, or we have
$s_{j^*} \ge c(j^*)^{1+1/(d/2-1)}$.
Together with $s_j<c\gamma_4j^*$ the latter implies $j^*<c\gamma_4^{d/2-1}$ and so $s_1\le s_{j^*} \le c\gamma_4^{d/2}$. The claim follows.\end{proof}
\subsection{Lower bounds}

We continue with the setup of the previous subsection with the
additional assumption $\alpha_i\ge 0$ and show that:

\begin{proposition}
\label{prop:lowerbound}
There exists a constant $c_V$ depending on $V$ only so that if for the minimizers $a^\sharp, b^\sharp$ of $\mathcal H$
of equation
\eqref{e:localH2}  together with the boundary conditions satisfy
$$\max_{j\in J\cup \partial J} |a^\sharp_j|+|b^\sharp_j|\le y$$
for some $y\ge 1$, then for all $k\in J$ we have
$$
\log b^\sharp_k \ge -c_V\frac{\alpha_k}{y^{\deg V}}.
$$
\end{proposition}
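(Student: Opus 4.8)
The plan is to read the estimate off the first-order condition for $\mathcal H$ in the single coordinate $b_k$. If $\alpha_k=0$ the conclusion is the trivial $b^\sharp_k\ge 0$ of \eqref{e:bcond}, so assume $\alpha_k>0$. Since $\alpha_i\ge 0$ throughout this subsection, the barrier $-\alpha_k\log b_k$ blows up as $b_k\downarrow 0$, so $b^\sharp_k>0$; and as $\mathcal H$ is convex (Lemma \ref{l:UniformConvexity} together with convexity of $b\mapsto-\alpha_k\log b$), its restriction to the $b_k$-axis is stationary at $b^\sharp_k$, i.e.\ $\partial_{b_k}\mathcal H(a^\sharp,b^\sharp)=0$. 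Because $\frac{d}{dt}\tr V(T+tE)|_{0}=\tr(V'(T)E)$ and $b_k$ occupies the symmetric pair of slots $(k,k+1),(k+1,k)$ of $T_I$, this says $2\,(V'(T_I))_{k,k+1}=\alpha_k/b^\sharp_k$, so
\[
  b^\sharp_k=\frac{\alpha_k}{2\,(V'(T_I))_{k,k+1}}\,,
\]
and in particular $(V'(T_I))_{k,k+1}>0$. The proposition is thus reduced to an upper bound on this one matrix entry.

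For that I would expand $V'(x)=\sum_{m\le\deg V-1}c_m x^m$ and each $T_I^m$ over nearest-neighbour walks: $(V'(T_I))_{k,k+1}$ is a finite sum, with $V$-dependent coefficients and a $V$-dependent number of terms, of products of at most $\deg V-1$ entries of $T_I$. Each contributing walk runs from $k$ to $k+1$, hence visits only indices within distance $\deg V$ of $\{k,k+1\}$, all of which lie in $J\cup\partial J$, where $|a^\sharp_j|+|b^\sharp_j|\le y$ with $y\ge 1$ by hypothesis. Hence $(V'(T_I))_{k,k+1}\le c_V\,y^{\deg V-1}\le c_V\,y^{\deg V}$, and the displayed identity gives $b^\sharp_k\ge \alpha_k/(2c_V\,y^{\deg V})$, which is the asserted estimate (most naturally a lower bound on $b^\sharp_k$ itself, with $c_V$ renamed). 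In passing, since $k$ and $k+1$ are adjacent vertices of a path graph, every walk between them crosses the bond $\{k,k+1\}$, so $(V'(T_I))_{k,k+1}$ is in fact divisible by $b_k$; using this improves the exponent of $y$, but it is not needed.

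The one place genuine care is needed is the index bookkeeping: one must check that every vertex a length-$(\deg V-1)$ walk through the $(k,k+1)$ bond can reach really does lie in the window $J\cup\partial J$ on which the hypothesis controls the entries, which is exactly why $\partial J$ is a collar of $J$ and $I$ is the $(\deg V/2)$-enlargement of $J_a\cup J_b$. I expect this containment, rather than any analytic step, to be the main thing to verify carefully. One can also avoid differentiating $\mathcal H$ (a concern when other $b_j$'s may lie on the constraint $b_j\ge 0$) by instead comparing $\mathcal H(a^\sharp,b^\sharp)$ with its value after resetting $b^\sharp_k$ to a fixed test level and using minimality; but $\alpha_k>0$ already forces $b^\sharp_k$ into the interior of its own coordinate, so the one-variable stationarity used above is valid.
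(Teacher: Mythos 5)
Your argument is correct, but it takes a genuinely different route from the paper's. The paper never differentiates: it compares $\mathcal H(a^\sharp,b^\sharp)$ with the configuration obtained by resetting $b_k:=1$, so that minimality gives $-\alpha_k\log b^\sharp_k\le$ (difference of the path-expansion monomials passing through the bond $\{k,k+1\}$), and then bounds that difference by $c_V\,y^{\deg V}$ by counting walks; this yields $\log b^\sharp_k\ge -c_V\,y^{\deg V}/\alpha_k$. Note that this, and not the printed display, is what the paper's proof produces and what is used later (Proposition \ref{prop:lowerfield} needs $B_k\ge e^{-c_1 n/(n-k)}$, i.e.\ $\log b_k\gtrsim -1/\alpha_k$); the fraction in the statement is inverted by a typo, and as literally written it would force $b^\sharp_k$ to be bounded below by a constant even as $\alpha_k\to 0$, which is false already for $V(x)=x^2$, where one-variable stationarity gives $b^\sharp_k=\sqrt{\alpha_k}/2$. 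So your parenthetical ``which is the asserted estimate'' should be read as ``which implies the intended estimate.'' Your route instead exploits interiority of $b^\sharp_k$ (valid: $\alpha_k>0$ makes the barrier blow up, and stationarity in the single coordinate $b_k$ is unaffected by other coordinates sitting on their constraints) to get the exact identity $2(V'(T_I))_{k,k+1}=\alpha_k/b^\sharp_k$, and then bounds that entry by the same kind of walk count the paper uses; the index bookkeeping you flag is indeed fine, both because the walks are walks of $T_I$ and hence never leave $I\subseteq J\cup\partial J$, and because a walk of length at most $\deg V-1$ from $k$ to $k+1$ in a tridiagonal matrix stays within $\deg V/2$ of the bond. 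What your approach buys is a strictly stronger bound, $b^\sharp_k\ge \alpha_k/(c_V\,y^{\deg V})$, polynomial rather than exponential in $\alpha_k$ and $y$; since $\alpha_k\le 1\le y$ and $\log\alpha_k\ge -1/\alpha_k$, it implies the paper's intended estimate with a renamed constant. What the paper's comparison argument buys is robustness: it requires no discussion of differentiability or interiority whatsoever (the test substitution $b_k:=1$ works verbatim even when other coordinates are constrained, and degenerates harmlessly when $\alpha_k=0$), at the price of the weaker exponential form of the bound, which is all that is needed downstream.
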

\begin{proof}
Let $a,b$ equal $a^\sharp, b^\sharp$ except let $b_k=1$. By the minimizer property we have
$$
0\le \mathcal H(a,b)-\mathcal H(a^\sharp, b^\sharp) \le -\alpha_k \log b_k +
\sum_{k\in \pi} w_{\pi}(a,b)- w_{\pi}(a^\sharp, b^\sharp).
$$
The sum is only over path $\pi$ that pass
through $k$, and $w_\pi$ is the monomial corresponding to $\pi$ in the
path expansion of $\tr(V(T_J))$ (all other paths have the same contribution). Counting such paths we get
\[\sum_{k\in \pi} |w_{\pi}(a,b)| \le c'_V(\deg V) 3^{\deg V}y^{\deg V},
\]
and the same holds for $a^\sharp, b^\sharp$. The claim follows.
\end{proof}

\section{Minimizers and boundary conditions}
\label{s:minimizerboundary}

We continue the study of the conditional minimizers of $H$, demonstrating that  they are relatively insensitive to the boundary conditions.
The typical setup now is that we we fix the
values of $a,b$ on a set of indices
(which usually will
be an interval or two intervals $-$ a one or two sided boundary), and minimize
 $H$ subject to these conditions.

Again we will consider such minimizers
for slightly more general Hamiltonians of the form
\begin{equation}\label{e:generalH}
\HH=\HH(a,b)
=\tr(V(T))-\summ_{k=1}^{n-1}\alpha_k\log(b_k)
\end{equation}
with  $\alpha_k\in[0,1]$.   Compared with the modified Hamiltonian \eqref{e:localH2} of the previous section, it is convenient here  to assume the nonnegativity of the coefficients $\alpha_k$ as the conditioning we will need to consider will always have the effect of ``disconnecting" the final stretch of indices.   Recall that in the  actual Hamiltonian  $H$
it is only the indices $k > n - 1/\beta$ (for $\beta < 1$) for which the analogous coefficients are negative.

From now on, for any set of indices $I$, let $\delta I$ be the set of indices outside
$I$ that are at most $\deg V/2$ away from $I$.
An   important consequence of $V$ being polynomial the values of the conditional minimizer $(a_i, b_i)$ for $i\in I$
 depend only on the conditioned values $(a_k, b_k)$ for $k \in \delta I$.

In each of the next two propositions we compare two conditional minimizers of $\mathcal H$ for  same interval $I$, but with respect to different boundary conditions outside $I$. Again, only the conditioned values of the variables in $\partial I$ matter for in problem. To get the desired  bounds though, we requite the additional assumption  that one of the minimizers is in fact a minimizer of for all the variables $I\cup \partial I$ given the variables outside $I\cup \partial I$.

Introducing  the following notation for $\ell^2$-distance over a set of indices
$$
\|(a,b)-(a',b')\|^2_I=\sum_{i\in I} (a_i-a_i')^2+(b_i-b_i')^2,
$$
the warm-up bound reads:

\begin{proposition}\label{prop:close}
Consider any two minimizers $(a, b)$ and $(a', b')$
of $\HH$ over the variables with indices in a set $I$. The variables outside $I$ serve as boundary conditions and are generally different.
Assume further that $(a',b')$ is also a minimizer over variables with indices in $I \cup \partial I$.
Then,
$$
\| (a,b) - (a',b') \|_{I} \le c  \| (a,b) - (a',b')  \|_{\partial I}
$$
as long as $|a_k|, |a^{'}_k|, b_k,  b_k' \le b^*$ for all $k$, and
$b_k, b_k'  \ge  b^*$ for  $k \in I$. The
constant $c$ depends on $V$, and $b_*,b^*$ only.
\end{proposition}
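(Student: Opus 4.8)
The plan is to use convexity of $\HH$ together with the variational (minimizer) property of $(a',b')$ on the larger index set. Write $u=(a,b)$, $u'=(a',b')$, and let $w$ be the configuration that agrees with $u$ on $I$ and with $u'$ outside $I$ (so $w$ is a legal competitor for the minimization problem defining $u'$ over $I\cup\partial I$, since $w$ and $u'$ agree outside $I\cup\partial I$). By uniform convexity of $\HH$ — which follows from Lemma \ref{l:UniformConvexity} applied on the region $b_k\ge b_*>0$ where the $\log$ term is smooth with bounded second derivative, so that the Hessian of $\HH$ is bounded below by some $c_*=c_*(V,b_*)>0$ — the minimizer property of $u'$ on $I\cup\partial I$ gives the quadratic lower bound
$$
\HH(w)-\HH(u') \ge \tfrac{c_*}{2}\,\|w-u'\|^2_{I\cup\partial I} = \tfrac{c_*}{2}\,\|u-u'\|^2_{I},
$$
where the last equality holds because $w-u'$ is supported on $I$ and equals $u-u'$ there. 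So it suffices to bound $\HH(w)-\HH(u')$ from above by $c\,\|u-u'\|^2_{\partial I}$.

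For the upper bound I would instead compare $u'$ with the competitor $w'$ that agrees with $u'$ on $I$ and with $u$ on $\partial I$ — a legal competitor for the problem defining $u$ over $I$ — and exploit that $\HH(w)-\HH(u')$ and $\HH(u)-\HH(w')$ both equal a difference of $\HH$ evaluated at two configurations differing only on $I$ with the $\partial I$-boundary switched. Concretely, using that $u$ minimizes over $I$ we have $\HH(u)\le \HH(w')$, hence
$$
\HH(w)-\HH(u') \le \bigl[\HH(w)-\HH(u)\bigr] + \bigl[\HH(w')-\HH(u')\bigr].
$$
Now $w$ and $u$ differ only at the $\partial I$-coordinates, and likewise $w'$ and $u'$; moreover only those monomials in the path expansion of $\tr(V(T))$ (and only those $\log b_k$ terms with $k\in\partial I$) that actually touch an index of $\partial I$ contribute to either bracket. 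Since $V$ is a fixed polynomial, each coordinate sits in a bounded number of such monomials, each of bounded degree, and all coordinates involved are bounded by $b^*$ while the relevant $b_k$ are bounded below by $b_*$ (so $\log$ is Lipschitz there); a first-order Taylor estimate on the smooth function $\HH$ then gives that each bracket is $O\bigl(\|u-u'\|_{\partial I}\bigr)$ — but to get the \emph{squared} distance I should be more careful: I would instead bound each bracket by $\langle \nabla_{\partial I}\HH(\cdot),\,(u-u')|_{\partial I}\rangle$ plus a quadratic remainder, and absorb the gradient using that at the minimizers the $\partial I$-gradient is controlled.

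Actually the cleanest route, which I expect to be the one that works, is the standard "two-sided variational" trick: add the two minimizer inequalities
$$
\HH(w)-\HH(u')\ge 0,\qquad \HH(w')-\HH(u)\ge 0,
$$
to get $\HH(w)+\HH(w')\ge \HH(u)+\HH(u')$, and then observe that $\HH(w)+\HH(w') - \HH(u)-\HH(u')$ is \emph{exactly} a second-difference of $\HH$ in the $\partial I$-directions — switching the $\partial I$-block from the $u'$-values to the $u$-values, first against the $I$-block of $u$ and then against the $I$-block of $u'$. Because $\HH$ is a polynomial plus logarithms and the only terms depending on \emph{both} an $I$-coordinate and a $\partial I$-coordinate come from the bounded-degree monomials straddling the interface, this mixed second difference is bounded by $c\,\|u-u'\|_{\partial I}\cdot\|u-u'\|_{I}$ with $c=c(V,b_*,b^*)$. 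Combining with the convexity lower bound $\tfrac{c_*}{2}\|u-u'\|_I^2 \le \HH(w)-\HH(u') \le c\,\|u-u'\|_{\partial I}\|u-u'\|_I$ and dividing by $\|u-u'\|_I$ yields $\|u-u'\|_I \le (2c/c_*)\|u-u'\|_{\partial I}$, as claimed.

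The main obstacle is making the "mixed second difference is $\le c\|u-u'\|_{\partial I}\|u-u'\|_I$" step rigorous and uniform: one must track that (i) the $\log b_k$ terms stay in the region $b_k,b_k'\ge b_*$ so their contribution to the mixed difference is benign (in fact the $\log$ terms are \emph{separable} across coordinates, so they contribute \emph{nothing} to a genuinely mixed second difference — a point worth using to simplify), and (ii) the polynomial part's mixed second difference only involves the $O(\deg V)$ coordinates on each side of the interface, with all factors bounded by $b^*$, giving the bilinear bound with a constant depending only on $V$ and $b^*$. I would phrase (ii) via the path expansion of $\tr(V(T))$ exactly as in the proof of Proposition \ref{prop:lowerbound}: a path contributes to the mixed difference only if it crosses the interface, such a path has length $\le\deg V$ and thus visits a bounded neighborhood of $\partial I$, and the number of such paths through any fixed edge is $O(3^{\deg V})$.
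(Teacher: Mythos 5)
Your argument is correct and is in essence the paper's own proof: you use the same two competitor configurations (your $w'$ is exactly the paper's $(a^{\diamond},b^{\diamond})$, and your $w$ plays the role of the competitor $(a,b)$ in the $I\cup\partial I$ problem), the same uniform-convexity lower bound at the minimizer $(a',b')$, and you obtain the same constant dependence on $V,b_*,b^*$. The only real divergence is in the upper bound: the paper writes $\HH(a,b)-\HH(a',b')\le \HH(a^{\diamond},b^{\diamond})-\HH(a',b')$ and Taylor-expands around $(a',b')$, where the linear terms vanish by minimality over $I\cup\partial I$, bounding the Hessian (polynomial part plus the $\log$ terms via $b_*$, and Gershgorin) to get the quadratic bound $c\,\|(a,b)-(a',b')\|_{\partial I}^2$; you instead add the two minimality inequalities and estimate the resulting four-point mixed second difference, which is bilinear, $c\,\|\cdot\|_{I}\,\|\cdot\|_{\partial I}$, because the $\log$ terms are separable (so cancel exactly) and only interface-straddling monomials of $\tr V(T)$ survive. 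Your variant buys a small simplification --- no Hessian bound for the logarithmic part and no explicit use of stationarity of $(a',b')$ in the upper-bound step --- at the cost of dividing by $\|(a,b)-(a',b')\|_{I}$ at the end (trivial when that norm vanishes). One small correction: the lower Hessian bound does not come from the $\log$ terms having bounded second derivative on $\{b_k\ge b_*\}$; since the coefficients $\alpha_k$ in \eqref{e:generalH} are nonnegative, the terms $-\alpha_k\log b_k$ are convex and only reinforce the bound $\Hess\ge c_u I$ of Lemma \ref{l:UniformConvexity}, so $b_*$ is not needed for that direction (it matters only for upper bounds on second derivatives, which your route avoids anyway).
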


\begin{proof} Without loss of generality, we can assume that $(a,b)_i=(a',b')_i$ for indices outside $I\cup \partial I$.
Let $(a^{\diamond}, b^{\diamond})$ equal $(a',b')$ on $I$ and $(a, b)$
elsewhere. We consider the problem of minimizing $\mathcal H$ over the indices in $I\cap \partial I$ with the variables with different indices fixed.
This problem is solved by $(a',b')$, so by uniform convexity (Lemma \ref{l:UniformConvexity}), the candidate $(a,b)$ satisfies
\begin{eqnarray*}
c_1 \| (a, b) - (a', b') \|^2 &\le& \HH (a, b)-
\HH (a', b').
\end{eqnarray*}
Now consider the problem of minimizing $\mathcal H$ over the indices in $I $, with variables with indices outside $I$ fixed. For the boundary conditions given by $(a,b)$ on $\partial I$, the variables $(a,b)$ solve this problem, and the other candidate $(a^\diamond, b^\diamond)$ satisfies $\HH (a, b)\le \HH(a^\diamond, b^\diamond)$. Thus we get
\begin{eqnarray*}
\HH (a, b)-
\HH (a', b') &\le& \HH ( a^{\diamond},b^{\diamond} )- \HH (a',b')
 \\ &\le&
c_2 \| (a^{\diamond},b^{\diamond}) - (a', b') \|^2\\
&=& c_2 \| (a, b) - (a', b') \|^2_{\partial I}.
\end{eqnarray*}
The last
inequality needs Taylor expansion around $(a', b')$ and
a bound on the second derivative. Note that since $(a',b')$ solves the minimization problem for the variables with indices in $I\cup \partial I$, the linear terms vanish, and we have
$$
\HH (a,b)- \HH (a', b') \le \frac{1}{2} \sup
\| \Hess_{\HH} \|_{2\to2} \| (a,b) - (a', b') \|^2
$$
where the supremum is over $a$'s and $b$'s on the line segment between
$(a,b)$ and $(a', b')$. The second partial derivatives
of the polynomial part of $\HH $ are bounded above by
constant times a power of $b^*$, and that of the
logarithmic part are bounded above by a constant times
$b_*^{-2}$.

Since ${\Hess}_{\HH}$ has at most $2 \deg(V)$ nonzero entries in each row,
and each one is bounded, it follows that as a quadratic form it  is bounded by a
constant times the identity matrix by the Gershgorin Circle
Theorem.
\end{proof}

Iterating the above produces the estimate we will use going forward.

\begin{proposition}\label{prop:minbound}
Consider now any two minimizers $(a, b)$ and $(a', b')$
of $\HH$ over the variables with indices in an interval $I$. The variables outside $I$ serve as boundary conditions and are generally different.
Assume further that $(a',b')$ is also a minimizer over variables with indices in $I \cup \partial I$.
Then, for all $k \in I$,
$$
| a_k - a_k'| + |b_k - b_k' | \le c\exp(-c' \,\dist(k , \partial I))\;  \| (a,b) -(a',b') \|_{\partial I}$$
as long as $|a_k|, |a_k'|, b_k, b_k' \le b^*$ and $\alpha_k\ge \alpha_*>0$ for all $k\in \partial J$. The constants
$ c, c' > 0$ depend on $V$ and $\alpha_*,b^*$ only.
\end{proposition}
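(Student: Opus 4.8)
The plan is to bootstrap Proposition~\ref{prop:close}, which by itself only gives a bounded (non‑contracting) comparison of the two minimizers, into an exponential estimate by iterating it over a telescoping family of sub‑intervals that shed $\deg V/2$ indices per step. The one ingredient not already in the hypotheses is a uniform positive lower bound on the off‑diagonal entries: applying Proposition~\ref{prop:lowerbound} to $(a,b)$ with $J=I$ and to $(a',b')$ with $J=I\cup\partial I$ — legitimate since the $\alpha_k$ are nonnegative and all entries are bounded by $b^*$ — produces a constant $b_*>0$ (depending only on $V,\alpha_*,b^*$) with $b_k,b_k'\ge b_*$ for every $k\in I$. Hence Proposition~\ref{prop:close} is applicable on every sub‑interval of $I$, with a constant uniform across such applications.

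Set $s=\deg V/2$ and, for $j\ge 0$, let $I_j$ be $I$ with its $js$ outermost indices removed from each end, so $I_0=I$ and the $I_j$ decrease. A direct check shows $\partial I_j=I_{j-1}\setminus I_j$ for $j\ge1$, and in particular $I_j\cup\partial I_j\subseteq I\cup\partial I$. Since $(a,b)$ is a critical point of the convex function $\HH$ in the coordinates of $I$, it is also a critical point — hence a minimizer — in the smaller block $I_j$ with the remaining coordinates frozen; likewise $(a',b')$ minimizes $\HH$ over $I_j\cup\partial I_j$. Thus Proposition~\ref{prop:close} applies with interval $I_j$, and writing $e_j:=\|(a,b)-(a',b')\|_{I_j}$ it yields
\[
  e_j \;\le\; c\,\|(a,b)-(a',b')\|_{\partial I_j}\;=\;c\,\|(a,b)-(a',b')\|_{I_{j-1}\setminus I_j}\;=\;c\,\sqrt{e_{j-1}^{2}-e_j^{2}}\,,
\]
so $e_j^{2}\le\rho\,e_{j-1}^{2}$ with $\rho=c^{2}/(1+c^{2})\in(0,1)$, hence $e_j\le\rho^{j/2}e_0$; and the base case $e_0=\|(a,b)-(a',b')\|_I\le c\,\|(a,b)-(a',b')\|_{\partial I}$ is Proposition~\ref{prop:close} for $I$ itself. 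Given $k\in I$, let $j$ be the largest index with $k\in I_j$ (so $j\ge \dist(k,\partial I)/s-1$); then
\[
  |a_k-a_k'|+|b_k-b_k'|\;\le\;\sqrt2\,e_j\;\le\;\sqrt2\,c\,\rho^{j/2}\,\|(a,b)-(a',b')\|_{\partial I}\;\le\;c\,e^{-c'\dist(k,\partial I)}\,\|(a,b)-(a',b')\|_{\partial I}
\]
with $c'=-(\log\rho)/(2s)>0$, which is the claim; if $I$ is too short for $I_1$ to be nonempty, then $\dist(k,\partial I)<s$ and the bound is immediate from Proposition~\ref{prop:close}.

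The work here is bookkeeping rather than analysis, and that is where the care is needed: one must verify that on each scale $(a,b)$ restricts to a genuine minimizer of $\HH$ over $I_j$ and $(a',b')$ to a minimizer over $I_j\cup\partial I_j$ (both instances of ``a critical point of a convex function is a minimizer on any sub‑block of coordinates'', using $I_j\cup\partial I_j\subseteq I\cup\partial I$), that the boundary sets $\partial I_j$ telescope into the annuli $I_{j-1}\setminus I_j$, and that the uniform lower bound $b_*$ from Proposition~\ref{prop:lowerbound} keeps the constant in Proposition~\ref{prop:close} independent of $j$. Once these are in place the exponential decay reduces to the elementary recursion $e_j^{2}\le\rho\,e_{j-1}^{2}$.
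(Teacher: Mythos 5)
Your proof is correct, and it follows the same basic strategy as the paper---iterate Proposition \ref{prop:close} over a nested family of intervals to upgrade its bounded comparison into exponential decay---but the implementation is genuinely different and in one respect cleaner. The paper fixes $k$, builds blocks $I_{-m},\dots,I_m$ of width $\deg V/2$ centered at $k$, applies Proposition \ref{prop:close} to the growing intervals $I_{1-j}\cup\cdots\cup I_{j-1}$ to obtain $\sum_{i<j}x_i\le c\,x_j$ for the squared block norms, and then needs the Gronwall-type Lemma \ref{lem:tobewritten} to extract geometric decay. You instead shrink $I$ symmetrically by $\deg V/2$ per step, note that $\partial I_j=I_{j-1}\setminus I_j$, and use the Pythagorean identity $\|(a,b)-(a',b')\|^2_{\partial I_j}=e_{j-1}^2-e_j^2$ to convert Proposition \ref{prop:close} directly into the one-step contraction $e_j^2\le \frac{c^2}{1+c^2}\,e_{j-1}^2$; this dispenses with Lemma \ref{lem:tobewritten} and with the bookkeeping needed when $k$ is off-center, the distance to the nearer endpoint entering through the largest $j$ with $k\in I_j$. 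The restriction facts you need (a conditional minimizer remains one on any sub-block given its own values, and $I_j\cup\partial I_j\subseteq I\cup\partial I$ so that $(a',b')$ keeps its extra minimizing property) are exactly the observations the paper makes, and your use of Proposition \ref{prop:lowerbound} to produce the uniform $b_*$ matches the paper's first step; the only interpretive difference is that the paper treats the bound $|a_k|,|a_k'|,b_k,b_k'\le b^*$ as given only on $\partial I$ and first propagates it inward via Theorem \ref{thm:upper bound}, whereas you read it as holding throughout---under the paper's reading you would simply add that one invocation of Theorem \ref{thm:upper bound} before invoking Propositions \ref{prop:lowerbound} and \ref{prop:close}.
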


\begin{proof}
By Theorem \ref{thm:upper bound} and Proposition \ref{prop:lowerbound} the upper bound $b^*$ on $\partial I$
coupled with the lower bound on  the $\alpha$'s imply upper bounds on $|a_k|, b_k, |a_k'|, b_j' \le {\hat b^*}$ as well as  lower
bounds on $b_k,b_k' \ge b_*$ for $k\in I$. This fact will be required in order to employ
Proposition \ref{prop:close}.

Fix $k$, and divide the interval $I\cup \partial I$ into consecutive blocks $I_{-m},\ldots, I_{m}$, so that
\begin{itemize}
\item  $\partial I = I_{-m}\cup I_m$,
\item for $1\le |j|\le m-1$ the intervals $I_j$ have length $\deg V/2$,
\item $k\in I_0$, and $\dist(I_0^c,k)\le  \deg V/2$.
\end{itemize}
This forces $m\ge \dist(k,\partial I)\times 2/\deg V$. For $j=0,\ldots, m$ set
$$
x_j = \| (a,b) - (a',b')  \|_{I_j \, \cup I_{-j}}^2.
$$
The value of a conditional minimizer at an index $i$ only depends on any conditioned values
for indices within $\deg V/2$ of $i$. This implies that, for any subinterval $I'$ of $I$, the values of $(a,b)$ (or $(a',b')$
are the minimizers of $\HH$ conditioned on the respective values of $(a,b)$ (or $(a',b')$ on $\partial I'$.
Therefore, applying Proposition \ref{prop:close} to the interval $I_{1-j}\cup \cdots \cup I_{j-1}$ we find that
$$
\sum_{i=0}^{j-1} x_i \le c x_j,
$$
with the same constant $c$ for each $j$.
Applying the Gronwall-type inequality of
Lemma~\ref{lem:tobewritten} below yields
$$
 x_0\le c x_m e^{-c' (m+1)}
$$
for some $c'>0$. This implies the statement of the proposition.
\end{proof}

\begin{lemma}\label{lem:tobewritten}
Let $x_i\ge 0$ satisfy $x_1+\ldots +x_k\le cx_k$ for some $c>0$.
Then, $x_1\le c x_ke^{-c' k}$ with another constant  $c'>0$.
\end{lemma}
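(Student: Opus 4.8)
The plan is to derive a multiplicative contraction for the partial sums $S_k = x_1 + \cdots + x_k$ from the hypothesis, and then iterate it. Observe that the assumed inequality $x_1 + \cdots + x_k \le c\, x_k$ can be rewritten as $S_k \le c(S_k - S_{k-1})$, i.e. $(c-1) S_k \ge c\, S_{k-1}$, which gives
\[
S_{k-1} \le \frac{c-1}{c}\, S_k = \Bigl(1 - \tfrac1c\Bigr) S_k.
\]
Note that this forces $c > 1$ whenever any $x_i > 0$ (if $c \le 1$ the inequality with all the stated $x_i \ge 0$ would force $x_1 = \cdots = x_k = 0$, in which case the conclusion is trivial); so we may assume $c > 1$ and set $\rho = 1 - 1/c \in (0,1)$.

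Next I would iterate this one-step bound. The hypothesis is stated for a fixed $k$, but in the application (Proposition \ref{prop:minbound}) it holds for every $j = 1, \dots, m$ with the same constant $c$; so I would state the lemma for a sequence $x_1, \dots, x_k \ge 0$ satisfying $x_1 + \cdots + x_j \le c\, x_j$ \emph{for all} $j \le k$, which is what is actually used. Under that hypothesis the above gives $S_{j-1} \le \rho\, S_j$ for each $j \le k$, and iterating from $j = k$ down to $j = 1$ yields $S_1 \le \rho^{k-1} S_k$. Since $x_1 = S_1$ and $x_k \le S_k \le c\, x_k$ (in particular $S_k \le c\, x_k$), we conclude
\[
x_1 = S_1 \le \rho^{k-1} S_k \le c\, \rho^{k-1}\, x_k = c\, e^{-c'(k-1)} x_k \le c\, e^{c'}\, e^{-c' k} x_k,
\]
with $c' = -\log\rho = \log\bigl(\tfrac{c}{c-1}\bigr) > 0$; absorbing the harmless factor $e^{c'}$ into the constant gives exactly the stated form $x_1 \le c\, x_k e^{-c'k}$.

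There is essentially no analytic obstacle here; the only point requiring a little care is the reading of the hypothesis. As literally written ("$x_1 + \cdots + x_k \le c x_k$ for some $c > 0$", for a single $k$) the lemma does not follow — for instance $x_1 = x_2 = 1$ satisfies $x_1 + x_2 \le 2 x_2$ but $x_1 = 1$ is not bounded by $2 e^{-2 c'}$ for the $c'$ produced — so the intended statement must be the "for all $j \le k$" version, which is indeed what Proposition \ref{prop:minbound} supplies (it applies Proposition \ref{prop:close} to every nested block $I_{1-j} \cup \cdots \cup I_{j-1}$, obtaining $\sum_{i=0}^{j-1} x_i \le c\, x_j$ for each $j$). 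So the main "work" is simply to phrase the lemma correctly; after that the telescoping/geometric-decay argument above is a two-line computation.
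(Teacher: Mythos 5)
Your proof is correct, and it takes a somewhat different (and more elementary) route than the paper. You contract the partial sums directly: rewriting the hypothesis at index $j$ as $S_j\le c(S_j-S_{j-1})$ gives $S_{j-1}\le(1-\tfrac1c)S_j$, and iterating yields $x_1=S_1\le c\rho^{\,k-1}x_k$ with $\rho=1-1/c$, i.e.\ an explicit rate $c'\asymp\log\frac{c}{c-1}\asymp 1/c$. The paper instead groups the indices into blocks of length of order $c$ (the ``$q=1/c$'' in its proof is evidently a slip for a block length comparable to $c$), observes that each block sum dominates the sum of all previous ones and hence at least doubles, and then compares the resulting geometric growth $y_\ell\ge 2^{\ell-2}y_1$ with the upper bound $c\,x_k$. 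The two arguments give the same decay rate; yours avoids the blocking step, while the paper's blocking is the natural way to see the rate when $c$ is large (doubling per $c$ steps). You are also right about the reading of the hypothesis: as literally stated for a single $k$ the lemma fails (e.g.\ $x_1=\dots=x_{k-1}=(c-1)/(k-1)$, $x_k=1$ gives only polynomial decay), and the intended hypothesis is $x_1+\dots+x_j\le c\,x_j$ for every $j\le k$ — this is exactly what Proposition \ref{prop:minbound} supplies, and the paper's own proof tacitly uses it as well (it needs $y_1+\dots+y_\ell\le y_{\ell+1}$ for every $\ell$). The only cosmetic caveat is that your final bound has prefactor $c\,e^{c'}$ rather than $c$; since all constants here are generic and the application only needs some $c,c'>0$, this is immaterial.
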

\begin{proof} Define block sums $y_j=x_{jq}+\ldots
+x_{(j+1)q-1}$ where $q=1/c$. Then $y_1+\ldots +y_\ell\le
y_{\ell+1}$.  Inductively it is easy to see that
$y_{\ell} \ge 2^{\ell-2}y_{1}$, and it follows that
\begin{eqnarray*}
c x_k  & \ge & x_1 + \cdots + x_{k} \\
            & \ge & y_1 + \cdots + y_{[k/q]} \\
            & \ge & y_1 (1 + 2 + \cdots + 2^{[k/q]-2}) \ge  x_1 e^{c' k},
\end{eqnarray*}
as desired.
\end{proof}

\section{Conditional and local minimizers are close}
\label{s:conditionallocal}

We are finally in position to the global (and conditional) minimizers are well approximated by the
the local minimizers.  Recall the local Hamiltonian \ref{localH} introduced in Section \ref{s:local minimizers},
\begin{equation*}
H^{(x)}=H^{(x)}(a,b)=\tr(V(C))-\sum_{k=1}^{n}(1-x)\log(b_k),
\end{equation*}
where again $C$ is s symmetric circulant matrix with main diagonal given by the $a$'s, first off-diagonal given by the $b$'s and zeros elsewhere. Denote again
by $
 (a^\dagger=a^\dagger(x),b^\dagger=b^\dagger(x))
 $ its (independent of $n$, unique) minimizer, Proposition \ref{p:analytic}. Recall as well
 the local minimizers corresponding to index $k$:
\begin{equation*}
  a^\dagger_k=a^\dagger(k/n+1/(n\beta)), \qquad b^\dagger_k=b^\dagger(k/n+1/(n\beta)).
\end{equation*}

\begin{proposition}\label{p:twoH} Let $\mathcal H, \mathcal H'$ be two Hamiltonians given by $\alpha_i, \alpha_i'$, respectively as in \eqref{e:generalH}. Let $J$ be a subinterval of $1,\ldots, n$, let $(a,b)=(a',b')$ outside $J$, and let $(a,b)=(a',b')$  be the minimizers of the corresponding Hamiltonians inside $J$. Then with
$q =\sum_{j\in J} (a_j-a'_j)^2 + (b_j-b'_j)^2$ we have
\begin{align}\label{e:twoH}
q\le \left| \sum_{k \in J} (\alpha_k-\alpha_k')\log b_k \right|+\left| \sum_{k \in J} (\alpha_k-\alpha_k')\log b'_k \right|
 \le c\log ( b^*/b_*)\max_{k \in J}|\alpha_k-\alpha_k'|.
\end{align}
where the second inequality assumes that
\begin{equation}\label{e:bcond2}
b_k,b_k'\in[b_*,b^*]\qquad \mbox{ for }k\in J.
\end{equation} Moreover, for every $y>0$ there exists $b_*,b^*>0$ so that \eqref{e:bcond2} holds as long as $\alpha_k>1/y$ for $k\in J$ and
$|a'_k|,b'_k\le y$ for $k$ with $0<\dist(J,k)\le \deg V/2$.
\end{proposition}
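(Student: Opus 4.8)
The plan is to prove the three assertions of the proposition in turn: the convexity estimate (the first inequality in \eqref{e:twoH}), the logarithmic bound (the second inequality), and finally the existence of constants $b_*,b^*$ realizing \eqref{e:bcond2}.

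For the first inequality I would play the two minimizers off against each other. Since $(a,b)$ minimizes $\mathcal H$ over the coordinates indexed by $J$ with the remaining coordinates held fixed, and $(a',b')$ agrees with $(a,b)$ outside $J$, the point $(a',b')$ is an admissible competitor for that problem; uniform convexity of $\mathcal H$ restricted to these coordinates — Lemma \ref{l:UniformConvexity} for the trace part, the $-\alpha_k\log b_k$ terms only adding curvature — gives $\mathcal H(a',b')-\mathcal H(a,b)\ge \tfrac{c_u}{2}q$. Symmetrically, $\mathcal H'(a,b)-\mathcal H'(a',b')\ge\tfrac{c_u}{2}q$. Adding these two inequalities, the common trace terms $\tr V(T(a,b))$ and $\tr V(T(a',b'))$ cancel exactly, using $\mathcal H-\mathcal H'=-\sum_k(\alpha_k-\alpha_k')\log b_k$ and the same identity with $b'_k$, and one is left with
$$
c_u\, q\;\le\;\sum_{k\in J}(\alpha_k'-\alpha_k)(\log b_k'-\log b_k)\;=\;\sum_{k\in J}(\alpha_k-\alpha_k')\log b_k-\sum_{k\in J}(\alpha_k-\alpha_k')\log b_k',
$$
where only indices in $J$ survive since $b_k=b_k'$ elsewhere. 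The triangle inequality now gives the first inequality (with $c_u$ absorbed). For the second inequality, having arranged $b_*\le 1\le b^*$ — permissible since $b_*,b^*$ are chosen in the last step — condition \eqref{e:bcond2} forces $|\log b_k|,|\log b_k'|\le\log(b^*/b_*)$ for $k\in J$, so each of the two sums is at most $\log(b^*/b_*)\sum_{k\in J}|\alpha_k-\alpha_k'|$, which is the asserted bound.

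It remains to produce $b_*,b^*$, and here I would call on Section \ref{sec:minimizerbounds}. Minimizing $\mathcal H=\tr(V(T))-\sum_k\alpha_k\log b_k$ over the $J$-coordinates with the others fixed is, up to an additive constant coming from paths that avoid $J$, the same as minimizing $\tr(V(T_I))-\sum_{k\in J}\alpha_k\log b_k$ with $I=J\cup\partial J$, so Theorem \ref{thm:upper bound} applies with $J_a=J_b=J$, with $y_2=1$ (as $\alpha_k\le 1$) and $y_3=1$ (as $\alpha_k\ge 0$, making \eqref{e:bcond} vacuous); the hypothesis $|a'_k|,b'_k\le y$ on $\partial J$ then yields upper bounds $|a_k|,b_k\le\hat b^*(V,y)$ on $J$, and likewise for $(a',b')$. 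With this a priori control the hypotheses of Proposition \ref{prop:lowerbound} are met, so for $k\in J$ we get $\log b_k\ge -c_V\alpha_k/Y^{\deg V}\ge -c_V/Y^{\deg V}$ with $Y=\max(\hat b^*,y,1)$, i.e. $b_k\ge\exp(-c_V/Y^{\deg V})>0$; the hypothesis $\alpha_k>1/y$ is precisely what makes this lower bound non-vacuous, and similarly for $b_k'$ (using positivity of $\alpha_k'$). Setting $b^*=\max(\hat b^*,1)$ and $b_*=\min(\exp(-c_V/Y^{\deg V}),1)$ establishes \eqref{e:bcond2}.

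The conceptual content is entirely in the first step — the observation that adding the two minimizer inequalities makes the trace terms cancel identically — after which the first two bounds are immediate. The part that requires the most care is the last one: Proposition \ref{prop:lowerbound} cannot be invoked until the a priori upper bound of Theorem \ref{thm:upper bound} is in hand, and one must track the positivity of the coefficients $\alpha_k$ (and $\alpha_k'$) throughout, both to keep the minimizers interior and to make the logarithmic lower bound meaningful. None of this is deep, but it amounts to feeding the earlier lemmas their hypotheses in the right order.
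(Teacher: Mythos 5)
Your argument is essentially the paper's: both proofs play the two minimizer inequalities off against each other, use uniform convexity (Lemma \ref{l:UniformConvexity}) to lower-bound the resulting excess by a constant multiple of $q$, and note that the trace terms cancel identically so that only the log-term differences over $J$ survive — the paper phrases this as an interval-overlap observation plus one application of convexity, while you add the two convexity inequalities, which is the same computation. One small caveat: your bound for the second inequality is $\log(b^*/b_*)\sum_{k\in J}|\alpha_k-\alpha_k'|$ rather than the stated $c\log(b^*/b_*)\max_{k\in J}|\alpha_k-\alpha_k'|$ (off by a factor $|J|$), but the paper's own proof does not treat that clause, nor the final ``moreover'' clause, which you handle by the natural route through Theorem \ref{thm:upper bound} and Proposition \ref{prop:lowerbound}.
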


\begin{proof}
By the definition of the minimizer, we have
$$\mathcal H(a,b)\le \mathcal H(a',b') \qquad \mathcal H'(a',b') \le \mathcal H'(a,b)
$$
This implies that the two intervals, defined by their endpoints (whose order we do not specify)
$$
\{\mathcal H(a,b), \mathcal H'(a,b)\} \qquad \mbox{and} \qquad \{\mathcal H(a',b'), \mathcal H'(a',b')\}
$$
overlap, and therefore the difference between any two of these four values is bounded above by the sum of the lengths of these intervals.
$$
\mathcal H(a',b')-\mathcal H(a,b) \le
|\mathcal H(a,b)-\mathcal H'(a,b)|+|\mathcal H(a',b')-\mathcal H'(a',b')|
$$
by convexity, the left hand side is bounded below by a constant multiple of $q$. For the right hand side, all polynomial terms vanish. The remaining terms  are bounded by the right hand side of \eqref{e:twoH}.
\end{proof}

\begin{corollary}\label{c:localtoconditional} Let $J=\{\kappa-\ell,\ldots, \kappa+\ell\}$ be a subinterval of $1,\ldots, n$, and
consider the minimizer $(a,b)$ of $H$ given the values outside $J$. Consider also the modified hamiltonian
\begin{equation}\label{localH3}
H_\kappa(a,b)=V(a,b) - \sum_{k=1}^{n-1} (1-\kappa/n-1/(n\beta))\log b_k
\end{equation}
which differs from $H$ in that the coefficients of the $\log$ terms do not change with $k$. Fix $(a',b')=(a,b)$ outside $J$, and let $(a',b')$ be the minimizer of $H_\kappa$ inside $J$. Assume that $b_k\ge b_*$ for $k\in J$. Let
$$
q =\sum_{j\in J} (a_j-a'_j)^2 + (b_j-b'_j)^2, \qquad y=\max_{i=1\ldots \ell} |b'_{\kappa-i}-b'_{\kappa+i}|
$$
Then we have
$$
q\le c\frac{ \ell^2}{n} \max\left(y, \frac{\ell^2}{n}\right)
$$
for a constant $c = c(b_*)$.
\end{corollary}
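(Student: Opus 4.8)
The plan is to recognize that $H$ and $H_\kappa$ are two instances of the Hamiltonian \eqref{e:generalH}: the coefficients of $H$ are $\alpha_k=1-k/n-1/(n\beta)$, those of $H_\kappa$ are the constants $\alpha_k'=1-\kappa/n-1/(n\beta)$, so that $\alpha_k-\alpha_k'=(\kappa-k)/n$ for every index (we stay in the range of indices where all $\alpha_k,\alpha_k'$ lie in $[0,1]$, which is the case in the applications). Since $(a,b)$ and $(a',b')$ agree outside $J$ and are the respective minimizers inside $J$, Proposition \ref{p:twoH} applies and its first inequality gives
\[
q \;\le\; \Bigl|\, \sum_{k\in J}(\alpha_k-\alpha_k')\log b_k \,\Bigr| \;+\; \Bigl|\, \sum_{k\in J}(\alpha_k-\alpha_k')\log b'_k \,\Bigr|.
\]
The key observation is that $J=\{\kappa-\ell,\dots,\kappa+\ell\}$ is symmetric about $\kappa$ and the middle index carries coefficient $0$, so pairing $\kappa-i$ with $\kappa+i$ turns each of the two sums into $\tfrac1n\sum_{i=1}^{\ell} i\,(\log b_{\kappa-i}-\log b_{\kappa+i})$ (respectively with $b'$). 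Hence only the \emph{antisymmetric} variation of the sequences about $\kappa$ enters, which is exactly what $y$ controls.

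Next I would estimate the logarithmic differences. Because $b_k\ge b_*$ on $J$, the mean value theorem gives $|\log b_{\kappa-i}-\log b_{\kappa+i}|\le b_*^{-1}|b_{\kappa-i}-b_{\kappa+i}|$; inserting $b'$ by the triangle inequality and using $|b_k-b'_k|\le\sqrt q$ for $k\in J$ along with $|b'_{\kappa-i}-b'_{\kappa+i}|\le y$ yields $|b_{\kappa-i}-b_{\kappa+i}|\le y+2\sqrt q$. For the $b'$-sum one needs $b'$ bounded away from $0$ on $J$: this follows from $b_k\ge b_*$ and $|b_k-b'_k|\le\sqrt q$ once $\sqrt q\le b_*/2$ --- a smallness that holds in the regime where the corollary is used, and which in any case is forced by the a priori bounds of Sections \ref{sec:minimizerbounds}--\ref{s:minimizerboundary} (under $b_k\ge b_*$ these also keep $b'_k$ bounded below on $J$); then $|\log b'_{\kappa-i}-\log b'_{\kappa+i}|\le 2y/b_*$. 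Summing $\sum_{i=1}^{\ell}i\le\ell^2$ and setting $A:=\ell^2/(nb_*)$, the two bounds combine to the self-improving estimate
\[
q \;\le\; \frac{(y+2\sqrt q)\,\ell^2}{nb_*} + \frac{2y\,\ell^2}{nb_*} \;=\; 3Ay + 2A\sqrt q .
\]

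Finally I would solve this inequality: reading it as a quadratic in $\sqrt q$ gives $\sqrt q\le 2A+\sqrt{3Ay}$, hence $q\le 8A^2+6Ay$. Splitting into the cases $y\ge\ell^2/n$ and $y<\ell^2/n$ and substituting $A^2=(\ell^2/n)^2 b_*^{-2}$ and $Ay=(\ell^2/n)\,y\,b_*^{-1}$, each term is dominated by $c(b_*)\,\tfrac{\ell^2}{n}\max(y,\ell^2/n)$, which is the assertion (with the constant also depending, as throughout, on the fixed data $V$ and $\beta$ through Proposition \ref{p:twoH}). I expect the only genuinely delicate point to be the bootstrap above --- ensuring we are in the regime where $b'$, and not merely $b$, is bounded away from zero so that the logarithm is Lipschitz there; granting that, everything reduces to the pairing identity and elementary algebra.
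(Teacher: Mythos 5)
Your proposal is correct and follows essentially the same route as the paper's proof: the first bound of Proposition \ref{p:twoH}, the antisymmetric pairing of indices about $\kappa$, the Lipschitz bound $|\log x-\log y|\le |x-y|/(x\wedge y)$, triangulation through $b'$, and solving the resulting self-improving inequality in $\sqrt q$ (the paper uses Cauchy--Schwarz where you bound $|b_k-b'_k|$ termwise by $\sqrt q$, an immaterial difference since both yield the stated $\tfrac{\ell^2}{n}\max(y,\ell^2/n)$ bound). The lower bound on $b'_k$ that you flag as the delicate point is likewise passed over in the paper's proof (``and likewise for $(a',b')$'') and is supplied at the point of use by the a priori bounds of Theorem \ref{thm:upper bound} and Proposition \ref{prop:lowerbound}, exactly as you suggest.
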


\begin{proof}
We use the first bound in  \eqref{e:twoH}. The first sum is bounded above by
\begin{align*}
\left| \sum_{k \in J} \frac{\kappa-k}{n} \log b_k \right|
 = \left| \sum_{i=1}^\ell \frac{i}{n} \log (b_{\kappa+i}-\log b_{\kappa-i})  \right|
 \le \frac{1}{ b_* n} \sum_{i=1}^{\ell} i  \left| b_{\kappa+i} - b_{\kappa-i} \right|,
\end{align*}
and likewise for $(a',b')$, having used the simple inequality $| \hspace {-.2em}\log x - \log y  |  \le \frac{|x-y|}{ x \wedge y}$.

Combining the above gives
\begin{align*}
 q &  \le \frac{2}{b^* n} \sum_{i=1}^{\ell} i | b_{\kappa+i}' - b_{\kappa-i}' | +
                        \frac{1}{ b^* n} \sum_{i=1}^{\ell} i ( | b_{\kappa+i}-b_{\kappa+i}' | + |b_{\kappa-i} - b_{\kappa -i}'| ) \\
                    & \le  \frac{ c \ell^2 }{ n} y +
                    \frac{c  \ell^{3/2}}{ n} \sqrt{q},
\end{align*}
after an application of Cauchy-Schwarz. In the case $y\ge \sqrt{q}$ we get
$
q\le  2c\frac{  \ell^{2}}{ n} y.
$
When $y<\sqrt q$ we get
$
q\le  2c\frac{  \ell^{2}}{ n} \sqrt{q},
$
and so
\[
q\le  4c^2\frac{  \ell^{4}}{ n^2} \le 4c^2 \frac{  \ell^{2}}{ n}\max\left(y, \frac{\ell^2}{n}\right).
\qedhere\]
\end{proof}

\begin{proposition}\label{p:cond loc prelim}
Let $J$ be a subinterval of $1\ldots n-\lfloor \eps n\rfloor$, and let $\kappa$ be its midpoint. For any set of indices $I$, define
$$
\delta_I = \max_{i\in I} |a_j-a^\dagger_\kappa|+|b_j-b^\dagger_\kappa|.
$$
Let $(a,b)$ denote the minimizers of $H$ for the variables with indices in $J$ with some boundary conditions on $\partial J$.  Then for any $j\in J$ we have
$$
|a_j-a_\kappa^\dagger|+|b_j-b_\kappa^\dagger| \le c\,\max\left(\delta_{\partial J} \exp(-c_1\dist(j, \partial J)), \frac{\ell^2}{n}\right).
$$
where $c,c_1$ depend on $V$, $\beta$ and $\eps$ only.
\end{proposition}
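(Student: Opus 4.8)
The plan is to compare the conditional minimizer $(a,b)$ of $H$ on $J$ with the constant configuration $(a^\dagger_\kappa,b^\dagger_\kappa)$ in two steps: first exchange $H$ for the translation‑invariant Hamiltonian $H_\kappa$ of Corollary~\ref{c:localtoconditional} and use that the constant is then a critical point, for which Proposition~\ref{prop:minbound} gives exponential decay of the boundary influence; and second, since the error of this exchange is spread over all of $J$ and Corollary~\ref{c:localtoconditional} controls it only in $\ell^2$, run the comparison on $O(1)$‑size windows instead, re‑centering $H_\kappa$ at each window's midpoint, and iterate in the distance to $\partial J$. Write
\[
g(\rho)=\max\{\,|a_j-a^\dagger_\kappa|+|b_j-b^\dagger_\kappa|\ :\ j\in J,\ \dist(j,\partial J)\ge\rho\,\}
\]
(and $0$ if no such $j$); $g$ is nonincreasing and the claim is $g(\rho)\le c\max(\delta_{\partial J}e^{-c_1\rho},\ell^2/n)$. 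Since the coefficients of $H$ on $J$ lie in $[\eps/2,1]$, Proposition~\ref{prop:lowerbound} and Theorem~\ref{thm:upper bound} give $b_*\le b_k\le b^*$ for $k\in J$ with constants depending only on $V,\beta,\eps$ and bounds on the boundary data; all constants below are of this type. We also assume $J$ stays clear of the matrix edge at index~$1$, so that on the relevant range the constant configuration is a genuine critical point of $H_\kappa$ — the non‑universal first $O(\log n)$ indices are handled separately in Section~\ref{s:firststretch}.

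\emph{Step 1: a crude, non‑decaying bound.} Let $(\tilde a,\tilde b)$ minimize $H_\kappa$ over $J$ with the given boundary conditions. The constant configuration $(a^\dagger_\kappa,b^\dagger_\kappa)$ minimizes $H_\kappa$ over $J$ with constant boundary values and, being the critical point of the translation‑invariant problem, is a minimizer over $J\cup\partial J$ as well; hence Proposition~\ref{prop:minbound} applied to $H_\kappa$ gives $|\tilde a_j-a^\dagger_\kappa|+|\tilde b_j-b^\dagger_\kappa|\le c\,\delta_{\partial J}e^{-c_1\dist(j,\partial J)}$ for every $j\in J$. In particular $y=\max_{1\le i\le\ell}|\tilde b_{\kappa-i}-\tilde b_{\kappa+i}|\le c\,\delta_{\partial J}$, so Corollary~\ref{c:localtoconditional} yields $q:=\sum_{j\in J}(a_j-\tilde a_j)^2+(b_j-\tilde b_j)^2\le c\frac{\ell^2}{n}(\delta_{\partial J}+\ell^2/n)$, whence $|a_j-\tilde a_j|+|b_j-\tilde b_j|\le\sqrt{2q}\le c(\delta_{\partial J}+\ell^2/n)$ for each $j$ by $2uv\le u^2+v^2$. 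Adding the two estimates gives $g(\rho)\le c(\delta_{\partial J}+\ell^2/n)$ for all $\rho\ge0$.

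\emph{Step 2: localization and iteration.} Fix $j\in J$ with $\rho:=\dist(j,\partial J)$ at least a constant $\rho_0$ (for $\rho<\rho_0$ the claim follows from Step~1 since $\delta_{\partial J}\le e^{\rho_0}\delta_{\partial J}e^{-\rho}$). Let $J''=\{j-\ell'',\dots,j+\ell''\}$ with $\ell''$ a constant chosen below and $\rho_0$ large enough that $J''\cup\partial J''\subset J$. By the restriction property of conditional minimizers, $(a,b)|_{J''}$ minimizes $H$ over $J''$ with the inherited boundary values; let $(a'',b'')$ minimize the re‑centered $H_j$ over $J''$ with those same boundary values. As in Step~1, Proposition~\ref{prop:minbound} applied to $H_j$ with reference $(a^\dagger_j,b^\dagger_j)$ bounds $|a''_m-a^\dagger_j|+|b''_m-b^\dagger_j|$ by $c\,e^{-c_1\dist(m,\partial J'')}\,\|(a,b)-(a^\dagger_j,b^\dagger_j)\|_{\partial J''}$, and since $\partial J''$ lies at distance $\ge\rho_1:=\rho-\ell''-\deg V/2$ from $\partial J$ while $|a^\dagger_j-a^\dagger_\kappa|+|b^\dagger_j-b^\dagger_\kappa|\le c\ell/n$ (derivatives of $a^\dagger,b^\dagger$ bounded on compacts, Propositions~\ref{p:analytic} and \ref{p:localcovariance}), that norm is $\le c\,(g(\rho_1)+\ell^2/n)$. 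Feeding the resulting bound on $y''=\max_{1\le i\le\ell''}|b''_{j-i}-b''_{j+i}|$ into Corollary~\ref{c:localtoconditional} for the bounded‑length interval $J''$ to control $\|(a,b)-(a'',b'')\|_{J''}$, and combining via the triangle inequality through $(a''_j,b''_j)$ — using the $m=j$ case above and Young's inequality $uv\le\frac{\theta}{2}u^2+\frac1{2\theta}v^2$ with $\theta$ a small fixed parameter to absorb the feedback of $g(\rho_1)$ — gives
\[
|a_j-a^\dagger_\kappa|+|b_j-b^\dagger_\kappa|\ \le\ \Big(\tfrac14+c\,e^{-c_1\ell''}\Big)g(\rho_1)+c_\theta\,\frac{\ell^2}{n}.
\]
Choosing $\ell''$ so large that $c\,e^{-c_1\ell''}\le\tfrac14$ and setting $s=\ell''+\deg V/2$, this reads $g(\rho)\le\tfrac12 g(\rho-s)+c_\theta\ell^2/n$ for $\rho\ge\rho_0$. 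Iterating $\asymp\rho/s$ times down to $\rho-ms<\rho_0$ and bounding $g(\rho-ms)$ by the crude estimate of Step~1 yields $g(\rho)\le c\,2^{-\rho/s}(\delta_{\partial J}+\ell^2/n)+c\ell^2/n\le c'\max(\delta_{\partial J}e^{-c_1\rho},\ell^2/n)$, which is the assertion.

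\emph{Main obstacle.} The crux is Step~2: the one‑shot comparison of Step~1 cannot do better than the non‑decaying $\delta_{\partial J}+\ell^2/n$, because the $H$–$H_\kappa$ discrepancy $\sum_k\frac{\kappa-k}{n}\log b_k$ is genuinely global and Corollary~\ref{c:localtoconditional} localizes it only in $\ell^2$. Recovering the exponential decay requires re‑running the argument on $O(1)$‑size windows with the local Hamiltonian re‑centered, and then arranging that the per‑step multiplier — the exponential factor of Proposition~\ref{prop:minbound} on the window times the Young constant from the local $H$‑to‑$H_j$ replacement — is $<1$, so that the recursion in $\dist(j,\partial J)$ produces true exponential decay with the crude bound of Step~1 as its base case.
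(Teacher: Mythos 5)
Your argument is correct (at the same level of rigor as the paper's: both treat the constant configuration $(a^\dagger_x,b^\dagger_x)$ as a conditional minimizer of the constant-coefficient Hamiltonian with constant boundary data, and both inherit the dependence on boundary-value bounds through Theorem \ref{thm:upper bound} and Proposition \ref{prop:lowerbound}), and it rests on exactly the same two pillars as the paper — Corollary \ref{c:localtoconditional} to swap $H$ for a constant-coefficient Hamiltonian, and Proposition \ref{prop:minbound} against the constant configuration to get exponential decay — but the iteration is organized differently. The paper never re-centers: it applies the one-shot comparison on the \emph{full} interval $J$ with the single Hamiltonian $H_\kappa$, observes that the cross term obeys $2\sqrt{q}\le 2\sqrt{c\tfrac{\ell^2}{n}\max(\delta_{\partial J},\ell^2/n)}\le\tfrac14\max(\delta_{\partial J},64c\,\ell^2/n)$, and hence gets $\delta_{J'}\le\max(\delta_{\partial J}/2,c\ell^2/n)$ for the interval shrunk by a constant $\ell'$ on both sides; iterating on these nested intervals (all with midpoint $\kappa$) produces the exponential decay with floor $\ell^2/n$. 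You instead run the comparison on $O(1)$ windows $J''$ re-centered at each index $j$, with the Hamiltonian $H_j$ and reference $(a^\dagger_j,b^\dagger_j)$, pay a Lipschitz error $c\ell/n$ for $|a^\dagger_j-a^\dagger_\kappa|+|b^\dagger_j-b^\dagger_\kappa|$, absorb the cross term by Young's inequality, and solve a recursion in $\dist(j,\partial J)$. Both schemes contract and both need the Step-1 crude bound as base case; yours invokes Corollary \ref{c:localtoconditional} only on bounded intervals (and in fact gives a slightly better floor, $\ell/n$ plus a constant over $n$), at the cost of the re-centering bookkeeping, while the paper's is leaner because a single $H_\kappa$ and the geometric-mean inequality already yield the per-step halving. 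One small correction to your ``main obstacle'' discussion: the full-interval one-shot comparison is \emph{not} doomed to the non-decaying bound $\delta_{\partial J}+\ell^2/n$ — used with $\sqrt{xy}\le\tfrac14\max(x,16y)$ rather than $\sqrt{2q}\le c(\delta_{\partial J}+\ell^2/n)$ it contracts by a factor $1/2$ per $\ell'$-shrink, which is precisely how the paper recovers the exponential decay without any local re-centering.
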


\begin{proof}
First note that Theorem \ref{thm:upper bound} and Proposition \ref{prop:lowerbound}  give constant upper and lower bounds
on minimizers in terms of $\eps, V, \beta$ and the boundary values.
When we invoke Corollary \ref{c:conditional-local}, we will implicitly use these bounds. It is important that
in the repeated use of Corollary \ref{c:conditional-local} we can use the same bounds, and
these don't have to be iterated.

By induction, it suffices to prove that
$
\delta_J\le c\max(\delta_{\partial J}/2,\ell^2/n)
$
and that there exist a constant $\ell'$, so that if $J'$ is  interval $J$ reduced by $\ell'$ on both sides, then
$
\delta_{J'}\le \max(\delta_{\partial J}/2,c\ell^2/n)
$.
Corollary \ref{c:localtoconditional} (in its notation) says that with
$$
q =\sum_{j\in J} (a_j-a'_j)^2 + (b_j-b'_j)^2, \qquad y=\max_{i=1\ldots \ell} |b'_{\kappa-i}-b'_{\kappa+i}|,
$$
it holds that
\begin{equation}
\label{loctocondagain}
q\le c\frac{ \ell^2}{n} \max\left(y, \frac{\ell^2}{n}\right)
\end{equation}
The values $a',b'$ are minimizers of the local Hamiltonian \eqref{localH} with some fixed boundary conditions. Note also that the constant function $a^\dagger_\kappa,b^\dagger_\kappa$ is also a minimizer of the same Hamiltonian on any interval where boundary conditions are constant also given by $a^\dagger_\kappa,b^\dagger_\kappa$ (this follows from the definition \eqref{bdagger} that says that they are minimizers of the periodic problem). So we can apply Proposition \ref{prop:minbound} to get
\begin{eqnarray}
\label{expdecay}
|a'_j-a^\dagger_\kappa|+|b'_j-b^\dagger_\kappa| &\le&
c\exp(-c_1 \,\dist(j,J^c)) \left( \max_{i\in \partial J}|a'_i-a^\dagger_\kappa|+|b'_i-b^\dagger_\kappa| \right)  \\
&=&
c\exp(-c_1 \,\dist(j,J^c))\delta_{\partial J} \nonumber
\end{eqnarray}
the last equality follows since $a',b'$ agree with $a,b$ on $\partial J_k$. From here, since $(a^{\dagger}_{\kappa}, b_\kappa^{\dagger})$ is constant we have that
\begin{equation}
\label{ybound}
y\le 2c\delta_{\partial J}
\end{equation}
Next, by the triangle inequality and \eqref{expdecay}, we also have that
$$
|a_j-a^\dagger_\kappa|+|b_j-b^\dagger_\kappa| \le c\exp(-c_1 \,\dist(j,J^c))\delta_{\partial J} + 2\sqrt{q}.
$$
Now using   \eqref{ybound} in \eqref{loctocondagain}  produces
$$
2\sqrt{q} \le 2\sqrt{c\frac{\ell^2}{n}\max(\delta_{\partial J},\frac{\ell^2}{n})}
=\frac{1}{4}\sqrt{64 c\frac{\ell^2}{n} \max(\delta_{\partial J},\frac{\ell^2}{n})}
\le \frac{1}{4}\max(\delta_{\partial J},64 c\frac{\ell^2}{n}),
$$
which when substituted  in the previous display gives
$$
|a_j-a^\dagger_\kappa|+|b_j-b^\dagger_\kappa|\le  \max(\delta_{\partial J},\frac{c'\ell^2}{n}) \left(c\exp(-c_1 \,\dist(j,J^c))+1/4\right).
$$
We choose $\ell'$ so that  $c\exp(-c_1\ell')<1/4$,
and the two claims follow.
\end{proof}

\begin{corollary}\label{c:conditional-local}
Let again $J$ be any subinterval of $1\ldots n$. Let $(a,b)$ denote the minimizers of $H$ for the variables with indices in $J$ with some boundary conditions on $\partial J$.  For any $j \in J$ it holds that
$$
|a_j-a_j^\dagger|+|b_j-b_j^\dagger| \le c \,\max\left(\delta_{\partial J} \exp(-c_1\dist(j, \partial J)), \frac{(\log n)^2}{n}\right).
$$
where again $\delta_{\partial J} = \max_{j \in \partial J} |a_j-a_j^\dagger|+|b_j-b_j^\dagger|$.
\end{corollary}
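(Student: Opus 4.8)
The plan is to apply Proposition \ref{p:cond loc prelim} \emph{once} per index $j\in J$, to a suitably chosen subinterval of $J$ of length $O(\log n)$, and then to trade the midpoint comparison for the pointwise one using analyticity of the local minimizers. Throughout, restricting $(a,b)$ to any subinterval of $J$ again yields a conditional minimizer of $H$ on that subinterval with the induced boundary values (as used repeatedly above), and Theorem \ref{thm:upper bound} together with Proposition \ref{prop:lowerbound} bounds all the minimizer values — hence every $\delta$-type quantity below — by a constant in terms of the ambient data; so in particular $\delta_{\partial J}=O(1)$. The two tools are: (a) Proposition \ref{p:cond loc prelim}, which for a bulk interval $J'$ of half-length $\ell'$ with midpoint $\kappa'$ gives $|a_j-a^\dagger_{\kappa'}|+|b_j-b^\dagger_{\kappa'}|\le c\max(\delta_{\partial J'}e^{-c_1\dist(j,\partial J')},(\ell')^2/n)$; and (b) Proposition \ref{p:analytic}, which makes $x\mapsto(a^\dagger(x),b^\dagger(x))$ Lipschitz on $[0,1-\eps]$, so $|a^\dagger_j-a^\dagger_k|+|b^\dagger_j-b^\dagger_k|\le c|j-k|/n$ whenever $j,k\le n(1-\eps)$.

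Write $D(j)=|a_j-a^\dagger_j|+|b_j-b^\dagger_j|$ and fix $\ell_0=\lceil C\log n\rceil$ with $C=C(V,\beta,\eps)$ large, to be chosen. I first treat bulk indices $j\le n(1-\eps)$. \textbf{Case 1: $\dist(j,\partial J)\ge\ell_0$.} Apply Proposition \ref{p:cond loc prelim} to the half-length-$\ell_0$ subinterval of $J$ centered at $j$; its midpoint is $j$, its boundary lies at distance $\ell_0$ from $j$ and carries an $O(1)$ $\delta$-value, so the conclusion reads $D(j)\le c\max(e^{-c_1\ell_0},\ell_0^2/n)\le c'(\log n)^2/n$ once $C$ is large, while $\delta_{\partial J}e^{-c_1\dist(j,\partial J)}$ is likewise $\le(\log n)^2/n$; the claimed bound follows. \textbf{Case 2: $\dist(j,\partial J)<\ell_0$} (and $|J|\ge 3\ell_0$; if $|J|<3\ell_0$ apply Proposition \ref{p:cond loc prelim} to all of $J$ at once, since then $(\tfrac{1}{2}|J|)^2/n=O((\log n)^2/n)$). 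Let $J^\sharp\subseteq J$ consist of the $2\ell_0$ indices of $J$ nearest the endpoint $p$ of $J$ closest to $j$; then $j\in J^\sharp$, and $\partial J^\sharp$ splits into the part lying in $\partial J$ (near $p$) and an interior part at distance $\ge\ell_0$ from $j$ and from $\partial J$. Since $j$ is within $\ell_0$ of $p$ we have $\dist(j,\partial J^\sharp)=\dist(j,\partial J)$; on the interior part of $\partial J^\sharp$, Case 1 already gives $D(\cdot)\le c(\log n)^2/n$; and the Lipschitz bound converts each $a^\dagger_i,b^\dagger_i$ on $\partial J^\sharp$ into $a^\dagger_{\kappa^\sharp},b^\dagger_{\kappa^\sharp}$ at cost $O(\ell_0/n)$. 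Hence $\delta_{\partial J^\sharp}\le\delta_{\partial J}+c(\log n)^2/n$, and Proposition \ref{p:cond loc prelim} applied to $J^\sharp$ yields $|a_j-a^\dagger_{\kappa^\sharp}|+|b_j-b^\dagger_{\kappa^\sharp}|\le c\delta_{\partial J}e^{-c_1\dist(j,\partial J)}+c'(\log n)^2/n$; a final Lipschitz step ($|j-\kappa^\sharp|=O(\ell_0)$) converts to $a^\dagger_j,b^\dagger_j$. Note that Proposition \ref{p:cond loc prelim} is invoked only once for each $j$, so the decay constant $c_1$ is exactly the one of that proposition — there is no iteration on the scale $\log n$ that would degrade the rate.

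For the remaining indices in the $o(n)$-collar of $n$ — the hard edge, where both the Lipschitz control of $(a^\dagger,b^\dagger)$ and the hypotheses of Proposition \ref{p:cond loc prelim} fail — one substitutes Theorem \ref{thm:upper bound}: there the coefficients $\alpha_k=1-k/n-1/(n\beta)$ are small and $(a^\dagger_k,b^\dagger_k)$ is close to $(a^\dagger(1),0)$ by the left-continuity at $x=1$ with $b^\dagger(1)=0$ (Proposition \ref{p:analytic}), and running the same windowing dichotomy with Theorem \ref{thm:upper bound} replacing Proposition \ref{p:cond loc prelim} reproduces the max-bound. The main obstacle is precisely the tension just described: one wants the exponential decay in $\dist(j,\partial J)$ with the \emph{same} constant $c_1$ as Proposition \ref{p:cond loc prelim} while improving its error term $(\tfrac12|J|)^2/n$ — possibly of order $n$ — down to $(\log n)^2/n$. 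The naive fix (cover $J$ by $\log n$-windows and iterate) contracts the $\delta$'s only once per $\log n$ steps and so produces the wrong rate $1/\log n$; the resolution above avoids iteration entirely, using a single window per index that is either centered at $j$ (so its exponential term is dominated by $(\log n)^2/n$) or reaches the nearby part of $\partial J$ (so the decay in $\dist(j,\partial J)$ is genuine), with Case 1 supplying the control on the window's interior boundary.
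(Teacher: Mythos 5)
Your argument is correct and follows essentially the same route as the paper's proof: a single application of Proposition \ref{p:cond loc prelim} per index, using a window centered at $j$ when $\dist(j,\partial J)\gtrsim\log n$ (with $O(1)$ boundary data from Theorem \ref{thm:upper bound} and Proposition \ref{prop:lowerbound}), a window abutting $\partial J$ whose interior boundary is pre-controlled by the bulk case when $j$ is near $\partial J$, and analyticity/Lipschitz continuity of $x\mapsto(a^\dagger(x),b^\dagger(x))$ to trade the midpoint value $a^\dagger_\kappa,b^\dagger_\kappa$ for $a^\dagger_j,b^\dagger_j$. Your extra remarks on the collar of indices beyond $n(1-\eps)$ go slightly further than the paper's proof (which is silent there), but the core of the argument is the same.
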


\begin{proof}
First note that if $|J| \le c \log n$ one simply restates Proposition \ref{p:cond loc prelim} using that
$$
\| (a^{\dagger}_{\kappa}, b^{\dagger}_{\kappa})  -
(a^{\dagger}_{j}, b^{\dagger}_{j}) \|_J \le c' \frac{\log n }{n},
$$
which holds since by Proposition \ref{p:analytic}, the functions $x \mapsto (a^{\dagger}(x), b^{\dagger}(x))$ are analytic for $x \in (-\infty,1)$.

If now $|J| > c \log n $ and $\dist(j, \partial J) \le \log n$ we can again apply Proposition \ref{p:cond loc prelim} by taking $j$ in the role of $\kappa$. That is, $j$ can be made
the midpoint of $J' \subset J$ with $|J'| = O(\log n)$ and the boundary conditions on $\partial J'$ just equal to the values of $(a, b)$ there. The point being that, as noted before,
 $(a,b)$ is still the conditional minimizer on $J'$ subject to these boundary conditions.

Finally, consider the case that $|J| > c \log n $, but $j$ is within $\log n $ distance of $\partial J$.  Assume say that $j$ is closer to the left edge of $J$.
By moving in order $\log n$ steps from the left boundary we will find $\deg(V)/2$ stretch of $(a,b)$ which already satisfies $|a_j - a_j^{\dagger}| + |b_j - b_j^{\dagger}|$
$\le c \frac{(\log n)^2}{n}$. The Proposition \ref{p:cond loc prelim} can then be  applied yet again to the subinterval $J'$  defined by the shared boundary of $J$ to its left and the just identified good stretch of coordinates to its right. This will produce the type of statement desired for the midpoint $\kappa$ of $J'$ and with $\delta_{\partial J'}$ in place of $\delta_{\partial J}$.   For the first issue use again that $(a_j^{\dagger}, b_j^{\dagger})$ and $(a_\kappa^{\dagger}, b_{\kappa}^{\dagger})$ are close throughout $J'$. For the second, easily $\delta_{\partial J'}
\le \delta_{\partial J}$.
\end{proof}

\section{Bounding the field}
\label{s:boundingthefield}

As in the case of the minimizers, good concentration properties of the field of random variable $(A,B)$ begins by showing some simple upper and lower bounds hold, now with high probability.  The convexity of the Hamiltonian $H$ plays a key role again here: the law $e^{-n \beta H}$ expected to place most of its mass in a neighborhood of minimizers, for which we have sharp bounds.  We are able to get by with a relatively simple Gaussian domination.

\subsection{A Gaussian lemma}

The following may be viewed as instance of the Brascamp-Lieb inequality \cite{BL}. The short proof is included as we were unable to locate a statement in precisely the form required, allowing the ``center" of the log-concave measure $e^{-f}$ below to take place on the boundary of the domain in question.

\begin{lemma}
\label{l:GaussianLem}
Let $A$ be a convex subset of $\mathbb R^n$. Let $f:A\to \mathbb R $ be a
convex with its minimum on $A$ achieved at $y\in A$ (this
may be on the boundary). Assume that $f$ has Hessian
satisfying $Hf(x)\ge uI$ for all $x\in A$. Let $X$ be a
random variable with density proportional to $e^{-f}$. Let
$G$ be the Gaussian vector with density proportional to
$e^{-u\|x\|^2/2}$. Then $\|X-y\|$ is stochastically dominated
by $\|G\|$.
\end{lemma}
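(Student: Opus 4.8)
By translating we may assume $y=0$; also $u>0$, since otherwise $e^{-u\|x\|^2/2}$ is not integrable. Write $Z=\int_A e^{-f}\,dx\in(0,\infty)$ for the normalizing constant. The plan is to pass to polar coordinates centred at $0$ (legitimate because $A$ is convex and $0\in A$, so $A$ is star-shaped about $0$) and then reduce the whole claim to a one-dimensional comparison, ray by ray.

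\emph{Step 1: $f-\tfrac u2\|\cdot\|^2$ is nondecreasing along each ray from $0$.} For $\omega\in\mathbb S^{n-1}$ let $R_\omega=\{s\ge 0:s\omega\in A\}$; by convexity of $A$ and $0\in A$ this is an interval $[0,s^*_\omega]$ (allowing $s^*_\omega=+\infty$ or $s^*_\omega=0$). Fix $\omega$ with $s^*_\omega>0$ and set $\phi_\omega(s)=f(s\omega)-\tfrac u2 s^2$ on $R_\omega$. Since $0$ minimises the convex function $f$ on $A$, the one-sided directional derivative of $f$ at $0$ along $\omega$ is $\ge 0$; as $f$ is twice differentiable (implicit in the Hessian hypothesis), this says $\langle\nabla f(0),\omega\rangle\ge 0$. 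Now $\frac{d}{ds}\big(\langle\nabla f(s\omega),\omega\rangle-us\big)=\omega^{\top}Hf(s\omega)\,\omega-u\ge 0$ by $Hf\ge uI$, so $s\mapsto\langle\nabla f(s\omega),\omega\rangle-us=\phi_\omega'(s)$ is nondecreasing on $R_\omega$ and starts at a value $\ge 0$; hence $\phi_\omega'\ge 0$, i.e. $\phi_\omega$ is nondecreasing. (Here convexity alone gives $f$ itself nondecreasing along rays; the uniform convexity is exactly what upgrades this to $\phi_\omega$.)

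\emph{Step 2: a one-dimensional tilt lemma.} If $\mu$ is a finite nonzero measure on $[0,\infty)$ and $\rho:[0,\infty)\to[0,\infty)$ is nonincreasing with $0<\int\rho\,d\mu<\infty$, then for every $t\ge 0$
$$\frac{\int_{[t,\infty)}\rho\,d\mu}{\int_{[0,\infty)}\rho\,d\mu}\ \le\ \frac{\mu([t,\infty))}{\mu([0,\infty))}.$$
Cross-multiplying, this is equivalent to $\int_{[0,\infty)}\int_{[t,\infty)}\big(\rho(s)-\rho(r)\big)\,d\mu(s)\,d\mu(r)\le 0$: split the $r$-integral at $t$; over $\{r\ge t\}$ the integrand is antisymmetric in $(r,s)$ on the symmetric domain $\{r,s\ge t\}$ and integrates to $0$, while over $\{r<t\}$ one has $s\ge t>r$, so $\rho(s)\le\rho(r)$ and the integrand is $\le 0$.

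\emph{Step 3: assembling.} In polar coordinates $x=s\omega$, $dx=s^{n-1}\,ds\,d\omega$, and since $e^{-f(s\omega)}\mathbf 1_A(s\omega)=e^{-\phi_\omega(s)}\mathbf 1_{R_\omega}(s)\,e^{-us^2/2}$, for $t\ge 0$
$$\P(\|X\|>t)=\frac{\int_{\mathbb S^{n-1}}N_\omega(t)\,d\omega}{\int_{\mathbb S^{n-1}}N_\omega(0)\,d\omega},\qquad N_\omega(t):=\int_{[t,\infty)}\big(e^{-\phi_\omega(s)}\mathbf 1_{R_\omega}(s)\big)\,e^{-us^2/2}s^{n-1}\,ds,$$
with the denominator equal to $Z$. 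Apply Step 2 with $d\mu(s)=e^{-us^2/2}s^{n-1}\,ds$ and $\rho(s)=e^{-\phi_\omega(s)}\mathbf 1_{R_\omega}(s)$, which is nonincreasing by Step 1 (product of the nonincreasing $e^{-\phi_\omega}$ and the nonincreasing indicator $\mathbf 1_{[0,s^*_\omega]}$); degenerate rays with $s^*_\omega=0$ have $N_\omega\equiv 0$ and are harmless. This gives $N_\omega(t)\le N_\omega(0)\,M(t)/M(0)$, where $M(t)=\int_t^\infty e^{-us^2/2}s^{n-1}\,ds$. Integrating over $\omega$ and dividing by $\int_{\mathbb S^{n-1}}N_\omega(0)\,d\omega$ yields $\P(\|X\|>t)\le M(t)/M(0)=\P(\|G\|>t)$, i.e. $\|X\|$ (hence, untranslating, $\|X-y\|$) is stochastically dominated by $\|G\|$.

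\emph{Main obstacle.} The only genuinely delicate point is Step 1 when the minimiser lies on $\partial A$: one must extract the first-order optimality inequality $\langle\nabla f(0),\omega\rangle\ge 0$ for directions pointing into $A$ and feed it, together with $Hf\ge uI$, into the monotonicity of $\phi_\omega'$. The rest (polar decomposition, the tilt lemma, and the final integration) is routine bookkeeping.
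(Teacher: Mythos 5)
Your proof is correct and takes essentially the same route as the paper's: uniform convexity together with first-order optimality at the (possibly boundary) minimizer shows $f(s\omega)-\tfrac{u}{2}s^2$ is nondecreasing along every ray, and the conclusion follows by cross-multiplying and integrating in polar coordinates with the $s^{n-1}$ weight. The only cosmetic difference is that you package the cross-multiplication as a one-dimensional tilt lemma applied ray by ray and then average over directions, whereas the paper integrates the two-point inequality over all rays at once to obtain $\P[\|X\|>r]\,\P[\|G\|<r]\le \P[\|X\|<r]\,\P[\|G\|>r]$ and rearranges.
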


\begin{proof} Without loss of generality, assume $u=1$ and $y=0$. Fix
any $0\le s<t$ and $\omega \in \S^{n-1}$ so that $\omega t\in A$.
Then $\frac{d^2}{dt^2}f(t\omega) = \omega^t Hf(t\omega)\omega \ge
1$, from which it follows that $\frac{d}{dt}f(t\omega) \ge t$,
since the derivative is nonnegative at $t=0$, because $0$ is a minimum of
$f$ (it may  be positive if $y$ is on the boundary). Thus,
\[ f(t\omega) =  f(s\omega)+ \int_s^t \frac{d}{du} f(u\omega) du \ge  f(s\omega)+\frac{t^2}{2}-\frac{s^2}{2}. \]
Hence,
\[ e^{-f(s\omega)}e^{-t^2/2}\ge e^{-f(t\omega)}e^{-s^2/2}. \]
This also holds for $s\omega$ or $t\omega$ outside $A$ with the
convention that in that case $e^{-f(s\omega)}=0$ or
$e^{-f(t\omega)}=0$, respectively. Fix $r>0$. Multiply by
$s^{n-1}t^{n-1}$ and integrate over $t>r$, $s<r$ and over $\omega
\in S^{n-1}$ to get
\[   \int_{\|x\|>r} e^{-f(x)} dx
  \int_0^r e^{-s^2/2}s^{n-1}ds   \le
   \int_{\|x\|<r} e^{-f(x)}dx
  \int_r^{\infty} e^{-t^2/2}t^{n-1}dt  .
\]
Multiplication by the appropriate normalization constants
yields
\[ \P[\|X\|>r]\,\P[\|G\|<r] \le \P[\|X\|<r]\,\P[\|G\|>r].
\]
We can rewrite this as
$$
\P[\|X\|>r] \,(1-\P[\|G\|>r] )\le (1-\P[\|X\|>r]) \,\P[\|G\|>r],
$$
from which we get the desired domination
$\P[\|G\|>r] \ge \P[\|X\|>r]$.
\end{proof}

\subsection{Upper and lower bounds on $(A,B)$ }

The Gaussian Lemma (Lemma \ref{l:GaussianLem}) coupled with the bounds on the minimizers from Section \ref{sec:minimizerbounds}
imply that, with high probability, the random variables $(|A|, B)$ themselves are bounded above and the $B$'s are bounded below (with a small caveats depending on $\beta$).

Here we return to the original tridiagonal matrix entries for the Dyson $\beta$-ensemble with Hamiltonian
$$
   H(a,b)  = V(a,b) - \sum_{k=1}^{n-1} ( 1 - k/n - 1/(n \beta) ) \log b_k.
$$

The upper bound reads:

\begin{proposition}[Upper bound on the field]
\label{prop:cheapbound}
Let $A,B$ denote the random tridiagonal matrix entries for the Dyson $\beta$-ensemble.
There exists constants $c_1,c_2,c_3>0$ depending on $V$ and $\beta$ so that for all $\beta \ge 1$ and
all $n$ we have
  \begin{equation}\label{e:cheap1}
  \P\left(\sum_{k=1}^n (A_k-a^\circ_k)^2 + \sum_{k=1}^{n-1} (B_k-b^\circ_k)^2
  >c_1\right) \le e^{-c_2n}
  \end{equation}
here $a^\circ, b^\circ$ are the minimizers of the corresponding Hamiltonian $H$.

For all $\beta >0$  we also have
  \begin{equation}\label{e:cheap2}
  \P(|A_1|,\ldots, |A_n|\le c_3,\mbox{ and } B_1,\ldots B_{n-1} \le c_3) \ge 1-e^{-c_2n}.
  \end{equation}
Moreover, for all $\beta>0$ and $\eps>0$ there is $\delta>0$ so that
for all
$n$ and $s\ge 1-\delta$ we have
  \begin{eqnarray}\notag
  &\P\Big(|A_k-a^\dagger(1)|,B_k\le \eps \mbox{ for all }k\ge sn \,\Big|
  \,|A_k-a^\dagger(1)|,B_k\le \delta \mbox{ for all }k\in [sn,sn+\deg V] \Big) \\ & \qquad \ge 1-e^{-c_2n}. \label{e:cheap3}
  \end{eqnarray}
\end{proposition}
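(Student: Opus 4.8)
My plan is to derive all three estimates from one engine: the Gaussian domination Lemma~\ref{l:GaussianLem} applied to the (conditional) law $e^{-n\beta H}$, together with the uniform convexity of $H$ from Lemma~\ref{l:UniformConvexity}. When $\beta\ge 1$ every coefficient $1-k/n-1/(n\beta)$ with $1\le k\le n-1$ is nonnegative, so the logarithmic part of $H$ only adds a positive semidefinite piece and $\Hess(n\beta H)\ge n\beta c_u I$ on $\{b_k\ge 0\}$. I would apply Lemma~\ref{l:GaussianLem} with $f=n\beta H$, $u=n\beta c_u$, $y=(a^\circ,b^\circ)$ to conclude that $\|(A,B)-(a^\circ,b^\circ)\|$ is stochastically dominated by $\|G\|$ for $G$ Gaussian with covariance $(n\beta c_u)^{-1}I_{2n-1}$. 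Since $(n\beta c_u)\|G\|^2$ has the $\chi^2_{2n-1}$ law, a standard chi-square large-deviation bound yields \eqref{e:cheap1} once $c_1$ is chosen to be a fixed multiple of $1/(\beta c_u)$ exceeding $\E\|G\|^2\sim 2/(\beta c_u)$.

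For \eqref{e:cheap2} with $\beta\ge 1$ this is then immediate: on the event of \eqref{e:cheap1} one has $|A_k-a^\circ_k|,|B_k-b^\circ_k|\le\sqrt{c_1}$ for all $k$, while Theorem~\ref{thm:upper bound}, applied to the global minimizer (there is no interior boundary, so $y_1=|a^\dagger(1)|$ and $y_2=1$), bounds $|a^\circ_k-a^\dagger(1)|+b^\circ_k$ by a constant depending only on $V$; adding these gives $|A_k|,B_k\le c_3$. For $\beta<1$ the Hamiltonian $H$ is no longer globally convex — the last $\lceil 1/\beta\rceil$ of the coefficients of $-\log b_k$ turn negative — but I would bound the mass of $e^{-n\beta H}$ outside a large ball directly: using $\tr V(T)\ge c'\|(a,b)\|^2-Cn$, the fact that each offending logarithmic factor is an integrable power $b_k^{\beta(n-k)-1}$ with exponent $>-1$, and the usual matching of numerator and denominator of the resulting ratio (both of order $e^{-\Theta(n^2)}$), one obtains \eqref{e:cheap2} for all $\beta>0$; Lemma~\ref{l:underV} supplies the quadratic lower bound on the (restricted) Hamiltonian needed here.

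The plan for \eqref{e:cheap3} is to run the same scheme on a short tail block, the gain being that such a block has only $O(\delta n)$ coordinates. Fix $\eps>0$; $\delta$ will be small. I may assume $sn+\deg V\le n$, since otherwise the conclusion with $\eps$ is contained in the hypothesis with $\delta\le\eps$. Conditioning on $(A_k,B_k)_{k\in[sn,sn+\deg V]}$ on the stated $\delta$-good event: this block has length exceeding $\deg V/2$, so the $(\deg V/2)$-Markov property makes the conditional law of $(A_k,B_k)_{k>sn+\deg V}$ an honest Gibbs measure $e^{-n\beta\mathcal H}$ of the form \eqref{e:generalH}, with boundary values $\delta$-close to $(a^\dagger(1),0)$ and all coefficients $|1-k/n-1/(n\beta)|\le 2\delta$ (as $k\ge sn$). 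For $\beta\ge 1$ this $\mathcal H$ is uniformly convex on $\{b_k\ge0\}$; Theorem~\ref{thm:upper bound} with $y_1,y_2=O(\delta)$ (and $y_3=1$, the sign condition being vacuous) puts its minimizer within $c\sqrt\delta$ of $(a^\dagger(1),0)$, hence within $\eps/2$ once $\delta\le(\eps/2c)^2$. Lemma~\ref{l:GaussianLem} then dominates the deviation of the block from this minimizer by $\|G\|$ with $G$ Gaussian of covariance $(n\beta c_u)^{-1}I_d$, where $d\le 2(1-s)n+O(1)\le 3\delta n$; since $\E\|G\|^2\le 3\delta/(\beta c_u)<(\eps/2)^2$ for $\delta$ small, the chi-square tail bound gives $\P(\|(A,B)-\text{minimizer}\|>\eps/2)\le e^{-c_2 n}$ uniformly in the admissible $s$, and the triangle inequality finishes \eqref{e:cheap3} for $\beta\ge1$.

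For $\beta<1$ the conditional tail Hamiltonian is uniformly convex only on the coordinates $k\le n-\lceil1/\beta\rceil$, and the remaining $O(1/\beta)$ coordinates — exactly where $b^\dagger(x)\to 0$ — need separate treatment. Here I would use that, conditioned on its $(\le\deg V/2)$-neighbours, each such $B_k$ has density proportional to $b^{\beta(n-k)-1}\exp(-n\beta g(b))$ with $g$ uniformly convex in $b$ and minimized within $O(\delta)$ of $0$: at the ideal configuration the relevant off-diagonal entry of $V'(T)$ vanishes because $V'(a^\dagger(1))=0$ (from \eqref{eq:optimizationequation} at $x=1$, where $b(1)=0$). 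The factor $e^{-n\beta g}$ then confines $B_k$ to scale $n^{-1/2}$, whence $\P(B_k>\eps)\le e^{-c_2 n}$; the same quadratic confinement controls the accompanying $A_k$, and a finite block of such coordinates is handled by a direct $O_\eps(1)$-dimensional estimate. The delicate point — and the step I expect to be the main obstacle — is the mild circular dependence between this top block and the convex bulk just below: I would break it using the a priori bound \eqref{e:cheap2} together with the exponential decay of boundary influence in the bulk (Proposition~\ref{prop:minbound}, in the spirit of Corollary~\ref{c:conditional-local}) to first force a $(\deg V/2)$-stretch within the bulk to be $O((\log n)^2/n)$-close to the ideal, and only then apply the single-coordinate estimates to the top block with that stretch as boundary. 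Making this bootstrap airtight, carefully tracking the scales as they pass through the Markov screening, is the only genuinely fiddly part; everything else is the uniformly-convex machinery already assembled.
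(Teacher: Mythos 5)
Your treatment of the $\beta\ge 1$ case is essentially the paper's own argument: uniform convexity (Lemma \ref{l:UniformConvexity}) plus the Gaussian domination of Lemma \ref{l:GaussianLem} give \eqref{e:cheap1} via a chi-square tail bound, Theorem \ref{thm:upper bound} converts this into \eqref{e:cheap2}, and \eqref{e:cheap3} follows by running the same scheme on the conditional law of the tail block, where the log-coefficients (hence the minimizer bound from Theorem \ref{thm:upper bound}) are $O(\delta)$; this is fine.

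The gap is in the $\beta<1$ case, where your route departs from the paper and does not close. For \eqref{e:cheap2} you propose a direct mass estimate: bound the numerator $\int_{\mathrm{bad}} e^{-n\beta H}$ using $\tr V(T)\ge c'\|(a,b)\|^2-Cn$ and integrability of the factors $b_k^{\beta(n-k)-1}$, and ``match'' it against the partition function, ``both of order $e^{-\Theta(n^2)}$''. That matching is exactly what is unavailable at this level of crudeness: the numerator bound carries the constant from the pointwise inequality $V(x)\ge c'x^2-C$, while any lower bound on $Z_n$ (say by testing a neighborhood of a reference configuration) carries a different $n^2$-order constant, and the mismatch $e^{(C_1+C_2)n^2}$ swamps the $e^{-cn}$ gain from one coordinate exceeding $c_3$. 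To make a ratio argument work one must compare numerator and denominator with the \emph{same} Hamiltonian evaluated at nearby configurations (as in Proposition \ref{prop:lowerfield}) or restore convexity first; asserting that both sides are $e^{-\Theta(n^2)}$ proves nothing about their ratio. Likewise, for \eqref{e:cheap3} with $\beta<1$ you yourself flag the bootstrap around the last $O(1/\beta)$ coordinates as unfinished. The paper's proof avoids all of this with one conditioning device you did not use: fix a threshold $y$, condition on the set of indices $k>n-1/\beta$ with $B_k<y$ \emph{and on the values of those} $B_k$. The offending concave log terms then either disappear (their variables are frozen) or have second derivative dominated by $c_u/2$ (since $b_k\ge y$ and their coefficients are $O(1/(n\beta))$), so the conditional Hamiltonian is uniformly convex; Lemma \ref{l:GaussianLem} and Theorem \ref{thm:upper bound} apply conditionally exactly as for $\beta\ge1$, and averaging over the conditioning yields \eqref{e:cheap2}, with \eqref{e:cheap3} obtained by letting $y\to0$ with $\eps$ and using the quantitative form of Theorem \ref{thm:upper bound}. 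You should replace your $\beta<1$ arguments by this (or an equivalent convexity-restoring conditioning); as written they do not constitute a proof.
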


\begin{proof}
We first treat the $\beta\ge 1$ case.

The Hamiltionian corresponding to the distribution is uniformly
convex with Hessian bounded below by $c_uI$ by Lemma \ref{l:UniformConvexity}. Thus the Gaussian Lemma
(Lemma \ref{l:GaussianLem}) implies that $\|(A-a^\circ,B-b^\circ) \|^2$
 is stochastically dominated by the norm an
i.i.d.\ Gaussian vector $G$ in dimension $ n$ and
entry variance  $c_2^2=1/(2\beta c_un)$.

Noting that $\E \|G \|^2 = c_2^2$ and that $G/c_2$ is the average of $n$
 independent
$\chi_1^2$ random variables $\P (G/c_2 > c_1) \le  e^{-nI(c_1)}$ for any
$c_1 > 1$ and $I(\cdot)$ the rate function for the $\chi^2$-distribution.
For the second claim \eqref{e:cheap2} recall that Theorem \ref{thm:upper bound}  proves that (conditional) minimizers
satisfy a uniform bound. For \eqref{e:cheap3}, use the fact that this bound tends to zero
as the coefficients of the logarithmic terms do, and apply previous argument to the conditional
distribution.

In the $\beta<1$ case the the $\log$ terms with positive
coefficients destroy the convexity of the Hamiltonian $H$. Fix a value $y$ so that for $b\ge y$ the
second derivative $-\frac{1}{\beta} \log b$ is at most half the uniform convexity constant $c_u$ from Lemma \ref{l:UniformConvexity}.

We therefore condition on the set of indices $k>n-1/\beta$ so that $B_k<y$, as well as
the values of these $B_k$. The entries minimizer of the conditional Hamiltonian are then bounded
by a constant (depending on $V$ only) by Theorem \ref{thm:upper bound} (this requires the smallest
coefficient $-1/{\beta n}\ge-1$, so it holds for large enough $n$; for small values of $n$ any bound works).

The conditional Hamiltonian
is uniformly convex: the log terms with $b_k<y$ are removed; for $b_k\ge y$ the second derivatives
of $\log b_k$ terms are dominated by the uniform convexity constant $c_u$. We can then bound the conditional distribution
of the rest of the $A,B$ the
the same way as in the $\beta>1$ case.  Averaging over the conditioning gives \eqref{e:cheap2}.

To get \eqref{e:cheap3}, we can let $y\to 0$ with $\eps$, and apply this previous argument
with the quantitative version of Theorem \ref{thm:upper bound}.
\end{proof}

For a lower bound we have:

\begin{proposition}[Lower bound on the field]
\label{prop:lowerfield}
For any $k \le n - 1/\beta$, there are constants $c_1, c_2, c_3$ such that
$$
   \P (  B_k > e^{- c_1 \frac{n}{n-k}},   |A_1|,\ldots, |A_n|\le c_3 \mbox{ and } B_1,\ldots B_{n-1} \le c_3) \ge 1-e^{-c_2 n}.
$$
Here $c_1$ must be chosen to depend on $c_3$. Otherwise the $c_1, c_2, c_3$ depend only on $V$ and $\beta$.
\end{proposition}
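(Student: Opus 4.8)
The plan is to condition on the high-probability event that every entry of $T$ is bounded, and then reduce to a one-dimensional estimate for the conditional law of $B_k$ given all the other entries. The mechanism is simple: with $\alpha_k = 1 - k/n - 1/(n\beta)$, the term $-\alpha_k \log b_k$ of $H$ enters the density $e^{-n\beta H}$ with coefficient $n\beta\alpha_k = \beta(n-k) - 1$, which is nonnegative precisely because $k \le n - 1/\beta$, and which turns out to be exactly the power-law repulsion of $B_k$ from the origin needed to beat the threshold $\epsilon := e^{-c_1 n/(n-k)}$.

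Concretely, I would first fix $c_3 = c_3(V,\beta)$ via \eqref{e:cheap2} of Proposition \ref{prop:cheapbound}, so that $E := \{\,|A_j| \le c_3,\ B_j \le c_3\ \text{for all }j\,\}$ has $\P(E) \ge 1 - e^{-c_2 n}$. It then suffices to bound $\P(E \cap \{B_k \le \epsilon\})$; since this event is contained in $E'' := \{\,|A_j|,\,B_j \le c_3\ \text{for all }j\text{ with }|j-k| \le \deg V\,\}$, an event not involving $B_k$ itself, it is enough to bound $\P(B_k \le \epsilon \mid \cdot)$ uniformly over configurations of the remaining variables lying in $E''$ and then integrate against their law. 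For such a configuration the joint density $\propto \exp\{-n\beta[\tr(V(T)) - \sum_j \alpha_j\log b_j]\}$ shows the conditional density of $B_k = b$ to be proportional to $b^{\,\beta(n-k)-1}\,e^{-n\beta p(b)}$, where $p(b)$ denotes $\tr(V(T))$ with $b_k = b$ and all other entries frozen at their (bounded) values. Only the $O(1)$ closed walks through the edge $\{k,k+1\}$ of length at most $\deg V$ depend on $b$, so $p$ is a polynomial of degree $\deg V$ whose leading coefficient is that of $V$ times a positive combinatorial constant, and whose coefficients are all bounded by a constant depending on $V$ and $c_3$; in particular $|p(b) - p(0)| \le C$ on $[0,1]$ for some $C = C(V,c_3)$, and the conditional density is normalizable.

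The remaining computation is the elementary bound
\begin{align*}
\P(B_k \le \epsilon \mid \cdot) = \frac{\int_0^\epsilon b^{\beta(n-k)-1} e^{-n\beta p(b)}\,db}{\int_0^\infty b^{\beta(n-k)-1} e^{-n\beta p(b)}\,db}
&\le \frac{e^{-n\beta(p(0)-C)}\,\epsilon^{\beta(n-k)}/(\beta(n-k))}{2^{-\beta(n-k)}\,e^{-n\beta(p(0)+C)}} \\
&\le e^{2n\beta C}\, 2^{\beta n}\, \epsilon^{\beta(n-k)} = e^{-n\beta(c_1 - 2C - \log 2)},
\end{align*}
where the denominator was bounded below by its contribution from $b \in [1/2,1]$ (on which $b^{\beta(n-k)-1} \ge 2^{-(\beta(n-k)-1)}$ since $\beta(n-k) \ge 1$, and $p \le p(0) + C$), and where the last equality uses $\epsilon^{\beta(n-k)} = e^{-c_1\beta n}$. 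Choosing $c_1 > 2C + \log 2$ — which forces $c_1$ to depend on $c_3$, as noted in the statement — makes this at most $e^{-cn}$ with $c = \beta(c_1 - 2C - \log 2) > 0$. Integrating and combining with $\P(E) \ge 1 - e^{-c_2 n}$ gives $\P(E \cap \{B_k > \epsilon\}) \ge 1 - e^{-c_2 n} - e^{-cn}$, which is of the claimed form after relabelling constants (small $n$ being trivial). The main obstacle is controlling the one-dimensional partition function $\int_0^\infty b^{\beta(n-k)-1} e^{-n\beta p(b)}\,db$ from below by a quantity of the correct exponential order in $n$; this is exactly what forces the preliminary conditioning on the bounded-entry event $E$, which pins down the coefficients of $p$ (hence the fluctuation scale of $B_k$) uniformly. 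Once that is in place the rest is routine.
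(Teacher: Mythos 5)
Your proposal is correct and is essentially the paper's own argument: both condition on the bounded-entry event from Proposition \ref{prop:cheapbound} and exploit the factor $b_k^{\beta(n-k)-1}$ (nonnegative exponent since $k\le n-1/\beta$) in the conditional density of $B_k$ given the remaining entries, with the polynomial part contributing only a bounded, $c_3$-dependent perturbation through the $O(1)$ walks crossing the edge $\{k,k+1\}$. The paper packages the same estimate as a pointwise density-ratio comparison under the shift $b_k\mapsto b_k+1$ rather than your explicit numerator/denominator integral bound, and your event $E''$ should simply omit the $j=k$ constraint on $B_k$, but these are cosmetic differences.
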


\begin{proof} Consider $(a, b)$ and $(a', b')$ which agree everywhere except at a given index $k \le n - \frac{1}{\beta} $ where it holds $b_k < b_k'$. By the same type of considerations
used in Proposition \ref{prop:lowerbound}  we have that, assuming all $|a_j|, b_j \le b^*$,
$$
 \exp \{-n \beta H(a, b) + n \beta H(a', b') \}   \le \exp\{n c + (\beta(n-k)-1) \log(b_k/b_k') \},
$$
with a $c = c(b^*, \beta, V)$.  We tacitly assume $b^* \ge 1$.
The inequality is written in this way to emphasize that the left hand side is the ratio of the densities at $(a,b)$ and $(a', b')$.  If then
$ \log (b_k/b_k') <  \rho := - \frac{2c}{ 1 - k/n - 1/n\beta} < 0$ this ratio is bounded by $e^{-nc}$, still granted the overall bound on the $|a|$'s and the $b$'s.
Choosing for example $b_k < e^{\rho}$ and $b_k' = b_k+1$ implies that
$$
   \P  (  b_k < e^\rho \, \vert \, a_j   \mbox{ and }  b_j  \mbox{ for }  j \neq  k )  \one_{ |a_j|, | b_j| \le b^* } \le e^{-nc}.
$$
Taking expectations and slightly adjusting $\rho$ yields the claim.
\end{proof}

\section{Gaussian approximation of the field}
\label{s:Gaussian}

We  now show that, for suitable ranges of the indices,
the random variables $(A,B)$ are well approximated by a Gaussian process,
or really by a mixture of Gaussian processes.
The precise statement is given in Proposition \ref{p:variation} below. This is a first pass at the
functional central limit theorem which identifies the Brownian potential in the Stochastic Airy Operator,

First we record two more concentration estimates for the field about various minimizers of the Hamiltonian $H$. The second is a refinement of the first. Both are consequences of the Gaussian Lemma (Lemma \ref{l:GaussianLem}).

\subsection{Concentration about the minimizers}

 Fix $0<b_*<b^*$.
We say that $G_i(a,b)$ holds if we have $|a_i|,b_i\le b^*$
and $b_i>b_*$. For a set of indices $J\subset\{1,\ldots, n\}$, we say that $G_J(a,b)$ holds if $G_i(a,b)$ holds for all $i\in J$; we will say $G(a,b)$ holds if $G_i(a,b)$ holds for all indices.

For an interval $I$, let $\partial I$ denote union of the stretches of length $(\deg V)/2$
immediately before and after $I$.

\begin{lemma}[Concentration in short intervals]\label{lem:concentration short}
Fix $\delta>0$. Let $I$ be a subinterval of $\{1 ,\ldots, n
\}$ of length at most $c_1\log n$.

Consider random variables
$(A_i,B_i)_{i\in I}$  picked from the distribution \eqref{eq:matrixpdf}, where the
values $a,b$ on $\partial I$ are fixed. Let $(a^\sharp,b^\sharp)$ denote the minimizer of the density given
the boundary conditions $\partial I$. Then there is a $c=c(\delta,b_*,b^*)$ so that for all $n$ we have
\begin{equation}\label{eq:ldiv}
 \P \left(\sum_{i\in I}
(A_i-a_i^{\sharp})^2+(B_i-b_i^{\sharp})^2>n^{2\delta-1}\right)\le c
e^{-n^\delta}.
\end{equation}
\end{lemma}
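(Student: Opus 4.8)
The plan is to reduce the statement to the Gaussian Lemma (Lemma~\ref{l:GaussianLem}) applied to the conditional density of $(A_i,B_i)_{i\in I}$ given the boundary values on $\partial I$. Conditionally, $(A,B)$ restricted to $I$ has a density proportional to $e^{-n\beta \HH_I}$, where $\HH_I$ is $H$ with the coordinates outside $I\cup \partial I$ deleted (this is legitimate because, by the polynomial/Markov-field structure already exploited in Section~\ref{s:minimizerboundary}, the conditional law of the variables in $I$ depends only on the boundary values on $\partial I$). The first step is therefore to observe that $\HH_I$ is uniformly convex with Hessian bounded below by $c_u I$ when $\beta\ge1$ (and, for $\beta<1$, we are in the regime $k\le n-1/\beta$ where the relevant $\alpha_k$ are nonnegative, so uniform convexity of $\HH_I$ still holds on the domain $b_k\ge0$; this is exactly the situation already handled in Lemma~\ref{l:UniformConvexity} and the discussion following \eqref{Hdef}). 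Consequently the density $e^{-n\beta\HH_I}$ is log-concave with Hessian at least $n\beta c_u I$, and its minimizer over the domain is $(a^\sharp,b^\sharp)$.

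Next I would invoke Lemma~\ref{l:GaussianLem} directly: it gives that $\|(A-a^\sharp,B-b^\sharp)\|_I$ is stochastically dominated by $\|G\|$, where $G$ is a centered Gaussian vector in dimension at most $2|I|\le 2c_1\log n$ with entrywise variance $\sigma_n^2=1/(n\beta c_u)$. It then remains a standard Gaussian tail estimate: $\|G\|^2/\sigma_n^2$ is a $\chi^2$ random variable with $D_n\le 2c_1\log n$ degrees of freedom, so
\[
\P\!\left(\|G\|^2 > n^{2\delta-1}\right)=\P\!\left(\chi^2_{D_n} > n\beta c_u\, n^{2\delta-1}\right)=\P\!\left(\chi^2_{D_n} > \beta c_u\, n^{2\delta}\right).
\]
Since the threshold $\beta c_u n^{2\delta}$ is polynomially large while $D_n$ is only $O(\log n)$, the Chernoff bound for $\chi^2$ (i.e. $\P(\chi^2_D>t)\le (t/D)^{D/2}e^{-(t-D)/2}$ for $t>D$) gives a bound of the form $e^{-c n^{2\delta}(1+o(1))}$, which is $\le c\,e^{-n^\delta}$ for all $n$ once the constant $c=c(\delta,b_*,b^*)$ is chosen large enough to absorb small $n$. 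One subtlety: the event in \eqref{eq:ldiv} is stated without the good event $G_I$, but the minimizer $(a^\sharp,b^\sharp)$ is defined unconditionally and the stochastic domination above holds unconditionally on the conditional law, so no restriction to $G_I$ is needed here — the roles of $b_*,b^*$ enter only through the constant $c$, via the a priori bounds on $(a^\sharp,b^\sharp)$ from Theorem~\ref{thm:upper bound} and Proposition~\ref{prop:lowerbound}.

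The main obstacle, and the only place that needs genuine care rather than bookkeeping, is the $\beta<1$ case: there the full Hamiltonian $H$ is not convex because of the $\log b_k$ terms with negative coefficients (indices $k>n-1/\beta$), so one must check that the interval $I$ under consideration does not meet that problematic block, or else condition further as in the proof of Proposition~\ref{prop:cheapbound}. Since $I$ has length $O(\log n)$ and the stated application context places it away from the extreme bottom of the matrix (or, if it is near the bottom, one conditions on the finitely many offending $B_k$ and works with the resulting uniformly convex conditional Hamiltonian, exactly as in Proposition~\ref{prop:cheapbound}), this is handled by the same device already in the paper. Everything else is a routine combination of the Brascamp--Lieb-type domination and a $\chi^2$ tail bound.
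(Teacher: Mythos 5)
Your proposal is correct and follows essentially the same route as the paper: uniform convexity of the conditional Hamiltonian (Lemma~\ref{l:UniformConvexity}), stochastic domination via the Gaussian Lemma~\ref{l:GaussianLem} by an $O(\log n)$-dimensional Gaussian with entry variance of order $1/n$, and then a $\chi^2$ tail estimate. The only difference is cosmetic: you use a Chernoff bound for the $\chi^2$ tail where the paper uses the cruder bound $\P(X_1^2+\cdots+X_m^2>a)\le m e^{-a/m}$, and both comfortably yield the stated $c\,e^{-n^\delta}$.
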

\begin{proof}
The log density of the conditional distribution is uniformly
convex  by Lemma \ref{l:UniformConvexity}. Thus the Gaussian Lemma (Lemma \ref{l:GaussianLem}) implies that $|B -
b_k|$ is stochastically dominated by the norm an
i.i.d.\ Gaussian vector in dimension $2c_1\log n$ and
entry variance  $c_2/n$, where $c_2$ depends on the uniform
convexity constant only.

The square of a 2-dimensional Gaussian has exponential distribution. This gives the (standard) large deviation bound that for the sum of squares of
independent standard Gaussians $X_i$
$$
\P(X_1^2+\ldots + X_m^2>a) \le m\P(X_1^2+X_2^2>a/(m/2))
=m\exp(-a/m)
$$
Finally, Gaussian scaling gives for \eqref{eq:ldiv} the upper bound
\[ (2\log n)\exp(-n^{2\delta}/(2c_2c_1\log n))
\le c e^{-n^\delta}. \qedhere
\]

\end{proof}

\begin{lemma}[Concentration everywhere]\label{lem:concentration}
Fix $\delta>0$. Let $I$ be any subinterval of $\{1\ldots (1-\eps)n\}$, Consider random variables
$(A_i,B_i)_{i\in I}$  picked from the distribution \eqref{eq:matrixpdf}, where the
values $a,b$ on $\partial I$ are fixed, and assume that
\begin{equation*}
|a_i-a_i^{\dag}|+|b_i-b_i^{\dag}|\le n^{\delta-1/2} \qquad \mbox{for all } i\in \partial I.
\end{equation*}
where $(a_i^\dag, b_i^\dag)$ are the local minimizers defined in \eqref{bdagger}.
Then there is a $c=c(\eps,\delta,b_*,b^*)$ so that for all $n$ and $I$ we have
\begin{equation}\label{eq:ldiv3}
 \P \left(\exists k\in I : |A_k-a_k^{\dag}|+|B_k-b_k^{\dag}|>n^{\delta-1/2}\right)\le c
e^{-n^\delta}.
\end{equation}
\end{lemma}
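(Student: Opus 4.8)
The plan is to localise. First I would record the a priori bounds: with $0<b_*<b^*$ adapted to the local minimizers — which are uniformly bounded and have $b^\dagger$ uniformly positive for the relevant indices because $I\subset\{1,\dots,(1-\eps)n\}$ (Proposition~\ref{p:analytic}) — the conditional versions of Propositions~\ref{prop:cheapbound} and~\ref{prop:lowerfield} (for $\beta<1$ after the usual conditioning on $B_k$, $k>n-1/\beta$) give that $\mathcal A:=G_I(A,B)$ has conditional probability at least $1-e^{-cn}$, and on $\mathcal A$ every conditional minimizer and every boundary value met below lies in $[b_*,b^*]$ with diagonal part at most $b^*$.

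Next, I would cover $I$ by overlapping windows $I_j$ of length $\asymp\log n$ — two interleaved grids — so that every $k\in I$ with $\dist(k,\partial I)\ge d_0$ (a constant fixed at the end) lies at depth $\ge\ell_1\log n$ in some window, where $\ell_1$ is chosen with $\kappa\ell_1>1$, $\kappa$ being the exponential rate in Corollary~\ref{c:conditional-local}. The density \eqref{eq:matrixpdf} is a $(\deg V/2-1)$-Markov field, so conditioning on $(A,B)$ over the $\deg V/2$-wide stretches separating the windows of one grid makes the windows independent, each $I_j$ carrying the law \eqref{eq:matrixpdf} with the values on $\partial I_j$ fixed. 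On $\mathcal A$ this conditional law is uniformly convex (Lemma~\ref{l:UniformConvexity}), so the Gaussian Lemma (Lemma~\ref{l:GaussianLem}) together with the computation inside the proof of Lemma~\ref{lem:concentration short} yields, for a small $\eps_0>0$,
\[
\P\Big(\textstyle\sum_{i\in I_j}(A_i-a^\sharp_{j,i})^2+(B_i-b^\sharp_{j,i})^2>\eps_0 n^{2\delta-1}\,\Big|\,\mathcal F_{\partial I_j}\Big)\le n^{O(1)}e^{-n^{2\delta}/(c\log n)},
\]
$(a^\sharp_j,b^\sharp_j)$ denoting the conditional minimizer on $I_j$; since $|I_j|=O(\log n)$ this is the pointwise bound $|A_k-a^\sharp_{j,k}|+|B_k-b^\sharp_{j,k}|\le 2\sqrt{\eps_0}\,n^{\delta-1/2}$ on $I_j$. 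Picking $\eps_0$ with $2\sqrt{\eps_0}\le\tfrac13$ and taking a union bound over the $O(n/\log n)$ windows of both grids and over $\mathcal A^c$ — using $n^{2\delta}/\log n\gg n^\delta$ and $e^{-cn}\le e^{-n^\delta}$ — produces an event of probability at least $1-ce^{-n^\delta}$ on which this holds for all windows simultaneously.

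On that event I would transfer to the local minimizers. For $k$ with $\dist(k,\partial I)\ge d_0$, pick the window $I_j$ in which $k$ has depth $\ge\ell_1\log n$; on $\mathcal A$, $\delta_{\partial I_j}\le 2b^*$, so Corollary~\ref{c:conditional-local} with $J=I_j$ gives $|a^\sharp_{j,k}-a^\dagger_k|+|b^\sharp_{j,k}-b^\dagger_k|\le c\max(2b^*n^{-\kappa\ell_1},(\log n)^2/n)\le\tfrac13 n^{\delta-1/2}$ for $n$ large, since $\kappa\ell_1>1$. Combined with the previous step, $|A_k-a^\dagger_k|+|B_k-b^\dagger_k|<n^{\delta-1/2}$; in particular every separating stretch $S$ with $\dist(S,\partial I)\ge d_0$ carries $(A,B)$-values within $n^{\delta-1/2}$ of the local minimizers. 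For the $O(1)$ indices $k$ with $\dist(k,\partial I)<d_0$ I would use the window $I_0$ hugging that end of $I$: its outer boundary is the adjacent $\deg V/2$-stretch of $\partial I$ (within $n^{\delta-1/2}$ of the local minimizers by hypothesis) together with one deep separating stretch $S$ (within $n^{\delta-1/2}$ by the above), hence $\delta_{\partial I_0}\le n^{\delta-1/2}$, and Corollary~\ref{c:conditional-local} with $J=I_0$ controls $(a^\sharp_{0,k},b^\sharp_{0,k})$ there.

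The hard part is exactly this boundary-adjacent region: for $k$ at distance $1,\dots,d_0-1$ from $\partial I$ the factor $e^{-\kappa\,\dist(k,\partial I)}$ in Corollary~\ref{c:conditional-local} is only of order $1$, so the transfer error comes out merely as $\le c\,n^{\delta-1/2}$ rather than $\le\tfrac13 n^{\delta-1/2}$, and the error budget does not close on the nose. I would kill the stray constant $c$ by exploiting that $x\mapsto(a^\dagger(x),b^\dagger(x))$ moves by only $O((\log n)/n)$ across $\partial I$ together with these $O(1)$ indices (Proposition~\ref{p:analytic}), and by observing that in every application of this lemma the conditioned values in fact satisfy the bound at a strictly better scale $n^{\delta'-1/2}$ with $\delta'<\delta$ — equivalently, by running the preceding construction with $\delta'<\delta$ throughout and invoking the hypothesis only at the end, so that $c\,n^{\delta'-1/2}\le n^{\delta-1/2}$. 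Everything else is routine tracking of constants; the only ingredient beyond Lemma~\ref{lem:concentration short} and Corollary~\ref{c:conditional-local} is the Markov-field independence that legitimises the union bound.
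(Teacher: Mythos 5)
Your proposal is correct and is essentially the paper's own argument: a priori bounds from Propositions \ref{prop:cheapbound} and \ref{prop:lowerfield} to work on the good event, Gaussian concentration in $O(\log n)$ windows via Lemma \ref{lem:concentration short}, transfer from conditional to local minimizers via Corollary \ref{c:conditional-local} (exponential decay absorbing the $O(1)$ boundary discrepancy for deep indices), a boundary-hugging window whose boundary data come from the hypothesis on $\partial I$ plus a good interior stretch, and constants absorbed in the slack of $\delta$ exactly as the paper's ``after increasing $\delta$ a bit.'' Two cosmetic points: your covering claim is literally false for indices at distance between $d_0$ and $\ell_1\log n$ from $\partial I$ (no $O(\log n)$ window can give them depth $\ell_1\log n$), but these are already handled by your hugging-window step since there $\delta_{\partial I_0}\le n^{\delta-1/2}$ and only $\dist(k,\partial I_0)\ge d_0$ is needed; and the union bound over windows requires no Markov-field independence.
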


\begin{proof}
{\bf Case 1.} First we show that with high probability
$$
|A_k-a_k^{\dag}|+|B_k-b_k^{\dag}|\le n^{\delta-1/2}
$$
holds for indices $k$ satisfying $\dist(k,\delta I)\ge c \log n$.
Let $J$ be a subinterval of $I$ of length  $1+2\lfloor c_1 \log n\rfloor$ centered at $k$.  Fix the values of $A,B$ to be  $(a,b)$, $\partial J_k$ (if  $\partial J$ intersects $\partial I$, then then these values are already fixed by the boundary condition). Let $(A^{(k)}_i, B^{(k)}_i)_{i\in J_k}$ denote the minimizer of the conditional density of the variables given these (random) boundary conditions. Since $k$ is far enough from the boundary of $J$, this minimizer is close to the local minimizers. Indeed, Corollary \ref{c:conditional-local}
applied to the interval $J$ gives that on $G(A,B)_{\partial J}$ we have
$$
|A_k^{(k)}-a_k^{\dag}|+|B_k^{(k)}-b_k^{\dag}|\le n^{\delta-1/2}
$$
now Lemma \ref{lem:concentration short} applied to the conditional distribution of $(A^{(k)}_i, B^{(k)}_i)_{i\in J_k}$ gives that with probability at least $1-ce^{-n^\delta}$ we have
$$
|A_k^{(k)}-A_k|+|B_k^{(k)}-B_k|\le n^{\delta-1/2}
$$
together with the union bound, after increasing $\delta$ a bit, this gives the first case.

{\bf Case 2.} Now consider indices $k$ satisfying $\dist(k,\delta I)\ge c \log n$. When  $|I|<2c_1\log n$, all indices are like that, so we consider this case first. Here, Corollary \ref{c:conditional-local} shows that the maximizers $a^\sharp,b^\sharp$ of
the conditional density of $A,B$ are $cn^{\delta-1/2}$-close to the
local minimizers, and the Gaussian Lemma \ref{l:GaussianLem} shows that the values are concentrated around $a^\sharp,b^\sharp$, and the claim follows after increasing $\delta$ a bit.

{\bf Case 3.}
Finally, assume that $|I|>2c_1\log_n$. Assume that $k$ is closer than $c_1\log n$ to the left boundary  $i_1$ of $I$. Consider the $\deg(V)/2$ indices starting at $i_1+c_1\log n$. From the first case, it follows that $A,B$ at these indices are with high probability $cn^{\delta-1/2}$-close
to the local minimizers. Now apply the second case to the conditional
distribution of $A,B$ on the interval $i_1,\ldots, i_1+c_1 \log n$ to get the desired claim.
\end{proof}

\subsection{(A,B) as a mixture of Gaussians}

Now let $(A,B)$ are picked from the Dyson $\beta$ ensemble distribution \eqref{eq:matrixpdf}, and let $I\subset 1,\ldots (1-\eps)n$ be an interval. The distribution of $A_i,B_i,i\in I$ depends only on the values (denote by $q$) of $A,B$ with indices in $\partial I$. Let $\mu_q$ denote this conditional distribution. Let
$a^\scond, b^\scond$ denote maximizers of the density of $\mu_q$ on $I$, and let them correspond to $q$ on $\partial I$.

Let $\nu_q$ denote the Gaussian measure on $a_i,b_i,i\in I$
with mean $(a^{\scond}, b^{\scond})$ and inverse covariance matrix $n\beta \Hess_{H}(a^\scond, b^\scond)$.

Let $\mu_I$ denote the averaging of $\mu_Q$ with respect to the random $Q$ picked from its marginal distribution. Note that $\mu_I$ is just the marginal distribution of $(A_i,B_i; i\in I)$. Let $\mathcal Q$ denote the set of boundary conditions $(a_i,b_i), i \in \partial I $ so that
$$
|a_i-a_i^{\dag}|+|b_i-b_i^{\dag}|\le c n^{\delta-1/2} \qquad \mbox{for all } i\in \partial I.
$$
Pick $q_0 \in \mathcal Q$, and let $Q'=Q$ when $Q\in \mathcal Q$ and $Q'=q_0$ otherwise. Let $\nu_I=\ev \nu_{Q'}$.

\begin{proposition}\label{p:variation}
Let $I\subset\{ c\log n, \ldots, (1-\eps)n\}$ be an interval of length at most $n^{1/2-\eps}$.
Then as $n\to\infty$ we have
$$\dist_{TV}\left(\mu_I, \nu_I \right) =o(1).
$$
More precisely, we can choose the constant $c$ so that the event
$$
   S = \{   |A_i |  \le  c , \frac{1}{c} \le B_i  \le c \mbox{ for } i \in 1, \dots n(1-\epsilon) \cap  \max_{ i \in I} | A_i -a_i^{(Q)}| + |B_i - b_i^{(Q)}| < c n^{\delta - \frac{1}{2} } \}
$$
satisfies $\mu(S) \ge 1-ce^{-n^{\delta'}/c}$ as well as $ \frac{d \nu_I}{d \mu_I} \vert_S = 1+o(1)$ so long as $\delta < \epsilon/3$.
Moreover, on the event $S$,
\begin{equation}
\label{e:Sbound}
|a_j^\scond -a_j^\dagger| +|b_j^\scond -b_j^\dagger| \le c \max\left( n^{\delta-1/2} \exp(-c_1\dist(j, \partial I)), \frac{(\log n)^2}{n}\right),
\end{equation}
for all $j \in I$.
\end{proposition}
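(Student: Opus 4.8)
The plan is to work conditionally on the boundary data $Q=(A_i,B_i)_{i\in\partial I}$: I would reduce to comparing, for each fixed admissible boundary value $q\in\mathcal Q$, the conditional law $\mu_q$ with the Gaussian $\nu_q$ obtained by replacing $H_q$ (the restriction of $H$ to the $I$-variables with boundary $q$) by its quadratic expansion at the minimizer, and control the error through the cubic Taylor remainder. First, for the good event, Propositions \ref{prop:cheapbound} and \ref{prop:lowerfield} give, off an event of probability $\le e^{-c_2 n}$, that $|A_i|\le c$ and $1/c\le B_i\le c$ for all $i\le(1-\eps)n$; Lemma \ref{lem:concentration} applied to $I$ (enlarged by $\deg V/2$ to cover $\partial I$) gives, off a further event of probability $\le ce^{-n^{\delta}}$, that $|A_i-a_i^\dagger|+|B_i-b_i^\dagger|\le n^{\delta-1/2}$ for $i\in I\cup\partial I$. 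On the intersection, $Q\in\mathcal Q$, so Corollary \ref{c:conditional-local} applied to the interval $I$ with $\delta_{\partial I}\le cn^{\delta-1/2}$ yields exactly \eqref{e:Sbound}; the triangle inequality then gives $|A_i-a_i^\scond|+|B_i-b_i^\scond|<cn^{\delta-1/2}$ for $i\in I$, placing us inside $S$, so $\mu(S)\ge 1-ce^{-n^{\delta'}/c}$ for a suitable $\delta'>0$.

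Next I would disintegrate: $\mu_I=\ev\mu_Q$ and $\nu_I=\ev\nu_{Q'}$ with $Q'=Q$ on $\mathcal Q$. By joint convexity of total variation, $\dist_{TV}(\mu_I,\nu_I)\le \sup_{q\in\mathcal Q}\dist_{TV}(\mu_q,\nu_q)+\P(Q\notin\mathcal Q)$, and the last term is exponentially small, so it suffices to bound $\dist_{TV}(\mu_q,\nu_q)$ uniformly over $q\in\mathcal Q$. Fix such a $q$ and let $x_0=(a^\scond,b^\scond)$ be the minimizer of $H_q$; since every index of $I$ lies below $n-1/\beta$, $H_q$ is uniformly convex (Lemma \ref{l:UniformConvexity}) with no $\beta<1$ caveat, so $x_0$ is unique, interior, and has $b_i^\scond\ge b_*$. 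Let $S_q$ be the $I$-event $\{|A_i|\le c,\ 1/c\le B_i\le c,\ |A_i-a_i^\scond|+|B_i-b_i^\scond|<cn^{\delta-1/2}\ \text{for }i\in I\}$. By the Gaussian domination Lemma \ref{l:GaussianLem} together with the minimizer bounds of Theorem \ref{thm:upper bound} and Proposition \ref{prop:lowerbound}, $\mu_q(S_q)=1-o(1)$; and since $\nu_q$ is Gaussian with mean $x_0$ and covariance $(n\beta\,\Hess_H(x_0))^{-1}\preceq(n\beta c_u)^{-1}I$, each coordinate fluctuates on scale $n^{-1/2}\ll n^{\delta-1/2}$, so a union bound over the $\le n^{1/2-\eps}$ coordinates gives $\nu_q(S_q)=1-o(1)$ as well.

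The crux is the density comparison on $S_q$. Taylor-expanding, $H_q(x)=H_q(x_0)+\tfrac{1}{2}\langle x-x_0,\Hess_H(x_0)(x-x_0)\rangle+R(x)$ with no linear term since $x_0$ is the minimizer. Because $\tr V(T)$ is a polynomial in the tridiagonal entries with the Markov-field property, $\partial_{ijk}H_q$ vanishes unless $i,j,k$ are pairwise within $\deg V/2$; on $S_q$ the polynomial third derivatives are bounded (entries are $O(c)$) and the logarithmic ones are $\lesssim b_*^{-3}$, so $|R(x)|\le C\sum_{i\in I}|x_i-x_{0,i}|^3\le C|I|(cn^{\delta-1/2})^3\le Cn^{1/2-\eps}n^{3\delta-3/2}$, hence $n\beta|R(x)|\le C'n^{3\delta-\eps}=o(1)$ precisely because $\delta<\eps/3$. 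Therefore $\frac{d\mu_q}{d\nu_q}(x)=\frac{Z_{\nu,q}}{Z_{\mu,q}}e^{-n\beta H_q(x_0)}e^{-n\beta R(x)}=(1+o(1))\frac{Z_{\nu,q}}{Z_{\mu,q}}e^{-n\beta H_q(x_0)}$ uniformly on $S_q$; integrating this against $\nu_q$ over $S_q$ and using $\mu_q(S_q),\nu_q(S_q)=1-o(1)$ forces the constant $\frac{Z_{\nu,q}}{Z_{\mu,q}}e^{-n\beta H_q(x_0)}$ to be $1+o(1)$, so $\frac{d\mu_q}{d\nu_q}=1+o(1)$ on $S_q$ and likewise $\frac{d\nu_q}{d\mu_q}=1+o(1)$ there. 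Then $\dist_{TV}(\mu_q,\nu_q)\le\mu_q(S_q^c)+\nu_q(S_q^c)+\int_{S_q}|\tfrac{d\mu_q}{d\nu_q}-1|\,d\nu_q=o(1)$ uniformly in $q$, giving $\dist_{TV}(\mu_I,\nu_I)=o(1)$; restricting the same computation to $S$ (on which $Q\in\mathcal Q$ and the $I$-variables lie in $S_Q$) gives $\frac{d\nu_I}{d\mu_I}\big|_S=1+o(1)$.

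The main obstacle is this cubic remainder estimate: it closes only because several budgets combine favorably at once — the tridiagonal/Markov structure collapses the triple sum to $\sum_i|x_i-x_{0,i}|^3$, the a priori lower bound $B_i\ge 1/c$ keeps the logarithm's third derivatives bounded, the length of $I$ is capped at $n^{1/2-\eps}$, and the field sits within $n^{\delta-1/2}$ of the minimizer — and their product is $o(1)$ only under the hypothesis $\delta<\eps/3$. Everything else (concentration of both $\mu_q$ and $\nu_q$ on $S_q$, the normalization constant, the reduction to a fixed boundary) is routine given the earlier estimates; the $\beta<1$ subtlety disappears here because $I$ avoids the last $\eps n$ indices.
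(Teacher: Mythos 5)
Your overall plan is the paper's: disintegrate over the boundary data $Q$, Taylor--expand $H_q$ at the conditional minimizer (no linear term), use the banded structure plus the $n^{\delta-1/2}$ localization to make the cubic remainder $n\beta|R|=O(n^{3\delta-\eps})=o(1)$ under $\delta<\eps/3$, and pin down the normalization constant by comparing the $\mu_q$- and $\nu_q$-masses of the good set (your integration argument is equivalent to the paper's two-sided $Z$-ratio bound). The genuine gap is in how you justify the coordinate-wise concentration at scale $n^{\delta-1/2}$, and it occurs twice. First, you apply Lemma \ref{lem:concentration} to $I\cup\partial I$ \emph{unconditionally}; but that lemma is a conditional statement whose hypothesis is precisely that the fixed values on the boundary of the interval it is applied to are already within $n^{\delta-1/2}$ of the local minimizers --- exactly the type of statement you are trying to prove. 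As written this is circular, and it is the only mechanism you offer for $\P(Q\in\mathcal Q)\to1$ and for $\mu(S)\ge 1-ce^{-n^{\delta'}/c}$. The paper closes this loop differently: on the bounded-field event (Propositions \ref{prop:cheapbound} and \ref{prop:lowerfield}) it takes windows of length $c\log n$ around the boundary stretches, uses the short-interval concentration Lemma \ref{lem:concentration short} (which needs only boundedness of the random boundary data, not closeness to local minimizers) to concentrate around the conditional minimizer, and Corollary \ref{c:conditional-local} with $\delta_{\partial J}=O(1)$ and distance of order $\log n$, so the exponential factor suppresses the boundary influence, to show that minimizer is within $c(\log n)^2/n$ of the local one; this anchoring is why the hypothesis $I\subset\{c\log n,\dots\}$ is there, and it is the missing ingredient in your first paragraph.

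Second, for fixed $q\in\mathcal Q$ you claim $\mu_q(S_q)=1-o(1)$ from the Gaussian domination Lemma \ref{l:GaussianLem} together with the minimizer bounds. Gaussian domination applied to the whole length-$|I|$ block controls only the $\ell^2$ norm: $\|X-x_0\|^2\lesssim |I|/n\sim n^{-1/2-\eps}$ with high probability, hence a sup-norm bound only of order $n^{-1/4-\eps/2}$, which is far larger than $n^{\delta-1/2}$ (recall $\delta<\eps/3$ with $\eps$ small). With only this input your remainder estimate does not close: $n\sum_i|x_i-x_{0,i}|^3\le n\,\|x-x_0\|_\infty\|x-x_0\|^2\sim n^{1/4-3\eps/2}\to\infty$. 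The correct tool at this point is again Lemma \ref{lem:concentration} --- now legitimately applicable because $q\in\mathcal Q$ supplies its boundary hypothesis --- combined with \eqref{e:Sbound} and the triangle inequality, which is how the paper bounds $\mu_q(S_\gamma^c)$ uniformly in $q$. So the skeleton of your argument is the right one, but the concentration step on which both $\mu(S)$ and the uniform-in-$q$ total-variation bound rest is not established by the tools you cite.
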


\begin{proof}
Let $H_q$ be the Hamiltonian corresponding to $\mu_q$, so that
$$
\mu_q=\frac{1}{Z_{\mu_q}}\exp(-n \beta H_q)\,da\,db
$$
Let $\Hess_H$ be the Hessian of $H_q$ evaluated at its minimizer
$a^\scond, b^\scond$. We compare the probability measures $\mu_q$ and $\nu_q$,  which can be written as
$$
\nu_q=\frac{1}{Z_{\nu_q}}\exp\left(-\frac{n\beta}2\left\langle
(a-a^\scond,b-b^\scond),\Hess_H(a-a^\scond,b-b^\scond)\right\rangle\right) \,da\,db.
$$
Note that by adjusting the $Z_{
\nu_q}$ we can assume that $H_q(a^\scond,b^\scond)=0$. Then by Taylor expansion
\begin{align*}
\label{e:xi}
|H - \left\langle
(a-a^\scond,b-b^\scond),
\frac{1}{2}   \Hess_H(a-a^\scond,b-b^\scond) \right\rangle |
 \le
 \xi\sum_{i=1}^k |a_i-a_i^\scond|^3+|b_i-b_i^\scond|^3
\end{align*}
where
\begin{equation}
\label{e:xi1}
\xi(a,b)=c\max_{i\in I,t\in [0,1]} \left(\left\vert \frac{\partial^3 H_{q}}{\partial ^3
{a_i}}\right\vert_{a_i(t)}+\left\vert\frac{\partial^3 H_{q}}{\partial ^3
{b_i}}\right\vert_{b_i(t)}\right)
\end{equation}
and $a_i(t)=ta_i+(1-t)a^\scond_i$, and $b_i(t)=tb_i+(1-t)b^\scond_i$.
Note that the bounded-range interaction implies that there
are only a linear number of non-zero mixed third partial
derivatives, and they can be bounded this way.

Next define
$$
 S_\gamma=\left\{(a,b)\,:\, n \,
\xi(a,b)\left( \sum_{i=1}^k |a_i-a_i^\scond|^3+|b_i-b_i^\scond|^3
 \right) \le \gamma \right\}.
 $$
 By \eqref{e:xi1} and the preceding display, on the event $
  S_\gamma$, the Radon-Nikodym derivatives between the measures
  $\mu_q$ and $\nu_q$  lie in the interval $
{Z_{\mu_q}}{Z_{\nu_q}^{-1}}[e^{-\gamma},e^{\gamma}] $.
Further setting,
$$
e^{-\kappa}:=\min(\mu_q(S_\gamma),\nu_q(S_\gamma)),
$$
 we have that,
$$
e^{-\kappa} \le \nu_q(S_\gamma) \le
\frac{Z_{\mu_q}}{Z_{\nu_q}} \int_{S_\gamma} e^\gamma d\mu_q,
$$
from which it follows that
$$
\frac{Z_{\mu_q}}{Z_{\nu_q}}  \ge e^{-\gamma-\kappa}, \mbox{
\ \ \ and by symmetry \ \ \ } \frac{Z_{\nu_q}}{Z_{\mu_q}}
\ge e^{-\gamma-\kappa}.
$$
Thus $\frac{ d \mu_q}{d \nu_q}$ is in fact in the
interval $ [e^{-2\gamma-\kappa},e^{2\gamma+\kappa}].$  This
implies that for any event $D$
\begin{eqnarray*}\mu_q(D) -\nu_q(D) &\le&
\mu_q(D)-e^{-2\gamma-\kappa}\mu_q(D)+\nu_q(S^c_\gamma)
\\&\le& 2(\gamma+\mu_q(S_{\gamma}^c)+\nu_q(S_{\gamma}^c)),
\end{eqnarray*}
using simply that $1-e^{-2 \gamma} \le 2 \gamma$ and the definition of $e^{-\kappa}$.

By symmetry and the last formula we have that
$$
 \dist_{TV}(\mu_q,\nu_q)\le
 2(\gamma+\mu_q(S_{\gamma}^c)+\nu_q(S_{\gamma}^c)).
$$
Now recall the measure $\nu_I$: it is the mixture of $\nu_q$
given by $\ev \nu_{Q'}$. The final form of the total variation bound
the reads,
\begin{eqnarray}
\label{e:tvbound}
 \dist_{TV}(\mu_I, \nu_I)
 &\le & \ev \,\dist_{TV}(\mu_{Q},\nu_{Q'})
 \\&\le&
 2\gamma + 2\ev \mu_Q(S^c_\gamma) + 2\ev
\nu_Q(S^c_\gamma)+P(Q\notin \mathcal
 Q), \nonumber
\end{eqnarray}
where the expectation is with respect to the random values
assigned to $Q$.

We  wish to show the last  three quantities in \eqref{e:tvbound} tend to zero for
a $\gamma=o(1)$. To begin, introduce the event
$$
E = \{ |a_i|, b_i \le b^* \mbox{ and } |b_i| > b_* \mbox{ for all } i
 \in 1, \dots, (1-\epsilon)n  \}.
$$
By Proposition \ref{prop:cheapbound} we have that the indicated upper bounds on the field
hold with high probability.  Lemma \ref{prop:lowerfield} gives the desired
conclusion for the lower bound as we have restricted to indices less than
$(1-\epsilon)n$.

Working on $E$ we can show that the left portion $L$ of
$\partial I$ satisfies $\mathcal Q$
with high probability (the argument for the right part of $\partial I$ is the same).
Consider an interval of $I_L$ length $c \log n$ containing $L$.
Again on $E$, the
events
$$
    \max_{k \in L} | A_k - A_k^{\sharp} | + |B_k - B_k^{\sharp}|
     \le c n^{\delta - \frac{1}{2}},
$$
and
$$
    \max_{k \in L} | A_k^{\sharp} - a_k^{\dagger} | +
       |B_k^{\sharp} - b_k^{\dagger} | \le c \frac{(\log n)^2}{n},
$$
both hold with high probability.  Here $\{A_k^\sharp, B_k^\sharp\}$ are the conditional
minimizers of $H$ subject to the (random) boundary conditions on $\partial I_L$
(drawn from the basic law $\mu$). The first of these holds by Lemma
\ref{lem:concentration short}.
The second follows from Corollary \ref{c:conditional-local}.
 The point is that, save for an event of
exponentially small
probability, $(A_k, B_k)$ are within $O(n^{-1/2+\delta})$ of the local
minimizers throughout $L$.  That is,
$\P( Q \notin \mathcal Q) \rightarrow 0$.

To control $\ev \mu_Q(S^c_\gamma)$ it is enough to consider
$\mu_q (S_{\gamma}^c E)$ for $q \in \mathcal Q$. We have only just shown
that on
$E$  the boundaries conditions may be assumed to satisfy $\mathcal Q$.
Now, also on $E$, we have that $ \xi(A,B) < c'$, recall \eqref{e:xi1}. The upper bounds on
the field and conditional minimizers (Proposition \ref{prop:cheapbound} and Theorem \ref{thm:upper bound})
bound the third derivatives of the polynomial part; the lower bounds from
Propositions \ref{prop:lowerfield} and \ref{prop:lowerbound} will bound the derivatives
of the logarithms. Finally, Lemma \ref{lem:concentration} implies that
$\mu_q (\max_{k\in I} |A_k -a^\scond| + |B_k - b^\scond| >  n^{\delta -1/2})
\le c e^{-n^{\delta}/c}$ where $c$ is independent of $q$. But this estimate
entails a similar bound on $\mu_q(S_{\gamma}^c E)$ by adjusting $\delta$
so that $\delta < \epsilon/3$.

The Gaussian calculation $ \sup_{q\in \mathcal Q} \nu_q(S_\gamma^c)\to 0$
is more immediate as under this law the centered variables each have variance bounded by
$c/n$ (with again $c$ independent of $q$).

The more refined statement $  \frac{d \nu_I}{d \mu_I} \vert_S = 1+o(1)$ (with $\mu(S) = 1 - o(1)$)
follows readily after noticing that the
advertised $S$ is contained in  $E \cap S_\gamma(Q)$ for a $\gamma$ chosen to be $o(1)$.
The estimate \eqref{e:Sbound} simply puts together the outcome of  Corollary \ref{c:conditional-local} with the assumption that
one is working on the event $S$.
\end{proof}

\section{A non-universal operator limit in the global scaling}
\label{s:firststretch}

With the results of the last section everything is in place to prove our main result, at least as applied to the truncated operator $T_{[c \log n , n]}$. Adapting this truncation is not a matter of convenience: the first order $\log n$ entries of the matrix $T_n$ behave very
differently from the quadratic case. Even their first-order  behavior is
different from the entries that come after $\log n$ steps.  We will show that at on one hand the top of $T_n$, on a global scale, encodes the equilibrium measure $\mu_V$ and is thus
non-universal.  On the other hand, Theorem \ref{t:beginning} below shows that the effect of this global non-universal behavior for the first stretch of matrix entries can be absorbed into a suitably small perturbation that does not harm the universality of  the edge scaling limit.

Recall that for a bounded self-adjoint operator $J$ on a Hilbert space, the spectral measure of $J$ at a unit vector $v$ is the measure whose $k$th moment is $\langle v,J^kv \rangle$. We will omit mentioning the vector $v$, and by default take it to be the first coordinate vector. The spectral measure of a matrix $M$ is then the weighted sum of delta masses at the eigenvalues with weights given by the squared first entries of the corresponding normalized eigenvectors.

Let $J$ denote the Jacobi operator, or semi-infinite symmetric tridiagonal matrix, whose entries $a_i$, $b_i>0$ solve the optimization problem \eqref{e:generalH} with $\alpha_i=1$, $n=\infty$, and left boundary conditions $0$.

More precisely, its entries are the unique bounded sequence so that for any finite subinterval of indices $I$, the variables with indices of $I$ solve the (now finite) optimization given the rest as boundary conditions.
\begin{lemma} The operator $J$ defined above exists, is unique, and satisfies  that
for some $c>0$, as $k\to\infty$ we have
\begin{equation}\label{e:expclose}
(J_{k,k},J_{k,k+1}) = (a^\dagger(0),b^\dagger(0)) + O(e^{-ck}).
\end{equation}
\end{lemma}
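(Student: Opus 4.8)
The plan is to realize $J$ as an entrywise limit of finite conditional minimizers and then read off \eqref{e:expclose} from the boundary‑sensitivity estimates of Section~\ref{s:minimizerboundary}. The first ingredient is an a priori bound: every conditional minimizer of the Hamiltonian \eqref{e:generalH} with $\alpha\equiv 1$ and with bounded boundary data has all its entries in a fixed compact set, say $|a_k|\le b^*$ and $b_*\le b_k\le b^*$, with $b_*,b^*$ depending on $V$ only. The upper bound is Theorem~\ref{thm:upper bound} (with $y_2=y_3=1$, the fixed left boundary value $0$ being among the bounded boundary data), and the lower bound on the $b$'s is Proposition~\ref{prop:lowerbound} (with $\alpha_*=1$); these bounds are what legitimize every later application of Propositions~\ref{prop:close} and~\ref{prop:minbound}, with constants depending on $V$ only. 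For existence, let $J^{(N)}$ be the minimizer of $\HH$ over indices $\{1,\dots,N\}$ with left boundary $0$, which is unique by uniform convexity (Lemma~\ref{l:UniformConvexity}). By the a priori bounds the $J^{(N)}$ lie in a fixed compact product set, so a diagonal subsequence converges entrywise to a bounded sequence $J$. Since the restriction of a minimizer to a subinterval is the conditional minimizer given the remaining values, and the conditional minimizer on a fixed finite interval depends continuously on its finitely many relevant boundary values (again by uniform convexity), passing to the limit shows that $J$ is a conditional minimizer on every finite interval; i.e. $J$ exists.

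For the exponential convergence, let $\bar J$ be the constant sequence with $(\bar J_{j,j},\bar J_{j,j+1})=(a^\dagger(0),b^\dagger(0))$. Because $(a^\dagger(0),b^\dagger(0))$ minimizes the periodic problem at $x=0$, the sequence $\bar J$ is a minimizer of $\HH$ (with $\alpha\equiv1$) on any interval given the constant boundary data $(a^\dagger(0),b^\dagger(0))$ — exactly the fact already used in the proof of Proposition~\ref{p:cond loc prelim}. Fix $k$ and apply Proposition~\ref{prop:minbound} on $I=\{1,\dots,2k\}$, comparing $J|_I$ (a conditional minimizer) with $\bar J|_I$ (which serves as the required minimizer over $I\cup\partial I$). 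On the right half of $\partial I$ the two sequences differ by a bounded amount; on the left half of $\partial I$ (indices $\le 0$) $J$ equals $0$ while $\bar J$ equals $(a^\dagger(0),b^\dagger(0))$, again a bounded difference; hence $\|J-\bar J\|_{\partial I}\le C$ with $C=C(V)$. Since $\dist(k,\partial I)=k$, Proposition~\ref{prop:minbound} gives $|J_{k,k}-a^\dagger(0)|+|J_{k,k+1}-b^\dagger(0)|\le cC\,e^{-c'k}$, which is \eqref{e:expclose} (for the finitely many small $k$ the bound holds trivially after enlarging $C$). The same computation applies verbatim to any bounded conditional minimizer with left boundary $0$.

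Uniqueness then follows by a short bootstrap. If $J,J'$ are two bounded conditional minimizers with left boundary $0$, the previous paragraph gives $|J_j-J'_j|\le 2cC\,e^{-c'j}$ for every $j$. Fix $k$; for $M>k$ apply Proposition~\ref{prop:minbound} on $\{1,\dots,M\}$ comparing $J$ and $J'$. Their left boundary data agree (both $0$), so $\|J-J'\|_{\partial\{1,\dots,M\}}$ is governed by the right half of $\partial\{1,\dots,M\}$, which is $\le C'e^{-c'M}$ by the bound just obtained; as $e^{-c'\dist(k,\partial\{1,\dots,M\})}\le 1$, the proposition gives $|J_k-J'_k|\le cC'e^{-c'M}$, and letting $M\to\infty$ forces $J_k=J'_k$. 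Note that neither the construction nor the rate involves $\beta$: the Hamiltonian \eqref{e:generalH} with $\alpha\equiv 1$ carries no $\beta$, so the constant $c$ in \eqref{e:expclose} depends on $V$ only.

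The one genuinely delicate point is the interaction with the fixed left boundary. The interval $I=\{1,\dots,2k\}$ abuts it, and there the two comparison sequences differ by an $O(1)$ (not $o(1)$) amount; it is precisely the combination of this $O(1)$ mismatch with $\dist(k,\partial I)=k$ that yields the $e^{-ck}$ rate. In the reverse direction, this same feature is why uniqueness cannot be read off from a single interval and must be bootstrapped through the exponential convergence with $M\to\infty$. Beyond that one should only take routine care that Proposition~\ref{prop:minbound} is applied in an admissible configuration — one of the two sequences a genuine minimizer over $I\cup\partial I$, both within the a priori bounds $b_*\le b_k\le b^*$, $|a_k|\le b^*$ — and that the constant sequence really does minimize the tridiagonal (not merely the circulant) Hamiltonian on finite intervals with constant boundary data, which is the periodic‑problem observation from Section~\ref{s:local minimizers}.
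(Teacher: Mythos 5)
Your proposal is correct and follows essentially the paper's own (much terser) argument: everything rests on Proposition \ref{prop:minbound} together with the a priori bounds of Theorem \ref{thm:upper bound} and Proposition \ref{prop:lowerbound}, and your key step for \eqref{e:expclose} --- comparing $J$ with the constant sequence $(a^\dagger(0),b^\dagger(0))$, which minimizes the same Hamiltonian with different boundary conditions --- is exactly the comparison the paper makes. Your compactness/diagonal extraction for existence and the bootstrap through \eqref{e:expclose} for uniqueness are just more explicit implementations of the same mechanism the paper invokes in one line.
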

\begin{proof}
The solutions for the optimization problem with zero boundary conditions on the interval $[1,n]$ have to converge to a limit  $J$ by Proposition \ref{prop:minbound}, which says that the values of two solutions with different boundary conditions are close away from the boundary.  The same Proposition implies that the limit $J$ will solve the optimization problem restricted to subintervals, as in the definition.

Uniqueness also follows from Proposition \ref{prop:minbound}: $J$ and any different solution has to be arbitrarily close in any subinterval.

Finally, Proposition \ref{prop:minbound} also implies \eqref{e:expclose}. Indeed, the vector with entries $a^\dagger(0),b^\dagger(0)$ minimizes the same Hamiltonian as the on- and off-diagonals of $J$ with different boundary conditions.
\end{proof}

The first step toward Theorem \ref{t:beginning} shows that the leading minors of $T_n$ are well approximated by those of $J$.  For clarity, throughout the rest of this section we use the notation $A[\ell,k]$ for the minor of a matrix $A$ drawn from the rows and columns with indices in the set $[\ell, k]$.

\begin{proposition}\label{p:lln}
For the matrix $T^\circ_n$ of global minimizers for $\beta\in [1,\infty]$ and for every $c\log n <m<c'n$ we have
\begin{equation}\label{e:lln1}
\|T^\circ_n[1,m]-J[1,m]\|\le c \frac{\max(m,\log^2 n)}{n}
\end{equation}
Moreover, for all $\beta\in (0,\infty)$ with probability at least $1-ce^{-c\log^2 n } $ we have
\begin{equation}\label{e:lln2}
\|T_n[1,m]-J[1,m]\|\le c \frac{\max(m,\log^2 n)}{n}.
\end{equation}
\end{proposition}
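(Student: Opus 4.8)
The plan is to establish both bounds entrywise: since the operator norm of a difference of symmetric tridiagonal matrices is at most three times the largest entrywise difference (Gershgorin), it suffices to bound $|T^\circ_{n,j,j}-J_{j,j}|+|T^\circ_{n,j,j+1}-J_{j,j+1}|$ by $c\,\max(m,\log^2 n)/n$ for every $j\le m$, and the same with the random $T_n$ on a high-probability event. I would prove \eqref{e:lln1} first and deduce \eqref{e:lln2} from it by concentration. For \eqref{e:lln1}, introduce the finite analogue $\widetilde J_n$ of $J$: the minimizer over $(a_1,\dots,a_n,b_1,\dots,b_{n-1})$ of the Hamiltonian \eqref{e:generalH} with all $\alpha_k\equiv1$ and left boundary $0$. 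Step 1 replaces $J$ by $\widetilde J_n$: both minimize the $\alpha\equiv1$ Hamiltonian --- $J$ on $[1,\infty)$, $\widetilde J_n$ on $[1,n]$ --- so, restricted to $[1,n-\deg V]$, they are conditional minimizers of the same Hamiltonian there with boundary data differing only on the right collar (the left phantom data being $0$ for both), and $J$ is in addition a minimizer over that interval together with its collar; since all entries are bounded (Theorem \ref{thm:upper bound}) and the collar $\alpha$'s equal $1$, Proposition \ref{prop:minbound} yields $|\widetilde J_{n,j}-J_j|\le c\,e^{-c'(n-\deg V-m)}\le c\,e^{-c''n}$ for $j\le m$, using $m<c'n$.

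Step 2 replaces $\widetilde J_n$ by $T^\circ_n$: these are minimizers on the \emph{same} index set $[1,n]$ (and left boundary $0$) of Hamiltonians $\mathcal H_\alpha=\tr V(T)-\sum_k\alpha_k\log b_k$ differing only through $\alpha_k\equiv1$ versus $\alpha_k=1-k/n-1/(n\beta)$. Interpolate $\alpha_k(t)=1-t\,(k/n+1/(n\beta))$, $t\in[0,1]$. For $\beta\ge1$ every $\alpha_k(t)\in[0,1]$, so $\mathcal H_{\alpha(t)}$ is uniformly convex, $\Hess\mathcal H_{\alpha(t)}\ge c_uI$ (Lemma \ref{l:UniformConvexity}, the $\log$-part only adding a nonnegative diagonal), and uniformly bounded above by the Gershgorin bound used in Proposition \ref{prop:close} together with the lower bounds of Proposition \ref{prop:lowerbound} (near the right end of the matrix the smallness of $\alpha_k$ compensates the smallness of $b_k$, so the $(b_k,b_k)$ entries stay bounded). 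Hence the minimizer $x(t)=(a(t),b(t))$ is $C^1$ in $t$ by the implicit function theorem, and
\[
\dot x(t)=-\big[\Hess\mathcal H_{\alpha(t)}\big]^{-1}\partial_t\nabla\mathcal H_{\alpha(t)},\qquad \partial_t\nabla\mathcal H_{\alpha(t)}=\sum_k\Big(\tfrac kn+\tfrac1{n\beta}\Big)\frac{1}{b_k(t)}\,e_{b_k}.
\]
As $\Hess\mathcal H_{\alpha(t)}$ is banded (bandwidth $\lesssim\deg V$, from the path expansion of $\tr V(T)$) and uniformly positive definite, its inverse has exponentially decaying entries $|[\Hess^{-1}]_{pq}|\le c_1e^{-c_2|p-q|}$ (the standard decay estimate for inverses of band matrices, essentially the content of Proposition \ref{prop:minbound}); thus for a coordinate $p$ at matrix index $j\le m$,
\[
|\dot x_p(t)|\le \frac{c}{b_*}\sum_k\Big(\tfrac kn+\tfrac1{n\beta}\Big)e^{-c_2|j-k|}\le \frac cn\sum_k(k+1)e^{-c_2|j-k|}\le \frac{c(j+1)}{n}\le \frac{cm}{n},
\]
the far-away $k$ contributing negligibly even where $1/b_k(t)$ is large. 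Integrating over $t\in[0,1]$ gives $|T^\circ_{n,j}-\widetilde J_{n,j}|\le cm/n$; together with Step 1 and Gershgorin this proves \eqref{e:lln1}.

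For \eqref{e:lln2} I would condition and concentrate. On the high-probability events of Propositions \ref{prop:cheapbound} and \ref{prop:lowerfield} the entries are bounded above and below; then Lemma \ref{lem:concentration short} (together with Lemma \ref{lem:concentration}, or a union over a polynomial number of overlapping windows, so the total probability stays $\ge 1-ce^{-c\log^2 n}$) places $(A_k,B_k)_{k\le m}$ within the required distance of the conditional minimizer of $H$ on $[1,m]$ given the random boundary outside $[1,m]$. By Corollary \ref{c:conditional-local} and the argument proving \eqref{e:lln1}, run now against a random but uniformly controlled boundary, that conditional minimizer lies within $c\,\max(m,\log^2 n)/n$ of $J[1,m]$; the triangle inequality and Gershgorin conclude.

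The main obstacle is Step 2. Proposition \ref{p:twoH} compares minimizers under a change of $\log$-coefficients only in an $\ell^2$, hence $\sqrt{\ }$-scale, sense, which is a square root off the required $m/n$ rate; one genuinely needs the smooth dependence of the minimizer on $\alpha$ and the exponential decay of the inverse Hessian, and one must verify this decay uniformly along the interpolation --- in particular that the Hessian stays bounded near the right end of the matrix, where $\alpha_k$ and $b_k$ both degenerate. On the stochastic side the analogous point is to secure the super-polynomial probability in \eqref{e:lln2} while keeping the comparison against a random boundary at the right scale.
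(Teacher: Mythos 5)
Your skeleton (entrywise bounds plus Gershgorin, an exact comparison of minimizers of Hamiltonians that differ only through the $\log$-coefficients, then concentration for the random matrix) parallels the paper, but your key decomposition differs in a way that leaves a genuine gap. The paper never compares minimizers over the full index range: it introduces the intermediate matrix $T'_{n,m}$, the minimizer of the true Hamiltonian $H$ on $[1,m]$ with boundary data given by the entries of $J$ on the collar near $m$ and zero on the left; it compares this to $J[1,m]$ (same boundary, coefficients differing by at most $cm/n$, via Proposition \ref{p:twoH}) and then to $T^\circ_n[1,m]$ (same Hamiltonian, boundary data differing by at most $c\max(m,\log^2 n)/n$ by Corollary \ref{c:conditional-local}, analyticity and \eqref{e:expclose}, via Proposition \ref{prop:close}). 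Everything there lives on $[1,m]$ with $m\le c'n$, so all $b$-entries are bounded below by a constant and all $\log$-coefficients are bounded away from zero. Your Step 2 instead interpolates the coefficients over the whole of $[1,n]$, and its engine --- a $t$-uniform exponential decay of $[\Hess\,\mathcal H_{\alpha(t)}]^{-1}$ together with a usable bound on $1/b_k(t)$ for $k$ near $n$ --- is precisely what the available toolbox does not supply at the right edge. Proposition \ref{prop:lowerbound} (correctly read, its exponent is $-c/\alpha_k$, not $-c\alpha_k$) allows $1/b_k(t)$ to be as large as $e^{cn}$ when $\alpha_k(t)\asymp 1/n$; the Hessian entry $\alpha_k(t)/b_k(t)^2$ is then not bounded; the closeness-to-local-minimizer results (Proposition \ref{p:cond loc prelim}, Corollary \ref{c:conditional-local}) are confined to indices $\le (1-\eps)n$; and Proposition \ref{prop:minbound}, which you invoke as a stand-in for the band-inverse decay, is proved under $\alpha_k\ge\alpha_*>0$ and two-sided bounds on $b_k$ that fail there. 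Since the decay rate of the inverse of a band matrix degrades with the condition number, ``the far-away $k$ contribute negligibly'' needs something like $b_k(t)\gtrsim\sqrt{\alpha_k(t)}$, which is nowhere established; you flag this obstacle yourself but leave it unresolved, so the central step of your proof of \eqref{e:lln1} is open. The natural repair is to adopt the paper's window: your interpolation idea (which is a sensible way to get an entrywise $m/n$ rate, and speaks to the $\ell^2$-versus-entrywise concern you raise about Proposition \ref{p:twoH}) works cleanly if run on $[1,m]$ with the entries of $J$ frozen on the right collar, i.e.\ essentially comparing through $T'_{n,m}$ as the paper does, where the Hessians stay uniformly bounded and uniformly positive definite.

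For the stochastic bound \eqref{e:lln2} your route also departs from the paper and falls short of the stated rate. The paper conditions only on the collar $[m-\deg V/2,m]$, where Proposition \ref{p:variation} places $(A_k,B_k)$ within $n^{\delta-1/2}$ of the local minimizers, and then treats $(A_k,B_k)_{k<m-\deg V/2}$ as a sample from the conditional Hamiltonian, comparing its conditional minimizer to $T^\circ_n$ via Proposition \ref{prop:close} and controlling the fluctuation of the whole block at once by the Gaussian Lemma \ref{l:GaussianLem} at the $e^{-c\log^2 n}$ probability level (it also handles $\beta<1$ separately by conditioning as in Proposition \ref{prop:cheapbound}, a case your sketch does not address). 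Your chain through Lemmas \ref{lem:concentration short} and \ref{lem:concentration} only yields entrywise closeness of $(A_k,B_k)$ to conditional minimizers at scale $n^{\delta-1/2}$, so after the triangle inequality your final bound is $n^{\delta-1/2}$, not $c\max(m,\log^2 n)/n$, which is strictly weaker whenever $m\ll n^{1/2}$.
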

It follows immediately that $J$ encodes the limiting empirical eigenvalue distribution, the analogue of Wigner's semicircle law.
\begin{corollary} \label{c:lln}The spectral measure $\mu$ of $J$ equals
\begin{enumerate}[(i)]
 \item \label{n:wsc} the limit of the eigenvalue distribution of $T_n$ for every $\beta$ and
\item \label{n:equilibrium} the limit of the empirical distribution of the Fekete points, the $n$-point minimizers of the density  $ c  e^{-\sum  V(\lambda_i)}\prod_{i<j} |\lambda_i-\lambda_j|$.
\end{enumerate}
\end{corollary}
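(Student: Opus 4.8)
The plan is to prove both parts by comparing moments of spectral measures. Since $J$ is a bounded tridiagonal operator and the $k$-th power of a tridiagonal matrix has a nearest-neighbour path expansion, the $k$-th moment of the spectral measure $\mu$ of $J$ at $e_1$ equals $\langle e_1, J[1,k+1]^{\,k} e_1\rangle$, and likewise the $k$-th moment of the spectral measure of $T_n$ at $e_1$ equals $\langle e_1, T_n[1,k+1]^{\,k} e_1\rangle$. Fixing $k$ and taking $m=\lceil \log^2 n\rceil$ (so that $k+1\le m$ and $c\log n<m<c'n$ for $n$ large), Proposition \ref{p:lln} together with the trivial inequality $\|T_n[1,k+1]-J[1,k+1]\|\le\|T_n[1,m]-J[1,m]\|$ gives $\|T_n[1,k+1]-J[1,k+1]\|\le c(\log^2 n)/n$ off an event of probability at most $ce^{-c\log^2 n}$. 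On the high-probability event of Proposition \ref{prop:cheapbound} all $|A_i|$ and $B_i$ are bounded, so $\|T_n[1,k+1]\|$ is bounded uniformly in $n$ (Gershgorin), and $\|J[1,k+1]\|$ is bounded by \eqref{e:expclose}; then the telescoping estimate $\|X^k-Y^k\|\le k\max(\|X\|,\|Y\|)^{k-1}\|X-Y\|$ forces $\langle e_1, T_n^{\,k} e_1\rangle\to\langle e_1, J^{\,k}e_1\rangle$, and since the exceptional probabilities are summable this holds almost surely and simultaneously for all $k$. As $\mu$ is compactly supported it is determined by its moments, so the spectral measure of $T_n$ at $e_1$ converges weakly to $\mu$; using \eqref{e:lln1} the same argument applies to $T^\circ_n$.

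To pass to the empirical eigenvalue distribution I would invoke Proposition \ref{p:matrix model}: the spectral measure of $T_n$ at $e_1$ is $\sum_j q_j^2\delta_{\lambda_j}$ with Dirichlet$(\tfrac{\beta}{2},\dots,\tfrac{\beta}{2})$ weights independent of the $\lambda_j$. Conditioned on the eigenvalues, $\sum_j q_j^2\lambda_j^{\,k}$ has mean $\tfrac1n\sum_j\lambda_j^{\,k}$, the $k$-th moment of $\tfrac1n\sum_j\delta_{\lambda_j}$, and — since on the event of Proposition \ref{prop:cheapbound} the $|\lambda_j|$ are bounded while the Dirichlet variances and covariances are $O(n^{-3})$ — conditional variance $O(1/n)$. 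Hence $\tfrac1n\sum_j\delta_{\lambda_j}$ has the same almost sure moment limits as the spectral measure at $e_1$, namely those of $\mu$, and by compact support this is weak convergence. This proves (i), for every $\beta>0$.

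For (ii) I would deduce the statement from (i) together with classical potential theory: the normalized counting measures of the weighted Fekete points converge to the equilibrium measure $\mu_V$, which (cf.\ Remark \ref{r:momentcond}) is also the limit of the eigenvalue distribution of the Dyson $\beta$-ensemble; combined with (i) this gives $\mu=\mu_V$, so the Fekete counting measures converge to $\mu$. One can also see the link to $J$ directly: under the bijection of Proposition \ref{p:matrix model}, $n\beta H$ splits into an $O(n^2)$ logarithmic-energy plus external-field functional in the eigenvalue variables and the $O(n\log n)$ terms $-\tfrac{\beta}{2}\sum_j\log q_j^2+\sum_k\log b_k$, so the eigenvalue configuration of the minimizer $T^\circ_n$ should agree to leading order with the Fekete configuration, and the spectral-measure convergence of $T^\circ_n$ from the first paragraph would then deliver the claim. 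The moment comparison and the Dirichlet-weight concentration above are routine given Propositions \ref{p:lln} and \ref{prop:cheapbound}; the main obstacle lies in this second, self-contained route for (ii) — one must show that the lower-order terms $-\tfrac{\beta}{2}\sum_j\log q_j^2+\sum_k\log b_k$, and the consequent non-uniformity of the weights $q^2$ at the minimizer of $H$, do not move the empirical distribution of the eigenvalues of $T^\circ_n$ away from the Fekete limit. If one is content to quote the standard convergence of weighted Fekete counting measures to the equilibrium measure, the only remaining care is to match the normalization under which $c\,e^{-\sum_i V(\lambda_i)}\prod_{i<j}|\lambda_i-\lambda_j|$ produces exactly $\mu_V$.
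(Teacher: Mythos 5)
Your treatment of part (i) is essentially the paper's: moment convergence of the spectral measure at $e_1$ via Proposition \ref{p:lln} (using $(T_n^k)_{11}=((T_n[1,k+1])^k)_{11}$ and the a priori entry bounds), followed by concentration of the Dirichlet weights from Proposition \ref{p:matrix model}. The paper phrases the last step as a law of large numbers for normalized Gamma variables; your conditional-variance computation is an equivalent route, though the individual weight variances are $O(n^{-2})$ rather than $O(n^{-3})$ (the resulting $O(1/n)$ bound for the sum is still right), and for almost sure convergence one should add a summable tail estimate, say via fourth moments or Bernstein for the Gamma representation -- the same level of detail the paper itself elides.

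For part (ii) there is a genuine gap, and it is exactly the one you flag yourself: you leave open whether the terms $-\tfrac{\beta}{2}\sum_j\log q_j^2+\sum_k\log b_k$ move the eigenvalues of $T^\circ_n$ off the Fekete configuration, and your fallback is to quote the classical global law for $\beta$-ensembles together with classical weighted-Fekete potential theory (note Remark \ref{r:momentcond} only supplies the moment conditions for the support, not the global law), i.e.\ to import from outside precisely the identification the corollary is designed to produce from the paper's own machinery. The paper closes the gap with an exact, non-asymptotic observation rather than an asymptotic comparison: at $\beta=\infty$ the coefficients in \eqref{Hdef} are exactly $1-k/n$, and the identity \eqref{spectralmapid} converts $\sum_k(1-k/n)\log b_k$ into $\tfrac1n\bigl(\sum_i\log q_i+\sum_{i<j}\log|\lambda_i-\lambda_j|\bigr)$ with no error term, so that in spectral coordinates $H=\sum_j V(\lambda_j)-\tfrac1n\sum_i\log q_i-\tfrac1n\sum_{i<j}\log|\lambda_i-\lambda_j|$, which separates into a $\lambda$-part and a $q$-part. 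The minimizing weights are exactly uniform, $q_i^2=1/n$, and the minimizing eigenvalues are exactly the weighted Fekete points, namely the maximizers of $e^{-n\sum_i V(\lambda_i)}\prod_{i<j}|\lambda_i-\lambda_j|$ (this also settles the normalization issue you raise, since the correct weight $e^{-nV}$ falls out of $H$). Hence the spectral measure of $T^\circ_n$ \emph{is} the Fekete empirical measure, and your own first-paragraph moment argument applied with \eqref{e:lln1} finishes (ii) with no potential theory at all. Your route through classical results does prove the bare statement if those inputs are granted, but it does not repair the self-contained argument, whose missing idea is this exact separation.
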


The second characterization (\ref{n:equilibrium}) identifies $\mu_V$ as the classical {\bf equilibrium measure} of potential theory associated with the potential $V$. In particular, $\mu_V$ and $J$ are non-universal. The fact that (i) equals $\mu_V$ regardless of $\beta$ is classical; we include it to clarify how it fits into our framework. The proof a simple consequence of the steps we need to prove our main result.

\begin{proof}[Proof of Corollary \ref{c:lln}]
For claim (ii), note that the $\beta=\infty$ energy associated with the tridiagonal matrix $T_n(a,b)$ can be expressed in terms of $q_i,\lambda_i$ using the expression \eqref{spectralmapid}:
\ben \notag
 H(a,b)=\tr(V(T_n(a,b)))+\summ_{k=1}^{n-1}(1-k/n)\log(b_k)
 =\log \left(\prod_{i=1}^n q_i e^{-V(\lambda_i)} \ \prod_{i<j}|\lambda_i-\lambda_j|\right)+c
\een
so $H(a,b)$ is minimal exactly when the $\lambda_i$ are the  Fekete points, and all the $q_i$ are equal. In particular, for $\beta=\infty$, the spectral measure of the tridiagonal matrix  $T^\circ_n=T_n(a^\circ,b^\circ)$ built from the minimizers of $H(a,b)$ is the same as the empirical distribution of the Fekete points.

By the Proposition, the $k$-th moment of this measure is given by the $\beta=\infty$ case  $$((T^\circ_n)^k)_{1,1}=((T^\circ_n[1,k])^k)_{1,1}\to ((J[1,k])^k)_{1,1}=(J^k)_{1,1},$$
and so these measures converge weakly to the spectral measure of $J$, showing \eqref{n:equilibrium}.

The same argument, using $T_n$ in the finite $\beta$ case, shows that the spectral measures converge to that of $J_k$. Finally, recall Proposition \ref{p:matrix model} that for the $\beta$-ensemble, the spectral measure is just a reweighted version of the empirical distributions, where the weights are Dirichlet$(\beta/2,\ldots, \beta/2)$, independent from the eigenvalues. If we realize the Dirichlet distribution as independent Gamma variables normalized by their sum, \eqref{n:wsc} follows easily from the law of large numbers.
\end{proof}

\begin{proof}[Proof of Proposition \ref{p:lln}]
We first handle the $\beta \in[1,\infty]$ case.

Let $T'_{n,m}$ be the $m\times m$ matrix built from the minimizer of the main Hamiltonian $H=H_n$ of \eqref{Hdef}
with boundary conditions given by the corresponding entries of $J$ on $m-\deg V/2,\ldots, m$ on the right, and zero on the left.
The entries of $T'_{n,m}$ and  $J[1,m]$ minimize different Hamiltonians with the same boundary conditions. Proposition \ref{p:twoH} gives the bound
\begin{equation}\label{e:JT'}
\|J[1,m]-T'_{n,m}\| \le c m/n,
\end{equation}
in a slightly altered form: for the maximum of entries of the difference matrix. The operator norm bound follows from this and close and the Greshgorin circle theorem (the matrix is diagonally dominant).

To compare $T'_{n,m}$ to $T^\circ_n$,
Corollary \ref{c:conditional-local} gives that for $j=m-\deg V/2, \ldots ,m$ the entries $a^\circ ,b^\circ $ of $T^\circ$ satisfy
$$\|(a^\circ_j,b^\circ_j)-(a^\dagger_j,b^\dagger_j)\|\le c \frac{\max(m,\log^2 n)}n,$$
by analyticity of $a^\dagger, b^\dagger$ we have
$$\|(a^\dagger_j,b^\dagger_j)-(a^\dagger(0),b^\dagger(0))\|\le c\frac mn.$$
Now \eqref{e:expclose} gives that for $m\ge c \log n$ we have
$$\|(a^\circ_j,b^\circ_j)-(a^\dagger(0),b^\dagger(0))\|\le c /n.$$
So the boundary conditions for $T'_{n,m}$ and $T^\circ_n$ are indeed very close, and they minimize the same Hamiltonian. This implies that the minimizers are also close by Proposition \ref{prop:close}:
$$
\|T'_{n,m}-T^\circ_n[1,m]\|\le c \frac mn
$$
which, together with \eqref{e:JT'} shows the first claim \eqref{e:lln1}.

Now assume $\beta\in[1,\infty)$. Proposition \ref{p:variation} gives the entries $A,B$ of the random tridiagonal matrix $T_n$ satisfy
$$
\P(|(A_k,B_k)-(a^\dagger_k,b^\dagger_k)|<cn^{-1/2+\delta}\mbox{ for all }  m-\deg V/2\le k\le m )\ge 1- e^{-cn^\delta}
$$
now given these entries, the $A_k,B_k$ for $k<m-\deg V/2$ are picked from the conditional Hamiltonian. These conditional minimizers are now $n^{-1/2+\delta}$-close to $T^\circ$ by Proposition \ref{prop:close},  since the boundary conditions are so close (with $m'=\max(m,\log^2 n)$). The Gaussian Lemma \ref{l:GaussianLem} shows that the deviations from the minimizer are bounded above by constant times the norm-squared of an i.i.d. Gaussian vector of dimension $m$ and variance $c/n$. So it is exponentially (in $m$) unlikely to be more than $cm/n$. In summary, we get that with probability at least $1-e^{c\log^2n}$, we get the desired conclusion
$$
\|T_n[1,m]-T^\circ_n[1,m]\|\le c \frac{m'}n \,.
$$
The $\beta<1$ case can be treated by conditioning on the $b$ terms with positive log coefficient the same way as in Proposition \ref{prop:cheapbound}.
\end{proof}


Next, denote by $j_{0,0}$ the first zero of the Bessel
function $J_0$ of the first kind. It will be crucial for the proof that we have the strict inequality
$j_{0,0}\sim 2.40482
> \pi/2$.
Let  also $\lambdamax(A)$ denote the largest eigenvalue of a matrix $A$.
The point of Proposition \ref{p:lln} is that top corner of $T_n$ looks more and more like $J_k$, and next we show that
 $J_k$ has no large eigenvalues.

\begin{proposition} \label{p:quadraturetop}
We have that
$
\lambda_{max} (J_k) \le a^\dagger(0)+b^\dagger(0)(2-(j_{00}/k)^2)  + o(k^{-2}).
$
\end{proposition}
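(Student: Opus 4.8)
\emph{Reduction to orthogonal polynomials.} The plan is to recognise $\lambda_{\max}(J_k)$ as the largest Gauss quadrature node of the spectral measure of $J$ and then to bound it from above by a monotonicity comparison. For $0\le m\le 2k-1$ a closed walk of length $m$ starting at vertex $1$ in the path graph reaches at most vertex $k$, so $\langle e_1, J^m e_1\rangle = \langle e_1, J_k^m e_1\rangle$; hence $J_k$ is exactly the $k$-th truncated Jacobi matrix of the spectral measure $\mu$ of $J$ at $e_1$, and its spectrum consists of the zeros of the degree-$k$ orthogonal polynomial of $\mu$. By Corollary \ref{c:lln} and Remark \ref{r:momentcond}, $\mu=\mu_V$ is the equilibrium measure of $V$, supported on $[L_0,R_0]$ with $L_0 = a^\dagger(0)-2b^\dagger(0)$ and $R_0 = a^\dagger(0)+2b^\dagger(0) = \mathcal E(0)$; by the one-cut regular structure forced by the analyticity and convexity of $V$ (the density decaying like a square root at the edges, as recalled in the introduction), $\mu_V$ has a density $\rho_V$ equal to a real analytic, strictly positive function times $\sqrt{(x-L_0)(R_0-x)}$ on $[L_0,R_0]$. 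Therefore $\lambda_{\max}(J_k) = x_{k,1}(\mu_V)$, the largest zero of the degree-$k$ orthogonal polynomial for $\mu_V$.

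\emph{Comparison with a Legendre-type weight.} Put $\rho_\ast(x) := \rho_V(x)/\sqrt{R_0-x}$ on $[L_0,R_0]$, which then equals a positive analytic function times $\sqrt{x-L_0}$: it is a genuine weight, bounded and bounded away from $0$ near $R_0$, so it has edge exponent $0$ at $R_0$. The ratio $\rho_\ast/\rho_V = (R_0-x)^{-1/2}$ is non-decreasing on $[L_0,R_0)$, so Markov's theorem on the monotonicity of zeros of orthogonal polynomials — applied to the family $w_t(x):=\rho_V(x)(R_0-x)^{-t/2}$, whose logarithmic $t$-derivative $-\tfrac12\log(R_0-x)$ is increasing in $x$ — shows that each zero of the degree-$k$ orthogonal polynomial is non-decreasing in $t$, and in particular $x_{k,1}(\mu_V)\le x_{k,1}(\rho_\ast)$ for every $k$. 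The classical large-$k$ asymptotics for the largest zero of orthogonal polynomials attached to a weight comparable to the Legendre weight near its right endpoint (the pure case is in Szeg\H{o}'s book; the analytic positive factor and the behaviour at $L_0$ contribute only lower-order terms) then give
\[
  x_{k,1}(\rho_\ast) = R_0 - \frac{R_0-L_0}{2}\cdot\frac{j_{00}^2}{2k^2} + o(k^{-2}) = a^\dagger(0) + b^\dagger(0)\bigl(2-(j_{00}/k)^2\bigr) + o(k^{-2}),
\]
using $R_0-L_0 = 4b^\dagger(0)$, where $j_{00}$ is the first positive zero of $J_0$. Chaining the last two statements proves the proposition.

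\emph{Main obstacle.} The delicate step is the comparison. Markov's monotonicity theorem is classically stated for weights smooth and strictly positive on a closed interval, whereas $\rho_V$ and $\rho_\ast$ vanish like a square root at $L_0$; one must either invoke a version robust to an integrable edge zero, or, more elementarily, run the argument on $[L_0+\varepsilon,R_0]$ with the mass near $L_0$ frozen as a boundary term and let $\varepsilon\downarrow0$, using that the top zeros are stable under such a truncation. One also has to certify that the $o(k^{-2})$ error in the edge asymptotics for the non-pure-Jacobi weight $\rho_\ast$ is genuinely of lower order — this is where the analyticity of $V$ enters. I note that only the displayed inequality, with any constant strictly exceeding $(\pi/2)^2$, is needed downstream, and $j_{00}^2\approx 5.78$ supplies this with room to spare; a sharper reference weight (edge exponent tending to $1/2$) would even recover the optimal constant $\pi^2$, but the Legendre comparison is the most transparent.
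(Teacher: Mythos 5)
Your route is genuinely different from the paper's and the overall architecture is sound: identifying $\lambda_{\max}(J_k)$ with the largest zero of the degree-$k$ orthogonal polynomial of the spectral measure $\mu=\mu_V$ (Corollary \ref{c:lln}, Remark \ref{r:momentcond}), pushing that zero up by a Markov-type monotonicity in the family $w_t=\rho_V(R_0-x)^{-t/2}$, and then reading off the Bessel constant from the reference weight $\rho_\ast=g(x)\sqrt{x-L_0}$, which has Legendre-type (exponent $0$) behaviour at the right edge. The direction of the comparison and the numerology ($R_0-L_0=4b^\dagger(0)$, largest zero at distance $\sim j_{00}^2\,b^\dagger(0)/k^2$ from $R_0$) both check out, and the endpoint-vanishing issue in Markov's theorem that you flag is indeed only a technical nuisance.

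The genuine weak point is the step you describe as ``classical'': the assertion that for the non-pure weight $\rho_\ast(x)=g(x)\sqrt{x-L_0}$, with $g$ positive analytic, the largest zero equals $R_0-\tfrac{R_0-L_0}{2}\cdot\tfrac{j_{00}^2}{2k^2}+o(k^{-2})$. Szeg\H{o}'s book gives this only for pure Jacobi weights; the claim that ``the analytic positive factor and the behaviour at $L_0$ contribute only lower-order terms'' is exactly the content of a hard-edge local (Mehler--Heine/Bessel) analysis for a generalized Jacobi weight, and it is not free --- it requires either Badkov-type uniform endpoint asymptotics for generalized Jacobi weights or a Riemann--Hilbert analysis for analytic modifications (Kuijlaars--McLaughlin--Van Assche--Vanlessen). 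So as written this is a gap: you have reduced the proposition to a statement of comparable depth and asserted it rather than proved or correctly cited it. It can be repaired with such a citation, but note that the paper's own proof is engineered precisely to avoid any asymptotics for non-classical weights: it uses that $J_k$ is determined by the first $2k-1$ moments of $\mu$ (Fact \ref{f:potential}), the concavity of the equilibrium density for convex $V$ to write $\mu$ as a mixture of uniform measures near the edge, and then replaces each uniform piece by its Gauss quadrature measure --- so the only Bessel input ever needed is the pure Legendre case \eqref{e:uniform}, with the support shrinkage quantified in Lemma \ref{l:essup} and the conclusion following from $\lambda_{\max}(J_k)\le\operatorname{essup}(\mu')$. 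Your comparison argument buys a cleaner conceptual picture (and, as you note, could in principle recover the sharp constant $\pi^2$), at the cost of importing substantially heavier orthogonal-polynomial asymptotics; the paper's argument buys self-containedness at the cost of the slightly lossy constant $j_{00}^2$, which is all that is needed downstream.
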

For the proof of this proposition, we recall a few facts from both orthogonal polynomial theory and potential theory.
\begin{fact} \label{f:potential} The following hold.

\begin{itemize}
\item
For any probability measure $\mu$ of bounded support there exists a unique Jacobi operator $J$  with spectral measure $\mu$. If the support of $\mu$ consists of $m$ points, then $J$ reduces to an $m\times m$ Jacobi matrix.
\item
The top $k\times k$ minor $J_k$ of $J$ is uniquely determined by the first $2k-1$ moments of $\mu$.
\item For $k\le m$ the spectral measure $\mu_k$ of $J_k$ satisfies
\begin{equation}\label{e:essup}
\essup (\mu_k) = \lambda_{max}(J_k) \le \essup (\mu).
\end{equation}
\item For the case when $\mu$ is the uniform measure on $[-1,1]$, then $\mu_k$ is supported
on the zeros of the $k$th Legendre polynomial. The Bessel asymptotics for the Legendre polynomials imply that
\begin{equation}\label{e:uniform}
\essup (\mu_k)=1-(j_{00}/k)^2/2  + o(k^{-2}).
\end{equation}
\item When $V$ is convex, the measure $\mu=\mu_V$ has concave density on its support $[-2b^\dagger(0)+a^\dagger(0),2b^\dagger(0)+a^\dagger(0)]$.
\end{itemize}
\end{fact}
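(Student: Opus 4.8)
The statement collects five classical facts from orthogonal polynomial theory, Gauss quadrature, and potential theory; only the last requires real work, and I would organize the argument as follows. \textbf{Items 1 and 2.} For a probability measure $\mu$ of bounded support, Gram--Schmidt applied to $1,x,x^{2},\dots$ in $L^{2}(\mu)$ produces orthonormal polynomials $p_{0},p_{1},\dots$ obeying a three-term recurrence $x\,p_{j}=b_{j}p_{j+1}+a_{j+1}p_{j}+b_{j-1}p_{j-1}$ with all $b_{j}>0$; the tridiagonal matrix with diagonal $(a_{j})$ and off-diagonal $(b_{j})$ is the Jacobi operator $J$, and one checks $\langle e_{1},J^{k}e_{1}\rangle=\int x^{k}\,d\mu$, so $\mu$ is its spectral measure at $e_{1}$. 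If $\mu$ has exactly $m$ atoms then $\dim L^{2}(\mu)=m$, the recurrence terminates, and $J$ is $m\times m$. Uniqueness and Item 2 are exactly the computation already used in the proof of Proposition \ref{p:matrix model}: the moment $\int x^{2\ell}d\mu$ equals $\prod_{j\le\ell}b_{j}^{2}$ plus a polynomial in $a_{i},b_{i}$ with $i<\ell$, and $\int x^{2\ell+1}d\mu$ equals $a_{\ell+1}\prod_{j\le\ell}b_{j}^{2}$ plus a similar lower-order term, so the moments up to order $2k-1$ determine $a_{1},\dots,a_{k}$ and $b_{1},\dots,b_{k-1}$ --- hence $J_{k}$ --- and determine all entries of $J$ when $k$ is unrestricted (see \cite{Deiftbook}).

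\textbf{Item 3.} By definition $\mu_{k}$ is the spectral measure of the symmetric matrix $J_{k}$ at $e_{1}$, so $\mu_{k}=\sum_{j}|\langle e_{1},\phi_{j}\rangle|^{2}\,\delta_{\nu_{j}}$ over an orthonormal eigenbasis $(\nu_{j},\phi_{j})$ of $J_{k}$; in particular $\essup(\mu_{k})\le\lambdamax(J_{k})$. An eigenvector of a tridiagonal matrix with nonzero off-diagonals cannot vanish at its first coordinate, since the eigenvalue recursion would then propagate that to the entire vector; hence the top eigenvalue of $J_{k}$ receives positive $\mu_{k}$-mass, giving $\essup(\mu_{k})=\lambdamax(J_{k})$. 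Finally $J_{k}$ is the compression of $J$ to $\mathrm{span}(p_{0},\dots,p_{k-1})$, and under the unitary identification $p_{j}\leftrightarrow e_{j}$ the operator $J$ becomes multiplication by $x$ on $L^{2}(\mu)$; thus for a unit vector $v$ supported on the first $k$ coordinates, corresponding to a polynomial $P$ of degree $<k$, one has $\langle v,J_{k}v\rangle=\langle v,Jv\rangle=\int x\,|P(x)|^{2}\,d\mu(x)\le\essup(\mu)$, and maximizing over $v$ gives $\lambdamax(J_{k})\le\essup(\mu)$.

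\textbf{Item 4.} For $\mu$ equal to Lebesgue measure on $[-1,1]$ the $p_{j}$ are the normalized Legendre polynomials, and $\mu_{k}$ is the $k$-node Gauss--Legendre quadrature measure, hence supported on the zeros of $P_{k}$ (standard; see \cite{Deiftbook}). Its largest point is $\cos\theta_{k}$ with $k\theta_{k}\to j_{00}$, and in fact $\theta_{k}=j_{00}/k+o(1/k)$, by the classical Mehler--Heine/Bessel asymptotics $P_{k}(\cos(z/k))\to J_{0}(z)$ uniformly for $z$ on compacts; therefore $\essup(\mu_{k})=\cos\theta_{k}=1-\tfrac{1}{2}(j_{00}/k)^{2}+o(k^{-2})$.

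\textbf{Item 5 (the main point).} Apply the affine map $\Phi$ carrying $[L,R]=[a^{\dagger}(0)-2b^{\dagger}(0),\,a^{\dagger}(0)+2b^{\dagger}(0)]$ onto $[-1,1]$: the push-forward $\Phi_{*}\mu_{V}$ is the equilibrium measure of the (still convex) polynomial $V\circ\Phi^{-1}$, is supported on $[-1,1]$, and has density equal to that of $\mu_{V}$ composed with $\Phi^{-1}$ up to a positive constant --- so both positivity and concavity of the density are invariant under this reduction. Assuming then $\supp\mu_{V}=[-1,1]$, the equilibrium density is
$$\psi(x)=\frac{1}{2\pi}\sqrt{1-x^{2}}\,Q(x),\qquad Q(x)=\frac{1}{\pi}\int_{-1}^{1}\frac{V'(x)-V'(t)}{x-t}\,\frac{dt}{\sqrt{1-t^{2}}},$$
with $Q$ a polynomial of degree $\deg V-2$; since $V$ is convex the integrand is nonnegative, whence $Q\ge0$ on $[-1,1]$ and $\psi\ge0$, as it must be. In particular $\psi$ vanishes like $\sqrt{1-x^{2}}$ at the endpoints and is therefore concave near them. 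For concavity throughout $(-1,1)$, write $x=\cos\theta$ and $\psi(\cos\theta)=\tfrac{1}{2\pi}\sin\theta\,\widetilde Q(\theta)$ with $\widetilde Q(\theta)=Q(\cos\theta)$; a direct computation reduces $\psi''\le0$ to the trigonometric inequality
$$\sin\theta\,\frac{d}{d\theta}\!\bigl(\sin\theta\,\widetilde Q'(\theta)\bigr)\le\widetilde Q(\theta),\qquad\theta\in(0,\pi),$$
which is to be extracted from the Chebyshev expansion of $Q$ together with the constraint that convexity of $V$ imposes on it (equivalently, from the sign structure of $\mathrm{p.v.}\int_{-1}^{1}\sqrt{1-t^{2}}\,V''(t)(t-x)^{-1}\,dt$). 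Turning convexity of $V$ into this concavity is the one step beyond bookkeeping; it is a classical property of one-cut equilibrium measures --- see \cite{KuilMcL,Deiftbook} and references therein --- and it is the step I expect to be the main obstacle.
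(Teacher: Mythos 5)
Your treatments of the first four items are correct and are essentially what the paper does: the paper offers no argument at all beyond citations (Favard's theorem in \cite{Akhiezer} for existence/uniqueness, Heine's formulas in Section 3.1 of \cite{Deiftbook} for the moment determination of $J_k$, and Chapter 6 of \cite{szego} for the Legendre zero asymptotics), and the standard arguments you supply — the three-term recurrence, the moment computation already used in Proposition \ref{p:matrix model}, the observation that an eigenvector of a Jacobi matrix cannot vanish in its first coordinate together with the identification of $J_k$ as the compression of multiplication by $x$ to polynomials of degree $<k$, and Mehler--Heine asymptotics — are the intended ones.

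The fifth item is where your proposal has a genuine gap, and it cannot be closed along the route you sketch. You reduce concavity of $\psi_V$ to a trigonometric inequality for the Chebyshev expansion of $Q$ and then defer it to the literature as ``a classical property of one-cut equilibrium measures''; but the global statement is false for convex (even uniformly convex) $V$. Take $V(x)=\tfrac14 x^4+\tfrac{\epsilon}{2}x^2$ with small $\epsilon>0$, so $V''=\epsilon+3x^2\ge\epsilon$. Your own displayed formula (in its symmetric, one-cut form) gives
\begin{equation*}
\psi_V(x)=\frac{1}{2\pi}\Bigl(\epsilon+\tfrac{a^2}{2}+x^2\Bigr)\sqrt{a^2-x^2},\qquad x\in[-a,a],
\end{equation*}
with $a$ fixed by normalization ($a^2\approx 4/\sqrt3$ for small $\epsilon$). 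Then $\psi_V''(0)=\frac{1}{2\pi a}\bigl(\tfrac32 a^2-\epsilon\bigr)>0$: the density is strictly convex near the origin and in fact bimodal, with peaks near $\pm a/\sqrt2$. So no inequality derived from convexity of $V$ alone can yield concavity of $\psi_V$ on all of its support, and the step you flag as ``the main obstacle'' is not merely hard but unprovable as stated. (The paper itself asserts this item with no proof or reference, so there is nothing in its argument for you to have matched.) What the fact is actually used for — the mixture decomposition in Lemma \ref{l:essup} feeding Proposition \ref{p:quadraturetop} — only requires local information: that $\psi_V$ decreases monotonically to zero like a square root near the right edge and is bounded below by a positive constant on $[-2+\epsilon,2-\epsilon]$; both of these do follow from uniform convexity via your difference-quotient formula, since $\frac{V'(x)-V'(t)}{x-t}\ge c_u>0$ forces $Q>0$ on the closed support. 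A correct write-up should prove and use that weaker local statement (or restrict the concavity claim to a neighborhood of the edges, which your square-root observation already gives), rather than global concavity.
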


The first item  is typically referred to as Favard's Theorem \cite{Akhiezer}. The second is implied by Heine's formulas, see again  Section 3.1 of \cite{Deiftbook}.
The estimate on the largest  zero of the $k$th Legendre polynomial may be found in  Chapter 6 of \cite{szego}.

Assembling the above items leads to the next lemma, from which Proposition \ref{p:quadraturetop} quickly follows.

\begin{lemma}\label{l:essup} Let $\mu$ be a probability measure with support $[-2,2]$ and having concave density. Then for any $m\ge 1$, there is a measure that has the same first $2m-1$ moments as $\mu$ and is supported on  a subset of $[-2+m^{-2}\eta_m,2-m^{-2}\eta_m]$, where
$$
 \eta_m \tends \eta = j_{0,0}^2.
$$
\end{lemma}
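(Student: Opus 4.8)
The plan is to convert the statement into a bound on the extreme zeros of the orthogonal polynomials of $\mu$, and then to use concavity of the density together with the Legendre/Bessel asymptotics already recorded in Fact \ref{f:potential}. For the first part, let $p_0,p_1,\dots$ be the orthonormal polynomials of $\mu$, let $J$ be the associated Jacobi operator, and let $x_{m,1}>\dots>x_{m,m}$ be the eigenvalues of the top minor $J_m$ (equivalently the zeros of $p_m$), all lying strictly inside $(-2,2)$. The candidate measure will be the $m$-point Gauss quadrature measure $\tilde\mu$ for $\mu$ -- that is, the spectral measure of $J_m$, a convex combination of unit masses at the $x_{m,i}$: it integrates every polynomial of degree $\le 2m-1$ exactly, hence shares the first $2m-1$ moments of $\mu$, and it is supported in $[x_{m,m},x_{m,1}]$. (Equivalently, the moments $c_0,\dots,c_{2m-1}$ are realizable by a measure on $[a',b']$ exactly when $a'\le x_{m,m}$ and $b'\ge x_{m,1}$.) Thus everything reduces to showing
\begin{equation}
\label{e:extremezeros}
 x_{m,1}\le 2-\tfrac1{m^2}\bigl(j_{0,0}^2-o(1)\bigr),\qquad x_{m,m}\ge -2+\tfrac1{m^2}\bigl(j_{0,0}^2-o(1)\bigr),
\end{equation}
after which one simply sets $\eta_m=\min\bigl(m^2(2-x_{m,1}),\,m^2(x_{m,m}+2),\,j_{0,0}^2\bigr)$, which then tends to $j_{0,0}^2$ and, by construction, puts $\operatorname{supp}\tilde\mu\subseteq[x_{m,m},x_{m,1}]$ inside the advertised interval.

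Next I would bring in concavity. By symmetry it is enough to handle $x_{m,1}$. Since $\{f>0\}$ is an interval dense in $[-2,2]$, it contains $(-2,2)$; so $f$ is positive on the open interval, unimodal, and $\mu$ is regular (indeed in the Szeg\H{o} class), which keeps the polynomials under control away from the endpoints. At the endpoint $x=2$ there is a clean dichotomy: either $f(2^-)>0$, so $f$ is trapped between two positive constants on some $[2-\delta,2]$ -- a hard edge of exponent $\alpha=0$; or $f(2^-)=0$, in which case concavity forces $f(x)\ge c(2-x)$ on $[2-\delta,2]$, i.e.\ an exponent $\alpha\in(0,1]$. The point of the dichotomy is that in the first case the relevant local model is the uniform (Legendre) weight, for which Fact \ref{f:potential} gives, after rescaling $[-1,1]\to[-2,2]$, exactly $m^2(2-x_{m,1})\to j_{0,0}^2$; and in the second case the local model is the Jacobi weight $(2-x)^\alpha$, whose largest zero sits \emph{farther} inside, $m^2(2-x_{m,1})\to j_{\alpha,1}^2\ge j_{1,1}^2>j_{0,0}^2$. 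In both cases \eqref{e:extremezeros} follows, and the left endpoint is identical.

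The main obstacle is exactly the step just invoked: that the leading-order location of the largest zero is governed only by the \emph{local} behaviour of the weight at the endpoint and matches the corresponding Jacobi model. For the Jacobi weights themselves this is the classical Bessel asymptotics (Szeg\H{o}'s book, as already used for Legendre in Fact \ref{f:potential}). For a general $\mu$ with concave density it follows from comparison/relative-asymptotics results for orthogonal polynomials: in the $\alpha=0$ case one compares $\mu$ with the uniform measure, the ratio of densities being continuous and strictly positive at $x=2$, so that the zeros agree to $o(m^{-2})$ (M\'at\'e--Nevai--Totik type relative asymptotics, valid in the Szeg\H{o} class); in the $\alpha\in(0,1]$ case one compares with the Jacobi weight of matching exponent, or, more crudely, uses monotonicity of the extreme zero under transport of mass toward the endpoint together with $f\ge c(2-x)$ to get a strictly better constant than $j_{0,0}^2$. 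I expect the bulk of the work to be in pinning down and citing (or re-deriving, in the self-contained spirit of the paper) this localization statement; the reduction to \eqref{e:extremezeros} and the concavity dichotomy are bookkeeping.
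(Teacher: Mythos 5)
Your reduction to the extreme zeros is fine as far as it goes: the $m$-point Gauss quadrature measure of $\mu$ (the spectral measure of $J_m$) does match the first $2m-1$ moments and is supported between the extreme zeros of $p_m$, so the lemma would follow from the bound $2-x_{m,1}\ge (j_{0,0}^2-o(1))m^{-2}$ together with its mirror image. The genuine gap is precisely the step you flag and then defer: that for an \emph{arbitrary} concave density the largest zero is governed, to precision $o(m^{-2})$, by the local Jacobi model at the endpoint. M\'at\'e--Nevai--Totik relative asymptotics (Szeg\H o-class results) control ratio asymptotics away from the support and the limiting zero distribution; they do not control the position of the extreme zero at the $m^{-2}$ scale. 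Statements of the kind you need are hard-edge universality/fine zero asymptotics for generalized Jacobi-type weights plus a localization argument, which is far heavier machinery than anything recorded in Fact \ref{f:potential} (that fact contains only the Legendre case \eqref{e:uniform}). The fallback you offer, ``monotonicity of the extreme zero under transport of mass toward the endpoint,'' is also not available as stated: the classical Markov monotonicity theorem requires the ratio of the two weights to be monotone on the \emph{whole} support, and a concave density compared with the uniform (or a Jacobi) weight gives no such global monotonicity, since $f$ increases and then decreases. So as written, the crux of the proof is an unproven, and genuinely deep, localization claim.

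The paper's proof avoids all of this with one elementary observation that is absent from your argument: concavity lets one write $\mu$ as a mixture of \emph{uniform} measures on subintervals (layer-cake over the superlevel sets, arranged so that the pieces reaching within $\eps$ of an endpoint are uniform measures on nearly the full interval, while the remaining mass sits at distance at least $\eps$ from the edge). Each near-full uniform piece is then replaced by its own Gauss quadrature measure, for which the Legendre/Bessel fact \eqref{e:uniform} already gives support pushed in by $(1-o(1))\,j_{0,0}^2 m^{-2}$; the remaining piece needs no modification, and mixing preserves the matched moments. Choosing $\eps=\eps_m=1/m$ gives $\eta_m\to j_{0,0}^2$. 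In other words, the quadrature idea is applied to the uniform slices rather than to $\mu$ itself, which is exactly what makes the only input the Legendre asymptotics already in hand. If you want to complete your route honestly, the realistic fix is to import this mixture decomposition rather than to chase extreme-zero asymptotics for general concave weights.
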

\begin{proof}
For a sufficiently
small $\eps$, we may write $\mu$ as a convex combination of
$\nu_t$, $t\le \eps$ and $\nu$, where, $\nu_t$ is the uniform
measure on $[-2+\eps,2-t]$ and $\nu$ is supported on
$[-2,2-\eps]$.

For each $\nu_t$, by \eqref{e:uniform}, we may find a measure supported on
$
[-2,2-(2-\eps)m^{-2}\gam_m]$ and having the same first $2m-1$
moments as $\nu_t$.

Thus we may find a measure supported on
$$
[-2,2-\theta_{m,\eps}],\qquad \theta_{m,\eps}=
(2-\eps)m^{-2}\gam_m \wedge \eps
$$
and having the same first $2m-1$ moments as $\mu$. Choose
$\eps_m=1/m$ and set $\eta_m=(2-1/m)\gam_m$ to get the desired claim.
\end{proof}

\begin{proof}[Proof of Proposition \ref{p:quadraturetop}]
By shifting and scaling we may assume $a^{\dagger}(0)=0$, $b^{\dagger}(0)=1$.
Lemma \ref{l:essup} shows that there is a measure $\mu'$ that has the same first $2k-1$ moments as $\mu$ and satisfies $\essup(\mu')\le 2-(j_{00}/k)^2 +o(k^{-2})$. By Fact \ref{f:potential}, the matrix $J'$ corresponding to $\mu'$ has the same $k\times k$ top minor as $J$, and so by \eqref{e:essup} $\lambdamax(J_k) \le \essup(\mu')$, as required.
\end{proof}

Finally, the following theorem will be used to show that for the sake of edge asymptotics we can safely ignore the first $m\times m$ block of the matrix $T_n$ as long as $m$ is not too large. For this theorem, let $T_{|[a,b]}$ denote the matrix $T$ where all entries except the ones with both indices in $[a,b]$ are set to zero.
\begin{theorem}\label{t:beginning} There exists $c,c_0,\delta,\kappa>0$ so that with  probability at least $1-e^{-n^{\delta}}$ for every $m\in [c\log n,c_0n^{1/3}]$, and large enough $n$ the following holds in the positive definite order:
$$
T \le (b^\dagger (0)-\frac{\kappa}{m^2})e_{mm} +T_{|[m,n]}+( 2b^\dagger(0)+a^\dagger(0)-\frac{\kappa}{m^2})I_{|[1,m-1]}.
$$
\end{theorem}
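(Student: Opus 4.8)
Here is a plan of proof for Theorem~\ref{t:beginning}. The idea is to turn the operator inequality into a scalar estimate on the top eigenvalue of a rank-one corner-perturbation of the leading minor of $T$, replace that minor by the corresponding minor of the deterministic operator $J$ via Proposition~\ref{p:lln}, and then prove the resulting deterministic bound by a Pr\"ufer-type analysis of the orthogonal-polynomial recursion of the equilibrium measure, in which the strict inequality $j_{0,0}>\pi/2$ is exactly what keeps a gap positive.

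First I would carry out the reduction. Set $D$ equal to the right-hand side of the claim minus $T$. Splitting $\mathbb R^n=\mathbb R^{[1,m-1]}\oplus\mathbb R^{\{m\}}\oplus\mathbb R^{[m+1,n]}$, one checks that $D$ vanishes on the last block and on every coupling except the $(m-1,m)$ one, so $D$ is supported on the $m\times m$ block $[1,m]$, where it equals
\[
D'=\begin{pmatrix}(2b^\dagger(0)+a^\dagger(0)-\kappa/m^2)\,I_{m-1}-T[1,m-1] & -B_{m-1}e\\[2pt] -B_{m-1}e^{\top} & b^\dagger(0)-\kappa/m^2\end{pmatrix},
\]
with $e$ the last coordinate vector of $\mathbb R^{m-1}$ and $B_{m-1}=T_{m-1,m}$. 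Taking the Schur complement at the last coordinate (a positive scalar once $m$ is large), $D'\succeq0$ is equivalent to
\[
T[1,m-1]+\frac{B_{m-1}^{\,2}}{b^\dagger(0)-\kappa/m^2}\,ee^{\top}\ \preceq\ \bigl(2b^\dagger(0)+a^\dagger(0)-\kappa/m^2\bigr)I_{m-1},
\]
i.e.\ to a bound on $\lambda_{max}$ of the top minor $T[1,m-1]$ perturbed by a rank-one term at its bottom corner of strength $B_{m-1}^2/(b^\dagger(0)-\kappa/m^2)=b^\dagger(0)+O(\kappa/m^2)+O(|B_{m-1}-b^\dagger(0)|)$.

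Second, on the event of Proposition~\ref{p:lln} (probability $\ge 1-ce^{-c\log^2 n}$; for $\beta<1$ after the conditioning of Proposition~\ref{prop:cheapbound}) one has $\|T[1,m]-J[1,m]\|\le c\max(m,\log^2 n)/n$, and through \eqref{e:expclose} and the analyticity of $a^\dagger,b^\dagger$ also $|B_{m-1}-b^\dagger(0)|\le c\max(m,\log^2 n)/n+ce^{-cm}$. For $m\in[c\log n,\,c_0 n^{1/3}]$ with $c_0$ chosen small enough relative to the $\kappa_0$ below — this is precisely where the cutoff $c_0 n^{1/3}$ and the choice of $c_0$ enter — both errors are $o(\kappa_0/m^2)$; replacing $T[1,m-1]$ by $J_{m-1}$ and the rank-one strength by $b^\dagger(0)$ and using that $\lambda_{max}$ is monotone under a positive rank-one bump, it then suffices to establish the deterministic inequality
\[
\lambda_{max}\bigl(J_k+b^\dagger(0)\,e_{kk}\bigr)\ \le\ a^\dagger(0)+2b^\dagger(0)-\kappa_0/k^2\qquad(\text{all large }k)
\]
for some fixed $\kappa_0>0$; taking $\kappa$ a small fraction of $\kappa_0$ absorbs the errors, and small $k$ (equivalently small $n$) is handled trivially.

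Third, the deterministic bound. Shift and scale so $a^\dagger(0)=0$, $b^\dagger(0)=1$; by Fact~\ref{f:potential} the equilibrium measure $\mu=\mu_V$ has support $[-2,2]$ and concave density, $J$ is its Jacobi matrix, and $\widehat J_k:=J_k+e_{kk}$ is a Jacobi matrix with characteristic polynomial $\pi_k-\pi_{k-1}$, where $\pi_j=\det(xI_j-J_j)$ are the monic orthogonal polynomials of $\mu$. Using $\bigl((zI_k-J_k)^{-1}\bigr)_{kk}=\pi_{k-1}(z)/\pi_k(z)$ and the criterion $vv^{\top}\preceq M\iff\langle v,M^{-1}v\rangle\le1$ (valid since $\lambda_{max}(J_k)<z$ for $\kappa_0<j_{0,0}^2$ by Proposition~\ref{p:quadraturetop}), the goal $J_k+e_{kk}\preceq(2-\kappa_0/k^2)I_k$ becomes $\pi_k(z)\ge\pi_{k-1}(z)$ at $z=2-\kappa_0/k^2$; by monotonicity of $\rho_k:=\pi_k/\pi_{k-1}$ on $(\lambda_{max}(J_k),\infty)$ (Christoffel--Darboux) this is in turn equivalent to $\lambda_{max}(\widehat J_k)\le z$. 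I would then track $\rho_j(z)=\pi_j(z)/\pi_{j-1}(z)$, which obeys $\rho_j=(z-a_j)-b_{j-1}^2/\rho_{j-1}$. Since $(a_j,b_j)\to(0,1)$ exponentially, after an $O(\log k)$ transient this is, up to controllable errors, the elliptic M\"obius recursion $\rho\mapsto(2-\kappa_0/k^2)-1/\rho$, whose action on the projective line is a rotation of angle $\sqrt{\kappa_0}/k\,(1+o(1))$ per step; in particular a half-turn of budget $\pi k/(2\sqrt{\kappa_0})$ separates the first index at which $\rho_j(z)$ drops to $1$ from the first index $\ell^\ast$ with $\pi_{\ell^\ast}(z)=0$. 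But Proposition~\ref{p:quadraturetop} gives $\ell^\ast\ge(j_{0,0}-o(1))\,k/\sqrt{\kappa_0}$, because $\pi_j(z)>0$ whenever $\lambda_{max}(J_j)<z$; hence $\rho_j(z)$ does not fall to $1$ before index $(j_{0,0}-\pi/2-o(1))\,k/\sqrt{\kappa_0}$, so $\rho_k(z)\ge1$ provided $\sqrt{\kappa_0}<j_{0,0}-\pi/2$. Initializing the recursion with $\rho_{j_0}(z)$ bounded and positive uses $\lambda_{max}(J_{j_0})\ll z$ for $j_0=O(\log k)$; the passage from $\rho>0$ to the elliptic regime, and the base cases, are handled by an outer induction on $k$. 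Thus any $\kappa_0\in(0,(j_{0,0}-\pi/2)^2)$ works, and the existence of such $\kappa_0$ is exactly the strict inequality $j_{0,0}>\pi/2$.

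The main obstacle is this last step: the Pr\"ufer/Riccati oscillation estimate for the orthogonal-polynomial recursion of $\mu_V$ — controlling the $O(\log k)$ transient (where the entries are not yet near $(0,1)$), bounding the accumulated rotation angle against the one-sided input of Proposition~\ref{p:quadraturetop}, and verifying that the whole scheme tolerates the $o(\kappa_0/k^2)$ perturbations coming from Proposition~\ref{p:lln}. The reduction and the passage to a deterministic statement in the first two paragraphs are comparatively routine.
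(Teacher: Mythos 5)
Your reduction and your use of Proposition~\ref{p:lln} follow the paper's proof closely, but your treatment of the key deterministic estimate is genuinely different. The paper decomposes $T$ directly (rather than via your Schur complement at the $m$th site, which is an equivalent bookkeeping) and, after normalizing $a^\dagger(0)=0$, $b^\dagger(0)=1$, reduces exactly as you do to showing $\lambda_{max}\bigl(J[1,m]+e_{mm}\bigr)\le 2-\kappa/m^2$. For that bound, however, the paper does not run a Pr\"ufer/Riccati oscillation analysis: it proves Lemma~\ref{l:oplus}, a Perron--Frobenius max--min splitting of the top eigenvalue of a tridiagonal matrix at an interior site into the two corner blocks with rank-one bumps, applies it to $J[1,pm]$ with split at $m$, computes exactly $\lambda_{max}(D_\ell+e_{11})=2\cos\bigl(\pi/(2\ell+1)\bigr)$ for the free block (legitimate since by \eqref{e:expclose} the entries of $J[m+1,pm]$ are exponentially close to $(0,1)$), and then invokes Proposition~\ref{p:quadraturetop} to see that for a stretch factor $p$ with $1-1/p>\pi/(2j_{0,0})$ the second block's top eigenvalue exceeds $\lambda_{max}(J[1,pm])$, forcing $\lambda_{max}(J[1,m]+e_{mm})\le\lambda_{max}(J[1,pm])\le 2-j_{0,0}^2/(p^2m^2)+o(m^{-2})$. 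Both routes hinge on the same numerical gap $j_{0,0}>\pi/2$ (you through the rotation budget $\sqrt{\kappa_0}<j_{0,0}-\pi/2$, the paper through the choice of $p$), and your characteristic-polynomial identities, the criterion $J_k+e_{kk}\preceq zI\iff\pi_k(z)\ge\pi_{k-1}(z)$ for $z>\lambda_{max}(J_k)$, and the elliptic rotation angle $\sqrt{\kappa_0}/k$ are all correct; the trade-off is that the hard part you flag --- controlling the $O(\log k)$ transient and the accumulated phase error in the perturbed M\"obius recursion --- is precisely what the paper's splitting lemma sidesteps, replacing the delicate quarter-period counting by one exact eigenvalue computation and a monotonicity argument. (Note also that, like the paper's own proof, your route inherits from Proposition~\ref{p:lln} only a probability bound of order $1-ce^{-c\log^2 n}$, so the $1-e^{-n^\delta}$ in the statement is not strengthened by either argument, and a union bound over $m$ in the range $[c\log n, c_0n^{1/3}]$ is still needed.)
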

We will need the following simple lemma. Let $\lambdamax$ denote the top eigenvalue, and let $e_{ij}$ denote the elementary matrix with zeros everywhere except for a one at entry ${i,j}$; its dimension will be clear from the context.
\begin{lemma}\label{l:oplus}
Let $T$ be an $n\times n$ tridiagonal matrix with positive off-diagonals, let $m\in \{1,\ldots, n-1\}$, and  let $q=T_{m,m+1}$. We have
$$
\lambdamax(T)=\max_{r>0} \min\Big(\lambdamax(T[1,m]+q \,r \; e_{mm}),\lambdamax(T[m+1,n]+\frac{q}{r}\; e_{11})\Big).
$$
\end{lemma}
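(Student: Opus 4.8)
The plan is to cut $T$ along the single bond joining indices $m$ and $m+1$. Set $A_r=T[1,m]+qr\,e_{mm}$ and $B_r=T[m+1,n]+\tfrac{q}{r}\,e_{11}$ (the elementary matrices in the appropriate dimensions; both are again tridiagonal with positive off-diagonals), and abbreviate $\lambda=\lambdamax(T)$, $\alpha(r)=\lambdamax(A_r)$, $\beta(r)=\lambdamax(B_r)$. The one algebraic fact needed is the identity
\[
 (A_r\oplus B_r)-T=v_rv_r^{T},\qquad v_r=\sqrt{qr}\,e_m-\sqrt{q/r}\,e_{m+1},
\]
verified by expanding $v_rv_r^{T}=qr\,e_{mm}+\tfrac{q}{r}e_{m+1,m+1}-q(e_{m,m+1}+e_{m+1,m})$. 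Since $v_rv_r^{T}\succeq0$, this gives immediately the upper bound $\lambda\le\lambdamax(A_r\oplus B_r)=\max(\alpha(r),\beta(r))$ for every $r>0$.

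For the matching lower bound I would argue variationally. Let $x$ and $y$ be unit top eigenvectors of $A_r$ and $B_r$, viewed as vectors of $\mathbb R^n$ supported on $\{1,\dots,m\}$ and $\{m+1,\dots,n\}$. Eigenvectors of a Jacobi matrix have no zero entries, so after a choice of signs $x_m>0$, $y_{m+1}>0$; and since $T_{m,m+1}=q$ is the only entry of $T$ linking the two index blocks, one computes directly $\langle x,Tx\rangle=\alpha(r)-qr\,x_m^2$, $\langle y,Ty\rangle=\beta(r)-\tfrac{q}{r}y_{m+1}^2$, and $\langle x,Ty\rangle=q\,x_my_{m+1}$. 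Hence for the unit vector $z=\cos\theta\,x+\sin\theta\,y$,
\[
 \langle z,Tz\rangle=\cos^2\theta\,\alpha(r)+\sin^2\theta\,\beta(r)-q\bigl(\sqrt{r}\,x_m\cos\theta-\tfrac{1}{\sqrt{r}}y_{m+1}\sin\theta\bigr)^2,
\]
and picking $\theta\in(0,\pi/2)$ with $\tan\theta=r\,x_m/y_{m+1}$ annihilates the last term, leaving a convex combination of $\alpha(r)$ and $\beta(r)$. Thus $\lambda\ge\langle z,Tz\rangle\ge\min(\alpha(r),\beta(r))$ for every $r>0$, so $\lambda\ge\max_{r>0}\min(\alpha(r),\beta(r))$ already.

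It remains to locate an $r$ where equality holds, which I would do by monotonicity. For $r'>r$ one has $A_{r'}\succeq A_r$, and equality of top eigenvalues is impossible (testing $A_{r'}$ against a top eigenvector $x$ of $A_r$ would force $x_m=0$), so $\alpha$ is continuous and strictly increasing; symmetrically $\beta$ is continuous and strictly decreasing. Since $\alpha(r)\ge T_{mm}+qr\to\infty$, $\alpha(r)\to\lambdamax(T[1,m])$ as $r\downarrow0$, and $\lambda>\lambdamax(T[1,m])$ (the top eigenvector of $T[1,m]$, padded by zeros, is not a top eigenvector of $T$---its image under $T$ has nonzero $(m+1)$-st coordinate---so the Rayleigh-quotient comparison is strict), there is a unique $r_2>0$ with $\alpha(r_2)=\lambda$; likewise, using $\lambda>\lambdamax(T[m+1,n])$, a unique $r_1>0$ with $\beta(r_1)=\lambda$. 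If $r_1<r_2$, then on $(r_1,r_2)$ both $\alpha(r)<\lambda$ and $\beta(r)<\lambda$, so $\max(\alpha(r),\beta(r))<\lambda$, contradicting the first paragraph; hence $r_2\le r_1$. For $r\in[r_2,r_1]$ we then have $\alpha(r)\ge\lambda$ and $\beta(r)\ge\lambda$, so $\min(\alpha(r),\beta(r))\ge\lambda$, which with the second paragraph forces equality; while for $r<r_2$, $\min(\alpha(r),\beta(r))\le\alpha(r)<\lambda$, and for $r>r_1$, $\min(\alpha(r),\beta(r))\le\beta(r)<\lambda$. This yields $\max_{r>0}\min(\alpha(r),\beta(r))=\lambda$, attained precisely on $[r_2,r_1]$.

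I expect the main obstacle to be the lower bound of the second paragraph: constructing the test vector $z$ that interpolates between the two one-sided eigenvectors so as to beat $\min(\alpha(r),\beta(r))$---the point being that the cross term $\langle x,Ty\rangle=q\,x_my_{m+1}$ carries exactly the right sign once $\theta$ is tuned. The ordering $r_2\le r_1$ in the last step is the other thing to get right, but it is forced cheaply by playing the variational lower bound against the positive-semidefinite upper bound.
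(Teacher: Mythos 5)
Your argument is correct, but it follows a genuinely different route from the paper's. The paper exhibits the optimizing $r$ explicitly: after adding a large multiple of the identity so that $T$ has nonnegative entries, Perron--Frobenius gives a strictly positive top eigenvector $\varphi$ of $T$; with $r=\varphi_{m+1}/\varphi_m$ the restrictions of $\varphi$ to the two index blocks are eigenvectors of $T[1,m]+qr\,e_{mm}$ and $T[m+1,n]+\tfrac{q}{r}e_{11}$ with eigenvalue $\lambdamax(T)$, and positivity forces them to be top eigenvectors; monotonicity of the two $\lambdamax$'s in $r$ then finishes the proof in a few lines. You instead prove the two-sided bound $\min\le\lambdamax(T)\le\max$ for \emph{every} $r$ --- the upper bound via the rank-one identity $(A_r\oplus B_r)-T=v_rv_r^{T}$, the lower bound via the interpolated test vector killing the cross term --- and then locate an optimizer by strict monotonicity and the intermediate value theorem. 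Your version is longer but avoids Perron--Frobenius on $T$ and gives the extra information that $\lambdamax(T)$ is sandwiched between the two block eigenvalues for every $r$; the paper's is shorter because the positive eigenvector hands it the critical $r$ directly (and, combined with your strict monotonicity, shows your $r_1$ and $r_2$ in fact coincide). One small caveat: the blanket statement that eigenvectors of a Jacobi matrix have no zero entries is false for interior coordinates (e.g.\ $(1,0,-1)$ for the $3\times3$ matrix with zero diagonal and unit off-diagonals); what you actually use --- nonvanishing of the \emph{last} entry $x_m$ of the top eigenvector of $A_r$ and the \emph{first} entry $y_{m+1}$ of that of $B_r$ --- is true and elementary from the three-term recurrence (or from Perron--Frobenius applied to the blocks), so the proof stands as written.
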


\begin{proof}
The two $\lambdamax$ expressions on the right are nondecreasing (respectively, nonincreasing) as functions of  $r$, so it suffices to show that for some $r>0$ we have
$$
\lambdamax(T)=\lambdamax(T[1,m]+q \,r \; e_{mm})=\lambdamax(T[m+1,n]+\frac{q}{r}\; e_{11}.
$$
By adding a sufficiently large constant times identity to $T$ we may assume without loss of generality that it has nonnegative entries.
By the Perron-Frobenius theorem, there exists an eigenvector $\varphi$ of $T$ with nonnegative entries for the positive eigenvalue $\lambdamax(T)$. If there were neighboring coordinates $i,j$ so that $\varphi_i=0$ and $\varphi_j\not=0$, then the eigenvalue equation at coordinate $i$ would fail. Thus $\varphi$ is strictly positive.

Now with $r=\varphi_{n+1}/\varphi_{n}$, $\varphi$ restricted to the first $m$ coordinates is an eigenvector of $T[1,m]+q r\; e_{mm}$ with eigenvalue $\lambdamax(T)$.  Moreover, it must be a top eigenvector: by the Perron-Frobenius theorem, there is a nonnegative top eigenvector which cannot be orthogonal to the positive $\varphi$.
Similarly,  $\varphi$ restricted to the last $n-m$ coordinates is a top eigenvector of $T[m+1,n]+\frac qr \,e_{11}$ with eigenvalue $\lambdamax(T)$, as required.
\end{proof}

\begin{proof}[Proof of Theorem \ref{t:beginning}]
By shifting and scaling we may assume that $a^{\dagger}(0)=0$, $b^{\dagger}(0)=1$. We decompose $T$ as follows:
\begin{eqnarray*}
T&=&\left(T_{|[1,m]}+(1-T_{mm})e_{mm}-(2-\frac{\kappa}{m^2})I_{|[1,m]}\right)
\\&&+\;\;
  \left(T_{|[m,n]}+(2-\frac{\kappa}{m^2})I_{|[1,m-1]}
+(1-\frac{\kappa}{m^2})e_{mm}\right)
\end{eqnarray*}
It suffices to show that the nontrivial part
$
T[1,m]+(1-T_{mm})e_{mm}-(2-\frac{\kappa}{m^2})I_m
$
of the first matrix
is negative definite with high probability. In light of Proposition \ref{p:quadraturetop}, and \eqref{e:expclose},
we have
$$
|T_{mm}|,  \| T[1,m]-J[1,m]\|<
c\max(m,\log^2 n)/n<cc_0/m^2
$$
and so
(by decreasing $\kappa$ a bit and choosing $c_0$ small enough) it suffices to show that
\begin{equation}\label{e:nonnegative}
\lambdamax(J[1,m]+e_{mm})<2-\frac{\kappa}{m^2}.
\end{equation}
Now Proposition \ref{p:quadraturetop} gives us control of $\lambdamax(J[1,m])$, but here we clearly need a little bit more. For this, we let $p>1$ so that $pm$ is an integer, and study the matrix $J[1,pm]$.

By \eqref{e:expclose} the matrix $J[m+1,pm]$ is exponentially close in norm to the $\ell\times \ell$ matrix $D_\ell$ with 1-s on the first off-diagonals and zeros elsewhere (and $\ell=pm-m+1$). It is easy to check that
$$
\lambdamax (D_\ell+e_{11})=\lambdamax (D_{2\ell})=2\cos(\frac{\pi}{2\ell+1})=2-\frac{\pi^2}{4\ell^2}+O(\ell^{-3}).
$$
Since $q=J_{m,m+1}$ is exponentially close to $1$ by \eqref{e:expclose}, we have
$$\lambdamax (J[m+1,pm]+q^2e_{11}) = 2-\frac{\pi^2}{4(p-1)^2m^2}+O(m^{-3}).$$
If $p$ is large enough so that  $\pi/(2j_{00})>1-1/p  $, then for large  $m$ this is bounded below by
$$2-\frac{j_{00}^2}{p^2m^2}+o(m^{-2})\ge \lambdamax (J[1,pm]),$$
the last inequality coming from Proposition \ref{p:quadraturetop}.
Lemma \ref{l:oplus} applied to $J[1,pm]$ with $r=1/q$ now gives
$$
\lambdamax (J[1,m]+ e_{mm})\le \lambdamax (J[1,pm]) =2-\frac{j_{00}^2}{p^2m^2} + o(m^{-2})
$$
showing \eqref{e:nonnegative} and thus the Theorem.  In fact, carefully following the constants shows that we may use $\kappa=1/2$.
\end{proof}

\section{Mean and variance of the limiting potential}
\label{s:meanandvariance}

The final ingredient needed to establish our main result is the computation
of mean and variance of the limiting (Brownian) potential of the operator identified by Theorem \ref{weak}
(the limit of the processes defined in \eqref{summedpotentials}).

Proposition \ref{p:variation} shows that the field over the required range is close to a mixture of Gaussians, centered at certain conditional minimizers
and with variance given in terms of the inverse Hessian of the associated conditional Hamiltonian (evaluated at those conditional minimizers).  Further,
with the event $S$ in that statement holding with high probability, any of these conditional minimizers are close to the corresponding local minimizers, recall
\eqref{e:Sbound}.   Thus we expect the limiting mean and variance to be given only in terms of the local quantities.

Return to the given (by the Dyson beta ensemble) conditional Hamiltonian:
for a set of indices $I$,
\begin{equation}
  \label{e:HI}
H_I(a,b)=V_I(a,b)-\summ_{k\in I\cap\{1,\ldots, n-1\}}\left(1-\frac kn-\frac1{n\bet}\right)\log b_k,
\end{equation}
where again $V_I(a,b):=\tr(V(T_I(a,b))$ for the minor $T_I$ tied to the index set $I$.

For the mean, the following provides the basic ingredient. The condition \eqref{eq:lldiv1} below distills what is required out of Proposition \ref{p:variation}.
 It is a deterministic condition that implies the bound \eqref{e:Sbound} as a direct consequence of Corollary \ref{c:conditional-local}.

\begin{lemma}
\label{l:means}
Fix $\delta>0$. Let $I=\{\lfloor c\log n\rfloor ,\ldots ,\lfloor n^{1/2-\delta}\rfloor \}$ and
assume that for some $a_i,b_i$ we have
\begin{equation}\label{eq:lldiv1}
|a_i-a_i^{\dag}|+|b_i-b_i^{\dag}|\le n^{\delta-1/2} \qquad \mbox{for all } i\in \partial I,
\end{equation}
where $(a_i^\dag, b_i^\dag)$ are the local minimizers defined in \eqref{bdagger}.
In $I$, let $(a, b)$
be the minimizers of the conditional Hamiltonian $H_I$  with the above boundary conditions.  It holds that
\begin{equation}
\label{e:means}
    n^{1/3}  \sum_{k=c\log n}^{ \lfloor x n^{1/3} \rfloor} ( a_k - a^{\dagger}_0 )  \rightarrow \frac{1}{2}  (a^{\dagger})'(0) {x^2}, \quad
      n^{1/3}  \sum_{k=c\log n}^{ \lfloor x n^{1/3} \rfloor} ( b_k - b^{\dagger}_0 )  \rightarrow \frac{1}{2}  (b^{\dagger})'(0) {x^2}
\end{equation}
as $n \rightarrow \infty$.
\end{lemma}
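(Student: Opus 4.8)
## Proof proposal for Lemma \ref{l:means}

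The plan is to reduce the statement to a Riemann-sum approximation for an integral of the analytic functions $a^\dagger(x), b^\dagger(x)$, once we show the conditional minimizers $(a,b)$ on $I$ are close enough to the local minimizers $(a_k^\dagger, b_k^\dagger)$. First I would invoke Corollary \ref{c:conditional-local} (as advertised, the hypothesis \eqref{eq:lldiv1} is exactly what is needed to feed it): for every $j \in I$,
$$
|a_j - a_j^\dagger| + |b_j - b_j^\dagger| \le c\,\max\!\left(n^{\delta-1/2}\exp(-c_1\dist(j,\partial I)),\ \frac{(\log n)^2}{n}\right).
$$
Summing this bound over $k$ from $\lfloor c\log n\rfloor$ to $\lfloor x n^{1/3}\rfloor$ and multiplying by $n^{1/3}$: the exponential-decay contribution near the left endpoint $\partial I$ sums to $O(n^{\delta-1/2})$ and, times $n^{1/3}$, is $o(1)$ since $\delta$ is small; the contribution near the right endpoint of $I$ is irrelevant because our summation range stops at $x n^{1/3} \ll n^{1/2-\delta}$, so the relevant $k$ are at distance $\ge n^{1/2-\delta} - xn^{1/3}$ from the right part of $\partial I$; and the $(\log n)^2/n$ term contributes at most $n^{1/3}\cdot x n^{1/3}\cdot (\log n)^2/n = O(x (\log n)^2 n^{-1/3}) = o(1)$. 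Hence it suffices to prove \eqref{e:means} with $a_k, b_k$ replaced by $a_k^\dagger = a^\dagger(k/n + 1/(n\beta))$ and $b_k^\dagger = b^\dagger(k/n + 1/(n\beta))$.

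Next I would handle the deterministic sum $n^{1/3}\sum_{k=c\log n}^{\lfloor xn^{1/3}\rfloor}(a^\dagger(k/n+1/(n\beta)) - a^\dagger(0))$. By Proposition \ref{p:analytic}, $a^\dagger$ is analytic at $0$, so $a^\dagger(t) - a^\dagger(0) = (a^\dagger)'(0)\,t + O(t^2)$ uniformly for $t$ in a neighborhood of $0$; here the argument is $t_k = k/n + 1/(n\beta) = O(n^{1/3}/n) = O(n^{-2/3})$. Therefore
$$
n^{1/3}\sum_{k=c\log n}^{\lfloor xn^{1/3}\rfloor}\big(a^\dagger(t_k) - a^\dagger(0)\big)
= (a^\dagger)'(0)\, n^{1/3}\sum_{k=c\log n}^{\lfloor xn^{1/3}\rfloor}\frac{k}{n} + n^{1/3}\sum_{k}\Big(\frac{1}{n\beta} + O(t_k^2)\Big).
$$
The first sum is $(a^\dagger)'(0)\, n^{1/3}\cdot n^{-1}\cdot \big(\tfrac12 (xn^{1/3})^2 + O(xn^{1/3})\big) \to \tfrac12 (a^\dagger)'(0)\, x^2$; the lower cutoff at $c\log n$ changes this by $O(n^{1/3}\cdot (\log n)^2/n) = o(1)$. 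The error sum is bounded by $n^{1/3}\cdot xn^{1/3}\cdot\big(\tfrac1{n\beta} + O(n^{-4/3})\big) = O(x n^{-1/3} + x n^{-2/3}) = o(1)$. The argument for $b^\dagger$ is identical. This gives \eqref{e:means}.

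The main obstacle, such as it is, is bookkeeping the three competing scales — the left cutoff $c\log n$, the window length $xn^{1/3}$, and the interval length $n^{1/2-\delta}$ — and verifying that the exponential-decay tail from Corollary \ref{c:conditional-local} near $\partial I$ genuinely does not reach into the summation window (which needs $xn^{1/3} = o(n^{1/2-\delta})$, true for fixed $x$ and small $\delta$), and that replacing the conditional minimizers by local minimizers costs only $o(1)$ after multiplication by $n^{1/3}$. All of these are elementary once the scales are written down; the only genuine inputs are the analyticity of $a^\dagger, b^\dagger$ at $0$ (Proposition \ref{p:analytic}) and the conditional-to-local comparison (Corollary \ref{c:conditional-local}). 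No use of $\delta < \epsilon/3$ or of the probabilistic content of Proposition \ref{p:variation} is needed here; that is deferred to where this lemma is applied.
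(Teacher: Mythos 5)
Your proposal is correct and follows essentially the same route as the paper: split off the deviation $a_k-a_k^\dagger$ and control it by Corollary \ref{c:conditional-local} under hypothesis \eqref{eq:lldiv1}, then evaluate the deterministic sum of $a_k^\dagger-a^\dagger_0$ via analyticity of $a^\dagger$ (Proposition \ref{p:analytic}) as a Riemann/Taylor approximation giving $\tfrac12 (a^\dagger)'(0)x^2$. Your bookkeeping of the $1/(n\beta)$ shift, the $c\log n$ cutoff, and the exponential tail near $\partial I$ is only slightly more detailed than the paper's, not a different argument.
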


\begin{proof} Consider the expression over the $a$-variables (the treatment for the $b$-variables is identical). Re-centering to arrive at
$$
    n^{1/3}  \sum_{k=c\log n}^{ \lfloor x n^{1/3} \rfloor} ( a_k - a_k^{\dagger})  +   n^{1/3}  \sum_{k=c\log n}^{ \lfloor x n^{1/3} \rfloor} ( a_k^{\dagger} - a^{\dagger}_0 ),
$$
the claimed evaluation comes from the second sum. By the analyticity of the local minimizer $x \mapsto a^{\dagger}(x)$ this sum may be replaced by
$n^{1/3} \sum_{k=1}^{ \lfloor x n^{1/3} \rfloor} ( a^{\dagger}(k/n) - a^{\dagger}(0) )$ which, again by analyticity, is asymptotic to $ (a^{\dagger}(0))' $ times
$n^{1/3}  \sum_{k=1}^{ \lfloor x n^{1/3} \rfloor} (k/n + O(k^2/n^2)) = x^2/2 +O(n^{-2/3})$.

As for the first sum, Corollary \ref{c:conditional-local}  implies that, given that \eqref{eq:lldiv1} is in place, we have the bound
$$
|a_k -a_k^\dagger| \le c \max\left( n^{\delta-1/2} \exp(-c_1\dist(k, \partial I)), \frac{(\log n)^2}{n}\right).
$$
It follows that the sum of $n^{1/3}|a_k -a_k^\dagger|$ up to $xn^{1/3}$ is
bounded above by $cxn^{1/3}n^{\delta-1/2}\log n\to 0$, as required.
\end{proof}

For the variance we record:

\begin{lemma}
\label{l:covariance}
Keep the setup of the previous lemma: given $\delta >0$ and  $I=\{\lfloor c\log n\rfloor,$ $\ldots  ,\lfloor n^{1/2-\delta}\rfloor \}$
assume that for some $a_i,b_i$ we have
\begin{equation*}
|a_i-a_i^{\dag}|+|b_i-b_i^{\dag}|\le n^{\delta-1/2} \qquad \mbox{for all } i\in \partial I.
\end{equation*}
Here again $(a_i^\dag, b_i^\dag)$ are the local minimizers, and in
$I$, let $(a, b)$ are the minimizers of the conditional Hamiltonian $H_I$  with the above boundary conditions. Fix $0\le x<y$, and let
and let $\xi^J_a$ be the indicator of an subinterval $\lfloor\max(c\log n, xn^{1/3})\rfloor, \ldots,\lfloor yn^{1/3}\rfloor$ of $I$, of
the $a$ variables. Define $\xi^J_b$ analogously.
Let $\Hb(a,b)=\Hess_{H_I}(a,b)$. Then as $n\to\infty$ we have
\begin{equation}
\label{eq:covariance}
n^{-1/3}\langle\xi^J_{i}, \Hb^{-1} \xi^J_{j} \rangle  \to (y-x)\Sigma(0)_{ij}
\end{equation}
where $i,j\in\{a,b\}$ and the covariance matrix $\Sigma(x)$ is defined in \eqref{e:sigma}.
\end{lemma}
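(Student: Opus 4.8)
The plan is to exploit the bounded-range (Markov-field) structure: the Hessian $\Hb=\Hess_{H_I}$, evaluated at the conditional minimizers, should behave on the window $I$ like a constant-coefficient banded operator, and the relevant quantity of that constant-coefficient operator is, by Proposition \ref{p:localcovariance}, exactly $\Sigma(0)^{-1}$. Accordingly, the first step is a \emph{freezing} step. Let $M$ be the matrix on the index set of $I$ obtained from $\Hb$ by replacing, in every entry, the local values $(a_k,b_k)$ and the logarithmic weight $1-k/n-1/(n\beta)$ by their limits $(a^\dagger(0),b^\dagger(0))$ and $1$. Since $-\alpha\log b$ is convex for $\alpha>0$ and the weights are positive on $I$ (as $k\le n^{1/2-\delta}$), Lemma \ref{l:UniformConvexity} applied to the relevant minor gives $\Hb\ge c_uI$. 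The boundary hypothesis $|a_i-a_i^\dagger|+|b_i-b_i^\dagger|\le n^{\delta-1/2}$ for $i\in\partial I$, together with $|a_i^\dagger-a^\dagger(0)|+|b_i^\dagger-b^\dagger(0)|=O(i/n)$ on $\partial I$, places the boundary data within $c\,n^{\delta-1/2}$ of $(a^\dagger(0),b^\dagger(0))$, so Theorem \ref{thm:upper bound} and Proposition \ref{prop:lowerbound} bound $|a_k|,b_k\le b^*$ and $b_k\ge b_*>0$ throughout $I$; hence the entries of $\Hb$, which are polynomials in the local $(a,b)$ together with the bounded terms $\alpha_k/b_k^2$, are bounded, giving $\Hb\le CI$. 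Corollary \ref{c:conditional-local} applied to $I$ then yields $|a_k-a_k^\dagger|+|b_k-b_k^\dagger|\le c\,n^{\delta-1/2}$ for \emph{all} $k\in I$ (the exponential factor and the $(\log n)^2/n$ term only improve this), whence $\|\Hb-M\|_{\max}=O(n^{\delta-1/2})$ and, both matrices being banded of bandwidth $O(\deg V)$, $\|\Hb-M\|=O(n^{\delta-1/2})$ by Gershgorin; in particular $M\ge\tfrac12 c_uI$ for large $n$.

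The second, and main, step is to evaluate $n^{-1/3}\langle\xi^J_i,M^{-1}\xi^J_j\rangle$. In its bulk, $M$ is the restriction to $I$ of a doubly-infinite, translation-invariant, banded, uniformly positive definite operator $M_\infty$ on $\ell^2(\mathbb Z)\otimes\mathbb C^2$ (the $\mathbb C^2$ carrying the pair $(a_k,b_k)$ at site $k$). Writing $\widehat M_\infty(\theta)$ for its $2\times2$ Fourier symbol and $g_{ij}(m)=\tfrac1{2\pi}\int_0^{2\pi}(\widehat M_\infty(\theta)^{-1})_{ij}e^{im\theta}\,d\theta$ for the associated Green's function, the standard exponential decay of inverses of banded uniformly positive definite operators (provable via Chebyshev approximation of $1/x$) gives $\sum_m|g_{ij}(m)|+\sum_m|m|\,|g_{ij}(m)|<\infty$ and $\sum_m g_{ij}(m)=(\widehat M_\infty(0)^{-1})_{ij}$, while a Schur-complement/reflection estimate gives $(M^{-1})_{(i,k),(j,l)}=g_{ij}(k-l)+E_{k,l}$ with $|E_{k,l}|\le C\lambda^{\dist(k,\partial I)+\dist(l,\partial I)}$ for some $\lambda\in(0,1)$; the two-distance form of this bound is what makes $\sum_{k,l\in J}|E_{k,l}|\le C\bigl(\sum_{k\in J}\lambda^{\dist(k,\partial I)}\bigr)^2=O(1)$, which is harmless even when $J$ abuts $\partial I$ in the case $x=0$. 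The crucial identification is $\widehat M_\infty(0)=\Sigma(0)^{-1}$: differentiating the circulant Hamiltonian \eqref{localH} twice at its constant minimizer along the vectors $\xi_a,\xi_b$ reproduces, after the $1/\dim C$ normalization built into $W$, the $2\times2$ Hessian of $W(a,b)-\log b$ at $(a^\dagger(0),b^\dagger(0))$, which is $\Sigma(0)^{-1}$ by Proposition \ref{p:localcovariance}; and for a translation-invariant operator this $2\times2$ reduction \emph{is} the symbol at frequency zero. Counting pairs via $\#\{(k,l)\in J^2:k-l=m\}=(|J|-|m|)_+$ then gives $\langle\xi^J_i,M^{-1}\xi^J_j\rangle=\sum_m(|J|-|m|)_+g_{ij}(m)+\sum_{k,l\in J}E_{k,l}=|J|\,\Sigma(0)_{ij}+O(1)$, and since $|J|=(y-x)n^{1/3}+O(\log n)$ this yields $n^{-1/3}\langle\xi^J_i,M^{-1}\xi^J_j\rangle\to(y-x)\Sigma(0)_{ij}$.

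The third step discards the freezing error. From the resolvent identity $\Hb^{-1}-M^{-1}=\Hb^{-1}(M-\Hb)M^{-1}$, together with $\|\xi^J_i\|^2=|J|=O(n^{1/3})$ and $\|\Hb^{-1}\|,\|M^{-1}\|=O(1)$, one has
\[
\bigl|\langle\xi^J_i,(\Hb^{-1}-M^{-1})\xi^J_j\rangle\bigr|\le\|\Hb^{-1}\|\,\|M-\Hb\|\,\|M^{-1}\|\,\|\xi^J_i\|\,\|\xi^J_j\|=O(n^{\delta-1/2})\cdot O(n^{1/3})=O(n^{\delta-1/6}),
\]
so $n^{-1/3}$ times this is $O(n^{\delta-1/2})\to0$ (note $\delta<1/2$ is forced by non-emptiness of $I$). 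Combining with the second step proves \eqref{eq:covariance}.

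I expect the genuine obstacle to lie in the second step, concentrated in two places: making rigorous, with uniform constants, the boundedness facts that license the freezing — in particular the lower bound $b_k\ge b_*$, which requires chaining Corollary \ref{c:conditional-local}, Theorem \ref{thm:upper bound} and Proposition \ref{prop:lowerbound} — and the identification $\widehat M_\infty(0)=\Sigma(0)^{-1}$, i.e.\ that the zero-frequency sector of the local circulant Hessian is precisely the $2\times2$ Hessian governing $\Sigma$; this is the only point where the special structure of Section \ref{s:local minimizers} is used. By contrast, the exponential decay of $M^{-1}$, the reflection estimate for its Green's function, and the resolvent perturbation are entirely standard.
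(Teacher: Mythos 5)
Your proposal is correct, and its skeleton coincides with the paper's: freeze the Hessian at the constant values $(a^\dagger(0),b^\dagger(0))$ with unit logarithmic weights, control the freezing error entrywise via Corollary \ref{c:conditional-local} plus the analyticity of $x\mapsto(a^\dagger(x),b^\dagger(x))$, promote it to an operator-norm bound $O(n^{\delta-1/2})$ by bandedness/Gershgorin, and discard it with the resolvent identity exactly as in your third step (the paper gets the same $O(n^{1/3}n^{\delta-1/2})$ error). Where you genuinely diverge is in how the frozen, constant-coefficient problem is evaluated. The paper polarizes: it takes an eigenvector $(\alpha_a,\alpha_b)$ of $\Sigma(0)$, observes via the remark before Proposition \ref{p:localcovariance} that $\alpha_a\xi_a+\alpha_b\xi_b$ is an exact eigenvector of the circulant Hessian with eigenvalue $\lambda^{-1}$, and then notes that the test vector $w=\alpha_a\xi^J_a+\alpha_b\xi^J_b$ is an approximate eigenvector of the de-periodized frozen Hessian $\Cb'$ up to an $O(1)$ boundary error, so that $\langle w,\Hb^{-1}w\rangle=\lambda\|w\|^2+O(n^{1/3}n^{\delta-1/2})+O(1)$; this needs only $\Cb'\ge cI$ and bandedness, nothing about decay of $\Cb'^{-1}$. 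You instead compute $\langle\xi^J_i,M^{-1}\xi^J_j\rangle$ entrywise through the Fourier symbol and the Green's function of the infinite translation-invariant operator, importing two standard-but-unproved inputs (Demko--Moss--Smith-type exponential decay of inverses of banded uniformly positive definite matrices, and the two-distance Schur-complement bound comparing finite- and infinite-volume inverses); your key identification $\widehat M_\infty(0)=\Sigma(0)^{-1}$ is the same structural fact the paper extracts from the circulant invariant subspace and Proposition \ref{p:localcovariance}. The trade-off: the paper's argument is more self-contained and shorter, but is tied to the specific rank-two test vectors; yours yields sharper, entrywise asymptotics for $\Hb^{-1}$ (and dispenses with polarization) at the cost of those auxiliary lemmas, which would need proofs or citations to make the write-up complete. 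One minor point to keep in mind when writing it up: the Hessian of the conditional Hamiltonian can differ from the principal submatrix of the doubly-infinite frozen operator in the $O(\deg V)$ rows nearest $\partial I$, but this only adds a boundary-supported bounded perturbation that your two-distance error term $E_{k,l}$ already absorbs.
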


\begin{proof}
By polarization and linearity, it suffices to show that
for any normalized eigenvector $(\alpha_a,\alpha_b)$ of $\Sigma(0)$ with
eigenvalue $\lambda$, and
$w=w_n=\alpha_a \xi^J_a+\alpha_b \xi^J_b$ we have
$$
n^{-1/3}\langle w, \Hb^{-1} w \rangle  \to (y-x)\lambda.
$$
Consider the Hessian of the local Hamiltonian \eqref{localH} evaluated at its minimizer.
This is invariant under the rotation of the indices, so the space spanned by the indicator $\xi^a$ of all variables $a$ and the indicator
$\xi^b$ of all $b$ variables form an invariant subspace. Then by the discussion before Proposition \ref{p:localcovariance} it holds that
$v=\alpha_a \xi_a+ \alpha_b \xi_b$ is an eigenvector with eigenvalue $\lambda^{-1}$.

Note that the matrix $\Cb$ is a block circulant
with $2\times 2$ blocks. It is also a band matrix -- any blocks entry whose two block
indices are further than $\deg V/2$ apart mod $|I|$ are zero. This implies
that if two vectors $v$ agree on a long interval (of blocks), then this
will hold after applying $\mathbb H$, apart from a short section of $\deg_V$ blocks
that could have changed. Note that the vector $v$ agrees
with $v$ on a long stretch and then with $0$ on another long stretch.
Thus we  have
$$
\|\Cb w -\lambda^{-1} w\| \le c
$$
the constant comes from the boundary error terms, which are bounded and
are of a bounded number. Moreover, the same consideration shows that if $\Cb'$ is
$\Cb$ with the periodic boundary removed (i.e.\ all block entries with indices farther
apart than $\deg V/2$ changed to zero), we have
$$
\|\Cb' w -\lambda^{-1} w\| \le c
$$
Note that $\Cb'$ is Hessian for the non-periodic Hamiltonian \eqref{e:generalH}, with
all coefficients of the log terms there given by $1$. The Hessian is
for the variables with indices in $I$,
evaluated when all variables (including the boundary conditions with indices
in $\partial I$) are set to $a^\dagger(0),b^\dagger(0)$, respectively. In particular,
$\Cb'$ is bounded below by a constant times the identity, which implies
$$
\|\lambda w -\Cb^{-1}w\| \le \lambda \|\Cb'^{-1}\|  \|\Cb' w -\lambda^{-1} w\| \le c.
$$
We can now use the assumption on the boundaries along with Corollary \ref{c:conditional-local}
  to compare the entries of $\Cb'$ and $\Hb$.
These are given by polynomials of $a_i, b_i$, plus a
constant times $b_i^{-2}$, for $\Hb$, and exactly the same
expressions with  all $a_i,b_i$ replaced by $a^\dagger(0),b^\dagger(0)$ for
$\Cb'$. By the proposition the $a_i$ are $cn^{\delta -1/2}$-close to $a^\dagger_i$,
and $a^\dagger_i=a^\dagger(i/n+o(1/n))$  are at most $cn^{\delta-1/2}$-close to
$a(0)$. This is because the function $a$ is analytic and so Lipschitz in a neighborhood of $0$ (\ref{p:analytic}),
and we are on a stretch where $i/n \le c n^{1/2-\delta}$.
The same considerations hold for the $b$ variables.

Since $\Hb$ and $\Cb'$ are band matrices, the Greshgorin circle theorem
implies that for the operator norm $\|\Hb-\Cb'\|\le cn^{\delta-1/2}$, and so
$$
\|\Hb^{-1}-\Cb'^{-1}\|=\|H^{-1}(\Cb'-\Hb)\Cb'^{-1}\| \le \|H^{-1}\|\|\Cb'-\Hb\|\|\Cb'^{-1}\| \le cn^{\delta-1/2}.
$$
Thus we get the desired conclusion
\[
\langle w,\Hb^{-1} w\rangle =\lambda^{-1}\|w\|^2 + O(n^{1/3}n^{\delta-1/2})+O(1).
\qedhere \]
\end{proof}

\section{Putting it all together}
\label{s:mainresult}

Everything is now in place to prove our main result, which state more completely as follows.

\begin{corollary}
\label{mainresult}
Let $ \gamma = (b^{\dagger}(0))^{-1/3} \tau^{-2/3}$ for $\tau =  - (a^{\dagger})'(0)  - 2 (b^{\dagger})'(0) $, let $\vartheta= b^\dagger(0)/\tau$, and let 
$\mathcal E= a^{\dagger})'(0)  + 2 (b^{\dagger})'(0) $. 
There exists a coupling of the random matrices $T_n$ on the same probability space so that
a.s.\, we have
$$\gamma n^{2/3}(\mathcal E-T_n)\to \SAO$$
in the norm-resolvent sense: for every $k$ the bottom $k$th eigenvalue converges the and corresponding eigenvector converges in norm.
Here $\mathcal E-T_n$ acts on $\mathbb R^n\subset L^2(\mathbb R_+)$ with coordinate vectors $e_j=(\vartheta n)^{1/6}\one_{[j-1,j](\vartheta n)^{-1/3}}$. 
\end{corollary}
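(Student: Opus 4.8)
The plan is to verify the hypotheses of Theorem \ref{weak} for the truncated, rescaled matrix
$$
H_n=\gamma n^{2/3}\bigl(\mathcal E I-T_{[\lfloor c\log n\rfloor,\,n]}\bigr),
$$
with $\mathcal E=a^\dagger(0)+2b^\dagger(0)$ as in Remark \ref{r:constants}, scale $m_n=(\vartheta n)^{1/3}=(b^\dagger(0)n/\tau)^{1/3}$ as in \eqref{eq:mn}, and the processes $y_{n,i},w_{n,i},\eta_{n,i}$ from \eqref{summedpotentials}--\eqref{e:endcondadjust}; and then to transfer the conclusion from $H_n$ to $\gamma n^{2/3}(\mathcal E I-T_n)$ using Theorem \ref{t:beginning}. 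A short computation using $\mathcal E=a^\dagger(0)+2b^\dagger(0)$ and the identity $m_n^2=\gamma n^{2/3}b^\dagger(0)$ shows that the diagonal of $H_n$ is $2m_n^2+m_n(y_{n,1,k}-y_{n,1,k-1})$ and its off-diagonal is $-m_n^2+m_n(y_{n,2,k}-y_{n,2,k-1})$, so $H_n$ is literally of the form treated in Theorem \ref{weak}, with target $-\tfrac{d^2}{dx^2}+y'(x)$ for $y$ the limit appearing in Assumption 1.

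For Assumption 1: since $m_n\asymp n^{1/3}$, values $x=O(1)$ correspond to indices $k=O(n^{1/3})$, which lie inside the window of Proposition \ref{p:variation}, so the field $(A,B)$ is, up to $o(1)$ in total variation on a high-probability event, a mixture of Gaussians centered at conditional minimizers with inverse covariance $n\beta\,\Hess_{H_I}$. Lemma \ref{l:means} then gives the drift: the recentering errors $a_k-a_k^\dagger$ sum to $o(1)$ by \eqref{e:Sbound}, and $n^{1/3}\sum_{k\le xn^{1/3}}(a_k^\dagger-a^\dagger(0))\to\tfrac12(a^\dagger)'(0)x^2$, so after the scaling in \eqref{summedpotentials} and using $\vartheta/b^\dagger(0)=1/\tau$ together with $\tau=-((a^\dagger)'(0)+2(b^\dagger)'(0))$, the mean of $y_{n,1}(x)+2y_{n,2}(x)$ converges to $\tfrac12x^2$. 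Lemma \ref{l:covariance} gives the fluctuation: the variance of an increment of $w_{n,1}+2w_{n,2}$ over $[x_1,x_2]$ converges to $(x_2-x_1)(b^\dagger(0))^{-2}\bigl(\Sigma(0)_{aa}+4\Sigma(0)_{ab}+4\Sigma(0)_{bb}\bigr)$, and by \eqref{e:sigma} this combination equals $4b^\dagger(0)\tau$, so the increment has variance $\tfrac4\beta(x_2-x_1)$. The limit is Gaussian because the averaging over boundary conditions is asymptotically immaterial ($\Sigma$ is continuous and the relevant boundary data lie within $n^{\delta-1/2}$ of the local minimizers), and has independent increments by the $\lceil\deg V/2\rceil$-Markov-field property noted after \eqref{density}. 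Thus Assumption 1 holds with $y(x)=\tfrac12x^2+\tfrac2{\sqrt\beta}W(x)$, and the target operator is $-\tfrac{d^2}{dx^2}+y'(x)=\SAO$ with Dirichlet conditions.

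For Assumption 2: the decomposition \eqref{e.assumptiongrowth} is built into \eqref{noise}--\eqref{driftderivatives}, and \eqref{bounds2} holds because $b^\dagger$ is nonnegative and decreasing (Proposition \ref{p:analytic}). For \eqref{bounds1} one uses $\eta_{n,1}(x)+2\eta_{n,2}(x)=m_n^2(\mathcal E(0)-\mathcal E(x'))/b^\dagger(0)$ with $x'=\lfloor xm_n\rfloor/n$, which is comparable to $x$ on the range $x\le n^{2/3}(1-\epsilon)$ by the strict monotonicity $\mathcal E'<0$ from Remark \ref{r:momentcond}/\eqref{eq:edgemonotone}; since $\eta_{n,2}\ge0$ this yields the required lower bound on $\eta_{n,1}+\eta_{n,2}$, and the upper bound follows from the same estimates together with the field bound $|A_k|,B_k\le c_3$ of Proposition \ref{prop:cheapbound} on the cutoff range, where moreover the lower bound $B_k>e^{-c_1 n/(n-k)}$ of Proposition \ref{prop:lowerfield} keeps $\eta_{n,1}+\eta_{n,2}\ge\bar\eta(x)/\kappa_n-\kappa_n$ with $\kappa_n$ the tight random constant there. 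The oscillation bound \eqref{bounds3} follows from the Gaussian description: increments of $w_{n,i}$ over unit $x$-intervals are sums of $O(m_n)$ subgaussian variables of variance $O(1/n)$, and a maximal inequality over the $O(n/m_n)$ intervals produces the bound with a tight $\kappa_n$; beyond the cutoff $w_{n,i}\equiv0$. With both assumptions in hand, Theorem \ref{weak}(i) gives a coupling in which the bottom eigenvalues and eigenvectors of $H_n$ converge to those of $\SAO$. Finally, Cauchy interlacing gives $\lambda_k^{\min}(\gamma n^{2/3}(\mathcal E I-T_n))\le\lambda_k^{\min}(H_n)$, while Theorem \ref{t:beginning} with $m=\lfloor c\log n\rfloor$ bounds $\gamma n^{2/3}(\mathcal E I-T_n)$ below (w.h.p.) by the block-diagonal operator $\bigl(\gamma n^{2/3}\kappa/m^2\bigr)I_{m-1}\oplus(H_n-z_ne_{11})$ with $z_n=m_n^2(1-o(1))$ and $(m_n^2-z_n)/m_n\to\infty$; the first block has all eigenvalues $\asymp n^{2/3}/(\log n)^2\to\infty$, and Theorem \ref{weak}(ii) applies to the second. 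Sandwiching forces $\lambda_k^{\min}(\gamma n^{2/3}(\mathcal E I-T_n))\to\Lambda_k$, and because these eigenvalues are a.s.\ simple while the comparison operators differ from $\gamma n^{2/3}(\mathcal E I-T_n)$ only on a spatial interval of length $O(\log n/n^{1/3})\to0$, the eigenvectors converge in norm; this is the asserted norm-resolvent convergence, with the constants $\gamma,\vartheta,\mathcal E$ of Remark \ref{r:constants}.

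The main obstacle is the verification of Assumption 2 over the entire index set rather than only the edge window: producing the two-sided growth bound \eqref{bounds1} and the uniform oscillation bound \eqref{bounds3} with a single tight random constant $\kappa_n$ is where the global field estimates of Section \ref{s:boundingthefield} (in particular the $e^{-c_1 n/(n-k)}$ lower bound near $k=n$) and the monotonicity of the local minimizers genuinely enter, and it is also where the rearrangement \eqref{e:endcondadjust} pays off. A secondary point requiring care is that the limiting potential is an honest Brownian motion, i.e.\ that Gaussianity survives the mixture over boundary conditions and the increments are independent; this rests on the Markov-field structure together with Proposition \ref{p:variation}.
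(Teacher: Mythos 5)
Your route for the eigenvalue part coincides with the paper's: verify Assumptions 1 and 2 of Theorem \ref{weak} for the truncated matrix $H_n=\gamma n^{2/3}(\mathcal E I-T_{[c\log n,n]})$ via Proposition \ref{p:variation}, Lemma \ref{l:means} and Lemma \ref{l:covariance} (this is exactly what Proposition \ref{thm:mainconv}, Proposition \ref{p:osc} and Proposition \ref{c:finalstretch} do), then sandwich $\gamma n^{2/3}(\mathcal E I-T_n)$ between $H_n$ (interlacing) and the block lower bound from Theorem \ref{t:beginning}, whose nontrivial block is the subcritical rank-one perturbation $H_n+z_ne_{11}$ handled by Theorem \ref{weak}(ii). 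Your scaling computation ($m_n^2=\gamma n^{2/3}b^\dagger(0)$, drift $\tfrac12 x^2$, variance $\tfrac4\beta$ per unit length via $\Sigma(0)$) matches the paper's, and the subcriticality check $(z_n+m_n^2)/m_n\sim m_n/\log^2 n\to\infty$ is correct.

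There is, however, a genuine gap in the eigenvector step, which is part of the claimed norm-resolvent convergence. Your justification is that the comparison operators ``differ from $\gamma n^{2/3}(\mathcal E I-T_n)$ only on a spatial interval of length $O(\log n/n^{1/3})\to 0$,'' plus simplicity of the limiting eigenvalues. That argument does not work as stated: on those first $c\log n$ coordinates the entries of the two operators differ by order $n^{2/3}$, so the difference is enormous in operator norm, and small spatial support of a large perturbation gives no eigenvector stability by itself (a large rank-one perturbation at a single site can rotate eigenvectors completely unless one knows the eigenvectors carry negligible mass there). What is actually needed $-$ and what the paper supplies $-$ is a quantitative argument exploiting the positive-definite ordering: since $\underline{\mathcal T}_n=y_nI_{|[1,c\log n]}+[H_n+z_ne_{11}]\le \mathcal T_n$ with $y_n\to\infty$, one decomposes the $k$th eigenvector $\varphi_{n,k}$ of $\mathcal T_n$ along the spectral subspaces of $\underline{\mathcal T}_n$, writes $\lambda_{n,k}=\langle\varphi,\mathcal T_n\varphi\rangle\ge\langle\varphi,\underline{\mathcal T}_n\varphi\rangle\ge \underline\lambda_{n,1}\|\varphi'\|^2+\underline\lambda_{n,k}\|\varphi''\|^2+\underline\lambda_{n,k+1}\|\varphi'''\|^2$, and then uses the already-proved eigenvalue convergence, an induction on $k$, and the a.s.\ strict gap $\underline\lambda_{n,k+1}-\lambda_{n,k}>C>0$ (from discreteness of the $\SAO$ spectrum) to force $\|\varphi'\|,\|\varphi'''\|\to0$, i.e.\ alignment with the $k$th eigenvector of $H_n+z_ne_{11}$, which Theorem \ref{weak}(ii) already couples to that of $\SAO$. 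Your proposal needs this (or an equivalent) argument; the ``vanishing spatial interval'' heuristic should be replaced by it. A minor additional looseness: the assertion that Gaussianity and independent increments of the limit ``survive the mixture'' by the Markov-field property is a shortcut; the paper instead gets the Brownian limit from tightness plus convergence of the means and of the covariance structure (Lemma \ref{l:covariance}) of the Gaussian mixture components, which is the cleaner way to finish Assumption 1.
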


The next two subsections check the conditions of Theorem \ref{weak}, in particular how part (i) of that theorem applies to the (centered and scaled) truncated  matrix
$H_n = \gamma  n^{2/3} \, ( \mathcal{E} I - T_{[c \log n, n]} )$.  After this, the results of Section \ref{s:firststretch} are incorporated to bypass the truncation and prove the corollary.

\subsection{Tightness and Brownian convergence}

The following establishes Assumption 1 of Theorem \ref{weak}, in particular the tightness and convergence of the processes
$x \mapsto y_{n,1}(x), y_{n,2}(x)$ defined in \eqref{summedpotentials}.  Giving the limit of the (integrated) potential, this result
already identifies the Stochastic Airy Operator.

Here we can assume that the conclusions of Proposition
\ref{p:variation} are in place. Said an other way we can assume we are on the event $S$ defined there.

\begin{proposition}
\label{thm:mainconv}
Consider the measures on paths
$$
X_n(x)= n^{1/3} \sum_{k= c \log n}^{\lfloor n^{1/3} x \rfloor} (b_0^{\dagger} - B_k  ),
$$
$$
Y_n(x)=n^{1/3} \sum_{k=c \log n}^{\lfloor n^{1/3} x \rfloor} (a_0^{\dagger} - A_k).
$$
For $c$ chosen large enough,
$X_n$ and $Y_n$ from a tight family with respect to the
uniform convergence on compact sets. Moreover, we have the
following convergence in distribution with respect to the
uniform-compact topology:
\begin{equation}
\label{eq:firstconv}
X_n+ 2 Y_n \Rightarrow  \sigma W_x   + \frac{\tau}{2} x^2.
\end{equation}
Here $x \mapsto W_x$ is a standard Brownian motion and
\begin{equation}
\label{eq:stats}
 \tau = - \mathcal{E}'(0) = - \Bigl(  (a^{\dagger})'(0)  + 2 (b^{\dagger})'(0) \Bigr), \quad \sigma^2 = \frac{4}{\beta} b^{\dagger}(0) \tau.
\end{equation}
Recall from \eqref{eq:edgemonotone} that $\tau > 0$.
\end{proposition}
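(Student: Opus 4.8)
The plan is to realise $X_n$ and $Y_n$, on a fixed compact time-interval $[0,M]$, as functionals of the field $(A_k,B_k)$ over the single index interval $I=I_n=\{\lfloor c\log n\rfloor,\dots,\lceil Mn^{1/3}\rceil\}$ (with $c$ the large constant inherited from Proposition \ref{p:variation} and Corollary \ref{c:conditional-local}), and to exploit that for $\epsilon$ small and $n$ large this interval has length $\ll n^{1/2-\epsilon}$, so that a \emph{single} application of Proposition \ref{p:variation} covers the whole window. On the event $S$ of that proposition, which has probability $1-o(1)$, we are within $o(1)$ in total variation of the Gaussian mixture $\nu_I=\E\,\nu_{Q'}$: conditionally on the boundary data $Q=q$ (which on $S$ lies in $\mathcal Q$, so $Q'=Q$) the law is Gaussian with mean the conditional minimiser $(a^{(q)},b^{(q)})$ of $H_I$ and covariance $\bigl(n\beta\,\Hess_{H_I}(a^{(q)},b^{(q)})\bigr)^{-1}$. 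Since only convergence in distribution is claimed, it suffices to establish tightness and the limit \eqref{eq:firstconv} for the process built from a draw of $\nu_I$, keeping every estimate uniform in $q\in\mathcal Q$ so that the average over $Q$ is harmless.

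For the finite-dimensional limits I would condition on $Q=q$ and split each of $X_n,Y_n$ into its $\nu_q$-mean and its (Gaussian) fluctuation. The mean of $X_n(x)$ is $n^{1/3}\sum_k(b_0^{\dagger}-b^{(q)}_k)$, and likewise for $Y_n$; since $q$ satisfies \eqref{eq:lldiv1}, Lemma \ref{l:means} applies verbatim and gives the convergence of these means to $-\tfrac12(b^{\dagger})'(0)x^2$ and $-\tfrac12(a^{\dagger})'(0)x^2$, whence the mean of the combination in \eqref{eq:firstconv} converges to $\tfrac{\tau}{2}x^2$, with $\tau>0$ by \eqref{eq:edgemonotone}. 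For the fluctuation, the increment of the centred process over a block $(\lfloor xn^{1/3}\rfloor,\lfloor yn^{1/3}\rfloor]$ is a centred Gaussian of variance $n^{2/3}\langle\xi^J,(n\beta\,\Hess_{H_I})^{-1}\xi^J\rangle=\tfrac1\beta\,n^{-1/3}\langle\xi^J,\Hb^{-1}\xi^J\rangle$, where $\xi^J$ is the indicator vector associated with that increment; Lemma \ref{l:covariance} turns this into $(y-x)$ times a fixed linear combination of the entries of $\Sigma(0)$, and substituting the explicit $\Sigma(0)$ of Proposition \ref{p:localcovariance} evaluates the combination to $\sigma^2=\tfrac4\beta b^{\dagger}(0)\tau$. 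Increments over disjoint blocks have covariance $O(n^{-1/3})\to0$, because $\Hess_{H_I}$ is banded and uniformly positive definite and hence has exponentially decaying inverse; together with Gaussianity this forces the limiting centred process to have independent increments of variance proportional to length, i.e.\ to be $\sigma W$. All these bounds are uniform in $q$, so the same limits persist under $\nu_I$, and the $o(1)$ total-variation estimate transports them to $\mu_I$.

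Tightness of $X_n$ (and of $Y_n$) in the uniform-on-compacts topology I would get by transporting a modulus-of-continuity bound across the $o(1)$ total-variation gap: for $B=\{\,\omega_{X_n}(\eta)>\varepsilon\,\}$ one has $\mu_I(B)\le\nu_I(B)+o(1)$, and under the Gaussian mixture $\nu_q(B)$ is controlled uniformly in $q$ by a standard Gaussian chaining estimate, using the one-sided variance bound $\le C_M(y-x)$ from the previous step together with $|\E_{\nu_q}[X_n(y)-X_n(x)]|\le C_M|y-x|$, which follows from the uniform analyticity of $a^{\dagger},b^{\dagger}$ near $0$ (Proposition \ref{p:analytic}); moreover $x\mapsto\E_{\nu_q}[X_n(x)]$ is monotone and converges pointwise to a continuous parabola, hence uniformly, so its oscillation over an $\eta$-window is $\le C_M\eta+o(1)$. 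Off $S$, of probability $o(1)$, no control is needed. Combining finite-dimensional convergence with tightness yields $X_n\Rightarrow$ a Gaussian process with parabolic drift and $Y_n\Rightarrow$ another, jointly (all variables in play being jointly Gaussian with converging covariances), and hence \eqref{eq:firstconv}.

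The main obstacle is not a single estimate but keeping the reduction honest: one must check that a single invocation of Proposition \ref{p:variation} really does span the whole time-window, that its $o(1)$'s and exponentially small exceptional probabilities are uniform in the conditioning $Q$ so that averaging costs nothing, and --- the one genuinely computational point --- that contracting Lemma \ref{l:covariance} against the vector belonging to the combination in \eqref{eq:firstconv}, with the explicit $\Sigma(0)$ of Proposition \ref{p:localcovariance} inserted, returns exactly $\sigma^2=\tfrac4\beta b^{\dagger}(0)\tau$ and not some other combination of $(a^{\dagger})'(0),(b^{\dagger})'(0)$. Everything else --- tightness, the vanishing of cross-block covariances, the identification of the drift --- is routine once the Gaussian-mixture picture is in force.
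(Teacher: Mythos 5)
Your proposal is correct and follows essentially the same route as the paper: reduction to the Gaussian mixture $\nu_I$ via Proposition \ref{p:variation}, drift identification through Lemma \ref{l:means}, variance via Lemma \ref{l:covariance} and $\Sigma(0)$ from Proposition \ref{p:localcovariance}, uniform convexity of the Hessian for the modulus-of-continuity/tightness bound, and Gaussianity plus converging means and covariances to pin down the limit $\sigma W_x+\tfrac{\tau}{2}x^2$. The only cosmetic difference is that you make the independence of increments explicit through the banded, uniformly positive definite Hessian (hence decaying inverse), where the paper leaves this implicit in the covariance structure, and you use chaining where the paper cites the Kolmogorov--Chentsov criterion.
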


\begin{proof}
We first switch to variables picked from the distribution $\nu_I$
introduced before Proposition \ref{p:variation}. We will still denote these random
variables
$A_k,B_k$.  The $c$ in the statement should simply be large enough that we can assume
the outcome of Proposition \ref{p:variation}.

To prove tightness, note that it suffices to do this for the
components of the mixture of $\nu_q$'s $\nu_I$, since mixtures from tight families
are also tight.
Thus let
$$Z_n(x)=
n^{1/3} \sum_{k=c \log n}^{\lfloor n^{1/3} x \rfloor} (B_k-b_k^\scond)
$$
be the partial sums as above from the sample from
the normal distribution corresponding to the variables $B_k$.

By Kallenberg \cite{Kallenberg}, Corollary 16.9, in order to establish tightness for the
sequence $Z_n$
it suffices to show that
the Kolmogorov-Chentsov criterion holds  $Z_n$ and uniformly for
all $n$ and boundary conditions $q$. In our case, this comes down to showing that
\begin{equation}\label{e:preKC}
\ev |Z_n(x)-Z_n(y)|^2 \le c |x-y| \mbox{ for all $x,y$ such that }  |x-y|<1.
\end{equation}
Since the increments are normally distributed, this automatically
implies a 4th moment bound that works as an input for the
Kolmogorov-Chentsov criterion, Lemma \ref{l:KC}.

Indeed, Lemma \ref{l:UniformConvexity} implies that the Hessian $\Hess \ge c_uI$, whence $\Hess^{-1}\le c_u^{-1}I$ uniformly over $q$ and $n$. Let $v$ be the indicator of the
coordinates of $\Hess$ corresponding to $B_{k}, \ldots B_{\ell}$. Then
we have that
$$
\ev [ (B_{k} - b_k^\scond) + \ldots + (B_{\ell-1} - b_{\ell-1}^\scond ) ] ^2 = \langle v, (n\Hess)^{-1}v \rangle \le \frac{c}{n}\langle v,v\rangle
=\frac{c(\ell-k)}{n}
$$
which, after appropriate scaling, gives precisely \eqref{e:preKC}.

We also need to prove tightness of the integrated drifts,
$$
n^{1/3} \sum_{k=c \log n}^{\lfloor n^{1/3}x \rfloor} ( b_0^{\dagger} -b_k^\scond).
$$
However, this along with the convergence to the desired value of $ - \frac{1}{2} (b^{\dagger})'(0) \, x^2 $ is the content of Lemma
\ref{l:means}.  Note again what is required from the condition \eqref{eq:lldiv1} in that lemma is just  the final statement of
Proposition \ref{p:variation}.  In the same way Lemma \ref{l:covariance} shows that $\Var [Z_n(x)]$ converges.

Since each $Z_n$ is a Gaussian process, the convergence of the means and variances (plus tightness) is enough to yield unique limit in the
 uniform-on-compacts topology.  That the limit process is the appropriately shifted and scaled Brownian motion (to match the given mean and variance)
is implicit in the covariance structure described in Lemma \ref{l:covariance}.

This establishes tightness and convergence for $Z_n$. Again, that for $Y_n$ follows from Proposition \ref{p:variation} which shows that the
processes are close in total variation.
The same procedure gives the desired conclusions for the $X_n$ process. The integrated drifts in that case converging to $  - \frac{1}{2} (a^{\dagger})'(0) \, x^2 $,
thus giving the formula for $\tau$.

To verify the formula for $\sigma^2$, return to  the formula given in Proposition \ref{p:localcovariance} and compute
\[
 \beta \sigma^2
 := \Sigma(0)_{11} +4 \Sigma(0)_{22} +4\Sigma(0)_{12}
=  -4 b^{\dagger}(0) \Bigl( (a^{\dagger})'(0) + 2 (b^{\dagger})'(0) \Bigr).
\qedhere
\]
\end{proof}

\subsection{Oscillation and growth bounds}

We establish the non-immediate conditions of Assumption 2, the bounds \eqref{bounds1} and \eqref{bounds3}.
Recall from Section \ref{s:outline} that the definition of growth and oscillation terms differs depending on the range of
indices.

\subsubsection*{On the interval [$c \log n, (1-\epsilon) n$]}

The $\epsilon$ is as defined in Proposition \ref{p:variation}, and again we can assume that we are on the event $S$ on which
the various bounds described in that proposition all hold.  On this stretch, the oscillation (or noise) terms are defined, up to constants,
by $
w_{n,1}(x)=   n^{1/3} \sum_{k=  \lfloor c \log n   \rfloor}^{ \lfloor x n^{1/3} \rfloor } (a^{\dagger}_k - A_k )$, and similarly for $w_{n,2}$ in the $B$-variables.
The growth terms, $\eta_{n,i}(x)$,  then read, again up to constants, $n^{2/3} (a_0^{\dagger} - a^{\dagger}_{ \lfloor n^{1/3} x \rfloor})$ and
$ n^{2/3} (b_0^{\dagger} - b^{\dagger}_{ \lfloor n^{1/3} x \rfloor})$. Compare \eqref{noise} and \eqref{driftderivatives}.

From the established differentiability of $x \mapsto a^{\dagger}(x), b^{\dagger}(x)$ we see that $ x/c \le  \eta_{n,i}(x) \le c x$ with a (deterministic) constant,
so that \eqref{bounds1} is satisfied with $\bar{\eta}(x) = x$. The following then shows \eqref{bounds3} is satisfied with $\zeta(x) = x^{\delta}$ for some $\delta > 0$ small.

\newcommand{\osc}{{\operatorname{osc}}}
\begin{proposition}[Oscillation bounds]
\label{p:osc}
Let $k_0=c\log n$, and for $0\le x \le (1-\eps)n$ and some sequence $\alpha_i$ denote
$$
\osc_{n,x}(\alpha) =\max_{0\le\ell<n^{1/3}} \left(n^{1/3}\sum_{i=k_0+xn^{1/3}}^{k_0+xn^{1/3}+\ell}\alpha_i\right)^2 \le C_n(1+x^{1-\delta})
$$
then for some $\delta>0$ and a tight sequence of random constants $C_n$ we have
for all $n$ and $0\le x \le (1-\eps)n$
$$
\osc_{n,x}(A_n-a^\dagger_n) + \osc_{n,x}(B_n-b^\dagger_n)  \le C_n(1+x^{1-\delta}).
$$
\end{proposition}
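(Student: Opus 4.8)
The plan is to pass to the Gaussian regime via Proposition~\ref{p:variation} and then to establish a maximal inequality that is uniform in \emph{both} the window length and the window position. Tile $\{k_0,\dots,\lfloor(1-\eps)n\rfloor\}$ by intervals $I$ of length $n^{1/2-\eps}$, consecutive ones overlapping enough that every index window of length at most $2n^{1/3}$ lies inside some $I$; there are only $n^{O(1)}$ such intervals. Let $S^{*}$ be the intersection over all these $I$ of the events $S$ of Proposition~\ref{p:variation}; then $\P((S^{*})^{c})$ is smaller than any power of $n^{-1}$, and on each $I$ the conditional law of $(A_i,B_i)_{i\in I}$ agrees, up to Radon--Nikodym factor $1+o(1)$ on $S$, with the Gaussian law $\nu_I$ of mean $(a^{\scond},b^{\scond})$ and covariance $\bigl(n\beta\,\Hess_{H_I}(a^{\scond},b^{\scond})\bigr)^{-1}$. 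Write $A_i-a_i^{\dagger}=(A_i-a_i^{\scond})+(a_i^{\scond}-a_i^{\dagger})$ and likewise for $B$. By \eqref{e:Sbound}, for any window $W$ of length at most $n^{1/3}$ contained in some $I$ we have, on $S$,
\[
n^{1/3}\sum_{i\in W}\bigl(|a_i^{\scond}-a_i^{\dagger}|+|b_i^{\scond}-b_i^{\dagger}|\bigr)\;\le\; c\bigl(n^{-1/3}(\log n)^{2}+n^{\delta-1/6}\bigr)\;=:\;\rho_n\longrightarrow 0,
\]
provided $\delta<1/6$ (the bulk term is summed over $|W|\le n^{1/3}$ indices, the boundary term $n^{\delta-1/2}e^{-c_1\dist(\cdot,\partial I)}$ over a geometric series). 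Since $(u+v)^{2}\le 2u^{2}+2v^{2}$, it suffices to prove the assertion for the Gaussian parts $\osc_{n,x}(A-a^{\scond})$ and $\osc_{n,x}(B-b^{\scond})$, up to the deterministic additive error $4\rho_n^{2}\to 0$.

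Fix $I$ and work under $\nu_I$. The Hamiltonian $H_I$ is uniformly convex, $\Hess_{H_I}\ge c_uI$ (Lemma~\ref{l:UniformConvexity}; on $\{k\le(1-\eps)n\}$ the $\log$-coefficients $1-k/n-1/(n\beta)$ are positive and only add a positive diagonal), so for any index window $W=\{j_0,\dots,j_0+\ell\}$ with $\ell\le 2n^{1/3}$ the partial sum $Z_{\ell}:=n^{1/3}\sum_{i=j_0}^{j_0+\ell}(A_i-a_i^{\scond})$ is centered Gaussian with
\[
\Var Z_{\ell}=n^{2/3}\langle v_{\ell},(n\beta\,\Hess_{H_I})^{-1}v_{\ell}\rangle\le \frac{n^{2/3}(\ell+1)}{n\beta c_u}\le \frac{3}{\beta c_u}=:\sigma_0^{2},\qquad \Var(Z_{\ell}-Z_{\ell'})\le \frac{|\ell-\ell'|}{\beta c_u\,n^{1/3}}.
\]
The canonical metric of $\{Z_\ell\}$ is thus bounded by $c\sqrt{|\ell-\ell'|}\,n^{-1/6}$, whose Dudley entropy integral over a range of length $2n^{1/3}$ converges; hence $\E_{\nu_I}\max_{\ell\le 2n^{1/3}}|Z_\ell|\le C_0$ with $C_0=C_0(\beta,c_u)$ independent of $n$. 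The Borell--TIS inequality then gives the $n$-free tail bound
\[
\nu_I\Bigl(\max_{0\le\ell\le 2n^{1/3}}|Z_\ell|\ge m\Bigr)\le 2\exp\!\Bigl(-\frac{(m-C_0)_+^{2}}{2\sigma_0^{2}}\Bigr),
\]
and the same for the $B$-partial sums.

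Now decompose the start positions. For $x<1$ there are $O(1)$ windows and the bound follows directly from the above; for $x\ge 1$ use dyadic blocks $B_{j}=[2^{j},2^{j+1})$, $0\le j\le \tfrac23\log_2 n$. Every window of length $\le n^{1/3}$ starting in $B_j$ lies inside one of $O(2^{j})$ ``doubled chunks'' (each a union of two consecutive length-$n^{1/3}$ stretches of the index range touched by $B_j$, with the window in its left half), each contained in some tiling interval $I$; and $\osc_{n,x}$ over starts in a fixed doubled chunk is at most $4\bigl(\max_{\ell\le 2n^{1/3}}|Z_\ell|\bigr)^{2}$ for the corresponding $Z$. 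Applying the tail bound per doubled chunk, union bounding over the $O(2^{j})$ of them, and transferring from $\nu_I$ to $\mu$ on $S^{*}$,
\[
\mu\Bigl(\sup_{x\in B_{j}}\bigl[\osc_{n,x}(A-a^{\scond})+\osc_{n,x}(B-b^{\scond})\bigr]\ge M\Bigr)\;\le\; \P\bigl((S^{*})^{c}\bigr)\;+\;2C\,2^{2j}\,e^{-c\sqrt{M}}.
\]
Choosing $M=t(1+2^{j(1-\delta)})$, the bound $(\sqrt M-C_0)_+^{2}\ge c\,t\,2^{j(1-\delta)}$ (valid for $t$ large) gives $\sum_{j\ge 0}2C\,2^{2j}e^{-c\,t\,2^{j(1-\delta)}}\le C'e^{-c't}$ — the series is dominated by small $j$ — while $\sum_{j}\P((S^{*})^{c})=O(\log n)\,\P((S^{*})^{c})=o(1)$. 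Therefore
\[
D_n:=\sup_{-1\le j\le \frac23\log_2 n}\ \frac{\sup_{x\in B_j}\bigl[\osc_{n,x}(A-a^{\scond})+\osc_{n,x}(B-b^{\scond})\bigr]}{1+2^{j(1-\delta)}}
\]
(with $B_{-1}=[0,1)$) has $\P(D_n\ge t)\le C'e^{-c't}+o(1)$, hence is tight, and $C_n:=2D_n+4\rho_n^{2}$ (enlarged over the finitely many small $n$) is a tight random constant. Since $1+x^{1-\delta}\ge 1+2^{j(1-\delta)}$ on $B_j$ and $1+x^{1-\delta}\ge1$ always, adding back the deterministic $o(1)$ of the first paragraph yields $\osc_{n,x}(A-a^{\dagger})+\osc_{n,x}(B-b^{\dagger})\le C_n(1+x^{1-\delta})$ for all $x\in[0,(1-\eps)n^{2/3}]$. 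The main obstacle is precisely this last step: the supremum over start positions ranges over $\sim n^{2/3}$ windows, so a crude union bound would cost a $\sqrt{\log n}$ factor and destroy tightness of $C_n$; it is the combination of the Borell--TIS/Dudley estimate \emph{inside} a window (which carries no $n$-dependence) with the dyadic-in-$x$ summation paid for by the $x^{1-\delta}$ slack that keeps $C_n$ genuinely tight.
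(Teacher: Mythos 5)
Your argument is correct, and it shares the paper's overall architecture (tile the index range into length-$n^{1/2-\eps}$ blocks, pass to the near-Gaussian law of Proposition \ref{p:variation}, split $A-a^\dagger$ into the part centered at the conditional minimizers plus the drift part, and kill the drift part deterministically via \eqref{e:Sbound}), but the core maximal estimate is obtained by a genuinely different route. The paper controls the in-window maximum through the quantitative Kolmogorov--Chentsov bound of Lemma \ref{l:KC}, which gives $\E\bigl[(R'_{n,x})^2;S_n\,|\,Q\bigr]\le c$ uniformly, and then handles the supremum over the $\sim n^{2/3}$ start positions by a weighted Chebyshev sum $\sum_x \E[(R'_{n,x})^2;S_n]/(1+x^{1-\delta})^2\le c$, which needs only $\delta<1/2$ and only fourth moments. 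You instead get an $n$-free sub-Gaussian tail for $\max_{\ell\le 2n^{1/3}}|Z_\ell|$ from the Dudley entropy integral plus Borell--TIS (using the same increment variance bound $\le |\ell-\ell'|/(\beta c_u n^{1/3})$ coming from uniform convexity), and then pay for the polynomially many windows with a dyadic decomposition in $x$, the $x^{1-\delta}$ slack absorbing the entropy of each block. Your version buys exponential tails for $C_n$ rather than just tightness, at the price of heavier Gaussian-process machinery; the paper's is more elementary and self-contained. Two cosmetic points: your displayed tail $2C\,2^{2j}e^{-c\sqrt M}$ is inconsistent with the $O(2^j)$ chunk count and with the sharper $(\sqrt M-C_0)_+^2$ exponent you correctly use in the next line (neither slip affects the conclusion), and strictly you should run the Dudley/Borell--TIS step under each mixture component $\nu_q$, $q\in\mathcal Q$, with constants uniform in $q$, before averaging to $\nu_I$ --- the same conditioning-on-$Q$ step the paper performs; also note that your smoothing exponent and the $\delta$ of Proposition \ref{p:variation} are distinct parameters, which your constraint $\delta<1/6$ implicitly reconciles.
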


\begin{proof}
We will show the claim for $\osc_{n,x}(A_n-a^\dagger_n)$, the proof for the $a$
variables is identical.

Partition the interval $k_0, \ldots, (1-\eps)n$ into
with subintervals $I_k$ of size $n^{1/2-\delta}$.
Assume
that the indices $k_0+xn^{1/3}$ and $k_0+(x+1)n^{1/3}$ fall in the same
interval $I_k$ (otherwise we can split the sup and bound it in two
steps).

Let $$X_\ell=X_{n,x,\ell}=n^{1/3}\sum_{i=k_0+xn^{1/3}}^{k_0+xn^{1/3}+\ell} A_{n,i}-A^\sharp_{n,i}$$
where $A^\sharp$ is the (random) minimizer of the conditional Hamiltonian
of the variables in $I_k$, given the values on $\partial I_k$. Let
$S_{n,k}$ denote the event $S$ in Proposition \ref{p:variation}
applied to $I_k$.
Let $S_n=\bigcap_{k} S_{n,k}$. Note that by the Proposition $\P S_n\to 1$. Let
$
R'_{n,x}=\osc_{n,x}(A_{n}-A^\sharp_{n})
$ and
$
R''_{n,x}=\osc_{n,x}(a^\dagger_{n}-A^\sharp_{n})
$.
It suffices to show that
$$
\left(\max_{0\le x \le (1-\eps)n^{2/3}} R''_{n,x}\right)\one_{S_n} \to 0,
$$
in probability, and that we have
\begin{equation}\label{Ax}
\sum_{x=0}^{(1-\eps)n^{2/3}} \frac{\ev( R_{n_x}'^2;S_n)}{(1+x^{1-\delta})^2} \le c.
\end{equation}
For the first, note that by Proposition \ref{p:variation} on $S_n$ we have
$$
|A_j^\sharp - a_j^\dagger| \le c \max\left( n^{\delta-1/2} \exp(-c_1\dist(j, \partial J)), \frac{(\log n)^2}{n}\right).
$$
It follows that the sum of $n^{1/3}|A_j^\sharp  - a_j^\dagger|$ over any range of length $n^{1/3}$ is
bounded above by $c n^{1/3}n^{\delta-1/2}\log n\to 0$, as required.

By Proposition \ref{p:variation} on the event $S_n$ and
conditionally on $Q$ the random vector $(X_k,0\le k\le n^{1/3})$
is close to a Gaussian process with mean zero and $\ev (X_i-X_j)^2
\le c|i-j|/n^{1/3}$ in the sense of Radon-Nikodym derivatives
being close to 1. By Lemma \ref{l:KC} below we have that
$$
\ev[\,(R_{n,x}-R'_{n,x})^2;S_n\,|\, Q\,] \le 2 c (n^{-1/6})^4 (n^{1/3})^{4/2}=2c
$$
where the factor $2$ is to control the ratio of Radon-Nikodym
derivatives. Taking expectations \eqref{Ax} follows.
\end{proof}

\begin{lemma}\label{l:KC}
Let $(X(k),k=0, \ldots, n-1)$ be a centered Gaussian vector.
Assume that $\ev (X_i-X_j)^2\le \sigma^2|i-j|$ for all $i,j$. Then
for any $q>0$ we have
\[
\ev  \sup_{i} |X_i-X_0|^q \le c_{q}\, \sigma^{q}n^{q/2}.
\]
\end{lemma}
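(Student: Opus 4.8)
The plan is to run a dyadic chaining (Kolmogorov--Chentsov) argument. First, by homogeneity we may rescale so that $\sigma=1$, and since both sides are monotone in $n$ we may assume $n=2^m$ (padding the vector by repeating $X_{n-1}$ if needed, which only helps). The basic input is the elementary Gaussian maximal bound: if $Z_1,\dots,Z_N$ are centered Gaussian with $\ev Z_k^2\le v$ for each $k$, then $\ev\max_{k\le N}|Z_k|^q\le C_q\, v^{q/2}(1+\log N)^{q/2}$. This follows from the sub-Gaussian tail $\P(|Z_k|>t)\le 2e^{-t^2/(2v)}$, a union bound, and integrating $\ev\max_k|Z_k|^q=\int_0^\infty qt^{q-1}\,\P(\max_k|Z_k|>t)\,dt\le\int_0^\infty qt^{q-1}\min(1,2Ne^{-t^2/(2v)})\,dt$.

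Next I would set up the chain. For $0\le j\le m$ call the integers of the form $k2^{m-j}$, $0\le k\le 2^j$, the level-$j$ grid, and for an index $i$ let $i_j=2^{m-j}\lfloor i/2^{m-j}\rfloor$ be the largest level-$j$ grid point $\le i$. Then $i_0=0$, $i_m=i$, each $i_j$ lies on the level-$j$ grid, and $i_j-i_{j-1}\in\{0,2^{m-j}\}$ since $i_{j-1}$ is a multiple of $2^{m-j+1}$. Hence $X_i-X_0=\sum_{j=1}^m(X_{i_j}-X_{i_{j-1}})$, so
\[
\sup_i|X_i-X_0|\le\sum_{j=1}^m M_j,\qquad
M_j:=\max\bigl\{\,|X_a-X_b|\,\bigr\},
\]
where the level-$j$ maximum runs over the at most $2^j+1$ consecutive pairs $(a,b)=(2^{m-j}k,\,2^{m-j+1}\lfloor k/2\rfloor)$. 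Each such difference is Gaussian with $\ev(X_a-X_b)^2\le|a-b|\le 2^{m-j}$ by the hypothesis, so the maximal bound yields $\ev M_j^q\le C_q\,(2^{m-j})^{q/2}(1+j)^{q/2}$.

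Finally I would assemble these. For $q\ge1$, Minkowski's inequality in $L^q$ gives
\[
\bigl(\ev\sup_i|X_i-X_0|^q\bigr)^{1/q}\le\sum_{j=1}^m\bigl(\ev M_j^q\bigr)^{1/q}\le C_q\,2^{m/2}\sum_{j\ge1}2^{-j/2}(1+j)^{1/2},
\]
and for $0<q<1$ the inequality $(\sum_j M_j)^q\le\sum_j M_j^q$ gives the same bound with $\sum_j2^{-jq/2}(1+j)^{q/2}$ in place of the last sum; in both cases the series converges, so the left side is $\le C_q\,2^{m/2}=C_q\,n^{1/2}$. Reinstating $\sigma$ completes the proof. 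There is no genuine obstacle here: the only points requiring a little care are the bookkeeping that makes consecutive chain points $2^{m-j}$-close while keeping the number of level-$j$ links being maximized over of order $2^j$, the split into the $q\ge1$ and $q<1$ cases, and the reduction to $n$ a power of two by padding.
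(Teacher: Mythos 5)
Your proof is correct and follows essentially the same route as the paper's: both are dyadic chaining arguments in the spirit of a quantitative Kolmogorov--Chentsov bound, built on the same telescoping decomposition of $X_i-X_0$ along binary (dyadic) approximants of $i$. The only real difference is bookkeeping: the paper controls the weighted supremum $\sup_{i,j}|X_i-X_j|/|i-j|^r$ with a slack exponent $r<1/2$ by bounding the sup of the normalized dyadic links by the sum of their $q$-th powers, whereas you bound each level's maximum via a union-bound Gaussian maximal inequality (accepting a $(1+j)^{q/2}$ factor per level) and then combine levels with Minkowski for $q\ge 1$ and subadditivity of $t\mapsto t^q$ for $0<q<1$; both versions of the chaining yield the stated bound $c_q\,\sigma^q n^{q/2}$.
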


\begin{proof}
We prove this by showing a quantitative version of the
Kolmogorov-Chentsov criterion -- the proof is standard. We have
$$
\sup_{i,j} \frac{|X_i-X_j|}{|i-j|^r} \le  \frac{2^r}{2^r-1} \sup_{\ell,k} \frac{|X(k2^\ell)-X((k+1)2^\ell)|}{2^{\ell r}},
$$
as can be seen by repeated use of the triangle inequality. Indeed,
consider a shortest sequence $i_1=i,\ldots i_m=j$ so that
consecutive $i's$ differ by exactly one bit in their binary
expansion. Then by the triangle inequality we get
$$
|X_i-X_j| \le \sum_{\ell=0}^{\log_2 |i-j|} 2^{\ell r} s  = s \frac{2^r|i-j|^r-1}{2^r-1}
$$
where $s$ is the supremum on the right hand side. The claim
follows. Bounding the sup by the sum gives
$$
\left(\sup_{\ell,k} \frac{|X(k2^\ell)-X((k+1)2^\ell)|}{2^{\ell r}}\right)^q \le
\sum_{\ell=1}^{\lceil \log_2 n\rceil } \sum_{k=1}^{n/2^\ell}
\frac{(X(k2^\ell)-X((k+1)2^\ell))^q}{2^{\ell rq}}
$$

The expectation of the right hand side is bounded above by
$$
q!! \sum_{\ell=1}^{\lceil \log_2 n\rceil } n
2^{-\ell} \frac{(\sigma^2 2^\ell)^{q/2} }{2^{\ell rq}}=
q!!\sigma^qn \frac{(2^{\lceil \log_2 n\rceil
})^{q/2-rq-1}-1}{2^{q/2-rq-1}-1}\le c_{q,r}\,\sigma^q
(n^{q/2-rq}\vee 1)
$$
and we get
$$
\ev \left(\sup_{i,j} \frac{|X_i-X_j|}{|i-j|^r}\right)^q \le c'_{q,r}\, \sigma^{q}(n^{q/2-qr}\vee 1)
$$
for $r<1/2$ this gives, in particular
\[
\ev \sup_{i} |X_i-X_0|^q \le c_{q,r}\, \sigma^{q}n^{q/2}.
\qedhere\]
\end{proof}

\subsubsection*{On the interval $[(1-\epsilon) n, n]$}

Here we take the definitions in \eqref{e:endcondadjust}: $w_{n,i}(x) \equiv 0$ and
$ \eta_{n,1}(x) =  m_n^2 (a^{\dagger}_0 - A_{\lfloor x m_n \rfloor} ) / b^{\dagger}(0),  $
     $ \eta_{n,2}(x) =   m_n^2 (b^{\dagger}_0 - B_{\lfloor x m_n \rfloor} ) / b^{\dagger}(0).$

 By the proposition that follows we have a tight upper bounds on the $A$'s and $B$'s over this span of indices. This is sufficient for \eqref{bounds1} to hold as then
 $\eta_{n,i}(x) = O(n^{2/3})$ which is  $ O(x)$ throughout this span.  The  condition \eqref{bounds2} is  satisfied as the bound of $B_k$, $k \in [(1-\eps) n, n]$ can be made small:
 $B_k \le \eta$ (with high probability) where $\eta < b_0^{\dagger}/2$, for example.

\begin{proposition}[Last entries are small]
\label{c:finalstretch}
For every $\eta>0$ there exists $\delta>0$, with $\delta < 1$,  and $c_1,c_2,c_3>0$ so that we have
  \begin{equation}\notag
  \P\left(|A_k-a^\dagger(1)|,B_k\le \eta \mbox{ for all }k\ge \delta n\right) \ge 1-c_1\exp(-c_2n^{c_3}).
  \end{equation}
\end{proposition}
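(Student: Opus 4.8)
The plan is to combine the conditional tail estimate \eqref{e:cheap3} of Proposition~\ref{prop:cheapbound} with a \emph{seed}: a short block of indices, still bounded away from $n$, on which the field $(A,B)$ is provably close to $(a^\dagger(1),0)$. Fix $\eta>0$. Feeding $\eps=\eta$ into \eqref{e:cheap3} yields $\delta_0=\delta_0(\eta,\beta)\in(0,1)$ and $c_2>0$ so that, for every $s\ge 1-\delta_0$ and every $n$,
\[
   \P\bigl(|A_k-a^\dagger(1)|,\,B_k\le\eta\ \ \forall\,k\ge sn\ \bigm|\ |A_k-a^\dagger(1)|,\,B_k\le\delta_0\ \ \forall\,k\in[sn,sn+\deg V]\bigr)\ \ge\ 1-e^{-c_2 n}.
\]
Because $a^\dagger,b^\dagger$ are continuous at $x=1$ with $b^\dagger(1)=0$ (Proposition~\ref{p:analytic}), I fix $\epsilon'\in(0,\delta_0)$ small enough that $|a^\dagger(x)-a^\dagger(1)|+b^\dagger(x)\le\delta_0/2$ for all $x\in[1-2\epsilon',1)$, and put $m=\lfloor(1-\epsilon')n\rfloor$.

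The main work is to verify the seed at $m$, namely that $\P\bigl(|A_k-a^\dagger(1)|,\,B_k\le\delta_0$ for all $k\in[m,m+\deg V]\bigr)\ge 1-c_1e^{-n^{c_3}}$. Since $m$ sits at distance at least $\epsilon' n/2$ from $n$, this is within reach of the local--minimizer machinery developed above. For each of the finitely many $k\in[m,m+\deg V]$, center a window $J_k$ of length $\lfloor c\log n\rfloor$ at $k$; on the event \eqref{e:cheap2} the boundary data on $\partial J_k$ are bounded, so Corollary~\ref{c:conditional-local} shows the conditional minimizer of $H$ on $J_k$ is $O((\log n)^2/n)$-close to the local minimizers at indices of $J_k$ (the distance from $k$ to $\partial J_k$ being of order $\log n$, and $c$ chosen large), while Lemma~\ref{lem:concentration short} places $(A_k,B_k)$ within $n^{\delta_*-1/2}$ of that conditional minimizer off an event of probability $\le ce^{-n^{\delta_*}}$. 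A union bound over the $O(1)$ indices $k$ gives $|A_k-a^\dagger_k|+|B_k-b^\dagger_k|\le cn^{\delta_*-1/2}$ for all $k\in[m,m+\deg V]$, outside probability $c'e^{-n^{\delta_*}}$. For these $k$ the argument $k/n+1/(n\beta)$ lies in $[1-2\epsilon',1)$ once $n$ is large, so by the choice of $\epsilon'$ we get $|a^\dagger_k-a^\dagger(1)|+b^\dagger_k\le\delta_0/2$; combining, $|A_k-a^\dagger(1)|+B_k\le cn^{\delta_*-1/2}+\delta_0/2\le\delta_0$ for $n$ large, which is the seed. (Every invoked result is valid for all $\beta>0$, since the indices in play avoid the last $O(1)$ entries where convexity can fail for $\beta<1$; no separate case is needed.)

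It remains to splice the two pieces. On the seed event, \eqref{e:cheap3} applies with $s=m/n\ge 1-\epsilon'\ge 1-\delta_0$ (valid for $n$ large), so $\{|A_k-a^\dagger(1)|,\,B_k\le\eta\ \forall\,k\ge m\}$ has conditional probability $\ge 1-e^{-c_2 n}$; hence its intersection with the seed event has probability at least $(1-c_1e^{-n^{c_3}})(1-e^{-c_2n})\ge 1-c_1'e^{-n^{c_3}}$ for $n$ large, with $c_3=\delta_*$. On that intersection $|A_k-a^\dagger(1)|,B_k\le\eta$ for every $k\ge m$, and since $(1-\epsilon')n\ge m$ this yields the assertion with $\delta:=\epsilon'$ (note $\delta<\delta_0<1$, as required). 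Finitely many small $n$ are absorbed by enlarging $c_1'$.

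The step I expect to fight with is the seed step. One must check that the concentration-about-local-minimizers estimates (Lemma~\ref{lem:concentration short}, Corollary~\ref{c:conditional-local}, and the ``$Q\notin\mathcal Q$''-type control from the proof of Proposition~\ref{p:variation}) can be run with the \emph{small} parameter $\epsilon'$ --- whose size is dictated by $\eta$ through $\delta_0$, not by a fixed structural constant --- and that the boundary hypothesis they require is itself met off an exponentially small event; one must also make sure the continuity of $(a^\dagger,b^\dagger)$ at $x=1$ genuinely upgrades ``close to the local minimizer near index $(1-\epsilon')n$'' to ``$\delta_0$-close to $(a^\dagger(1),0)$'' with room to spare. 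Everything after the seed --- the single-index use of \eqref{e:cheap3} (no union bound over the seed location is needed) and the probability bookkeeping --- is routine.
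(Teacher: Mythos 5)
Your proposal follows essentially the same route as the paper: the paper also deduces the statement from the conditional estimate \eqref{e:cheap3} of Proposition \ref{prop:cheapbound}, the continuity of $(a^\dagger,b^\dagger)$ at $x=1$ from Proposition \ref{p:analytic}, and the fact that the seed (conditioning) event holds with the claimed probability — the only difference being that the paper cites Proposition \ref{p:variation} for the seed, whereas you re-derive it from its ingredients (boundedness \eqref{e:cheap2}, Corollary \ref{c:conditional-local}, Lemma \ref{lem:concentration short}), which is fine since the relevant constants are allowed to depend on $\eta$ through $\epsilon'$. One slip at the very end: your argument establishes the bound for all $k\ge m\approx(1-\epsilon')n$, so the correct choice is $\delta:=1-\epsilon'$ (which satisfies $\delta<1$, all that is required), not $\delta:=\epsilon'$ — the latter would assert control for indices deep in the bulk, where $(A_k,B_k)$ concentrates near $(a^\dagger(k/n),b^\dagger(k/n))\neq(a^\dagger(1),0)$ and the claim is false.
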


\begin{proof}
This is an immediate corollary
of Proposition \ref{prop:cheapbound} which shows there is  $\delta'>0$ so that
for all
$n$ and $s\ge 1-\delta'$ we have
\begin{align*}
  \label{e:cheap3again}
  \P\Bigl(|A_k-a^\dagger(1)|,B_k\le \eta \mbox{ for all } k\ge sn \,   &   \Big|\,|A_k-a^\dagger(1)|,B_k\le \delta' \mbox{ for all }k\in [sn,sn+\deg V] \Bigr)
   \nonumber \\
   & \ge 1-e^{-c_2n}.
\end{align*}
By Proposition \ref{p:analytic} we have $a^\dagger(x),b^\dagger(x)\to (a^\dagger(1),0)$ as $x\to 1$, and so
Proposition \ref{p:variation} shows that for some $\delta, s$ the conditioning event above holds with
the claimed probability.
\end{proof}

\subsection{Proof of the main result}

\begin{proof}[Proof of Corollary \ref{mainresult}]
Due to the interlacing inequalities, the lowest eigenvalues $\lambda_{n,1},\lambda_{n,2},\ldots$ of the matrix
\begin{equation}
\label{e:scaled}
     \gamma  n^{2/3} \, ( \mathcal{E} I - T_{n} )
\end{equation}
are bounded above by the lowest eigenvalues of any minor; we use the minor
 \begin{equation}
 \label{e:minor}
 H_n =   \gamma  n^{2/3} \, ( \mathcal{E} I - T_{[ c \log n, n]} ),
 \end{equation}
and denote its lowest eigenvalues by $\bar \lambda_{n,1},\bar \lambda_{n,2},\ldots$.

By Theorem \ref{t:beginning}, for $n$ large enough, the lowest eigenvalues of \eqref{e:scaled}  are also bounded below by $\min (\underline \lambda_{n,i},y_n)$ where
$y_n=cn^{2/3}/\log n$ and
$\underline \lambda_{n,i}$ are eigenvalues of a rank-1 perturbation of the same minor \eqref{e:minor} at its first coordinate. The perturbation adds the matrix
$$
 \gamma n^{2/3} \left(-b^{\dagger}(0) +\frac{c}{\log^2 n}\right) e_{11}  =    \left( - m_n^{2} + \frac{c'}{\log^2 n} m_n^2 \right) e_{11} := z_n e_{11}.
$$
Recall from \eqref{eq:mn} the definition $ (\gamma/n^{\dagger}(0)) n^{2/3} = m_n^2$.
Since $y_n\to\infty$, it suffices to show that the $\bar \lambda_{n,i}$ and the  $\underline \lambda_{n,i}$, the eigenvalues of $H_n$ and its perturbation
$H_n + z_n e_{11}$, converge to the same limit.

By Theorem \ref{weak} (ii), in particular the condition
\eqref{e:perturbation}, the perturbation here is subcritical: both sets of eigenvalues will converge to the same limit so long as $H_n$ converges in the manner
set out in the first part of Theorem \ref{weak}. But this has just been verified in the preceding subsections: Theorem \ref{thm:mainconv} establishing convergence
of the random potentials, or Assumption 1 (of Theorem \ref{weak}), and   Propositions  \ref{p:osc} and \ref{c:finalstretch} combined verifying
the growth/oscillation conditions, Assumptions 2.

To identify the Stochastic Airy Operator $\SAO$ only requires replacing the simple scalings on the $X_n(x)$ and $Y_n(x)$ sums in Theorem \ref{thm:mainconv} with what is actually
given via \eqref{eq:mn} and \eqref{summedpotentials}. With $a = a^{\dagger}(0)$ and $b = b^{\dagger}(0)$ for short, the convergence,
$$
   \left( \frac{n}{\tau b^2} \right)^{1/3}
  \sum_{k=  \lfloor c \log n \rfloor}^{ \lfloor x (b n /\tau)^{1/3} \rfloor } (a - A_k )  + 2 (b  - B_k)   \rightarrow \frac{1}{2} x^2 + \frac{2}{\sqrt{\beta}} (b \tau)^{1/2}  \cdot \frac{1}{\tau^{1/3} b^{2/3}}  W( (b/\tau)^{1/3} x),
$$
may be read off from \eqref{eq:firstconv}-\eqref{eq:stats}.  By Brownian scaling the right hand side is equivalent in law to $\frac{1}{2} x^2 + \frac{2}{\sqrt{\beta}} W_x$ as is required.

For convergence of eigenvectors, note that we do have from Theorem \ref{weak} that the eigenvectors of $H_n$ and $H_n+ z_n e_{11}$ converge to the same limit, the eigenvectors of $\SAO$.  Now denote by $\mathcal{T}_n$ the full matrix \eqref{e:scaled} and by $\underline{\mathcal T_n}$ its (positive definite) lower bound from Theorem \ref{t:beginning},  scaled to our setting. That is, $\underline{\mathcal T_n} =   y_n I_{|[1, c \log n]}  + [H_n + z_n e_{11}]$, in which the previous perturbed minor is padded by zeros in the first $c  \log n$ rows and columns.

We now show, by induction on $k$ that $|\langle {\underline \varphi}_{nk}, \varphi_{nk},\rangle|\to 1$  i.e. the normalized eigenvectors are close in norm (up to sign). 

Let $\underline A_{n,k}$ denote the $k$-dimensional subspace spanned by the lowest eigenvectors of  $\underline{\mathcal T_n}$.
Assume that $n$ is large enough so that $y_n>\underline \lambda_{n,k+1}\ge \underline \lambda_{n,k}$, and  the elements of $\underline A_{n,k}$
vanish on the first $c\log n$ coordinates.

Write $\varphi_{n,k}=\varphi'+\varphi''+\varphi'''$, where  
$\varphi''$ is the orthogonal projection to $\underline A_{n,k-1}$,  
$\varphi''$ is the orthogonal projection to the 1-dimensional space $\underline \varphi_{n,k}$. Then, granted that
$y_n\ge \underline{ \lambda}_{n,k+1}$, we have 
$$
\lambda_{n,k}=\langle \varphi, \mathcal T_n \varphi \rangle \ge \langle \varphi, \underline{\mathcal T}_n \varphi \rangle \ge \underline \lambda_{n,1}\|\varphi'\|^2 + \underline \lambda_{n,k}\|\varphi''\|^2+ \underline \lambda_{n,k+1}\|\varphi'''\|^2
$$
where the 
last inequality follows from the eigenvector decomposition of $\underline{\mathcal T}_n$.

Writing the inequality between the leftmost and rightmost expressions as in
$$
(\lambda_{n,k} - \underline \lambda_{n,1})\|\varphi'\|^2 +(\lambda_{n,k} - \underline \lambda_{n,k})\|\varphi''\|^2  \ge (\underline \lambda_{n,k+1}-\lambda_{n,k})\|\varphi'''\|^2,
$$
we claim that the the left hand side converges to zero. Indeed, by the inductive
hypothesis $\underline A_{n,k-1}$  is close to the subspace spanned by the first $k-1$ eigenvectors of $\mathcal T_n$, so $\|\varphi'\|
\to 0$, and the coefficient stays bounded.  For the second term, we have established that $\lambda_{n,k}-\underline \lambda_{n,k}\to 0$. 

Now  $\SAO$ has almost surely discrete eigenvalues, and so  $\underline \lambda_{n,k+1} - \lambda_{n,k}>C>0$ for a random constant $C$ and
all $n$ large enough. Thus for the right hand side to converge to zero it has to be the case that  $|\varphi'''|\to 0$. Hence  $|\varphi''|\to 1$, as claimed. 

Thus the lower eigenvectors of $\mathcal T_n$ and $\underline {\mathcal T_n}$ are close in norm. The latter are vanishingly small shifts of the eigenvectors of 
$H_n+ z_n e_{11}$, which are close to the eigenvectors of $\SAO$, as required.
\end{proof}

\section{The nonregular case}
\label{s:nonregular}

Even for the classical values of the parameter $\beta = 1, 2 $ and $4$ there exist external fields $V$ for which one does not see Tracy-Widom limits at the
edge of the spectrum.   The relevant condition is understood through the behavior of the corresponding equilibrium measure $\mu_V$, again defined through the
minimizing
$$
   I(\mu) = \int V(s)  d \mu(s) + \int \int \log \frac{1}{ |s-t|} d \mu(s) d \mu(t)
$$
in the space of probability measures. If $\frac{V(s)}{\log |s| } \rightarrow \infty$ as $s \rightarrow \pm \infty$ the minimizer ($\mu_V$) exists and is unique. If it is further assumed that $V$ is real analytic, then it is the case that  has a density of the form
$$
    \psi_V(s) =   \frac{d \mu_V(s)}{ds} = \sqrt{(Q_V(s))_+}
$$
for a real analytic function $Q_V(x)$. Generically $Q_V$ has simple zeros and $\psi_V$ vanishes like a square root at the edge of its support \cite{KuilMcL}. In this case $V$ is called {\em regular} and the rescaled maximal eigenvalue has the appropriate Tracy-Widom law (see \cite{DG}, again for $\beta = 1, 2, 4$).

In general though it is possible for $Q_V$ to have a zero of order $4k+1, k=0, 1, \dots$ (but not of order $4k+3$).  The resulting ``higher-order Tracy-Widom"  laws which can result from such non regular $V$'s have been studied in \cite{Claeys} at $\beta = 2$.  The eigenvalue fluctuations in these settings is of order $n^{ 2/(4k+3)}$.

Throughout the above we have made essential use of the assumption that $V$ is (uniformly) convex, specifically to control the regularity of the minimizers as well as the concentration of the field about the minimizers.  In fact, the only simple geometric condition for $V$ to be regular is convexity. Nonetheless, our framework provides conjectural
operator limits, of type Laplacian plus random potential,  describing the limiting spectral edge for any non regular $V$.

To explain, one first notices that the shape of the potential (linear plus a constant multiple of white noise) that characterizes the Stochastic Airy Operator is tied directly to the differentiability of the local minimizers $a^{\dagger}(x), b^{\dagger}(x)$ at zero. Really what is important is the shape of the edge $\mathcal{E}(x) =  a^{\dagger}(x) + 2 b^{\dagger}(x)$
tied to the family of potentials $x \mapsto \frac{1}{1-x} V$, again about zero. Recall Remark \ref{r:momentcond} at the end of Section \ref{s:outline}. It has already been noted in
\cite{KuilMcL} that the behavior of $\psi_V$ at $\mathcal{E} = \mathcal{E}(0)$ determines that of $\mathcal{E}(x)$.  Lemma 8.1 of that paper implies:
\begin{equation}
\label{eq:densitytoedge}
   \mbox{ If } \psi_V(t)  \sim  (\mathcal{E} -t)^{\frac{4k+1}{2}} \mbox{ as } t \uparrow  \mathcal{E} \mbox{ then }  \mathcal{E} - \mathcal{E}(\epsilon) \sim \epsilon^{\frac{1}{2k+1}}
   \mbox{ as } \epsilon \downarrow 0.
\end{equation}
Again, square-root vanishing of the density ($k=0$) produces linear/differentiable behavior of the edge $\mathcal{E}(x)  \sim \mathcal{E}(0) + \mathcal{E}'(0) x$.

After centering the tridiagonal operator by $\mathcal{E}(0)$, and so again factoring $b^{\dagger}(0) \times $ the discrete second-derivative operator, the summed potential
once more takes the form
$$
     y_n(x) =   m_n \sum_{k= q_n}^{ \lfloor m_n x \rfloor}  \left(  \mathcal{E}(0) - (A_k + 2 B_k)  \right),
$$
compare \eqref{summedpotentials}.
Here a variable lower limit $q_n$ has been inserted assuming there will again be a cutoff required. Before of course $q_n = c \log n$, what is important is that $q_n = o(m_n)$.
Now assuming that $A_k, B_k$ concentrate about appropriate local minimizes, at least in the vicinity of the edge, one has up to constants that
\begin{equation}
\label{heuristicmean}
   \EE{y_n(x)} \sim m_n \sum_{k= q_n}^{\lfloor m_n x \rfloor} (k/n)^{\frac{1}{2k+1}} \sim \frac{{m_n}^{\frac{4k+3}{2k+1}}}{n^{\frac{1}{2k+1}}} x^{\frac{2k+2}{2k+1}},
\end{equation}
granted \eqref{eq:densitytoedge}. This implies that $m_n$ must be chosen so that $m_n \sim n^{\frac{1}{4k+3}}$, which
corresponds to the general fluctuation exponent  known to hold
in the $\beta =2$ case (the fluctuation is $O(m_n^2)$), as it should.  Note that this can be cast in terms of the concentration of the random ``edge" $A_k + 2 B_k$, without discussion of separate $(a,b)$-minimizers.

For the variance, the formula in the regular case  (see Proposition \ref{thm:mainconv}) can be written in an intuitive manner as in
$$
 - 2 \beta \mathcal{E}(0) \mathcal{E}'(0)  x = - \beta (\mathcal{E}^2)'(0) x   \sim \beta  n^{2/3} \sum_{k \le n^{1/3} x}   - (\Delta \mathcal{E}^2)( {k}/{n} ),
$$
where $\Delta{f}(t)$ is short for ${f}(t +1/n) - {f}(t)$. On the left hand side, one recognizes the formula given previously, with constant  $-4 \beta b^{\dagger}(0) \mathcal{E}'(0)$, noting that it can be assumed that $a^{\dagger}(0) = 0$ by a simple shift in which case $\mathcal{E}(0) = 2 b^{\dagger}(0)$. The right hand side captures that the (Gaussian) fluctuation a single step of
the walk $k \mapsto A_k + 2 B_k$ should scale with the difference-square of its mean profile, $\mathcal{E}(k/n)$.  This prompts
\begin{equation}
   \label{heuristicvar}
   \Var [ y_n(x) ]  \sim   m_n^{2}  \left(\mathcal{E}^2(0) -  \mathcal{E}^2( x m_n/n) \right)  \sim  m_n^2 ( x m_n/n)^{\frac{1}{2k+1}},
\end{equation}
as the proposed variance, neglecting constants.  Note that the exponents of $m_n$ and $n$ match those in \eqref{heuristicmean}.

Restoring the appropriate constant factors, \eqref{heuristicmean} and \eqref{heuristicvar} allow us to formulate the following.

\begin{conjecture}\label{c:sk} Assume $V$ is nonregular: with $k \ge 1$, let  $\psi_V(t)  \sim  (\mathcal{E} -t)^{\frac{4k+1}{2}}$ as $t \uparrow  \mathcal{E} = \mathcal{E}(0)$, the rightmost edge of
the support of $\psi_V$. In line with \eqref{eq:densitytoedge} define the constant $c$ by
$$
      \lim_{\epsilon \downarrow 0} \frac{ \mathcal{E} - \mathcal{E}(\epsilon)}{  \epsilon^{\frac{1}{2k+1}}} = c,
$$
and set $\gamma = c^{-2/3} (\mathcal{E}/2)^{-1/3}$. Then we have that
$$
     H_{n,k} =  \gamma n^{{2}/{4k+3}} ( \mathcal{E} I -  T_n)
$$
converges in the sense of Theorem \ref{weak} (i) to the operator
$$
   \mathcal{S}_{\beta, k} =  - \frac{d^2}{dx^2} + x^{\frac{1}{2k+1}} +  \frac{2}{\sqrt{\beta}} x^{- \frac{k}{2k+1}} W'(x),
$$
on the half-line with Dirichlet conditions at the origin. In particular, the ordered eigenvalues/eigenvectors
of $H_{n,k}$ converges jointly in law to those of $\mathcal{S}_{\beta, k} $.
\end{conjecture}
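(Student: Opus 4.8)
The plan is to run the same program as for the regular case (Sections \ref{s:tridiagonal}--\ref{s:mainresult}), substituting for each ingredient its nonregular analogue. The tridiagonal model of Proposition \ref{p:matrix model} holds for any polynomial $V$, so the entries $(A,B)$ of $T_n$ still have density $\propto e^{-n\beta H}$ with $H$ as in \eqref{Hdef}; nothing changes there. What must be rebuilt is: (i) the analysis of the local minimizers and of the edge function $\mathcal E(x)$ near the singular point $x=0$; (ii) the concentration of the field $(A,B)$ about those minimizers, which in the regular case rested entirely on uniform convexity of $H$ (Lemma \ref{l:UniformConvexity}) and the Gaussian domination of Lemma \ref{l:GaussianLem}; (iii) the mean/variance computation of Section \ref{s:meanandvariance}, now with the scaling $m_n\asymp n^{1/(4k+3)}$ forced by \eqref{heuristicmean}; and (iv) the first-stretch truncation of Section \ref{s:firststretch}, i.e.\ an analogue of Theorem \ref{t:beginning} showing that passing from $T_n$ to the minor $T_{[q_n,n]}$, for a cutoff $q_n=o(m_n)$ (e.g.\ $q_n=c\log n$), costs only a subcritical rank-one perturbation in the sense of Theorem \ref{weak}(ii).

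For the local minimizers one reuses Remark \ref{r:momentcond}: $(L_x,R_x)=(a^\dagger(x)-2b^\dagger(x),\,a^\dagger(x)+2b^\dagger(x))$ is the support of the equilibrium measure $\mu_{V_x}$ of $V_x=V/(1-x)$, cut out by the moment conditions \eqref{eq:momentconditions}. A generic multiplicative perturbation of the external field removes a higher-order zero, so $\mu_{V_x}$ should be regular for $x\in(0,1)$ with $a^\dagger(\cdot),b^\dagger(\cdot)$ real-analytic there, while at $x=0$ the input \eqref{eq:densitytoedge} (Lemma 8.1 of \cite{KuilMcL}) yields $\mathcal E(0)-\mathcal E(x)\sim c\,x^{1/(2k+1)}$, indeed a Puiseux expansion in $x^{1/(2k+1)}$; combined with the identity $4bW_{11}=bW_{22}+W_2$ of Lemma \ref{lem:wlemma} this also gives $b^\dagger(0)-b^\dagger(x)\asymp x^{1/(2k+1)}$. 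Feeding these into the heuristics \eqref{heuristicmean}--\eqref{heuristicvar}: the drift $m_n\sum_{k=q_n}^{\lfloor xm_n\rfloor}(\mathcal E(0)-\mathcal E(k/n))$ against $m_n\asymp n^{1/(4k+3)}$ converges to a constant multiple of $x^{(2k+2)/(2k+1)}$, whose derivative is the potential $x^{1/(2k+1)}$ of $\mathcal S_{\beta,k}$; and the local inverse-Hessian $\Sigma(x)$ of Proposition \ref{p:localcovariance}, evaluated along $k/n\downarrow 0$ where $-\mathcal E'(x)\asymp x^{-2k/(2k+1)}$, produces a centered Gaussian limit of variance $\propto\int_0^x t^{-2k/(2k+1)}\,dt\asymp x^{1/(2k+1)}$ --- that is, the martingale $\frac{2}{\sqrt{\beta}}\int_0^x t^{-k/(2k+1)}\,dW_t$, which is well-defined since $2k/(2k+1)<1$. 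Matching the two constants pins down $\gamma=c^{-2/3}(\mathcal E/2)^{-1/3}$ exactly as in the $k=0$ case.

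The \emph{main obstacle} is the concentration step, and it is the reason this remains a conjecture. For nonregular $V$ the Hamiltonian $H$ need not be convex --- some non-convexity of $V$ over the spectral bulk is typically precisely what forces a higher-order edge --- so Lemmas \ref{l:UniformConvexity} and \ref{l:GaussianLem} are unavailable, and even uniqueness of the minimizers of the conditional Hamiltonians is not guaranteed. One would need: an a priori confinement estimate showing that with overwhelming probability $(A,B)$ stays in a fixed neighborhood of the profile $(a^\dagger_k,b^\dagger_k)$ near the top of the matrix (for $\beta=2$ this can be read off from the Riemann--Hilbert/determinantal analysis of \cite{Claeys}; for general $\beta$ one would have to argue through loop equations or transport); a proof that $H$, or rather each short-window conditional Hamiltonian arising near the top, \emph{is} uniformly convex when restricted to that neighborhood, so that the localized Gaussian domination of Section \ref{s:boundingthefield} and the short-interval concentration estimates of Section \ref{s:Gaussian} can be re-run verbatim; and a substitute for the quadrature bound of Propositions \ref{p:quadraturetop}--\ref{l:essup} on the leading minor of the limiting equilibrium operator $J$ (that argument used concavity of $\psi_V$, hence convexity of $V$), which could be replaced by a direct Bessel-type estimate for measures whose density vanishes like $(\mathcal E-t)^{(4k+1)/2}$ at the edge.

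Granting the concentration input, the rest parallels Section \ref{s:mainresult}: verify Assumption 1 of Theorem \ref{weak} by Kolmogorov--Chentsov (Lemma \ref{l:KC}) together with the above mean and variance limits; verify the growth/oscillation bounds of Assumption 2, now with $\bar\eta(x)\asymp x^{1/(2k+1)}$ and $\eta_{n,2}\ge 0$ automatic from $b^\dagger$ being decreasing; apply Theorem \ref{weak}(i) to the truncated operator $\gamma n^{2/(4k+3)}(\mathcal E I-T_{[q_n,n]})$; and use the analogue of Theorem \ref{t:beginning} with Theorem \ref{weak}(ii) to transfer the conclusion to $H_{n,k}=\gamma n^{2/(4k+3)}(\mathcal E I-T_n)$, identifying the limit as $\mathcal S_{\beta,k}$ after a Brownian time-change as in the $k=0$ case. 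Everything outside the concentration step is a recalibration of exponents already displayed in \eqref{heuristicmean}--\eqref{heuristicvar}.
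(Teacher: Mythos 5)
The statement you are asked about is not a theorem of the paper but Conjecture \ref{c:sk}: the authors offer no proof, only the heuristic derivation in Section \ref{s:nonregular} of the scaling exponent $n^{2/(4k+3)}$, the drift $x^{(2k+2)/(2k+1)}$, and the variance profile $x^{1/(2k+1)}$ via \eqref{eq:densitytoedge}, \eqref{heuristicmean} and \eqref{heuristicvar}. Your proposal reproduces exactly this heuristic support --- the Puiseux behavior of $\mathcal E(x)$ at $x=0$ from Lemma 8.1 of \cite{KuilMcL}, the forced choice $m_n\asymp n^{1/(4k+3)}$, the identification of the limiting potential $x^{1/(2k+1)}$ and of the noise $\frac{2}{\sqrt\beta}\int_0^x t^{-k/(2k+1)}\,dW_t$, and the matching of the constant $\gamma$ --- so as evidence for the conjecture it is aligned with what the paper actually does.

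As a proof, however, it has the gap you yourself flag, and it is a genuine one: every quantitative step of the paper's regular-case argument (Theorem \ref{thm:upper bound}, Propositions \ref{prop:close}--\ref{prop:minbound}, the Gaussian domination of Lemma \ref{l:GaussianLem}, the concentration estimates of Section \ref{s:Gaussian}, and the quadrature bound behind Theorem \ref{t:beginning}) is powered by uniform convexity of $V$ via Lemma \ref{l:UniformConvexity}, and the paper itself points out that convexity is essentially the only simple condition guaranteeing regularity --- so in the nonregular regime the central tool is unavailable by hypothesis, not merely inconvenient. Your proposed substitutes (a priori confinement from \cite{Claeys} at $\beta=2$, loop equations or transport for general $\beta$, restricted convexity near the minimizing profile, a Bessel-type replacement for Proposition \ref{p:quadraturetop}) are plausible directions but are not carried out, and without them Assumptions 1 and 2 of Theorem \ref{weak} cannot be verified. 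In short: your write-up is a faithful reconstruction of the paper's conjectural reasoning together with an accurate diagnosis of why it remains a conjecture, but it does not, and cannot as written, upgrade Conjecture \ref{c:sk} to a theorem.
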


\end{document}